\newtheorem{theo}{Theorem}[section]
\newtheorem{lemm}[theo]{Lemma}
\newtheorem{rema}[theo]{Remark}
\newtheorem{assu}{Hypothesis}
\numberwithin{equation}{section}
\begin{document}

\title[Periodic solutions to nonlinear wave equation]{
Periodic Solutions to Nonlinear  Wave equation with $X$-dependent  Coefficients under the General Boundary Conditions }

\author{Bochao Chen}
\address{School of
Mathematics and Statistics, Center for Mathematics and
Interdisciplinary Sciences, Northeast Normal University, Changchun, Jilin 130024, P.R.China}
\email{chenbc758@nenu.edu.cn}


\author{Yong Li}
\address{School of
Mathematics and Statistics, Center for Mathematics and
Interdisciplinary Sciences, Northeast Normal University, Changchun, Jilin 130024, P.R.China. }
\address{College of Mathematics, Jilin University, Changchun 130012, P.R.China}
\email{yongli@nenu.edu.cn}

\author{Xue Yang}
\address{School of
Mathematics and Statistics, Center for Mathematics and
Interdisciplinary Sciences, Northeast Normal University, Changchun, Jilin 130024, P.R.China. }
\email{yangx100@nenu.edu.cn}

\thanks{ The research  of YL was supported in part by NSFC grant 11571065, 11171132  and  National Research Program of China Grant 2013CB834100}

\keywords{Wave equations; General boundary conditions;   Periodic solutions; Lyapunov-Schmidt reduction; Nash-Moser iteration.}

\begin{abstract}
In this paper we consider a class of nonlinear  wave equation with  $x$-dependent  coefficients  and prove  existence of families of time-periodic solutions
under the general boundary conditions.
 Such a model arises from the forced vibrations of a nonhomogeneous string and the propagation of seismic waves in nonisotropic media.
The proofs are based on
 a Lyapunov-Schmidt reduction together with a  differentiable   Nash-Moser iteration scheme.
\end{abstract}

\maketitle

\section{Introduction}
This paper is devoted to the study  of time-periodic solutions to
 nonlinear wave equation subject to  the general boundary conditions
\begin{align}\label{E1.1}
\begin{cases}
\rho(x)u_{tt}-(p(x)u_{x})_{x}+m(x)u=\epsilon f(\omega t,x,u),\\
\alpha_1u(t,0)-\beta_1u_{x}(t,0)=0,\\
\alpha_2u(t,\pi)+\beta_2u_{x}(t,\pi)=0,
\end{cases}
\end{align}
where $\alpha^2_i+\beta^2_i\neq0,i=1,2$, $\rho(x)>0$, $p(x)>0$, $\epsilon$ is a small parameter, the potential $m(x)>0$,
 and the nonlinear forcing term $f(\omega t,x, u)$ is $(2\pi/\omega)$-periodic in time, i.e. $f(\cdot,x, u)$ is $2\pi$-periodic.

Equation \eqref{E1.1} with $\rho(x),p(x)$ depending on $x$ is a  more realistic model, which
describes the forced vibrations of a bounded nonhomogeneous string and the propagation of seismic waves in non-isotropic media, see \cite{Bamberger1979about,Barbu1996Periodic,Barbu1997Periodic,Barbu1997determining,ji2006periodic,
ji2007periodic,ji2008time,ji2009peridic,ji2011time,baldi2008forced}.
More precisely, the vertical displacement $u(t,x)$ of a plane seismic waves at time $t$ and depth $x$  is described by the following equation
\begin{equation*}
\rho(x)u_{tt}-(p(x)u_{x})_{x}=0,
\end{equation*}
where $\rho(x)$ is the rock density, $p(x)$ is the elasticity coefficient.

The search for periodic solutions to nonlinear wave equations has a long standing tradition.
If the coefficients $\rho(x),p(x)$ are nonzero constants,  Equation \eqref{E1.1} corresponds to the classical wave equation.
The problem of finding time-periodic solutions to the classical nonlinear wave equation has received wide attention due to the first pioneering work
of Rabinowitz \cite{Rabinowitz1967periodic,rabinowitz1968periodic,Rabinowitz1971,Rabinowitz1978}.
Provided that the nonlinearity $f$ is monotonic in $u$, he \cite{Rabinowitz1967periodic} rephrased the problem as a variational problem and verified the existence of periodic solutions whenever the time period $T$ is a rational
multiple of the length of spatial interval.
 Subsequently, Rabinowitz's
variational methods was developed by Bahri, Br\'{e}zis, Corn, Nirenberg etc., and
  many related results was obtained, see \cite{bahri1980periodic,brezis1983periodic,brezis1981periodic,brezis1978forced}.
  In these papers, time
period $T$ is required to be a rational multiple of $\pi$, i.e. the  frequency $\omega$ has to be rational. When the  forced frequency $\omega$ is irrational,
the spectrum of the wave operator $\omega^2\partial_{tt}-\partial_{xx}$ approaches to zero for almost every $\omega$, which will leads to  a ``small denominators problem''. The case in which   $\omega$ is irrational
  has been investigated by Fe\v{c}kan \cite{mickan1995periodic}
and McKenna\cite{mckenna1985Osolutions}, where the frequencies are essentially the numbers whose continued fraction expansion is bounded.
At the end of the 1980s, a quite different approach
which used the Kolmogorov-Arnold-Moser (KAM) theory was developed from the
viewpoint of infinite dimensional dynamical systems by Kuksin \cite{kuksin1987hamiltonian}, Eliasson\cite{eliasson1988perturbations} and Wayne
\cite{wayne1990periodic}. This method allows one to obtain solutions whose periods are irrational multiples
of the length of the spatial interval.
 In addition, this method is easily
extended to construct quasi-periodic solutions, see \cite{chierchia2000kam,liu2010spectrum,yuan2006quasi-periodic,Gao2009,gengyou2006kam,Geng2013}. Later, in order to overcome some limitations inherent to the usual KAM procedures, Craig,
Wayne \cite{craig1993newton} applied a novel method based on a Lyapunov-Schmidt decomposition and the Nash-Moser technique to  construct the periodic
of the wave equation with the Dirichlet boundary conditions
 and periodic boundary conditions. Bourgain successfully constructed the periodic or quasi-periodic solutions of the wave equation using the similar method,
 see \cite{bourgain1994construction,bourgain1995construction}. The advantage of this approach is to require only the ``first order Melnikov'' non-resonance conditions, which are essentially the minimal assumptions.
Some recent results on  Nash-Moser theorems can be found in \cite{berti2006cantor,berti2008cantor,berti2010sobolev,Berti2012nonlinearity,berti2015abstract} and the references there in.

On the other hand, the problem of finding periodic solutions to
equation \eqref{E1.1} with $\rho(x)=p(x)$ depending on $x$
was firstly considered by Barbu and Pavel in \cite{Barbu1996Periodic,Barbu1997Periodic,Barbu1997determining}. In \cite{Barbu1997Periodic}, if $\omega$ is rational, then the spectrum of the linear operator has the following form
\begin{equation*}
-\omega^2l^2+\lambda_j=-\omega^2l^2+j^2+b+O({1}/{j}), \quad l\in\mathbf{Z},~j\geq1.
\end{equation*}
Under the assumption  $b\neq0$, the linear spectrum possesses at most finite many zero eigenvalues and the other eigenvalues are far away from zero. For $b=0$, $\omega\in\mathbf{Q}$,  the infinite eigenvalues tend to zero for $|l|=\frac{\mathfrak{n}}{\mathfrak{m}}j$.
Under assuming $b\neq0$, Ji
and Li obtained a series of results on looking for periodic solutions to
equation \eqref{E1.1} with $\rho(x)=p(x)$  under  the general boundary conditions and  periodic  boundary conditions, see \cite{ji2006periodic,
ji2007periodic,ji2008time,ji2009peridic}.
The case of $b=0$ was also posed as an open problem by Barbu and Pavel in \cite{Barbu1997Periodic}. The difficulty arising from $b=0$ have been actually overcame
  by Ji and Li in \cite{ji2011time} when the forced frequency $\omega$ is rational.
 For the forced frequency $\omega$ is irrational, the small denominators phenomenon arises.
 For the forced frequency $\omega$
is irrational with $b\neq0$,  Baldi and Berti \cite{baldi2008forced} investigated equation \eqref{E1.1} under Dirichlet boundary conditions,
 where the nonlinearity $f$ was assumed to be  analytic in $(t,u)$ and $H^1$ in $x$.
  They proved the existence of time-periodic solutions of the equation via a Lyapunov-Schmidt reduction together with a analytic Nash-Moser iteration scheme.
In this paper,  we will consider the case of the forced frequency $\omega$ is irrational  and $b\neq0$. There are two main difficulties in this work:
 (i)  the finite differentiable  regularities of the nonlinearity. All above results are carried out in analytic nonlinearities cases.
  However, there is no existence result of nonlinear wave equation (with
 $x$ dependent coefficients) with perturbations having only finitely differentiable regularities presently, which is the main motivation for this paper.
 (ii) the more general boundary conditions,
 which contain Dirichlet boundary conditions, Neumann boundary conditions, Dirichlet-Neumann boundary conditions and the general boundary condition. We have to give the
 asymptotic properties of the eigenvalues for different boundary conditions.
 Applying a differentiable  Nash-Moser method \cite{berti2008cantor,berti2010sobolev,Berti2012nonlinearity,berti2015abstract}, under  the ``first order Melnikov'' non-resonance conditions,
 we  obtain the  existence of time-periodic solutions to equation \eqref{E1.1}.

The rest of the paper is organized  as follows: we decompose equation \eqref{E1.2} as the bifurcation equation $(Q)$ and the range equation $(P)$
by a Lyapunov-Schmidt reduction and state the main result in subsection \ref{sec:2.2}.
In subsection \ref{sec:2.3}, the $(Q)$-equation is solved  by the classical implicit function theorem under Hypothesis \ref{hy1}.
Based on the type of boundary conditions, we give the asymptotic formulae for  the eigenvalues of Sturm-Liouville problem \eqref{E6.1} in section \ref{sec:5}.
 Relayed on a differentiable  Nash-Moser iteration scheme, we devote section \ref{sec:6} to solve the $(P)$-equation  under the ``first order Melnikov'' non-resonance conditions.
  In subsection \ref{sec:6.0}, we give the properties $(P1)$-$(P5)$. The inversion of the linearized operators is the core of  the differentiable  Nash-Moser iteration.
 The aim of subsection \ref{sec:6.1} is to verify inversion of the linearized operators (see $(P5)$). In subsection \ref{sec:6.2},
 we give the inductive lemma (see Lemma \ref{lemma4.1}).
 At the end of the construction, we obtain a large measure Cantor-like set $B_\gamma$ in subsection \ref{sec:6.3}.
  Finally, in section \ref{sec:8}, we list the  the proof of some related results for the sake of completeness.

\section{Main results}\label{sec:2}
In this section, we decompose equation \eqref{E1.2} into the bifurcation equation $(Q)$ and the range equation $(P)$ by a Lyapunov-Schmidt reduction.
\subsection{Notations}
Rescaling the  time  $t\rightarrow\frac{t}{\omega}$,  we consider  the existence of $2\pi$-periodic solutions in time of
\begin{align}\label{E1.2}
\omega^2\rho(x)u_{tt}-(p(x)u_{x})_{x}+m(x)u=\epsilon f(t,x,u)
\end{align}
with the corresponding boundary condition
\begin{align*}
\alpha_1u(t,0)-\beta_1u_{x}(t,0)=0,\quad\alpha_2u(t,\pi)+\beta_2u_{x}(t,\pi)=0,
\end{align*}
where $\epsilon\in[0,\epsilon_0]$ with  given positive  constant $\epsilon_0$. The $2\pi$-periodic forcing term $f$ is $H^1$ in $x$ and $C^k$ in $(t,u)$ for some $k\in\mathbf{N}$ large enough, more precisely $f\in\mathcal{C}_k$, where
\begin{align*}
\mathcal{C}_k:=\left\{f\in C(\mathbf{T}\times [0,\pi]\times\mathbf{R};\mathbf{R})~~:~~(t,u)\mapsto f(t,\cdot,u)\in C^k(\mathbf{T}\times\mathbf{R};H^1(0,\pi))\right\}
\end{align*}
with $\mathbf{T}:=\mathbf{R}/(2\pi\mathbf{Z})$. It follows from  the  continuously embedding  of  ${H}^1(0,\pi)$ into   $C([0,\pi];\mathbf{R})$ and the definition of $\mathcal{C}_k$    that $\partial_{t}^{i}\partial_{u}^{j}f\in C(\mathbf{T}\times[0,\pi]\times\mathbf{R};\mathbf{R}),~\forall ~0\leq i,j\leq k$.
 In addition, if $f(t,x,u)=\sum_{l\in\mathbf{Z}}f_{l}(x,u)e^{{\rm i}lt}$, then $u\mapsto f_{l}(\cdot,u)\in C^{k}(\mathbf{R};H^{1}(0,\pi))$ with $f_{-l}=f_{l}^*$.
\begin{rema}
It is obvious that
if $f(t,x,0)\neq0$, then $u=0$ is not the solution of equation \eqref{E1.2}.
\end{rema}
Denote by $H^{s}$ with $s>\frac{1}{2}$ the following Sobolev space
\begin{align*}
H^{s}:=\bigg\{u\mid~&u(t,x)=\sum\limits_{l\in\mathbf{Z}}u_{l}(x)e^{{\rm i}lt},\quad u_l\in \mathcal{H}^1_g,\quad u_{-l}=u^*_{l},\\
&\|u\|^{2}_{s}:=\sum\limits_{l\in\mathbf{Z}}\|u_l\|^2_{H^1}(1+ l^{2s})<+\infty \bigg\},
\end{align*}
where
\begin{align*}
\mathcal{H}^1_g:=\mathcal{H}^1_g((0,\pi);\mathbf{C}):=\bigg\{u\mid~&u\in H^1((0,\pi);\mathbf{C}):~\alpha_1u(t,0)-\beta_1u_{x}(t,0)=0,\\
&\alpha_2u(t,\pi)+\beta_2u_{x}(t,\pi)=0~\text{with}~\alpha^2_i+\beta^2_i\neq0,i=1,2\bigg\}.
\end{align*}
Our aim is to look for solutions defined on $\mathbf{T}\times[0,\pi]$ of equation \eqref{E1.2} in $H^s$.
\begin{rema}\label{remark1}
Let $C(s)$ denote a constant  depending on $s$. The Sobolev space ${H}^s$ with $s>\frac{1}{2}$ has the following properties:
\begin{align*}
&(\mathrm{i})~\|uv\|_{{s}}\leq C(s)\|u\|_{s}\|v\|_{s}, \quad\forall u,v\in{H}^s;\\
&(\mathrm{ii})~\|u\|_{L^{\infty}(\mathbf{T};{H}^1(0,\pi))}\leq C(s)\|u\|_{s},\quad\forall u\in H^s.
\end{align*}
\end{rema}
\begin{proof}
The proof  is postponed to  Appendix \ref{prm1}.
\end{proof}

\subsection{The Lyapunov-Schmidt reduction}\label{sec:2.2}
For any $u\in H^s$, $u$ can be  written as the sum of $u_0(x)+\bar{u}(t,x)$, where $\bar{u}(t,x)=\sum_{l\neq0}u_{l}(x)e^{\mathrm{i}lt}$.
Then we perform the Lyapunov-Schmidt reduction with respect to the following decomposition
\begin{equation*}
H^s=(V\cap H^s)\oplus (W\cap H^s)=V\oplus (W\cap H^s),
\end{equation*}
where
\begin{equation*}
V:=\mathcal{H}^1_g,\quad W:=\left\{ w=\sum\limits_{l\neq 0}w_{l}(x)e^{\mathrm{i}lt}\in H^0\right\}.
\end{equation*}
Denote  by $\Pi_{V}$, $\Pi_{W}$  the projectors onto $V$ and $W$ respectively. Letting  $u=v+w$ with $v\in V$, $w\in W$,  equation \eqref{E1.2} is equivalent to the bifurcation equation $(Q)$ and the range equation $(P)$:
\begin{align}\label{E1.4}
\begin{cases}
-(pv')'+m(x)v=\epsilon\Pi_{V}F(v+w)\quad &(Q),\\
L_{\omega}w=\epsilon\Pi_{W}F(v+w)\quad &(P),
\end{cases}
\end{align}
where
\begin{equation*}
L_{\omega}w:=\omega^2\rho(x)w_{tt}-(p(x)w_{x})_{x}+m(x)w,\quad F:u\rightarrow f(t,x,u).
\end{equation*}
In the same way, the nonlinearity $f$ can be written into
\begin{equation*}
f(t,x,u)=f_{0}(x,u)+\bar{f}(t,x,u),
\end{equation*}
where $\bar{f}(t,x,u)=\sum_{l\neq0}f_{l}(x,u)e^{{\rm i}lt}$. This indicates that  for $w=0$
 \begin{equation*}
\Pi_{V}F(v)=\Pi_{V}f(t,x,v)=\Pi_{V}f_{0}(x,v(x))+\Pi_{V}\bar{f}(t,x,v(x))=f_{0}(x,v(x)).
\end{equation*}
 Then the $(Q)$-equation is the time-independent equation
\begin{equation}\label{E1.5}
-(pv')'+m(x)v=\epsilon f_{0}(x,v)\quad \text{as}~w\rightarrow 0.
\end{equation}
We call \eqref{E1.5} the infinite-dimensional ``zeroth-order bifurcation equation''. This is a second order ODE with the corresponding  general boundary conditions.
The following hypothesis is required to make.
\begin{assu}\label{hy1}
The following system
\begin{align}\label{E1.6}
\begin{cases}
-(p(x)v'(x))'+m(x)v(x)=\epsilon f_{0}(x,v(x)),\\
\alpha_1v(0)-\beta_1v'(0)=0,\\
\alpha_2v(\pi)+\beta_2v'(\pi)=0
\end{cases}
\end{align}
admits a nondegenerate solution $\hat{{v}}\in \mathcal{H}^1_g$, i.e. the linearized equation
\begin{equation}\label{E1.7}
-(ph')'+mh=\epsilon f'_0(\hat{{v}})h
\end{equation}
possesses only the trivial solution $h=0$ in $\mathcal{H}^1_g$.
\end{assu}
Let us explain the rationality of Hypothesis \ref{hy1}. The linearized equation \eqref{E1.7} possesses only the trivial solution $h=0$ for $\epsilon=0$.
Thus the trivial solution $\hat{v}$ of \eqref{E1.6} with $\epsilon=0$ is nondegenerate. Due to the implicit function theorem, Hypothesis \ref{hy1} is satisfied for $\epsilon$ small enough.
This  fact implies that there exists a constant $\epsilon_0>0$ small enough, such that, for all $\epsilon\in[0,\epsilon_0]$, Hypothesis \ref{hy1} holds.
\begin{rema}
Under Hypothesis \ref{hy1}, the $(Q)$-equation may be solved by the classical implicit function theorem.
This idea is based on the technique of Craig and Wayne in \cite{craig1993newton}.
 By means of assuming the existence of a nondegenerate solution of the ``zeroth-order bifurcation equation'',
 the finite dimensional bifurcation equation can be solved. The main reason that we apply the technique is the presence of the Cantor set of “good” parameters caused by solving the range equation $(P)$ via a Nash-Moser iteration. It's very difficult to guarantee that the critical points lie in the Cantor set of “good” parameters if we use variational methods in the case of PDEs.
\end{rema}
Let us state our main theorem as follows.
\begin{theo}\label{Th1}
Assume  that Hypotheses \ref{hy1} and \ref{hy2} (see \eqref{E3.2}) hold for $\hat{\epsilon}\in[0,{\epsilon}_0]$. Set $m(x)\in H^1(0,\pi)$, $p(x),\rho(x)\in H^3(0,\pi)\hookrightarrow C^2[0,\pi]$
with $p(x)>0,\rho(x)>0$, $f\in \mathcal{C}_k$, and fix $\tau\in(1,2),\gamma\in(0,1)$.
If $\frac{\epsilon}{\gamma^5\omega}$ small enough,
 then there exist a constant $K>0$ depending on $\rho, p, m, f, \epsilon_0, \hat{v}, \gamma,\tau, s,\beta, \gamma_0$, a neighborhood $[\epsilon_1,\epsilon_2]$ of $\hat{\epsilon}$, $0<r\leq1$ and a $C^2$ map $v(\epsilon,w)$ defined on $[\epsilon_1,\epsilon_2]\times\left\{w\in W\cap H^s:~\|w\|_{s}\leq r\right\}$, with values in $H^1$, a map
$
\tilde{w}\in C^1(A_0;W\cap H^s)
$
with
\begin{equation*}
\|\tilde{w}\|_{s}\leq\frac{K\epsilon}{\gamma\omega},
\quad\|\partial_{\omega}\tilde{w}\|_{s}\leq\frac{K\epsilon}{\gamma^5\omega},
\quad\|\partial_{\epsilon}\tilde{w}\|_{s}\leq\frac{K}{\gamma^5\omega},
\end{equation*}
and a Cantor-like set $B_{\gamma}\subseteq A_0$ of positive measure satisfying
\begin{equation*}
|B_{\gamma}\cap\Omega|\geq(1-K\gamma)|\Omega|,
\end{equation*}
 such that, for all $(\epsilon,\omega)\in B_\gamma$,
\begin{equation*}
\tilde{u}:=v(\epsilon,\tilde{w}(\epsilon,\omega))+\tilde{w}(\epsilon,\omega)\in \mathcal{H}^1_g\oplus(W\cap H^s)
\end{equation*}
is a solution of equation \eqref{E1.4}, where $\Omega:=(\epsilon',\epsilon'')\times(\omega',\omega'')$ stands for a rectangle contained in $(\epsilon_1,\epsilon_2)\times(2\omega_0,+\infty)$, $A_0$ and $B_\gamma$ are defined by \eqref{E4.30} and \eqref{E4.31} respectively.
\end{theo}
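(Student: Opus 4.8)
The plan is to solve the coupled system $(Q)$--$(P)$ of \eqref{E1.4} by first eliminating the finite-dimensional unknown $v$ with the classical implicit function theorem, and then solving the range equation $(P)$ for $w$ by a differentiable Nash--Moser iteration. To solve the $(Q)$-equation, for $(\epsilon,\omega)$ fixed and $w\in W\cap H^s$ with $\|w\|_s$ small, one considers the $C^2$ map $v\mapsto-(pv')'+m(x)v-\epsilon\Pi_V F(v+w)$ from $\mathcal{H}^1_g$ into its dual; its differential at the nondegenerate solution $\hat v$ supplied by Hypothesis \ref{hy1} (at $w=0$, $\epsilon=\hat\epsilon$) is $h\mapsto-(ph')'+mh-\epsilon f_0'(\hat v)h$, which is invertible on $\mathcal{H}^1_g$ by that hypothesis. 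The implicit function theorem then yields a neighborhood $[\epsilon_1,\epsilon_2]$ of $\hat\epsilon$, a radius $0<r\le1$, and a $C^2$ map $(\epsilon,w)\mapsto v(\epsilon,w)\in H^1$ solving $(Q)$ with $v(\hat\epsilon,0)=\hat v$; differentiating the identity $(Q)$ and using the uniform invertibility of the linearization gives the asserted regularity and bounds on $v$.

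Substituting $v=v(\epsilon,w)$ into $(P)$ reduces the problem to solving, for $w\in W\cap H^s$ with $\|w\|_s\le r$,
\begin{equation*}
L_\omega w=\epsilon\,\Pi_W F\big(v(\epsilon,w)+w\big)=:\epsilon\,\Gamma(\epsilon,\omega,w).
\end{equation*}
After a Liouville-type transformation in $x$ (using $p,\rho\in H^3$), the operator $L_\omega$ on $W$ is diagonalized in the basis $\{\varphi_j(x)e^{\mathrm{i}lt}\}$ built from the Sturm--Liouville eigenfunctions of \eqref{E6.1}, with eigenvalues of the form $-\omega^2l^2+\lambda_j$; the asymptotics $\lambda_j=j^2+b+O(1/j)$ of Section \ref{sec:5}, valid for each type of boundary condition, together with $b\ne0$ (Hypothesis \ref{hy2}, see \eqref{E3.2}), keep these eigenvalues bounded away from zero except on a small set of parameters. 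One then runs a Newton iteration on the finite-dimensional truncations $W^{(n)}$ to time modes $|l|\le N_n$, $N_n=N_0^{\chi^n}$ for a fixed $\chi\in(1,2)$. The heart of the scheme is the inversion of the linearized operator
\begin{equation*}
\mathcal{L}_n(w):=L_\omega-\epsilon\,\Pi_{W^{(n)}}\big(D_w\Gamma(\epsilon,\omega,w)\big)\Pi_{W^{(n)}}
\end{equation*}
on the parameter set where the first-order Melnikov conditions $|{-}\omega^2l^2+\lambda_j|\ge\gamma\,l^{-\tau}$ hold for $0<|l|\le N_n$: one shows $\mathcal{L}_n(w)$ has a right inverse with the tame bound $\|\mathcal{L}_n(w)^{-1}h\|_s\le C\gamma^{-1}\big(\|h\|_{s+\sigma}+\|w\|_{s+\sigma}\|h\|_{s_0}\big)$, which is precisely property $(P5)$ of Subsection \ref{sec:6.1}. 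Combined with the tame and smoothing properties $(P1)$-$(P4)$ of $\Gamma$ and the usual smoothing operators, the inductive Lemma \ref{lemma4.1} produces a sequence $w_n$ with $\|w_{n+1}-w_n\|_s\lesssim\epsilon\gamma^{-1}N_n^{-\alpha}$ and persistence of the Melnikov conditions along the iteration, provided $\epsilon/(\gamma^5\omega)$ is small enough. Passing to the limit gives $\tilde w(\epsilon,\omega)\in W\cap H^s$ with $\|\tilde w\|_s\le K\epsilon/(\gamma\omega)$, and, after differentiating the iterates in $(\epsilon,\omega)$ (each differentiation of a Melnikov denominator costing powers of $\gamma$), the bounds on $\|\partial_\omega\tilde w\|_s$ and $\|\partial_\epsilon\tilde w\|_s$.

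Finally one proves the measure estimate. With $A_0$ as in \eqref{E4.30} and $B_\gamma\subseteq A_0$ as in \eqref{E4.31} — the set of parameters where all the Melnikov conditions (for the perturbed potentials produced by the iteration) hold — one estimates, for each fixed $l$ and each rectangle $\Omega\subset(\epsilon_1,\epsilon_2)\times(2\omega_0,+\infty)$, the measure of the set of $\omega$ with $|{-}\omega^2l^2+\lambda_j|<\gamma\,l^{-\tau}$ for some admissible $j$; since $b\ne0$ only finitely many $j$ are dangerous for each $l$, each contributes $O(\gamma\,l^{-\tau-1})$, and $\tau>1$ makes $\sum_l l^{-\tau-1}$ converge, so $|\Omega\setminus B_\gamma|\le K\gamma|\Omega|$, i.e. $|B_\gamma\cap\Omega|\ge(1-K\gamma)|\Omega|$. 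For $(\epsilon,\omega)\in B_\gamma$ the function $\tilde u=v(\epsilon,\tilde w(\epsilon,\omega))+\tilde w(\epsilon,\omega)$ solves both $(Q)$ and $(P)$, hence \eqref{E1.4}.

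The main obstacle is exactly the inversion step, i.e. establishing $(P5)$ with tame estimates uniform in the parameters. Here the small-divisor difficulty, the merely finite differentiability of $f$ (which forces Sobolev rather than analytic norms and careful bookkeeping of the derivative loss $\sigma$ through the quadratic iteration), and the general boundary conditions (which enter through the eigenvalue asymptotics of Section \ref{sec:5} and require a separate treatment for the Dirichlet, Neumann, mixed and genuinely general cases) all meet: one must treat $\epsilon\,\Pi_{W^{(n)}}(D_w\Gamma)\Pi_{W^{(n)}}$ as a perturbation of the diagonal operator $L_\omega$, control its off-diagonal decay, and verify that the Melnikov conditions survive the successive corrections.
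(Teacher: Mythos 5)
Your overall route is the same as the paper's (Lyapunov--Schmidt plus implicit function theorem for the $(Q)$-equation, a differentiable Nash--Moser scheme with truncations for the $(P)$-equation, then a measure estimate), but the proposal leaves a genuine gap exactly at the step you yourself single out as the heart of the matter, namely the inversion property $(P5)$. You impose only the first family of non-resonance conditions, $|\omega^2l^2-\lambda_j(\epsilon,w)|\geq\gamma l^{-\tau}$, and then assert a tame bound for $\mathcal{L}_n^{-1}$. This is not enough: after conjugating by $|\mathscr{L}_{1,\mathrm{D}}|^{1/2}$ the off-diagonal remainder involves the products $1/\sqrt{\varpi_l\varpi_k}$ of \emph{two} small divisors, and with only the first conditions these products can be as small as $\gamma^2\omega^2/(lk)^{\tau-1}$, which cannot be dominated by $|k-l|^{\sigma}$ and destroys the convolution-type estimate. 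The paper's proof needs the second family of conditions $|\omega l-j/c|>\gamma l^{-\tau}$ (see \eqref{E5.11} and their presence in \eqref{E4.3}, \eqref{E4.30}, \eqref{E4.31}) \emph{together with} the refined eigenvalue asymptotics of Section \ref{sec:5} in the quantitative form \eqref{E6.28} (including the $(j^*+1/2)/c$ variant for the mixed boundary conditions and the lower bound $j^*\geq\mathfrak{N}\omega l$) in order to prove the separation estimate \eqref{E5.16}, $1/\sqrt{\varpi_l\varpi_k}\leq|k-l|^{\sigma}/(L\gamma^{3}\omega)$; this is precisely where the $\gamma^{3}$ (and ultimately $\gamma^{5}$) losses in the statement of the theorem originate, and it also forces the restriction $\omega\geq\omega_0$ in \eqref{E6.39}. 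Without this second set of conditions and the two-divisor estimate, the claimed bound on $\mathcal{L}_n^{-1}$, and hence the whole iteration, does not go through. Note also that your asymptotics $\lambda_j=j^2+b+O(1/j)$ miss the factor $1/c^2$ and the Dirichlet--Neumann case $(j+1/2)^2/c^2$, which is exactly why the condition is stated with $j/c$.

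Two further points are glossed over in a way that matters for the statement being proved. First, the iterates $w_n$ are only defined on the shrinking sets $A_n$, so ``passing to the limit and differentiating the iterates'' does not produce a map $\tilde w\in C^1(A_0;W\cap H^s)$ on the whole open set $A_0$: the paper needs the Whitney-type extension of subsection \ref{sec:6.3} (cut-offs $\psi_n$ supported at distance $\gamma_0\gamma^4 N_n^{-\tau-1}$ from $\partial A_n$, which is responsible for the $\gamma^{5}$ in the derivative bounds), and then the compatibility Lemma \ref{lemma4.10}, $B_\gamma\subseteq\widetilde A_n$ for all $n$, proved via the Lipschitz dependence of $\lambda_j(\epsilon,w)$ on its arguments (Kato's perturbation theorem, Lemma \ref{Lemma6.6}) and the convergence rate \eqref{E4.21}; only then does the extended $\tilde w$ actually solve the equation on $B_\gamma$, which is defined through the limit function in \eqref{E4.31}. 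Second, in the measure estimate the justification ``$b\neq0$ implies only finitely many dangerous $j$ per $l$'' is not what is used: the number of dangerous $j$ for fixed $l$ grows like $l(\omega''-\omega')$, and what makes the argument work is the uniform gap $\delta_1>0$ between consecutive $\sqrt{\lambda_j}$ (from the asymptotic spacing $1/c$, see \eqref{E4.23}) combined with the bound $|\partial_\omega\sqrt{\tilde\lambda_j}|\lesssim\epsilon K(\gamma_0)/(\gamma^{5}\omega)$ coming from $\|\partial_\omega\tilde w\|_s$, so each resonant interval has length $O(\gamma l^{-\tau-1})$ and the sum $\sum_l l\cdot\gamma l^{-\tau-1}$ converges because $\tau>1$; in addition the resonant sets for $|\omega l-j/c|$ must be excised as well ($\mathfrak{R}^3$ in the paper). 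These are not cosmetic omissions: without them the set $B_\gamma$, its positive measure, and the $C^1$ bounds on $\tilde w$ claimed in the theorem are not established.
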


\subsection{Solution of the bifurcation equation}\label{sec:2.3}
In this subsection, under Hypothesis \ref{hy1}, we will solve the ($Q$)-equation relaying on the classical implicit function theorem.
\begin{lemm}\label{lemma3.1}
Fix $\hat{\epsilon}\in[0,\epsilon_0]$. There exist a neighborhood $[\epsilon_1,\epsilon_2]$ of $\hat{\epsilon}$, $0<r\leq1$ and a $C^2$ map $v(\epsilon,w)$ defined on $[\epsilon_1,\epsilon_2]\times\left\{w\in W\cap H^s:~\|w\|_{s}\leq r\right\}$, with values in $V$, such that $v(\epsilon,w)$ solves the ($Q$)-equation in \eqref{E1.4}.
\end{lemm}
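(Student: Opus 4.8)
The plan is to set up the $(Q)$-equation in \eqref{E1.4} as a zero-finding problem for a smooth map on a Banach space and invoke the implicit function theorem at the base point $(\hat\epsilon,0)$. Concretely, for $v\in V=\mathcal H^1_g$ and $w\in W\cap H^s$, define the operator
\begin{equation*}
\mathcal Q(\epsilon,v,w):=-(pv')'+m(x)v-\epsilon\,\Pi_V F(v+w),
\end{equation*}
viewed as a map into $(\mathcal H^1_g)^*$ (or, after inverting the invertible Sturm--Liouville part, as a map $V\to V$; either formulation works). First I would check the regularity: since $p\in H^3\hookrightarrow C^2$ and $m\in H^1$, the linear part $v\mapsto -(pv')'+mv$ is a bounded linear operator on the relevant spaces, and since $f\in\mathcal C_k$ with $k$ large, the composition $v\mapsto \Pi_V f_0(x,v(x))$ together with the $w$-dependence is $C^2$ in $(\epsilon,v,w)$ by the usual Nemytskii/composition estimates in $H^1(0,\pi)$ (using the algebra property and the embedding $H^1(0,\pi)\hookrightarrow C([0,\pi])$, cf. Remark \ref{remark1}). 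Hence $\mathcal Q$ is a $C^2$ map between Banach spaces.

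Next I would identify the base point. At $w=0$ the $(Q)$-equation reduces to the zeroth-order bifurcation equation \eqref{E1.5}, and Hypothesis \ref{hy1} guarantees a solution $\hat v\in\mathcal H^1_g$ of \eqref{E1.6} at $\epsilon=\hat\epsilon$; thus $\mathcal Q(\hat\epsilon,\hat v,0)=0$. The partial derivative $D_v\mathcal Q(\hat\epsilon,\hat v,0)$ is exactly the linearized operator
\begin{equation*}
h\mapsto -(ph')'+m h-\hat\epsilon\, f_0'(x,\hat v)h
\end{equation*}
appearing in \eqref{E1.7}, acting on $\mathcal H^1_g$. By Hypothesis \ref{hy1} this operator has trivial kernel; being a self-adjoint Fredholm operator of index zero (a compact perturbation of the invertible Sturm--Liouville operator $-(p\,\cdot')'+m$ with the boundary conditions defining $\mathcal H^1_g$), it is therefore boundedly invertible. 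Consequently the implicit function theorem applies: there are a neighborhood $[\epsilon_1,\epsilon_2]$ of $\hat\epsilon$, a radius $0<r\le 1$, and a $C^2$ map $(\epsilon,w)\mapsto v(\epsilon,w)\in V$ with $v(\hat\epsilon,0)=\hat v$ and $\mathcal Q(\epsilon,v(\epsilon,w),w)=0$ on $[\epsilon_1,\epsilon_2]\times\{\|w\|_s\le r\}$, which is precisely the assertion of the lemma. The $C^2$ regularity of $v(\epsilon,w)$ is inherited from the $C^2$ regularity of $\mathcal Q$, and shrinking $\epsilon_2-\epsilon_1$ and $r$ if necessary keeps $v(\epsilon,w)$ in a fixed small ball around $\hat v$ where $D_v\mathcal Q$ stays invertible.

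The only genuinely delicate point is the invertibility of $D_v\mathcal Q(\hat\epsilon,\hat v,0)$ on $\mathcal H^1_g$, i.e. upgrading "trivial kernel" (the hypothesis) to "bounded inverse." The resolution is the Fredholm alternative for the regular Sturm--Liouville problem: the operator is a relatively compact perturbation of an invertible operator, hence Fredholm of index $0$, so injectivity forces surjectivity with bounded inverse; self-adjointness under the separated boundary conditions $\alpha_1v(0)-\beta_1v'(0)=0$, $\alpha_2v(\pi)+\beta_2v'(\pi)=0$ makes this transparent. A secondary technical point is verifying that the Nemytskii operator induced by $f_0\in C^k(\mathbf T\times\mathbf R;H^1(0,\pi))$ is $C^2$ from $V$ (and more generally from $V\times(W\cap H^s)$) into $H^1(0,\pi)$ (resp. into $H^s$); this follows from $H^1(0,\pi)\hookrightarrow C([0,\pi])$, the algebra property of $H^1$, and $k\ge 2$, but should be stated carefully since the whole scheme rests on it. Everything else is a routine application of the implicit function theorem.
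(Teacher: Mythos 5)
Your proposal follows essentially the same route as the paper: identify $(\hat\epsilon,\hat v,0)$ as the base point, use the $C^2$ regularity of the composition operator (the paper's Lemma \ref{lemma2.4}) to get smoothness of the map $(\epsilon,w,v)\mapsto -(pv')'+mv-\epsilon\Pi_V F(v+w)$, and apply the implicit function theorem with invertibility of the linearized Sturm--Liouville operator supplied by Hypothesis \ref{hy1}. The only difference is that you spell out the Fredholm-alternative step upgrading the trivial kernel to bounded invertibility, which the paper simply asserts; this is a correct and welcome addition rather than a deviation.
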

\begin{proof}
According to Hypothesis \ref{hy1}, the linear operator
\begin{equation*}
h\mapsto -(ph')'+mh-\hat{\epsilon}\mathrm{D}_v\Pi_{V}F(\hat{v})[h]=-(ph')'+mh-\hat{\epsilon}F'_0(\hat{v})h
\end{equation*}
is invertible from $V$ to $V$. In addition, Lemma \ref{lemma2.4} implies that the following map
\begin{equation*}
(\epsilon,w,v)\mapsto -(pv')'+mh-\epsilon \Pi_{V}F(v+w)
\end{equation*}
belongs to  $C^2([\epsilon_1,\epsilon_2]\times (W\cap H^s)\times V;V)$. Therefore, by the implicit function theorem, there is a $C^2$-path
\begin{equation*}
(\epsilon,w)\mapsto v(\epsilon,w)
\end{equation*}
such that $v(\epsilon,w)$ is a solution of the ($Q$)-equation in \eqref{E1.4} with $v(\hat{\epsilon},0)=\hat{v}$.
\end{proof}
\begin{rema}\label{Remark2}
Lemma \ref{lemma3.1} indicates that  $v, {\rm D}_{w}v, \partial_{\epsilon}v, \partial_{\epsilon}\mathrm{D}_wv, \partial^2_{\epsilon}v, {\rm D}^2_{w}v$ belong to $\mathcal{H}^1_g$,
where ${\rm D}_{w} $ denotes the Fr\'{e}chet derivative with respect to $w$.
\end{rema}

\section{The general  boundary value}\label{sec:5}
Before solving  the $(P)$-equation, we first propose  the asymptotic formulae of the eigenvalues to  the following Sturm-Liouville problem
\begin{align}\label{E6.1}
\begin{cases}
-(py')'+my-\epsilon\Pi_{V}f'(v(\epsilon,w)+w)y=\lambda\rho y,\\
\alpha_1y(0)-\beta_1y_{x}(0)=0,\\
\alpha_2y(\pi)+\beta_2y_{x}(\pi)=0.
\end{cases}
\end{align}
Denote by ${\lambda}_j(\epsilon,w),j\geq0$ the eigenvalues of \eqref{E6.1}.
\begin{rema}
Based on the invertibility of the linearized operators $\mathcal{L}_{N}(\epsilon,\omega,w)$ (see \eqref{E4.32}) on the  $(P)$-equation, we have to consider the Sturm-Liouville problem \eqref{E6.1}.
The related application can be seen in subsection \ref{sec:6.1}.
\end{rema}

\subsection{The Liouville substitution}
Let
\begin{equation}\label{E6.36}
d(x):=m(x)-\epsilon\Pi_{V}f'(t,x,v(\epsilon,w(t,x))+w(t,x))\in L^2(0,\pi).
\end{equation}
Make the Liouville substitution
\begin{equation}\label{E6.26}
x=\psi(\xi)\Leftrightarrow \xi=g(x)\quad\text{with}~ g(x):=\frac{1}{c}\int^{x}_{0}\left(\frac{\rho(s)}{p(s)}\right)^{\frac{1}{2}}\mathrm{d}s,
\end{equation}
where
\begin{align}\label{E6.23}
c:=\frac{1}{\pi}\int_{0}^{\pi}\left({\rho/p}\right)^{\frac{1}{2}}~\mathrm{d}x, \quad \xi\in[0,\pi],
\quad  g(0)=0, \quad g(\pi)=\pi,
\end{align}
$\lambda$ and $y(\psi(\xi))$ satisfy
\begin{align}\label{E6.2}
\begin{cases}
-y_{\xi\xi}-c\frac{(\sqrt{p\rho})'}{\rho}y_{\xi}+c^2\frac{d}{\rho}y=c^2\lambda y,\\
\alpha_1y(0)-\frac{\beta_1}{c}\sqrt{\rho(0)/p(0)}y_{\xi}(0)=0,\\
\alpha_2y(\pi)+\frac{\beta_2}{c}\sqrt{\rho(\pi)/p(\pi)}y_{\xi}(\pi)=0.
\end{cases}
\end{align}
Moreover we make the Liouville change
\begin{equation}\label{E6.35}
y=z/\mathfrak{s},
\end{equation}
where $\mathfrak{s}(x)=(p(x)\rho(x))^{\frac{1}{4}}$. Note that  $\mathrm{d}x=c(\frac{p(x)}{\rho(x)})^{\frac{1}{2}}~\mathrm{d}\xi$, which leads to
\begin{equation*}
y_{\xi}=\frac{z_{\xi} \mathfrak{s}-z\mathfrak{s}_{\xi}}{\mathfrak{s}^2}=\frac{z_{\xi} \mathfrak{s}-cz\mathfrak{s}_{x}(\frac{p}{\rho})^{\frac{1}{2}}}{\mathfrak{s}^2}.
\end{equation*}
Therefore the system \eqref{E6.2} can be reduced into
\begin{align}\label{E6.42}
\begin{cases}
-z_{\xi\xi}(\xi)+\varrho(\xi)z(\xi)=\mu z(\xi),\\
a_1z(0)-b_1z_{\xi}(0)=0,\\
a_2z(\pi)+b_2z_{\xi}(\pi)=0,
\end{cases}
\end{align}
where
\begin{align}
&\varrho(\xi)=q(\xi)+\vartheta(\xi),\label{E7.1}\\
&q(\xi)=c^2Q(\psi(\xi))\quad\mathrm{with}\quad Q(x)=\frac{p(x)\mathfrak{s}_{xx}(x)}{\rho(x)\mathfrak{s}(x)}+\frac{1}{2}\left(\frac{p(x)}{\rho(x)}\right)_{x}\frac{\mathfrak{s}_{x}(x)}{\mathfrak{s}(x)},\label{E7.2}\\
&\vartheta(\xi)=c^2\frac{d(\psi(\xi))}{\rho(\psi(\xi))},\quad\mu=c^2\lambda,\label{E6.6}\\
&a_1=\alpha_1+\frac{\beta_1}{4}\frac{(p\rho)_{x}(0)}{(p\rho)(0)},~b_1=\frac{\beta_1}{c}(\rho(0)/p(0))^{\frac{1}{2}},\quad a_2=\alpha_2-\frac{\beta_2}{4}\frac{(p\rho)_{x}(\pi)}{(p\rho)(\pi)},~ b_2=\frac{\beta_2}{c}(\rho(\pi)/p(\pi))^{\frac{1}{2}}.\label{E7.3}
\end{align}
We have to make an additional hypothesis:
\begin{assu}\label{hy2}
\begin{align}\label{E3.2}
\mathrm{(i)}~\varrho(\xi)>0,\quad\forall \xi\in[0,\pi];\quad\mathrm{(ii)}~a_i\geq0,b_i\geq0\quad\text{with}~a^2_i+b^2_i\geq0,\quad i=1,2.
\end{align}
\end{assu}
Now consider the following Sturm-Liouville problems
\begin{align}
&\varphi''_{n}(\xi)+(\mu_n-\varrho(\xi))\varphi_n(\xi)=0,\label{E6.7}\\
&a_1\varphi_n(0)-b_1\varphi'_n(0)=0,\quad
a_2\varphi_n(\pi)+b_2\varphi'_n(\pi)=0, \label{E6.8}
\end{align}
where $a_i\geq0,b_i\geq0,i=1,2$ are defined by \eqref{E7.3}, $\varphi'_{n}(\xi)=\frac{\mathrm{d}}{\mathrm{d}\xi}\varphi_n(\xi)$, $n\in\mathbf{N}$. By \eqref{E6.42}-\eqref{E7.1}, it shows that $\mu_n$ depends on $\vartheta(\cdot)$. However, for brevity, we do not write $\vartheta(\cdot)$.
The eigenvalues of the Sturm-Liouville problem \eqref{E6.7}-\eqref{E6.8} have the following properties.
\begin{theo}{\cite[Theorem 2.1]{coddington1972theory}}
The eigenvalues of \eqref{E6.7}-\eqref{E6.8} form an infinite number sequence with $\mu_0<\mu_1<\cdots< \mu_n < \cdots$, and $\mu_n\rightarrow+\infty$ as $n\rightarrow+\infty$.
 In addition, the eigenfunctions $\varphi_n,n\in\mathbf{N}$ with respect to $\mu_n $ have exactly $n $ zeros on $(0,\pi)$.
\end{theo}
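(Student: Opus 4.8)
The plan is to prove this classical statement by the Pr\"ufer transformation, which reduces the oscillation problem \eqref{E6.7}--\eqref{E6.8} to a scalar first-order ODE whose solution encodes at once the location of the eigenvalues and the zero count of the eigenfunctions. For a nontrivial solution $\varphi$ of $\varphi''+(\mu-\varrho(\xi))\varphi=0$ write $\varphi(\xi)=r(\xi)\sin\theta(\xi)$ and $\varphi'(\xi)=r(\xi)\cos\theta(\xi)$ with $r>0$; a direct computation gives
\begin{equation*}
\theta'(\xi)=\cos^2\theta(\xi)+\big(\mu-\varrho(\xi)\big)\sin^2\theta(\xi),
\end{equation*}
while $r$ is recovered by a quadrature and never vanishes, so the zeros of $\varphi$ are exactly the points where $\theta\in\pi\mathbf{Z}$. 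Since $a_1,b_1\geq0$ with $a_1^2+b_1^2>0$, the boundary condition at $\xi=0$ selects a unique initial value $\theta(0)=\theta_0\in[0,\pi/2]$ (with $\theta_0=0$ when $b_1=0$ and $\theta_0=\pi/2$ when $a_1=0$); let $\theta(\xi;\mu)$ denote the corresponding solution, which is $C^1$ jointly in $(\xi,\mu)$. In the same way, the condition at $\xi=\pi$ holds if and only if $\theta(\pi;\mu)\equiv\theta_1\pmod\pi$ for the fixed angle $\theta_1\in[\pi/2,\pi]$ determined by $a_2,b_2$. Thus $\mu$ is an eigenvalue of \eqref{E6.7}--\eqref{E6.8} precisely when $\theta(\pi;\mu)=\theta_1+n\pi$ for some integer $n\geq0$, and then $\varphi_n:=r\sin\theta(\cdot;\mu_n)$ is the associated eigenfunction.

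The argument then rests on two monotonicity facts. First, strict monotonicity in the parameter: differentiating the phase equation in $\mu$ shows that $\partial_\mu\theta$ solves a linear first-order ODE with zero data at $\xi=0$ and nonnegative, not identically zero forcing $\sin^2\theta(\cdot;\mu)$, so the variation-of-constants formula gives $\partial_\mu\theta(\xi;\mu)>0$ for all $\xi>0$; hence $\mu\mapsto\theta(\pi;\mu)$ is continuous and strictly increasing. Second, monotonicity through the multiples of $\pi$ in the space variable: at any point $\xi_0$ where $\theta(\xi_0)\in\pi\mathbf{Z}$ one has $\theta'(\xi_0)=\cos^2\theta(\xi_0)=1>0$, so $\theta$ crosses each level $k\pi$ strictly upward and can never return to it. Combining this with $\theta(0;\mu)=\theta_0<\pi$, one gets that for fixed $\mu$ the number of zeros of $\varphi=r\sin\theta$ in $(0,\pi)$ equals $\#\{k\geq1:k\pi<\theta(\pi;\mu)\}$; the sign hypotheses $a_i,b_i\geq0$ (which force $\theta_0\in[0,\pi/2]$ and $\theta_1\in[\pi/2,\pi]$) are exactly what make this count unaffected by a possible zero at an endpoint and equal to $n$ whenever $\theta(\pi;\mu)=\theta_1+n\pi$, since then $k\pi<\theta_1+n\pi$ holds iff $k\in\{1,\dots,n\}$.

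To assemble the full spectrum I would control $\theta(\pi;\mu)$ as $\mu\to\pm\infty$. As $\mu\to+\infty$, comparing the phase equation with a lower bound for $\theta'$ away from the levels $k\pi$, and noting that $\theta$ lingers only a bounded time near each such level, gives $\theta(\pi;\mu)\to+\infty$. As $\mu\to-\infty$, a standard comparison (using $\varrho\geq0$ from Hypothesis \ref{hy2} and the barrier $\theta'=1>0$ at $\theta=0$) pins $\theta(\pi;\mu)$ down to $O(|\mu|^{-1/2})$, so $\theta(\pi;\mu)\to0<\theta_1$. Since $\mu\mapsto\theta(\pi;\mu)$ is continuous, strictly increasing and has range $(0,+\infty)$, the intermediate value theorem produces, for every integer $n\geq0$, a unique $\mu_n$ with $\theta(\pi;\mu_n)=\theta_1+n\pi$; these exhaust the eigenvalues, satisfy $\mu_0<\mu_1<\cdots\to+\infty$, and by the zero count above $\varphi_n$ has exactly $n$ zeros in $(0,\pi)$. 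The main technical obstacle is the endpoint bookkeeping — tracking how $\theta_0$ and $\theta_1$ degenerate in the Dirichlet ($b_i=0$) and Neumann ($a_i=0$) cases and checking that a zero at $\xi=0$ or $\xi=\pi$ does not corrupt the interior count — together with the asymptotic estimate for $\theta(\pi;\mu)$ as $\mu\to-\infty$; the remainder is the standard Sturm oscillation machinery. The discreteness and ordering alone could instead be read off from the spectral theorem applied to the compact, self-adjoint (and, by Hypothesis \ref{hy2}, positive) operator $(-d^2/d\xi^2+\varrho)^{-1}$, but that route does not yield the zero-counting assertion.
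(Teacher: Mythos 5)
Your proposal is correct: the paper gives no proof of this statement at all — it simply invokes \cite[Theorem 2.1]{coddington1972theory} — and your Pr\"ufer-transformation oscillation argument (strict monotonicity of the phase $\theta(\pi;\mu)$ in $\mu$, upward crossings of the levels $k\pi$, the endpoint angles $\theta_0\in[0,\pi/2]$, $\theta_1\in[\pi/2,\pi]$ forced by $a_i,b_i\geq0$, and the limits of $\theta(\pi;\mu)$ as $\mu\to\pm\infty$) is exactly the classical proof given in that reference. It is also consistent in spirit with the paper itself, which uses the same Pr\"ufer substitution in Lemmas \ref{lemma6.3}--\ref{lemma6.8} to extract the eigenvalue asymptotics.
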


\begin{lemm}\label{lemma6.2}
If Hypothesis \ref{hy2} holds, then $\mu_n\geq\varrho_0$ for $n\in\mathbf{N}$, where
\begin{equation}\label{E6.16}
\varrho_0:=\inf_{\xi\in[0,\pi]}\varrho(\xi)>0.
\end{equation}
In particular, $\mu_n>\varrho_0$  if  $a^2_1+a^2_2>0$.
\end{lemm}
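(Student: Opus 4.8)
The plan is to run the classical energy argument (a Rayleigh quotient estimate) for the Sturm--Liouville problem \eqref{E6.7}--\eqref{E6.8}. Since all coefficients are real and, by the cited theorem, the eigenvalues $\mu_n$ are simple, each eigenfunction $\varphi_n$ may be taken real-valued; it is not identically zero by definition. First I would multiply \eqref{E6.7} by $\varphi_n$, integrate over $(0,\pi)$ and integrate by parts once to obtain the identity
\begin{equation*}
\mu_n\int_0^\pi\varphi_n^2\,\mathrm{d}\xi=\int_0^\pi(\varphi_n')^2\,\mathrm{d}\xi+\int_0^\pi\varrho(\xi)\varphi_n^2\,\mathrm{d}\xi-\big[\varphi_n'(\xi)\varphi_n(\xi)\big]_0^\pi .
\end{equation*}
Everything then comes down to showing that the boundary contribution $-\big[\varphi_n'\varphi_n\big]_0^\pi=\varphi_n'(0)\varphi_n(0)-\varphi_n'(\pi)\varphi_n(\pi)$ is nonnegative under Hypothesis \ref{hy2}(ii).

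For the endpoint $\xi=0$, the boundary condition in \eqref{E6.8} reads $a_1\varphi_n(0)=b_1\varphi_n'(0)$. If $b_1>0$ this gives $\varphi_n'(0)\varphi_n(0)=(a_1/b_1)\varphi_n(0)^2\geq0$ since $a_1\geq0$; if $b_1=0$ then $a_1\neq0$ (the two constants cannot vanish simultaneously), so $\varphi_n(0)=0$ and the product is $0$. Symmetrically, at $\xi=\pi$ the condition $a_2\varphi_n(\pi)=-b_2\varphi_n'(\pi)$ yields $-\varphi_n'(\pi)\varphi_n(\pi)=(a_2/b_2)\varphi_n(\pi)^2\geq0$ when $b_2>0$, and $\varphi_n(\pi)=0$ when $b_2=0$. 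Hence $-\big[\varphi_n'\varphi_n\big]_0^\pi\geq0$, and the identity gives
\begin{equation*}
\mu_n\int_0^\pi\varphi_n^2\,\mathrm{d}\xi\geq\int_0^\pi\varrho(\xi)\varphi_n^2\,\mathrm{d}\xi\geq\varrho_0\int_0^\pi\varphi_n^2\,\mathrm{d}\xi ;
\end{equation*}
dividing by $\int_0^\pi\varphi_n^2\,\mathrm{d}\xi>0$ proves $\mu_n\geq\varrho_0$.

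For the strict inequality when $a_1^2+a_2^2>0$, I would argue by contradiction: if $\mu_n=\varrho_0$, rewrite the identity as
\begin{equation*}
(\mu_n-\varrho_0)\int_0^\pi\varphi_n^2\,\mathrm{d}\xi=\int_0^\pi(\varphi_n')^2\,\mathrm{d}\xi+\int_0^\pi\big(\varrho(\xi)-\varrho_0\big)\varphi_n^2\,\mathrm{d}\xi+\Big(-\big[\varphi_n'\varphi_n\big]_0^\pi\Big),
\end{equation*}
whose left-hand side is $0$ while the right-hand side is a sum of three nonnegative terms; each must therefore vanish. In particular $\int_0^\pi(\varphi_n')^2\,\mathrm{d}\xi=0$, so $\varphi_n\equiv c$ is constant, and $c\neq0$ since $\varphi_n$ is a nontrivial eigenfunction. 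But if $a_1\neq0$, the condition at $\xi=0$ forces $a_1c=0$, hence $c=0$; if instead $a_2\neq0$, the condition at $\xi=\pi$ does the same. Either way we contradict $\varphi_n\not\equiv0$, so $\mu_n>\varrho_0$.

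All steps are elementary; the only place needing a little attention is the bookkeeping of the boundary terms in the Dirichlet-type cases $b_i=0$, where one must use that $a_i$ and $b_i$ do not vanish simultaneously (which follows from $\alpha_i^2+\beta_i^2\neq0$ together with the definitions \eqref{E7.3}) in order to conclude that $\varphi_n$ vanishes at that endpoint. I do not foresee any genuine obstacle beyond this.
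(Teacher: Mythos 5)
Your proposal is correct and follows essentially the same route as the paper: multiply \eqref{E6.7} by $\varphi_n$, integrate by parts, show the boundary contribution is nonnegative under Hypothesis \ref{hy2}, and derive the strict inequality by contradiction from $\int_0^\pi(\varphi_n')^2\,\mathrm{d}\xi=0$. The only cosmetic difference is that the paper treats the boundary terms via the single identity $\varphi_n(0)\varphi_n'(0)=\frac{1}{a_1+b_1}\bigl(a_1\varphi_n^2(0)+b_1(\varphi_n'(0))^2\bigr)$ (and its analogue at $\pi$), whereas you split into the cases $b_i>0$ and $b_i=0$; both arguments are equivalent.
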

\begin{proof}
Multiplying both sides of \eqref{E6.7} by $\varphi_n(\xi)$ and   integrating over $[0,\pi]$, it  yields that
\begin{align}\label{E6.9}
\mu_n\int^\pi_{0}\varphi^2_n(\xi)\mathrm{d}\xi&=\int^\pi_0\varrho(\xi)\varphi^2_n(\xi)\mathrm{d}\xi-\int^\pi_0\varphi''_{n}(\xi)\varphi_{n}(\xi)\mathrm{d}\xi \nonumber\\
&=\int^\pi_0\varrho(\xi)\varphi^2_n(\xi)\mathrm{d}\xi+\int^\pi_0(\varphi'_{n}(\xi))^2\mathrm{d}\xi-
\left(\varphi_n(\pi)\varphi'_n(\pi)-\varphi_n(0)\varphi'_n(0)\right).
\end{align}
Multiplying the first term of \eqref{E6.8} by $\varphi'_n(0)-\varphi_n(0)$ gives
\begin{equation*}
\varphi_n(0)\varphi'_n(0)=\frac{1}{a_1+b_1}(a_1\varphi^2_n(0)+b_1(\varphi'_n(0))^2)\geq0.
\end{equation*}
On the other hand, we obtain
\begin{equation*}
\varphi_n(\pi)\varphi'_n(\pi)=-\frac{1}{a_2+b_2}(a_2\varphi^2_n(\pi)+b_2(\varphi'_n(\pi))^2)\leq0
\end{equation*}
via multiplying the second equality of \eqref{E6.8} by $\varphi'_n(\pi)+\varphi_n(\pi)$. Then
$\varphi_n(\pi)\varphi'_n(\pi)-\varphi_n(0)\varphi'_n(0)\leq0$.
The combination of above estimates establishes
\begin{equation*}
\mu_n\int^\pi_{0}\varphi^2_n(\xi)\mathrm{d}\xi\geq\int^\pi_0\varrho(\xi)\varphi^2_n(\xi)\mathrm{d}\xi
\geq\varrho_0\int^\pi_0\varphi^2_n(\xi)\mathrm{d}\xi,
\end{equation*}
which implies $\mu_n\geq\varrho_0$.

In what follows, we further prove that $\mu_n > \rho_0$ if  $a^2_1+a^2_2>0$. Supposed by contrary that $\mu_n=\varrho_0$, formula \eqref{E6.9} leads to
\begin{equation*}
\begin{aligned}
&\int^\pi_0(\varrho(\xi)-\varrho_0)\varphi^2_n(\xi)\mathrm{d}\xi+\int^\pi_0(\varphi'_{n}(\xi))^2\mathrm{d}\xi=
\varphi_n(\pi)\varphi'_n(\pi)-\varphi_n(0)\varphi'_n(0)\\
\Longrightarrow&\int^\pi_0(\varrho(\xi)-\varrho_0)\varphi^2_n(\xi)\mathrm{d}\xi+\int^\pi_0(\varphi'_{n}(\xi))^2\mathrm{d}\xi=0\\
\Longrightarrow&\int^\pi_0(\varphi'_{n}(\xi))^2\mathrm{d}\xi=0\\
\Longrightarrow&\varphi'_{n}(\xi)=0.
\end{aligned}
\end{equation*}
Since $\varphi_{n}(\xi)$ is the eigenfunction, it checks that $\varphi_{n}(\xi)=c\neq0$. Plugging it back into the boundary conditions \eqref{E6.8}, we derive $a_1=a_2=0$.
 This leads to a contradiction to $a^2_1+a^2_2>0$. Hence $\mu_n>\varrho_0$.
\end{proof}
Since the eigenvalues $\mu_n,n\in\mathbf{N}$ of \eqref{E6.7}-\eqref{E6.8} are different when the type of boundary conditions is different, we restrict our attention to the following four cases:

Case 1: $a_1=0$, $b_1>0$, $a_2=0$, $b_2>0$.

Case 2: $a_1>0$, $b_1=0$ $a_2=0$, $b_2>0$.

Case 3: $a_1=0$, $b_1>0$ $a_2>0$, $b_2=0$.

Case 4: $a_1>0$, $b_1>0$ $a_2>0$, $b_2>0$.

\begin{rema}
Case 1 is  called the Neumann boundary conditions,   Case 2 and 3  are called Dirichlet-Neumann boundary conditions, and  we call case 4 the general boundary conditions.
In fact, the result about the Dirichlet boundary condition with finite differentiable nonlinearities can also be obtained.
 However, the Dirichlet boundary condition with  analytical nonlinearities  have been investigated in
 \cite{baldi2008forced}. Thus, we only consider above four cases.
\end{rema}

\subsection{Neumann boundary value problem}\label{sec:4.1}

In Case 1, the Sturm-Liouville problem \eqref{E6.7}-\eqref{E6.8} may be written as
\begin{equation}\label{E6.10}
\left\{
\begin{aligned}
&\varphi''_{n}(\xi)+(\mu_n-\varrho(\xi))\varphi_n(\xi)=0,\\
&\varphi'_n(0)=\varphi'_n(\pi)=0.
\end{aligned}
\right.
\end{equation}
\begin{lemm}\label{lemma6.3}
Denote by $\mu_0<\mu_1<\cdots$ and $\varphi_0,\varphi_1,\cdots$ the eigenvalues and orthonormal eigenfunctions of the Sturm-Liouville problem (\ref{E6.10}) respectively. Then, for $n\in\mathbf{N}^{+}$, we have the following asymptotic formulae
\begin{equation}\label{E6.11}
\mu_n=n^2+c_0+O\left(\frac{1}{n^2}\right)
\end{equation}
with $\varrho_0\leq c_0\leq\varrho_1$, where $\varrho_0$ is defined in \eqref{E6.16} and
\begin{equation}\label{rh1}
\varrho_1:=\frac{2}{\pi}\int^{\pi}_0\varrho(\xi)~\mathrm{d}\xi.
\end{equation}
\end{lemm}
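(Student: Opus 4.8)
The plan is to reduce the Neumann Sturm--Liouville problem \eqref{E6.10} to a classical eigenvalue asymptotic via the standard Prüfer-type / integral-equation argument, exactly as in the homogeneous string literature (Barbu--Pavel, Ji--Li). First I would observe that by Lemma \ref{lemma6.2} the eigenvalues satisfy $\mu_n\geq\varrho_0>0$, so for $n$ large we may write $\mu_n=:\nu_n^2$ with $\nu_n>0$ and treat $\varrho$ as a bounded $L^2$ (indeed $L^\infty$, since $\varrho=q+\vartheta$ with $q\in C^0$ coming from $p,\rho\in H^3\hookrightarrow C^2$ and $\vartheta\in L^2$; one uses the $L^2$ bound in the integral remainder) perturbation of $-\partial_{\xi\xi}$. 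The eigenfunction $\varphi_n$ solving $\varphi_n''+(\mu_n-\varrho)\varphi_n=0$ with $\varphi_n'(0)=0$ satisfies the Volterra integral equation
\begin{equation*}
\varphi_n(\xi)=\varphi_n(0)\cos(\nu_n\xi)+\frac{1}{\nu_n}\int_0^\xi \sin\big(\nu_n(\xi-s)\big)\,\varrho(s)\varphi_n(s)\,\mathrm{d}s .
\end{equation*}
A Gronwall estimate on this equation gives $\varphi_n(\xi)=\varphi_n(0)\cos(\nu_n\xi)+O(1/\nu_n)$ uniformly on $[0,\pi]$, and differentiating, $\varphi_n'(\xi)=-\nu_n\varphi_n(0)\sin(\nu_n\xi)+O(1)$.

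Next I would impose the second boundary condition $\varphi_n'(\pi)=0$. Normalising $\varphi_n(0)=1$ (nonzero for $n$ large by the above expansion), this yields the characteristic equation
\begin{equation*}
-\nu_n\sin(\nu_n\pi)+\int_0^\pi\cos\big(\nu_n(\pi-s)\big)\varrho(s)\varphi_n(s)\,\mathrm{d}s=0,
\end{equation*}
i.e. $\sin(\nu_n\pi)=O(1/\nu_n)$. Hence $\nu_n=n+O(1/n)$ for the $n$-th eigenvalue (the counting is fixed by the oscillation theorem quoted from \cite{coddington1972theory}: $\varphi_n$ has exactly $n$ zeros). Writing $\nu_n=n+\delta_n$ with $\delta_n=O(1/n)$ and substituting back, $\sin(\nu_n\pi)=(-1)^n\sin(\delta_n\pi)=(-1)^n\pi\delta_n+O(\delta_n^3)$, so $\delta_n$ is determined to leading order by the integral term. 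To extract the $O(1/n^2)$ correction to $\mu_n=\nu_n^2=n^2+2n\delta_n+\delta_n^2$ I would need $\delta_n$ to precision $O(1/n^3)$, which requires one more iteration of the integral equation: plug $\varphi_n(s)=\cos(ns)+O(1/n)$ into the integral, use $\cos(\nu_n(\pi-s))=\cos(n(\pi-s))+O(1/n)$, and evaluate $\int_0^\pi\cos(n(\pi-s))\cos(ns)\,\varrho(s)\,\mathrm{d}s$. The product-to-sum identity splits this into $\frac{(-1)^n}{2}\int_0^\pi\varrho(s)\,\mathrm{d}s$ plus a term $\frac12\int_0^\pi\cos(n(2s-\pi))\varrho(s)\,\mathrm{d}s$ which is $O(1/n)$ (indeed $o(1)$) by Riemann--Lebesgue, but for the error bound $O(1/n^2)$ in $\mu_n$ I only need it to be $O(1)$, which is immediate. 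This gives $2n\delta_n\pi\cdot\frac{(-1)^n}{(-1)^n} = \frac{1}{2}\int_0^\pi\varrho(s)\,\mathrm{d}s + O(1/n)$, hence $2n\delta_n=\frac{1}{\pi}\int_0^\pi\varrho(s)\,\mathrm{d}s+O(1/n)$, so
\begin{equation*}
\mu_n=n^2+\frac{1}{\pi}\int_0^\pi\varrho(\xi)\,\mathrm{d}\xi+O\!\left(\frac1n\right).
\end{equation*}
A second, more careful iteration (controlling the oscillatory integrals to $O(1/n^2)$, which holds because $\varrho$ is of bounded variation / in $H$-type regularity away from the $\vartheta$ part, or simply by a slightly cruder bound if one is content with $O(1/n)$) upgrades the remainder to $O(1/n^2)$; note that the claimed constant is $c_0=\frac1\pi\int_0^\pi\varrho$, whereas the lemma writes the \emph{bound} $\varrho_0\le c_0\le\varrho_1=\frac2\pi\int_0^\pi\varrho$, and indeed $\frac1\pi\int_0^\pi\varrho\le\frac2\pi\int_0^\pi\varrho$ since $\varrho>0$, while $\frac1\pi\int_0^\pi\varrho\ge\varrho_0$ is clear, so the two-sided bound holds.

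The main obstacle I expect is the bookkeeping of the oscillatory remainder terms: getting the error down to exactly $O(1/n^2)$ rather than $o(1)$ requires either integration by parts (which needs $\varrho$, hence $\vartheta$, hence $d(x)$ and thus $v(\epsilon,w),w$, to have enough smoothness — here one uses $m\in H^1$, $p,\rho\in H^3$, and the regularity of $v$ from Lemma \ref{lemma3.1} and of $w$) or a uniform-in-$n$ Gronwall/bootstrap handling two rounds of the Volterra iteration while tracking constants. The oscillation/counting step — matching "the $n$-th eigenvalue" on the left with "$\nu_n=n+o(1)$" on the right — also needs a short argument that the zeros of $\sin(\nu\pi)=O(1/\nu)$ near integers are simple and in bijection with $\mathbf N^+$, which follows from the quoted theorem of \cite{coddington1972theory} together with the derivative estimate $\frac{\mathrm d}{\mathrm d\nu}[\nu\sin(\nu\pi)]\sim\nu\pi\neq0$ near the roots. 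Everything else is the routine Liouville--Green / Prüfer machinery.
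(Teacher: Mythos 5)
Your route is genuinely different from the paper's. The paper's proof of Lemma \ref{lemma6.3} never touches the Volterra/characteristic-equation machinery: it proves only the two-sided bound \eqref{E6.31}, $n^2+\varrho_0\le\mu_n\le n^2+\varrho_1$, by two Pr\"ufer transformations (one scaled by $\sqrt{\mu_n-\varrho_0}$, so that $\theta'\le\sqrt{\mu_n-\varrho_0}$ and $\theta(\pi)-\theta(0)=n\pi$ give the lower bound; one scaled by $\sqrt{\mu_n}$, so that integrating $\theta'\ge\sqrt{\mu_n}-\varrho/\sqrt{\mu_n}$ and solving the resulting quadratic inequality in $\sqrt{\mu_n}$ give the upper bound), and then reads the statement \eqref{E6.11} with an unspecified $c_0\in[\varrho_0,\varrho_1]$ off these bounds via $\sqrt{n^2+\xi}-n=\frac{\xi}{2n}+O(n^{-3})$; what is actually used downstream (claim (F1), see \eqref{E5.16}) is only $|\sqrt{\mu_n}-n|\le C/n$. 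Your iteration argument, if completed, is sharper, since it identifies $c_0=\frac{1}{\pi}\int_0^\pi\varrho\,\mathrm{d}\xi$, which indeed lies in $[\varrho_0,\varrho_1]$.

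The genuine gap is exactly the step you defer to ``a second, more careful iteration''. Under the paper's hypotheses $\varrho=q+\vartheta$ has only $H^1$-type regularity in $\xi$ (the $q$ part contains $\mathfrak{s}_{xx}$ with $\mathfrak{s}=(p\rho)^{1/4}$, $p,\rho\in H^3$; the $\vartheta$ part contains $d=m-\epsilon\Pi_Vf'(v(\epsilon,w)+w)$ with $m\in H^1$), so the oscillatory coefficient $\int_0^\pi\varrho(s)\cos(2ns)\,\mathrm{d}s$, which enters $2n\delta_n$ at the same order as $\frac{1}{\pi}\int_0^\pi\varrho\,\mathrm{d}s$, is only $o(1/n)$: bounded variation or $H^1$ give $O(1/n)$, and $O(1/n^2)$ would require roughly $\varrho\in H^2$. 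So your method honestly yields $\mu_n=n^2+\frac{1}{\pi}\int_0^\pi\varrho+o(1/n)$, not the stated $O(1/n^2)$, contrary to your parenthetical claim that BV-type regularity suffices. A smaller slip in the same bookkeeping: to conclude $2n\delta_n=\frac{1}{\pi}\int_0^\pi\varrho+O(1/n)$ you need that cosine coefficient to be $O(1/n)$, not merely $O(1)$ as you wrote, and Riemann--Lebesgue alone gives $o(1)$, not $O(1/n)$. None of this damages the application, since everything later only uses $\sqrt{\mu_n}=n+O(1/n)$, which both your expansion and the paper's cruder bound deliver; but to prove \eqref{E6.11} exactly as stated you must either add regularity (enough to integrate by parts twice) or weaken the remainder, whereas the paper's elementary Pr\"ufer bound sidesteps the oscillatory integrals entirely.
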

\begin{proof}
Before approching the lemma, we first claim for $n\in\mathbf{N}^{+}$
\begin{equation}\label{E6.31}
n^2+\varrho_0\leq\mu_n\leq n^2+\varrho_1.
\end{equation}
In fact, Lemma \ref{lemma6.2} shows $\mu_n>\varrho_0$ for $n\in\mathbf{N}^{+}$. On the one hand, applying the first equality in \eqref{E6.10} and the Pr\"{u}fer transformation
\begin{equation*}
\varphi_n=r\sin\theta, \quad \varphi'_n=(\sqrt{\mu_n-\varrho_0})r\cos \theta
\end{equation*}
with $r(\xi)>0$, we derive for $n\in\mathbf{N}^+$
\begin{align}\label{E6.12}
\frac{\mathrm{d}\theta}{\mathrm{d}\xi}&=(\sqrt{\mu_n-\varrho_0})\cos^2\theta+\frac{\mu_n-\varrho(\xi)}{\sqrt{\mu_n-\varrho_0}}\sin^2\theta \nonumber\\
&=(\sqrt{\mu_n-\varrho_0})\cos^2\theta+\frac{\mu_n-\varrho_0+\varrho_0-\varrho(\xi)}{\sqrt{\mu_n-\varrho_0}}\sin^2\theta \nonumber\\
&=\sqrt{\mu_n-\varrho_0}+\frac{\varrho_0-\varrho(\xi)}{\sqrt{\mu_n-\varrho_0}}\sin^2\theta \nonumber\\
&\leq\sqrt{\mu_n-\varrho_0}.
\end{align}
Moreover we may choose $\theta(0)=\frac{\pi}{2}$, $\theta(\pi)=(n+\frac{1}{2})\pi$ thanks to that $\varphi_n$ has exactly $n$ zeros in $(0,\pi)$ and $\varphi'_n(0)=\varphi'_n(\pi)=0$. Integrating \eqref{E6.12} over $[0,\pi]$, we get for $n\in\mathbf{N}^{+}$
\begin{equation*}
n\pi\leq(\sqrt{\mu_n-\varrho_0})\pi\Longrightarrow\mu_n\geq n^2+\varrho_0.
\end{equation*}
And on the other hand, from the first term of \eqref{E6.10} and the Pr\"{u}fer transformation
\begin{equation*}
\varphi_n=r\sin\theta,~\varphi'_n=\sqrt{\mu_n}r\cos\theta
\end{equation*}
with $r(\xi)>0$, it yields that
\begin{equation}\label{E6.30}
\frac{\mathrm{d}\theta}{\mathrm{d}\xi}=\sqrt{\mu_n}-\frac{\varrho(\xi)}{\sqrt{\mu_n}}
\sin^2\theta\geq\sqrt{\mu_n}-\frac{\varrho(\xi)}{\sqrt{\mu_n}},\quad\forall n\in\mathbf{N}^{+}.
\end{equation}
Denoting $X=\mu_n$, $B=n$, $C=\frac{1}{\pi}\int^\pi_0\varrho(\xi)~\mathrm{d}\xi$, by integrating \eqref{E6.30} over $[0,\pi]$, the quadratic formula and the elementary inequality $\sqrt{1+x}\leq1+\frac{x}{2}$, we obtain
\begin{equation*}
\begin{aligned}
(\sqrt{X})^2-B\sqrt{X}-C\leq0
&\Longrightarrow\sqrt{X}\leq(B+\sqrt{B^2+4C})/2\\
&\Longrightarrow X\leq(2B^2+4C+2B\sqrt{B^2+4C})/4\\
&\Longrightarrow X\leq(2B^2+4C+2B^2\sqrt{1+4{C}/{B^2}})/4\\
&\Longrightarrow X\leq(2B^2+4C+2B^2(1+2{C}/{B^2}))/4=B^2+2C\\
&\Longrightarrow\mu_n\leq n^2+\varrho_1.
\end{aligned}
\end{equation*}
 The above analysis verifies the claim \eqref{E6.31},
which implies
\begin{equation*}
\sqrt{n^2+\varrho_0}-n\leq\sqrt{\mu_n}-n\leq\sqrt{n^2+\varrho_1}-n.
\end{equation*}
Let $g_1(\xi):=\sqrt{n^2+\xi}-n$. By Taylor expansion at $\xi=0$, we have
\begin{equation*}
g_1(\xi)=\frac{\xi}{2n}+O\left(\frac{1}{n^3}\right).
\end{equation*}
In view of the inequality: $g_1(\varrho_0)\leq\sqrt{\mu_n}-n\leq g_1(\varrho_1)$, there exists a constant $c_0$ with $\varrho_0\leq c_0\leq\varrho_1$ such that
\begin{equation*}
\mu_n=n^2+c_0+O\left(\frac{1}{n^2}\right).
\end{equation*}
This completes the proof.
\end{proof}

\subsection{Dirichlet-Newmann boundary value problem}\label{sec:4.3}

In Case 2, the Sturm-Liouville problem \eqref{E6.7}-\eqref{E6.8} becomes
\begin{equation}\label{E6.14}
\left\{
\begin{aligned}
&\varphi''_{n}(\xi)+(\mu_n-\varrho(\xi))\varphi_n(\xi)=0,\\
&\varphi_n(0)=\varphi'_n(\pi)=0.
\end{aligned}
\right.
\end{equation}
Notice that Case 3 is reduced to Case 2 if the transform $\bar{x}=\pi-x$ is made. Consequently, we just consider Case 2 in the section.
\begin{lemm}\label{lemma6.9}
Let $\mu_0<\mu_1<\cdots$ and $\varphi_0,\varphi_1,\cdots$ denote the eigenvalues and orthonormal eigenfunctions of the Sturm-Liouville problem \eqref{E6.14} respectively. Then, for $n\in\mathbf{N}$, the following asymptotic formulae hold:
\begin{align}\label{E6.13}
\mu_n=\left(n+{1}/{2}\right)^2+c_1+O\left(\frac{1}{(n+{1}/{2})^2}\right)
\end{align}
with $\varrho_0\leq c_1\leq\varrho_2$, where $\varrho_0,\varrho_1$ are given by \eqref{E6.16} and  \eqref{rh1} respectively.
\end{lemm}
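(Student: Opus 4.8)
The plan is to run essentially the same argument as in the proof of Lemma~\ref{lemma6.3}, the only substantive change being the endpoint values of the Pr\"{u}fer angle (which is what produces the half-integer shift $n\mapsto n+\tfrac12$). First I would establish the analogue of \eqref{E6.31}, namely
\begin{equation*}
\left(n+\tfrac12\right)^2+\varrho_0\le\mu_n\le\left(n+\tfrac12\right)^2+\varrho_1,\qquad n\in\mathbf{N};
\end{equation*}
note that in Case~2 one has $a_1>0$, so Lemma~\ref{lemma6.2} already gives $\mu_n>\varrho_0$ for every $n\in\mathbf{N}$ (in particular this covers the index $n=0$ and makes the Pr\"{u}fer substitutions below legitimate).

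For the lower bound I would use $\varphi_n=r\sin\theta$, $\varphi'_n=\sqrt{\mu_n-\varrho_0}\,r\cos\theta$ with $r>0$, obtaining as in \eqref{E6.12}
\begin{equation*}
\frac{\mathrm{d}\theta}{\mathrm{d}\xi}=\sqrt{\mu_n-\varrho_0}+\frac{\varrho_0-\varrho(\xi)}{\sqrt{\mu_n-\varrho_0}}\sin^2\theta\le\sqrt{\mu_n-\varrho_0}.
\end{equation*}
The Dirichlet condition $\varphi_n(0)=0$ permits the normalization $\theta(0)=0$, while the Neumann condition $\varphi'_n(\pi)=0$ forces $\cos\theta(\pi)=0$; since $\varphi_n$ has exactly $n$ zeros in $(0,\pi)$ and $\theta$ increases through every multiple of $\pi$, the only admissible value is $\theta(\pi)=(n+\tfrac12)\pi$. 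Integrating over $[0,\pi]$ gives $(n+\tfrac12)\pi\le\sqrt{\mu_n-\varrho_0}\,\pi$, i.e. $\mu_n\ge(n+\tfrac12)^2+\varrho_0$. For the upper bound I would take instead $\varphi_n=r\sin\theta$, $\varphi'_n=\sqrt{\mu_n}\,r\cos\theta$, so that as in \eqref{E6.30}
\begin{equation*}
\frac{\mathrm{d}\theta}{\mathrm{d}\xi}=\sqrt{\mu_n}-\frac{\varrho(\xi)}{\sqrt{\mu_n}}\sin^2\theta\ge\sqrt{\mu_n}-\frac{\varrho(\xi)}{\sqrt{\mu_n}},
\end{equation*}
with the same endpoint angles $\theta(0)=0$, $\theta(\pi)=(n+\tfrac12)\pi$. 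Integrating and writing $X=\mu_n$, $B=n+\tfrac12$, $C=\tfrac{1}{\pi}\int_0^\pi\varrho(\xi)\,\mathrm{d}\xi$, one arrives at $X-B\sqrt{X}-C\le0$, and the quadratic formula together with $\sqrt{1+x}\le1+\tfrac x2$ yields $X\le B^2+2C$, that is $\mu_n\le(n+\tfrac12)^2+\varrho_1$.

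To finish, from the two-sided bound and $g_1(\xi):=\sqrt{(n+\tfrac12)^2+\xi}-(n+\tfrac12)$ one gets $g_1(\varrho_0)\le\sqrt{\mu_n}-(n+\tfrac12)\le g_1(\varrho_1)$; the expansion $g_1(\xi)=\frac{\xi}{2(n+1/2)}+O\!\left(\frac{1}{(n+1/2)^3}\right)$ and the intermediate value theorem then give a number $c_1\in[\varrho_0,\varrho_1]$ with $\sqrt{\mu_n}=(n+\tfrac12)+\frac{c_1}{2(n+1/2)}+O\!\left(\frac{1}{(n+1/2)^3}\right)$, and squaring produces \eqref{E6.13}. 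The hard part here is purely the bookkeeping: pinning down the Pr\"{u}fer angle at $\xi=0$ and $\xi=\pi$ (this is the only place Case~2 differs from the Neumann case of Lemma~\ref{lemma6.3}, and it is exactly what shifts $n$ to $n+\tfrac12$) and verifying the zero count; once the analogue of \eqref{E6.31} is in hand, the remainder is verbatim the computation of Lemma~\ref{lemma6.3}.
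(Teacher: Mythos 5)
Your proposal is correct, and its second half (the upper bound via the Pr\"{u}fer substitution $\varphi_n=r\sin\theta$, $\varphi'_n=\sqrt{\mu_n}\,r\cos\theta$ with $\theta(0)=0$, $\theta(\pi)=(n+\tfrac12)\pi$, the quadratic inequality in $\sqrt{X}$, and the final Taylor-expansion step producing $c_1\in[\varrho_0,\varrho_1]$) coincides with the paper's proof. Where you genuinely differ is the lower bound $\mu_n\ge(n+\tfrac12)^2+\varrho_0$: you adapt the Pr\"{u}fer argument of Lemma~\ref{lemma6.3} with the modified amplitude $\varphi'_n=\sqrt{\mu_n-\varrho_0}\,r\cos\theta$, pin the angle at $\theta(0)=0$ and $\theta(\pi)=(n+\tfrac12)\pi$ (legitimate, since $\theta'>0$ at multiples of $\pi$, the zero count fixes the branch, and Lemma~\ref{lemma6.2} with $a_1>0$ gives the strict inequality $\mu_n>\varrho_0$ needed for the substitution), and integrate $\theta'\le\sqrt{\mu_n-\varrho_0}$. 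The paper instead partitions $[0,\pi]$ by the $n$ zeros of $\varphi_n$ together with interlaced critical points $a_{2i-1}$ of $\varphi_n$, applies a Wirtinger--Sobolev inequality $\int_{a_{j-1}}^{a_j}\varphi_n^2\le 4\pi^{-2}(a_j-a_{j-1})^2\int_{a_{j-1}}^{a_j}(\varphi'_n)^2$ on each of the $2n+1$ subintervals, and minimizes $\sum_j(a_j-a_{j-1})^{-2}$ subject to $\sum_j(a_j-a_{j-1})=\pi$ to extract the factor $(n+\tfrac12)^2$, arriving at \eqref{E6.37} by a variational rather than an ODE-comparison route. Both arguments are sound and give the same two-sided bound; yours is shorter and keeps the treatment uniform with the Neumann case, at the cost of the angle bookkeeping you flag, while the paper's avoids any Pr\"{u}fer normalization for the lower bound at the price of introducing the interior critical points (via Rolle) and the discrete minimization.
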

\begin{proof}
Applying the similar technique as in the proof of Lemma \ref{lemma6.3},  we  prove
\begin{equation}\label{E6.37}
(n+1/2)^2+\varrho_0\leq\mu_n\leq (n+1/2)^2+\varrho_1,\quad\forall n\in \mathbf{N}.
\end{equation}
It follows from
Lemma \ref{lemma6.2} that $\mu_n>\varrho_0$ for $n\in\mathbf{N}$.
Denote by $a_2,a_4,\cdots,a_{2n}$ with $0<a_2<a_4<\cdots<a_{2n}<\pi$ the $n$ zeros of $\varphi_n$ in $(0,\pi)$. Letting $a_0=0$, $a_{2n+1}=\pi$, we take $a_{2i-1}$ satisfying $\phi'_n(a_{2i-1})=0$ for $a_{2i-1}\in(a_{2i-2},a_{2i}),i=1,2,\cdots,n$. This infers that for $j=1,2,\cdots,2n+1$
\begin{equation*}
\int^{a_j}_{a_{j-1}}\varphi^2_n(x)\mathrm{d}x\leq 4\pi^{-2}(a_j-a_{j-1})^2\int^{a_j}_{a_{j-1}}(\varphi'_n(x))^2\mathrm{d}x
\end{equation*}
by Sobolev inequality. On the other hand, integrating by parts yields
\begin{equation*}
\int^{a_j}_{a_{j-1}}\varphi^2_n(x)\mathrm{d}x=-\int^{a_j}_{a_{j-1}}\varphi''_n(x)\varphi_{n}(x)\mathrm{d}x, \quad j=1,2,\cdots,2n+1.
\end{equation*}
From multiplying both sides of the equation in system \eqref{E6.7} by $\varphi_n(\xi)$ and integrating over $[a_{j-1},a_{j}]$, it derives that
\begin{equation*}
\int^{a_j}_{a_{j-1}}(\mu_n-\varrho(x))\varphi^2_n(x)\mathrm{d}x=\int^{a_j}_{a_{j-1}}(\varphi'_n(x))^2\mathrm{d}x.
\end{equation*}
Then
\begin{equation*}
(\mu_n-\varrho_0)\int^{a_j}_{a_{j-1}}\varphi^2_n(x)\mathrm{d}x=\int^{a_j}_{a_{j-1}}((\varphi'_n(x))^2+(\varrho(x)-\varrho_0)\varphi^2_n)~\mathrm{d}x.
\end{equation*}
As a result
\begin{equation*}
\int^{a_j}_{a_{j-1}}(\varphi'_n(x))\mathrm{d}x\leq(\mu_n-\varrho_0)\int^{a_j}_{a_{j-1}}\varphi^2_n(x)\mathrm{d}x
\leq 4\pi^{-2}(\mu_n-\varrho_0)(a_j-a_{j-1})^2\int^{a_j}_{a_{j-1}}(\varphi'_n(x))^2\mathrm{d}x.
\end{equation*}
This leads to
\begin{equation*}
\sum^{2n+1}_{j=1}\frac{1}{(a_{j}-a_{j-1})^2}\leq4\pi^{-2}(2n+1)(\mu_n-\varrho_0).
\end{equation*}
Moreover $\min\left\{\sum^{2n+1}_{j=1}1/(x^2_j)|x_j>0,\sum^{2n+1}_{j=1}x_i=\pi\right\}=(2n+1)(\frac{2n+1}{\pi})^2$ for $x_1=x_2=\cdots=x_{2n+1}=\frac{\pi}{2n+1}$. The fact shows
\begin{equation*}
\mu_n\geq(n+1/2)^2+\varrho_0.
\end{equation*}
Let us check the upper bound in \eqref{E6.37}. We introduce the Pr\"{u}fer transformation
\begin{equation*}
\varphi_n=r\sin\theta,~\varphi'_n=\sqrt{\mu_n}r\cos\theta
\end{equation*}
with $r(\xi)>0$. The calculation similar to the one used in Lemma \ref{lemma6.3} shows
\begin{equation*}
\frac{\mathrm{d}\theta}{\mathrm{d}\xi}=\sqrt{\mu_n}-\frac{\varrho(\xi)}{\sqrt{\mu_n}}
\sin^2\theta\geq\sqrt{\mu_n}-\frac{\varrho(\xi)}{\sqrt{\mu_n}}\quad \forall n\in\mathbf{N}.
\end{equation*}
Since $\varphi_n$ has exactly $n$ zeros in $(0,\pi)$ and $\varphi_n(0)=\varphi'_n(\pi)=0$, we may choose $\theta(0)=0$, which then gives $\theta(\pi)=(n+\frac{1}{2})\pi$. Integrating the above inequality over $[0,\pi]$, we have
\begin{equation*}
(\sqrt{X})^2-B\sqrt{X}-C\leq0,
\end{equation*}
where $X=\mu_n$, $B=n+\frac{1}{2}$, $C=\frac{1}{\pi}\int^\pi_0\varrho(\xi)~\mathrm{d}\xi$. Furthermore the quadratic formula together with the elementary inequality $\sqrt{1+x}\leq1+\frac{x}{2}$ may give rise to
\begin{equation*}
\begin{aligned}
\sqrt{X}\leq(B+\sqrt{B^2+4C})/2
&\Longrightarrow X\leq(2B^2+4C+2B\sqrt{B^2+4C})/4\\
&\Longrightarrow X\leq(2B^2+4C+2B^2(1+2{C}/{B^2}))/4=B^2+2C\\
&\Longrightarrow\mu_n\leq n^2+\varrho_1.
\end{aligned}
\end{equation*}
Therefore the inequality in \eqref{E6.37} is established. This reads
\begin{equation*}
\sqrt{(n+{1}/{2})^2+\varrho_0}-(n+{1}/{2})\leq\sqrt{\mu_n}
-(n+{1}/{2})\leq\sqrt{(n+{1}/{2})^2+\varrho_1}-(n+{1}/{2}).
\end{equation*}
Set $g_2(\xi):=\sqrt{(n+{1}/{2})^2+\xi}-(n+{1}/{2})$. By Taylor expansion at $\xi=0$, we have
\begin{equation*}
g_2(\xi)=\frac{\xi}{2(n+{1}/{2})}+O\left(\frac{1}{(n+{1}/{2})^3}\right).
\end{equation*}
With the help of the fact $g_2(\varrho_0)\leq\sqrt{\mu_n}-(n+{1}/{2})\leq g_2(\varrho_1)$, we have
\begin{equation*}
\mu_n=(n+{1}/{2})^2+c_1+O\left(\frac{1}{(n+{1}/{2})^2}\right)
\end{equation*}
for some constant $c_1$ with $\varrho_0\leq c_1\leq\varrho_1$. This completes the proof.
\end{proof}

\subsection{General boundary value problem}\label{sec:4.2}

In Case 4, we write the Sturm-Liouville problem (\ref{E6.7})-(\ref{E6.8}) as
\begin{equation}\label{E6.15}
\left\{
\begin{aligned}
&\varphi''_{n}(\xi)+(\mu_n-\varrho(\xi))\varphi_n(\xi)=0,\\
&a_1\varphi_n(0)-b_1\varphi'_n(0)=0,\\
&a_2\varphi(\pi)+b_2\varphi'_n(\pi)=0.
\end{aligned}
\right.
\end{equation}
\begin{lemm}\label{lemma6.8}
Denote by $\mu_0<\mu_1<\cdots$ and $\varphi_0,\varphi_1,\cdots$ the eigenvalues and orthonormal eigenfunctions of the Sturm-Liouville problem \eqref{E6.15} respectively
. Then there exists an integer  $\widehat{N}>0$ such that, for $n\geq \widehat{N}$, the following asymptotic formulae hold:
\begin{equation}\label{E6.17}
\mu_n=n^2+c_2+O\left(\frac{1}{n^2}\right)
\end{equation}
with $\varrho_0\leq c_2\leq\varrho_2$, where $\varrho_0$ is defined by \eqref{E6.16} and
$\varrho_2:=\frac{2}{\pi}(\frac{a_1}{b_1}+\frac{a_2}{b_2}+1+\int^{\pi}_0\varrho(\xi)\mathrm{d}\xi)$.
\end{lemm}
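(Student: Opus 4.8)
The plan is to mimic the two-sided estimate strategy used in Lemmas \ref{lemma6.3} and \ref{lemma6.9}, but with the extra care forced by the fact that in Case 4 the boundary data $a_i,b_i$ are all positive, so the natural boundary phases are no longer $0$ or $\pi/2$ but depend on $n$. First I would introduce the Pr\"{u}fer transformation $\varphi_n=r\sin\theta$, $\varphi'_n=\sqrt{\mu_n}\,r\cos\theta$ with $r(\xi)>0$, which gives, as before,
\begin{equation*}
\frac{\mathrm{d}\theta}{\mathrm{d}\xi}=\sqrt{\mu_n}-\frac{\varrho(\xi)}{\sqrt{\mu_n}}\sin^2\theta .
\end{equation*}
The boundary condition $a_1\varphi_n(0)-b_1\varphi'_n(0)=0$ translates into $\tan\theta(0)=\frac{b_1\sqrt{\mu_n}}{a_1}$, so one may take $\theta(0)=\frac{\pi}{2}-\arctan\frac{a_1}{b_1\sqrt{\mu_n}}\in(0,\pi/2)$; similarly $a_2\varphi_n(\pi)+b_2\varphi'_n(\pi)=0$ forces $\theta(\pi)=(n+1)\pi-\arctan\frac{a_2}{b_2\sqrt{\mu_n}}$, the integer part being fixed by the requirement that $\varphi_n$ have exactly $n$ interior zeros (each zero contributing a crossing of a multiple of $\pi$). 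Using $\arctan x\le x$ and $\arctan x\ge x-x^3/3$, the phase increment $\theta(\pi)-\theta(0)$ equals $(n+1)\pi-\frac{\pi}{2}+O\!\big(\tfrac{1}{\sqrt{\mu_n}}\big)=\big(n+\tfrac12\big)\pi+O\!\big(\tfrac{1}{\sqrt{\mu_n}}\big)$... but wait, this does not match the claimed $\mu_n=n^2+c_2+O(n^{-2})$; the correct accounting is that the two $\arctan$ terms together contribute $O(1/\sqrt{\mu_n})$ to the phase, and integrating the ODE over $[0,\pi]$ gives $\sqrt{\mu_n}\,\pi-\frac{1}{\sqrt{\mu_n}}\int_0^\pi\varrho\sin^2\theta\,\mathrm{d}\xi$, so we need the net phase to be $n\pi$ to leading order, which forces the integer offset in $\theta(\pi)$ to be $n\pi$ rather than $(n+1)\pi$; I would fix this bookkeeping carefully using the zero-counting theorem quoted after \eqref{E6.8}. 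The upshot is
\begin{equation*}
\sqrt{\mu_n}\,\pi=n\pi+\frac{1}{\sqrt{\mu_n}}\Big(\tfrac{a_1}{b_1}+\tfrac{a_2}{b_2}+\int_0^\pi\varrho\sin^2\theta\,\mathrm{d}\xi\Big)+O\!\big(\mu_n^{-3/2}\big),
\end{equation*}
from which $\sqrt{\mu_n}=n+O(1/n)$, hence $\mu_n=n^2+O(1)$, and this a posteriori bound justifies replacing $\mu_n$ by $n^2$ in all error terms.

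Next I would turn the rough bound into the sharp two-sided estimate $n^2+\varrho_0\le\mu_n\le n^2+\varrho_2$ for $n$ large. For the lower bound I would repeat the Pr\"{u}fer computation with shift $\varrho_0$, i.e. $\varphi_n=r\sin\theta$, $\varphi'_n=\sqrt{\mu_n-\varrho_0}\,r\cos\theta$, giving $\theta'=\sqrt{\mu_n-\varrho_0}+\frac{\varrho_0-\varrho(\xi)}{\sqrt{\mu_n-\varrho_0}}\sin^2\theta\le\sqrt{\mu_n-\varrho_0}$; integrating and using that the total phase change is at least $n\pi$ (from the $n$ interior zeros) minus a controlled boundary correction of size $O(1/\sqrt{\mu_n})$, I get $\sqrt{\mu_n-\varrho_0}\ge n-O(1/n)$, and for $n\ge\widehat N$ this yields $\mu_n\ge n^2+\varrho_0$ after absorbing the lower-order terms. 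For the upper bound I would use the unshifted Pr\"{u}fer transformation as above together with $\sin^2\theta\le 1$ to get $\theta'\ge\sqrt{\mu_n}-\varrho(\xi)/\sqrt{\mu_n}$, plus the explicit bounds $0<\arctan\frac{a_1}{b_1\sqrt{\mu_n}}<\frac{a_1}{b_1\sqrt{\mu_n}}$ and likewise at $\pi$, and the crude bound $\int_0^\pi\varrho\sin^2\theta\le\int_0^\pi\varrho\le\frac\pi2\varrho_1$, to land at a quadratic inequality $(\sqrt{\mu_n})^2-B\sqrt{\mu_n}-C\le 0$ with $B=n+O(1/n)$ and $C=\frac{1}{\pi}\big(\frac{a_1}{b_1}+\frac{a_2}{b_2}+1+\int_0^\pi\varrho\big)$, which via the quadratic formula and $\sqrt{1+x}\le 1+x/2$ gives $\mu_n\le n^2+2C=n^2+\varrho_2$ for $n\ge\widehat N$. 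Combining, $n^2+\varrho_0\le\mu_n\le n^2+\varrho_2$.

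Finally, from the sandwich $\sqrt{n^2+\varrho_0}-n\le\sqrt{\mu_n}-n\le\sqrt{n^2+\varrho_2}-n$, I set $g(\zeta):=\sqrt{n^2+\zeta}-n=\frac{\zeta}{2n}+O(n^{-3})$ by Taylor expansion at $\zeta=0$, and since $g$ is increasing there is $c_2\in[\varrho_0,\varrho_2]$ with $\sqrt{\mu_n}-n=g(c_2)=\frac{c_2}{2n}+O(n^{-3})$, hence $\mu_n=n^2+c_2+O(n^{-2})$, which is \eqref{E6.17}. I expect the main obstacle to be the precise zero-counting/phase-offset bookkeeping in the first step: because all of $a_1,b_1,a_2,b_2$ are nonzero, the endpoint phases $\theta(0),\theta(\pi)$ are genuinely $n$-dependent through $\arctan(\cdot/\sqrt{\mu_n})$, and one must argue carefully (using the quoted theorem that $\varphi_n$ has exactly $n$ interior zeros, together with continuity in a homotopy parameter) that the correct integer multiple of $\pi$ is selected, so that the $\arctan$ contributions land in the $O(1)$ constant $c_2$ rather than shifting the leading $n^2$ term; everything after that is a routine repetition of the Pr\"{u}fer estimates already carried out in Lemmas \ref{lemma6.3} and \ref{lemma6.9}. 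The existence of the threshold $\widehat N$ is exactly what lets us absorb all the $O(1/n)$ boundary corrections into the clean inequalities, so I would state it as: choose $\widehat N$ so large that $\frac{1}{\sqrt{\mu_n}}(\frac{a_1}{b_1}+\frac{a_2}{b_2}+1)$ and the associated Taylor remainders are smaller than any fixed fraction of the gap $\varrho_2-\varrho_0$ (or, if $\varrho_0=\varrho_2$, handle that degenerate case directly).
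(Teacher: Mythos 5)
Your overall strategy (Pr\"ufer transformation, two-sided bound $n^2+\varrho_0\leq\mu_n\leq n^2+\varrho_2$, then the Taylor sandwich) is the same as the paper's, and your upper-bound treatment is essentially the paper's argument --- in fact using $\arctan x\leq x$ directly, as you propose, is cleaner than the paper's limit computation \eqref{E6.34} and would even remove the need for $\widehat{N}$ on that side (the ``$+1$'' in $\varrho_2$ gives more than enough slack). But your lower bound has a genuine gap. You claim that the phase increase is ``at least $n\pi$ minus a boundary correction of size $O(1/\sqrt{\mu_n})$'', deduce $\sqrt{\mu_n-\varrho_0}\geq n-O(1/n)$, and then assert that for $n\geq\widehat{N}$ this yields $\mu_n\geq n^2+\varrho_0$ ``after absorbing the lower-order terms''. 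That absorption is impossible: an $O(1/n)$ deficit in $\sqrt{\mu_n-\varrho_0}$ becomes an $O(1)$ deficit in $\mu_n$, so your argument only gives $\mu_n\geq n^2+\varrho_0-O(1)$, which does not grow with $n$ and therefore cannot be absorbed by enlarging $\widehat{N}$; with it you cannot place the constant $c_2$ above $\varrho_0$, which is part of the statement. The point you are missing (and which the paper uses) is that for the lower bound the boundary terms are \emph{favorable}, not harmful: with $\varphi_n=r\sin\theta$, $\varphi_n'=\sqrt{\mu_n-\varrho_0}\,r\cos\theta$ one has $\theta(0)=\arctan\bigl(\tfrac{b_1}{a_1}\sqrt{\mu_n-\varrho_0}\bigr)\in(0,\tfrac{\pi}{2})$ and $\theta(\pi)=(n+1)\pi-\arctan\bigl(\tfrac{b_2}{a_2}\sqrt{\mu_n-\varrho_0}\bigr)\in\bigl((n+\tfrac12)\pi,(n+1)\pi\bigr)$, so the phase increment strictly exceeds $n\pi$; integrating $\theta'\leq\sqrt{\mu_n-\varrho_0}$ then gives $\mu_n\geq n^2+\varrho_0$ exactly, for every $n$, with no error term to absorb.

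A related slip feeds this confusion: your endpoint phase $\theta(\pi)=(n+1)\pi-\arctan\tfrac{a_2}{b_2\sqrt{\mu_n}}$ is wrong; the boundary condition gives $\tan\theta(\pi)=-\tfrac{b_2}{a_2}\sqrt{\mu_n}$, hence $\theta(\pi)=(n+1)\pi-\arctan\bigl(\tfrac{b_2}{a_2}\sqrt{\mu_n}\bigr)=(n+\tfrac12)\pi+\arctan\tfrac{a_2}{b_2\sqrt{\mu_n}}$ (the two expressions differ by essentially $\pi/2$, which is exactly the discrepancy that triggered your mid-proof ``but wait'' digression; changing the integer offset to $n\pi$, as you suggest, is not the correct repair). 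With the correct endpoint phases your displayed asymptotic identity for $\sqrt{\mu_n}\,\pi$ is fine, the upper bound goes through as in the paper, and the final Taylor-expansion step coincides with the paper's. So the proof is salvageable, but only after replacing the invalid absorption step in the lower bound by the sign observation above.
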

\begin{proof}
We should adopt the similar technique as in the proof of Lemma \ref{lemma6.3}. Let us assert that there exists an $\widehat{N}>0$ such that, for $n\geq\widehat{N}$, the following holds:
\begin{equation}\label{E6.33}
n^2+\varrho_0\leq\mu_n\leq n^2+\varrho_2.
\end{equation}
Since $a^2_1+a^2_2>0$, by Lemma \ref{lemma6.2}, we arrive at $\mu_n>\varrho_0$ for $n\in\mathbf{N}$. First, we introduce the Pr\"{u}fer transformation for $r(\xi)>0$
\begin{equation*}
\varphi_n=r\sin\theta,~\varphi'_n=(\sqrt{\mu_n-\varrho_0})r\cos \theta.
\end{equation*}
The calculation similar to the one used in Lemma \ref{lemma6.3} shows for $n\in\mathbf{N}$
\begin{equation}\label{E6.18}
\frac{\mathrm{d}\theta}{\mathrm{d}\xi}
\leq\sqrt{\mu_n-\varrho_0}.
\end{equation}
Denote by $\tau_1<\tau_2<\cdots<\tau_n$ the $n$ zeros of $\varphi_n$ in $(0,\pi)$. Let $\tau_0=0$, $\tau_{n+1}=\pi$ and $\theta_i=\theta(\tau_i),i=0,\cdots,n+1$.
Hence we may choose $\theta_i=i\pi,i=1,\cdots,n$, which  gives $\theta_0=\arctan(\frac{b_1}{a_1}\sqrt{\mu_n-\varrho_0})$ and $\theta_{n+1}=(n+1)\pi-\arctan(\frac{b_2}{a_2}\sqrt{\mu_n-\varrho_0})$.
It follows from $\mu_n>\varrho_0$ for $n\in\mathbf{N}$ and $a_i>0,b_i>0,i=1,2$ that $\theta_0\in(0,\frac{\pi}{2})$, $\theta_{n+1}\in((n+\frac{1}{2})\pi,(n+1)\pi)$.
 As a consequence $\theta_{n+1}-\theta_0\geq n\pi$.  Integrating \eqref{E6.18} over $[0,\pi]$ yields for $n\in\mathbf{N}$
\begin{equation*}
n\pi\leq\theta_{n+1}-\theta_0\leq(\sqrt{\mu_n-\varrho_0})\pi\Longrightarrow\mu_n\geq n^2+\varrho_0 .
\end{equation*}
In order to get the upper bounded  in \eqref{E6.33}, we take  the Pr\"{u}fer transformation
\begin{equation*}
\varphi_n=r\sin\theta,~\varphi'_n=\sqrt{\mu_n}r\cos \theta
\end{equation*}
with $r(\xi)>0$. Proceeding as in the proof of Lemma \ref{lemma6.3}, we have
\begin{equation}\label{E6.20}
\frac{\mathrm{d}\theta}{\mathrm{d}\xi}=\sqrt{\mu_n}-\frac{\varrho(\xi)}{\sqrt{\mu_n}}\sin^2\theta\geq\sqrt{\mu_n}-\frac{\varrho(\xi)}{\sqrt{\mu_n}}.
\end{equation}
Let $\tau_1<\tau_2<\cdots<\tau_n$ denote the $n$ zeros of $\varphi_n$ in $(0,\pi)$. Setting $\tau_0=0$, $\tau_{n+1}=\pi$ and $\theta_i=\theta(\tau_i),i=0,\cdots,n+1$,
 we take $\theta_i=i\pi,i=1,\cdots,n$. Then $\theta_0=\arctan(\frac{b_1}{a_1}\sqrt{\mu_n})$, $\theta_{n+1}=(n+1)\pi-\arctan(\frac{b_2}{a_2}\sqrt{\mu_n})$.
  With the help of $\mu_n>0$ and $a_i>0,b_i>0,i=1,2$, we derive $\theta_0\in(0,\frac{\pi}{2})$ and $\theta_{n+1}\in((n+\frac{1}{2})\pi,(n+1)\pi)$.
 Moreover the fact $\mu_n\rightarrow+\infty$ as $n\rightarrow+\infty$ implies that $\theta_0\rightarrow\frac{\pi}{2}$ as $n\rightarrow+\infty$,
 and that $\theta_{n+1}\rightarrow(n+\frac{1}{2})\pi$ as $n\rightarrow+\infty$.
Integrating \eqref{E6.20} over $[0,\pi]$ gives
\begin{equation*}
\begin{aligned}
&\theta_{n+1}-\theta_0\geq\sqrt{\mu_n}\pi-\frac{1}{\sqrt{\mu_n}}\int^{\pi}_{0}\varrho(\xi)~\mathrm{d}\xi\\
\Longrightarrow&(n+1)\pi-\left(\arctan\left(\frac{b_1}{a_1}\sqrt{\mu_n}\right)+\arctan\left(\frac{b_2}{a_2}\sqrt{\mu_n}\right)\right)
\geq\sqrt{\mu_n}\pi-\frac{1}{\sqrt{\mu_n}}\int^{\pi}_{0}\varrho(\xi)~\mathrm{d}\xi\\
\Longrightarrow&\mu_n-n\sqrt{\mu_n}+\frac{\sqrt{\mu_n}}{\pi}\left(\arctan\left(\frac{b_1}{a_1}\sqrt{\mu_n}\right)-\frac{\pi}{2}\right)\\
&+\frac{\sqrt{\mu_n}}{\pi}\left(\arctan\left(\frac{b_2}{a_2}\sqrt{\mu_n}\right)-\frac{\pi}{2}\right)
-\frac{1}{\pi}\int^{\pi}_{0}\varrho(\xi)~\mathrm{d}\xi\leq0.
\end{aligned}
\end{equation*}
Furthermore for $x>0,a>0,b>0$, it is obvious that
\begin{equation*}
\begin{aligned}
\lim_{x\rightarrow+\infty}x\left(\arctan\left(\frac{b}{a}x\right)-\frac{\pi}{2}\right)
&=\lim_{x\rightarrow+\infty}\frac{\arctan\left(\frac{b}{a}x\right)-\frac{\pi}{2}}{1/x}{=}\lim_{x\rightarrow+\infty}\frac{\frac{1}{1+((b/a)x)^2\frac{b}{a}}}{-1/x^2}=-\frac{a}{b}.
\end{aligned}
\end{equation*}
Since $\sqrt{\mu_n}\rightarrow+\infty$ as $n\rightarrow+\infty$, we get
\begin{equation}\label{E6.34}
\begin{aligned}
&\lim_{n\rightarrow+\infty}\sqrt{\mu_n}\left(\arctan\left(\frac{b_1}{a_1}\sqrt{\mu_n}\right)-\frac{\pi}{2}\right)=-\frac{a_1}{b_1},\\
&\lim_{n\rightarrow+\infty}\sqrt{\mu_n}\left(\arctan\left(\frac{b_2}{a_2}\sqrt{\mu_n}\right)-\frac{\pi}{2}\right)=-\frac{a_2}{b_2}.
\end{aligned}
\end{equation}
It follows from  \eqref{E6.34} that  there exists an integer  $\widehat{N}>0$ such that, for $n\geq\widehat{N}$, the following inequalities hold:
\begin{equation*}
\sqrt{\mu_n}\left(\arctan\left(\frac{b_1}{a_1}\sqrt{\mu_n}\right)-\frac{\pi}{2}\right)\geq-\frac{a_1}{b_1}-\frac{1}{2},\quad
\sqrt{\mu_n}\left(\arctan\left(\frac{b_2}{a_2}\sqrt{\mu_n}\right)-\frac{\pi}{2}\right)\geq-\frac{a_2}{b_2}-\frac{1}{2}.
\end{equation*}
Therefore for $n\geq \widehat{N}$
\begin{equation*}
\mu_n-n\sqrt{\mu_n}-\frac{1}{\pi}\left(\frac{a_1}{b_1}+\frac{a_2}{b_2}+1+\int^\pi_0\varrho(\xi)~\mathrm{d}\xi\right)\leq0.
\end{equation*}
Setting $X=\mu_n$, $B=n$, $C=\frac{1}{\pi}(\frac{a_1}{b_1}+\frac{a_2}{b_2}+1+\int^\pi_0\varrho(\xi))~\mathrm{d}\xi$, the elementary inequality $\sqrt{1+x}\leq1+1/x$ verifies for $n\geq \widehat{N}$
\begin{equation*}
\begin{aligned}
(\sqrt{X})^2-B\sqrt{X}-C\leq0
&\Longrightarrow X\leq(2B^2+4C+2B^2(1+2{C}/{B^2}))/4=B^2+2C\\
&\Longrightarrow\mu_n\leq n^2+\varrho_2.
\end{aligned}
\end{equation*}
The above discussion carries out the assertion. Thus, by \eqref{E6.33}, we obtain that  for $n\geq \widehat{N}$
\begin{equation*}
\sqrt{n^2+\varrho_0}-n\leq\sqrt{\mu_n}-n\leq\sqrt{n^2+\varrho_2}-n.
\end{equation*}
Denote $g_3(\xi):=\sqrt{n^2+\xi}-n$. It is clear to see that
$g_3(\xi)=\frac{\xi}{2n}+O\left(\frac{1}{n^3}\right)$ by Taylor expansion at $\xi=0$.
Since $g_3(\varrho_0)\leq\sqrt{\mu_n}-n\leq g_3(\varrho_2)$, there exists a constant $c_2$ with $\varrho_0\leq c_2\leq\varrho_2$ such that
\begin{equation*}
\mu_n=n^2+c_2+O\left(\frac{1}{n^2}\right),\quad\forall n\geq \widehat{N}.
\end{equation*}
We complete the proof of the lemma.
\end{proof}

\subsection{Summary}

Summarize what we have  obtained  in Lemmas \ref{lemma6.3}-\ref{lemma6.8} as the following lemma. 

\begin{lemm}\label{lemma6.1}
Denote by $\lambda_{n}(\epsilon,w),\psi_{n}(\epsilon,w),n\in\mathbf{N}$ the eigenvalues and the eigenfunctions of the sturm-Liouville problem \eqref{E6.1} respectively.
Let $c$ (see \eqref{E6.23}) be fixed constant and Hypothesis \ref{hy2} hold. Then
\[
0<\lambda_{0}(\epsilon,w)<\lambda_{1}(\epsilon,w)<\cdots<\lambda_{n}(\epsilon,w) < \cdots \]
with $\lambda_{n}(\epsilon,w)\rightarrow+\infty$ as $n\rightarrow+\infty$.
Moreover each eigenvalue  $\lambda_{n}(\epsilon,w)$ is simple and, for all $\epsilon\in[\epsilon_1,\epsilon_2]$, $w\in\{W\cap H^s:\|w\|_{s}\leq r\}$, it has  the following asymptotic formulae:

$\mathrm{(i)}$ If $\alpha_1=-\frac{\beta_1}{4}\frac{(p\rho)_{x}(0)}{(p\rho)(0)},\beta_1>0, \alpha_2=\frac{\beta_2}{4}\frac{(p\rho)_{x}(0)}{(p\rho)(0)},\beta_2>0$, then
for $n\in\mathbf{N}^{+}$,
\begin{equation}\label{E6.24}
\lambda_n(\epsilon,w)=\frac{n^2}{c^2}+\mathfrak{c}_0(\epsilon,w)+O\left(\frac{1}{n^2}\right)
\end{equation}
with ${\eta_0}\leq \mathfrak{c}_0(\epsilon,w)\leq\frac{2}{c\pi}\int^\pi_0{\sqrt{\rho/p}}~\eta(\epsilon,w)~\mathrm{d}x$, where
\begin{align*}
&\eta(x,\epsilon,w)=Q(x)+
\frac{1}{{\rho(x)}}\left (m(x)-\epsilon\Pi_{V}f'(t,x,v(\epsilon,w(t,x))+w(t,x))\right),\\
&\eta_0=\min_{\stackrel{x\in[0,\pi],\epsilon\in[\epsilon_1,\epsilon_2]}{w\in \{W\cap H^s:\|w\|_{s}\leq r\}}}{\eta(x,\epsilon,w)},
\end{align*}
and $Q(x)$ is given by  \eqref{E7.2};

$\mathrm{(ii)}$ If either $\alpha_1>0,\beta_1=0,\alpha_2=\frac{\beta_2}{4}\frac{(p\rho)_{x}(\pi)}{(p\rho)(\pi)},\beta_2>0$ or $\alpha_1=-\frac{\beta_1}{4}\frac{(p\rho)_{x}(0)}{(p\rho)(0)},\beta_1>0,\alpha_2>0,\beta_2=0$,
then for $n\in\mathbf{N}$,
\begin{equation}\label{E6.38}
\lambda_n(\epsilon,w)=\frac{(n+1/2)^2}{c^2}+\mathfrak{c}_1(\epsilon,w)+O\left(\frac{1}{(n+1/2)^2}\right)
\end{equation}
with ${\eta_0}\leq \mathfrak{c}_1(\epsilon,w)\leq\frac{2}{c\pi}\int^\pi_0{\sqrt{\rho/p}}~\eta(\epsilon,w)~\mathrm{d}x$;

$\mathrm{(iii)}$ If $\alpha_1>-\frac{\beta_1}{4}\frac{(p\rho)_{x}(0)}{(p\rho)(0)}, \beta_1>0, \alpha_2>\frac{\beta_2}{4}\frac{(p\rho)_{x}(0)}{(p\rho)(0)},\beta_2>0$, then there exists $\widehat{N}>0$ (see Lemma \ref{lemma6.8}) such that for $n\geq \widehat{N}$
\begin{equation}\label{E6.25}
\lambda_n(\epsilon,w)=\frac{n^2}{c^2}+\mathfrak{c}_2(\epsilon,w)+O\left(\frac{1}{n^2}\right)
\end{equation}
with
\[{\eta_0}\leq \mathfrak{c}_2(\epsilon,w)\leq\frac{2}{c^2\pi}(\frac{a_1}{b_1}+\frac{a_2}{b_2}+1)+
\frac{2}{c\pi}\int^\pi_0\frac{\sqrt{\rho}}{\sqrt{p}}{\eta(\epsilon,w)}~\mathrm{d}x.\]
And $\psi_{n}(\epsilon,w),n\in\mathbf{N}$ form an orthogonal basis of $L^2(0,\pi)$ with the scalar product $(y,z)_{L^2_{\rho}}:=c^{-1}\int_{0}^{\pi}yz\rho~\mathrm{d}x$.
 In addition we define an equivalent scalar product on $\mathcal{H}^1_g$
\begin{equation*}
(y,z)_{\epsilon,w}:=\frac{1}{c}\int_{0}^{\pi}py'z'+(m-\epsilon\Pi_{V}f'(v(\epsilon,w)+w))yz~\mathrm{d}x,
\end{equation*}
which establishes that for all $y\in \mathcal{H}^1_g$
\begin{equation}\label{E6.3}
L_1\|y\|_{H^1}\leq\|y\|_{\epsilon,w}\leq L_2\|y\|_{H^1}
\end{equation}
for some constants $L_1$, $L_2$. The eigenfunctions $\psi_{n}(\epsilon,w),n\geq0$ are also an orthogonal basis of $\mathcal{H}^1_g$
 with respect to the scalar product $(\cdot,\cdot)_{\epsilon,w}$. For $y=\sum_{n\geq0}\hat{y}_n\psi_{n}(\epsilon,w)$, it has
\begin{equation}\label{E6.4}
\|y\|^2_{L^2_{\rho}}=\sum\limits_{n\geq0}\hat{y}^2_n,\quad
\|y\|^2_{\epsilon,w}=\sum\limits_{n\geq0}\lambda_n(\epsilon,w)\hat{y}^2_n.
\end{equation}
\end{lemm}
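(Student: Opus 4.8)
The plan is to push every assertion through the chain of Liouville substitutions \eqref{E6.26} and \eqref{E6.35}, which turns the weighted problem \eqref{E6.1} into the constant-weight Sturm--Liouville problem \eqref{E6.42}, i.e.\ \eqref{E6.7}--\eqref{E6.8} with potential $\varrho$, and then to read off the statement from Lemmas \ref{lemma6.3}, \ref{lemma6.9}, \ref{lemma6.8} together with Lemma \ref{lemma6.2}. Since $\mu=c^2\lambda$ by \eqref{E6.6}, the $n$-th eigenvalue $\lambda_n(\epsilon,w)$ of \eqref{E6.1} equals $\mu_n/c^2$, where $\mu_n$ is the $n$-th eigenvalue of \eqref{E6.7}--\eqref{E6.8}. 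I would first match the three cases of the statement to the four cases of Section \ref{sec:5} by reading off \eqref{E7.3}: with $p,\rho>0$, the relation $b_i=0$ is equivalent to $\beta_i=0$, while $a_i=0$ is equivalent to $\alpha_i=\mp\tfrac{\beta_i}{4}(p\rho)_x/(p\rho)$ at the corresponding endpoint. Under this dictionary \eqref{E6.24}, \eqref{E6.38} and \eqref{E6.25} are exactly \eqref{E6.11}, \eqref{E6.13} and \eqref{E6.17} divided by $c^2$, so $\mathfrak c_j(\epsilon,w)=c_j/c^2$ for $j=0,1,2$, and in Case 4 the threshold $\widehat N$ of Lemma \ref{lemma6.8} is inherited.

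The second step is to convert the bounds for the constants $c_j$ provided by Lemmas \ref{lemma6.3}, \ref{lemma6.9}, \ref{lemma6.8} --- namely $c_j\ge\varrho_0$ for all $j$, together with $c_0,c_1\le\varrho_1$ and $c_2\le\varrho_2$ --- into the bounds in terms of $\eta$. Combining \eqref{E7.1}, \eqref{E7.2}, \eqref{E6.6} and \eqref{E6.36} gives the pointwise identity $\varrho(\xi)=c^2\bigl(Q(\psi(\xi))+d(\psi(\xi))/\rho(\psi(\xi))\bigr)=c^2\eta(\psi(\xi),\epsilon,w)$. Hence $\varrho_0=c^2\inf_{x}\eta(x,\epsilon,w)\ge c^2\eta_0$, so $\mathfrak c_j(\epsilon,w)=c_j/c^2\ge\eta_0$. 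For the upper bounds I change variables by $\xi=g(x)$, so that $\mathrm d\xi=\tfrac1c\sqrt{\rho/p}\,\mathrm dx$ by \eqref{E6.26}, and obtain
\begin{equation*}
\frac{\varrho_1}{c^2}=\frac{2}{\pi c^2}\int_0^\pi\varrho(\xi)\,\mathrm d\xi=\frac{2}{\pi}\int_0^\pi\eta(\psi(\xi),\epsilon,w)\,\mathrm d\xi=\frac{2}{c\pi}\int_0^\pi\sqrt{\rho/p}\;\eta(\epsilon,w)\,\mathrm dx,
\end{equation*}
which is the upper bound in (i) and (ii), while in Case 4 the extra term $\tfrac{2}{\pi c^2}\bigl(\tfrac{a_1}{b_1}+\tfrac{a_2}{b_2}+1\bigr)$ sitting inside $\varrho_2$ produces the additional summand in (iii).

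The structural assertions follow from classical Sturm--Liouville theory. Separated boundary conditions make each $\mu_n$, hence each $\lambda_n$, simple; the quoted \cite[Theorem 2.1]{coddington1972theory} gives the strict ordering and $\mu_n\to+\infty$; and $\mu_0\ge\varrho_0>0$ by Lemma \ref{lemma6.2}, so $\lambda_0>0$. For the eigenfunctions, the two substitutions show that $\psi_n(\epsilon,w)(x)$ is proportional to $\varphi_n(g(x))/\mathfrak s(x)$; since $\mathfrak s^{-2}=(p\rho)^{-1/2}$ and $\mathrm d\xi=\tfrac1c\sqrt{\rho/p}\,\mathrm dx$, the $L^2(0,\pi)$-orthonormality of $\{\varphi_n\}$ turns into $c^{-1}\int_0^\pi\psi_m\psi_n\rho\,\mathrm dx=\delta_{mn}$, i.e.\ orthonormality for $(\cdot,\cdot)_{L^2_\rho}$, which gives the first identity in \eqref{E6.4}. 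Multiplying \eqref{E6.1} with index $n$ by $\psi_m$, integrating over $(0,\pi)$ and integrating by parts, the eigenvalue relation yields $(\psi_m,\psi_n)_{\epsilon,w}=\lambda_n(\psi_m,\psi_n)_{L^2_\rho}=\lambda_n\delta_{mn}$ once the boundary contributions of the general conditions are accounted for, giving the second identity in \eqref{E6.4}. Finally, \eqref{E6.3} is a G\aa rding-type estimate: the form $(\cdot,\cdot)_{\epsilon,w}$ is bounded on $\mathcal H^1_g$ because $p,m,f'$ are bounded, and it is coercive because $(y,y)_{\epsilon,w}=\sum_n\lambda_n\hat y_n^2\ge\lambda_0\|y\|^2_{L^2_\rho}$, together with the ellipticity of the principal part $c^{-1}\int_0^\pi p(y')^2\,\mathrm dx$, controls the full $H^1$ norm after forming a suitable convex combination of these two lower bounds.

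The main obstacle I anticipate is not analytic but organizational: tracking the constant $c$ and the change-of-variables factor $\sqrt{\rho/p}$ consistently through the three cases, checking that the boundary terms in the integration by parts are correctly handled for the genuinely Robin conditions in Case 4, and keeping in mind that there \eqref{E6.25} is valid only for $n\ge\widehat N$. Everything else is a transcription of Lemmas \ref{lemma6.2}--\ref{lemma6.8}.
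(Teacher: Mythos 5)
Your proposal is correct and takes essentially the same route as the paper: identify $\mu_n=c^2\lambda_n$ through the Liouville substitutions \eqref{E6.26}, \eqref{E6.35}, divide the asymptotics of Lemmas \ref{lemma6.3}, \ref{lemma6.9}, \ref{lemma6.8} by $c^2$, use Lemma \ref{lemma6.2} and Hypothesis \ref{hy2} for positivity, transport the orthonormal basis via $\varphi_n=\psi_n\mathfrak{s}$ and the change of variables to get the $(\cdot,\cdot)_{L^2_\rho}$-orthogonality, and obtain \eqref{E6.4} by multiplying the eigenvalue equation by $\psi_{n'}$ and integrating by parts. The only difference is that you spell out details the paper leaves implicit, namely the pointwise identity $\varrho(\xi)=c^2\eta(\psi(\xi),\epsilon,w)$ and the conversion $\mathrm{d}\xi=\tfrac1c\sqrt{\rho/p}\,\mathrm{d}x$ giving the stated bounds on $\mathfrak{c}_j$, and the G{\aa}rding-type argument behind \eqref{E6.3}.
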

\begin{proof}
Formula \eqref{E6.6} gives $\mu_n(\vartheta)=c^2\lambda_n(d)$. Then it follows from Hypothesis \ref{hy2} and Lemma \ref{lemma6.2} that $\lambda_0(\epsilon,w)>0$ for all $\epsilon\in[\epsilon_1,\epsilon_2]$ and $w\in\{W\cap H^s:\|w\|_{s}\leq r\}$. By dividing by $c^2$, \eqref{E6.11}, \eqref{E6.13}, \eqref{E6.17} and the inverse Liouville substitution of \eqref{E6.26}, eigenvalues $\lambda_n(d),n\in\mathbf{N}$ of \eqref{E6.1} have the asymptotic formulae
\eqref{E6.24}-\eqref{E6.25}. Moreover the eigenfunctions of \eqref{E6.10} or \eqref{E6.14} or \eqref{E6.15} form an orthonormal basis for $L^2$.  The Liouville substitution \eqref{E6.35} yields
\begin{equation*}
\varphi_{n}=\psi_n(\epsilon,w)\mathfrak{s},
\end{equation*}
where $\mathfrak{s}(x)=(p(x)\rho(x))^{\frac{1}{4}}$. It follows from the inverse Liouville substitution of \eqref{E6.26} that
\begin{equation*}
\begin{aligned}
\int^\pi_0\varphi_{n}(\xi)\varphi_{n}(\xi)~\mathrm{d}\xi&=\int^\pi_0\psi_n(\epsilon,w)(x)\mathfrak{s}(x)
\psi_n(\epsilon,w)(x)\mathfrak{s}(x)\frac{1}{c}\left(\frac{\rho(x)}{p(x)}\right)^{1/2}\mathrm{d}x\\
&=\frac{1}{c}\int^\pi_0\psi_n(\epsilon,w)(x)\psi_n(\epsilon,w)(x)\rho(x)~\mathrm{d}x.
\end{aligned}
\end{equation*}
 Hence the eigenfunctions $\psi_{n}(\epsilon,w),n\in\mathbf{N}$ of \eqref{E6.1}
 form an orthogonal basis for $L^2$ with respect to the scalar product $(\cdot,\cdot)_{L^2_\rho}$. A simple calculation  gives \eqref{E6.3}.
 Furthermore, it is obvious that
\begin{equation*}
-(p\psi'_n(\epsilon,w))'+(m-\epsilon\Pi_{V}f'(t,x,v(\epsilon,w)+w))\psi_n(\epsilon,w)=\lambda_n(\epsilon,w)\psi_n(\epsilon,w).
\end{equation*}
 Multiplying above equality  by $\psi_{n'}(\epsilon,w)$ and integrating by parts yield
\begin{equation*}
(\psi_n,\psi_{n'})_{\epsilon,w}=\delta_{n,n'}\lambda_n(\epsilon,w),
\end{equation*}
which implies \eqref{E6.4}.
\end{proof}

\section{Solution of the $(P)$-equation}\label{sec:6}

In this section, our aim now is to solve the following $(P)$-equation
\begin{equation}\label{E4.1}
L_{\omega}w=\epsilon \Pi_{W}\mathcal{F}(\epsilon,w),
\end{equation}
where $\mathcal{F}(\epsilon,w):=F(v(\epsilon,w),w)$.
\begin{theo}\label{Th2}
There exists $\tilde{w}\in C^1(A_0;W\cap H^s)$ with
\begin{equation*}
\|\tilde{w}\|_{s}\leq\frac{K\epsilon}{\gamma\omega},
\quad\|\partial_{\omega}\tilde{w}\|_s\leq\frac{K(\gamma_0)\epsilon}{\gamma^5\omega},
\quad\|\partial_{\epsilon}\tilde{w}\|_s\leq\frac{K(\gamma_0)}{\gamma^5\omega},
\end{equation*}
and the large Cantor set $B_\gamma$, where $\gamma_0,B_\gamma$ are defined by \eqref{E4.45} (or see \eqref{E4.46}) and \eqref{E4.31} respectively, such that,
for $(\epsilon,\omega)\in B_\gamma$,  $\tilde{w}(\epsilon,\omega)$ solves equation \eqref{E4.1}.
\end{theo}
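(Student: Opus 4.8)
The plan is to solve the $(P)$-equation \eqref{E4.1} by a differentiable Nash--Moser iteration in the scale of Hilbert spaces $W^{(N)}\cap H^s$, where $W^{(N)}\subset W$ is spanned by the modes $\psi_j(\epsilon,w)\,e^{\mathrm{i}lt}$ with $0<|l|\le N$ and $j\le N$ (the $\psi_j$ being the eigenfunctions of Lemma \ref{lemma6.1}), and $N_n$ is a rapidly increasing sequence of truncation levels, say $N_{n+1}=N_n^{\chi}$ with $\chi\in(1,2)$. Introducing the smoothing projectors $\Pi_{W^{(N)}}$ and using the interpolation and algebra inequalities of Remark \ref{remark1} together with the tame dependence of $\mathcal F(\epsilon,w)=F(v(\epsilon,w),w)$ on $w$ (which follows from Lemma \ref{lemma3.1}, Remark \ref{Remark2}, and $f\in\mathcal C_k$), the iteration is the modified Newton scheme $w_0=0$,
\begin{equation*}
w_{n+1}=w_n+h_{n+1},\qquad h_{n+1}=-\,\mathcal L_{N_{n+1}}(\epsilon,\omega,w_n)^{-1}\,\Pi_{W^{(N_{n+1})}}\bigl(L_\omega w_n-\epsilon\Pi_W\mathcal F(\epsilon,w_n)\bigr),
\end{equation*}
where $\mathcal L_{N}(\epsilon,\omega,w)$ is the truncated linearized operator \eqref{E4.32}.

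The heart of the argument, and the step I expect to be the main obstacle, is property $(P5)$: the invertibility of $\mathcal L_N(\epsilon,\omega,w)$ on $W^{(N)}$ together with a polynomial tame estimate
\begin{equation*}
\|\mathcal L_N(\epsilon,\omega,w)^{-1}h\|_{s}\le \frac{KN^{\tau}}{\gamma}\,\|h\|_{s+\sigma}
\end{equation*}
for $(\epsilon,\omega)$ outside a set of resonant parameters. In the basis $\{\psi_j(\epsilon,w)e^{\mathrm{i}lt}\}$ one writes $\mathcal L_N=\mathcal D_N+\epsilon\,\mathcal R_N$, where the principal part $\mathcal D_N$ is governed by the small divisors $\lambda_j(\epsilon,w)-\omega^2l^2$ and $\mathcal R_N$ is a bounded perturbation (coming from the $x$-dependence of $\rho$, the term $\mathrm{D}_w v$, and the off-diagonal part of $f'$). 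Using the asymptotic formulae $\lambda_j=j^2/c^2+\mathfrak c_i+O(j^{-2})$ of Lemma \ref{lemma6.1} and the ``first order Melnikov'' non-resonance condition $|\omega^2l^2-\lambda_j(\epsilon,w)|\ge\gamma(1+|l|)^{-\tau}$, the inverse of $\mathcal D_N$ is bounded by $K\gamma^{-1}N^{\tau}$; a multiscale/coupling argument in the spirit of \cite{berti2008cantor,berti2015abstract} then absorbs $\epsilon\mathcal R_N$ and yields the tame bound, and it is here that the finite $C^k$-regularity of $f$ in $(t,u)$ and the $H^1$-regularity of the coefficients in $x$ enter in an essential way.

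Granting $(P5)$, the inductive Lemma \ref{lemma4.1} is run: combining $(P5)$ with the smoothing estimates $\|\Pi_{W^{(N)}}w\|_{s+\sigma}\le N^{\sigma}\|w\|_s$ and $\|(\mathrm{Id}-\Pi_{W^{(N)}})w\|_{s}\le N^{-\beta}\|w\|_{s+\beta}$ and with the quadratic Taylor remainder of $\mathcal F$, one proves by induction, on the nested parameter sets $A_n$ along which all linearized operators up to step $n$ are invertible, that
\begin{equation*}
\|h_{n+1}\|_s\le\frac{K\epsilon}{\gamma\omega}\,N_n^{-\beta},\qquad\|w_n\|_s\le\frac{K\epsilon}{\gamma\omega}.
\end{equation*}
Hence $\sum_n h_{n+1}$ converges in $W\cap H^s$ to a limit $\tilde w$ with $\|\tilde w\|_s\le K\epsilon/(\gamma\omega)$, and $L_\omega w_n-\epsilon\Pi_W\mathcal F(\epsilon,w_n)\to0$ in $H^s$, so that $\tilde w$ solves \eqref{E4.1} on $\bigcap_nA_n$. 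Differentiating the recursion in $(\epsilon,\omega)$, via $\partial\mathcal L_N^{-1}=-\mathcal L_N^{-1}(\partial\mathcal L_N)\mathcal L_N^{-1}$ and the bounds on $\partial_\epsilon v,\mathrm{D}_w v$ from Remark \ref{Remark2} together with the $\gamma^{-2}$-type bounds for the derivatives of the small divisors, yields the $C^1$ dependence of $\tilde w$ on $(\epsilon,\omega)$ with the claimed bounds $\|\partial_\omega\tilde w\|_s\le K(\gamma_0)\epsilon/(\gamma^5\omega)$ and $\|\partial_\epsilon\tilde w\|_s\le K(\gamma_0)/(\gamma^5\omega)$, the extra powers of $\gamma$ arising from the two occurrences of $\mathcal L_N^{-1}$ and from differentiating the eigenvalues.

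Finally, the admissible set is the Cantor-like set $B_\gamma=\bigcap_nA_n$ of \eqref{E4.31}, where $A_{n+1}\subseteq A_n$ is obtained by removing those $(\epsilon,\omega)$ at which the non-resonance conditions at scale $N_{n+1}$ with $w=w_n$ fail. Since each eigenvalue $\lambda_j(\epsilon,w_n)$ varies with nondegenerate speed in $\epsilon$ (ultimately via Hypothesis \ref{hy1}) and $\omega\mapsto\omega^2l^2$ is monotone, the slice removed because of a pair $(l,j)$ has measure $O\!\left(\gamma(1+|l|)^{-\tau}\right)|\Omega|$; summing over the $(l,j)$ with $|l|,j\le N_{n+1}$ still present (using the asymptotics of Lemma \ref{lemma6.1} to count the resonant sites, $j\sim|l|\omega c$) and then over $n$ (convergent because $\tau>1$) gives $|B_\gamma\cap\Omega|\ge(1-K\gamma)|\Omega|$, as carried out in subsection \ref{sec:6.3}. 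This completes the construction of $\tilde w$ on $B_\gamma$.
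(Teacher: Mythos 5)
Your overall skeleton (Galerkin truncation, Newton-type iteration, tame inverse of the truncated linearized operator under first-order Melnikov conditions, measure estimate) is the same as the paper's, but two essential pieces of the actual proof are missing or wrong. First, the heart of the matter, property $(P5)$, is not proved: you write $\mathcal L_N=\mathcal D_N+\epsilon\mathcal R_N$ and then invoke ``a multiscale/coupling argument in the spirit of the references'' to absorb the perturbation. The paper does not use a multiscale scheme here; it conjugates $\mathcal L_N$ to $\rho|\mathscr L_{1,\mathrm D}|^{1/2}(\cdots)(\mathrm{Id}-\mathcal R)|\mathscr L_{1,\mathrm D}|^{1/2}$ and sums a Neumann series, and the convergence of that series rests entirely on the separation estimate (claim {\bf(F1)}) $\frac{1}{\sqrt{\varpi_l\varpi_k}}\le\frac{|k-l|^{\sigma}}{L\gamma^{3}\omega}$, which is where the \emph{second} family of non-resonance conditions $|\omega l-j/c|>\gamma/l^{\tau}$ and the eigenvalue asymptotics $\sqrt{\lambda_j}=j/c+O(1/j)$ (or $(j+1/2)/c+O(1/j)$, depending on the boundary conditions) enter, together with the lower bound $j^*\ge\mathfrak N\omega l$ for the minimizing index. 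Without supplying this (or some substitute) you have no quantitative control of the off-diagonal part $\mathscr L_{1,\mathrm{ND}}$ and of the finite-rank term coming from $\mathrm D_w v$, so the tame bound in $(P5)$ — and hence the whole iteration — is unsupported. Note also that your truncated spaces are spanned by $\psi_j(\epsilon,w)e^{\mathrm i lt}$ with $j\le N$, i.e. they depend on the unknown $w$; the paper deliberately truncates only in the time Fourier modes precisely to avoid projectors that move with $w$.

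Second, your construction cannot deliver the statement $\tilde w\in C^1(A_0;W\cap H^s)$ with the bounds on $\partial_\omega\tilde w$, $\partial_\epsilon\tilde w$, nor the paper's set $B_\gamma$. Your iterates $w_n$ are defined only on the nested sets $A_n$, and you take $B_\gamma=\bigcap_n A_n$; but that intersection is a Cantor-like set with empty interior, on which ``$C^1$ in $(\epsilon,\omega)$'' and the derivative estimates are meaningless as stated. The paper resolves this by a Whitney-type extension: cut-off functions $\psi_n$ supported between $\widehat A_n$ and $\widetilde A_n$ produce $\tilde h_n=\psi_n h_n\in C^1(A_0;W_{N_n})$, whose sum defines $\tilde w$ on all of $A_0$ with the $\gamma^{-5}$ losses coming from $|\partial_{\omega,\epsilon}\psi_n|\le CN_n^{\tau+1}/(\gamma_0\gamma^4)$. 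Moreover $B_\gamma$ in \eqref{E4.31} is \emph{not} $\bigcap_nA_n$: it is defined by non-resonance conditions on the eigenvalues $\lambda_j(\epsilon,\tilde w)$ evaluated at the limit function, and one must then prove $B_\gamma\subseteq\widetilde A_n$ for all $n$ (Lemma \ref{lemma4.10}), using the Lipschitz dependence of the eigenvalues on $(\epsilon,w)$ (Kato's perturbation theorem, Lemma \ref{Lemma6.6}), the convergence rate $\|\tilde w-\tilde w_n\|_s\lesssim\frac{\epsilon}{\gamma\omega}N_{n+1}^{-(\sigma+3)/\chi}$, and the numerical constraint $\chi\le(\sigma+3)/\tau$; only then does $\tilde w(\epsilon,\omega)=w_n(\epsilon,\omega)$ on $B_\gamma$, so that $\tilde w$ actually solves \eqref{E4.1} there. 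These steps are absent from your proposal, and without them the conclusion of Theorem \ref{Th2} does not follow from the iteration alone.
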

Let us denote by the symbol $[~\cdot~]$ the integer part. Define
\begin{align}\label{E4.2}
W_{N_n}:=\Bigg\{w\in W~|~w=\sum_{|l|\leq N_n}w_{l}(x)e^{\mathrm{i}lt}\Bigg\},\quad W_{N_n}^{\bot}:=\Bigg\{w\in W~|~w=\sum_{|l|> N_n}w_{l}(x)e^{\mathrm{i}lt}\Bigg\},
\end{align}
where $N_n:=[e^{\mathfrak{d}\chi^{n}}]$ with $\mathfrak{d}=\ln N_0$ and $1<\chi\leq2$. Evidently there is a direct sum decomposition
\begin{equation*}
W=W_{N_n}\oplus W_{N_n}^{\bot}.
\end{equation*}
Furthermore $P_{N_n},P^{\bot}_{N_n}$ denote the orthogonal projectors onto $W_{N_n}$ and $W_{N_n}^{\bot}$ respectively, namely
\begin{equation*}
P_{N_n}:W\rightarrow W_{N_n},~~P^{\bot}_{N_n}:W\rightarrow W_{N_n}^{\bot}.
\end{equation*}
The existence result of the solutions of  ($P$)-equation is based on a differentiable Nash-Moser iteration scheme. Denote by $\Delta^{\gamma,\tau}_{N}(w)$ for $\tau\in(1,2),\gamma\in(0,1)$ the set of $(\epsilon,\omega)$ satisfying the Melnikov non-resonance conditions, i.e.
\begin{equation}\label{E4.3}
\begin{aligned}
\Delta^{\gamma,\tau}_{N}(w):=\Bigg\{&(\epsilon,\omega)\in(\epsilon_1,\epsilon_2)\times(\omega_0,+\infty): \left|\omega l-\sqrt{\lambda_{j}(\epsilon,w)}\right|>\frac{\gamma}{l^{\tau}},\\
 &\left|\omega l-\frac{j}{c}\right|>\frac{\gamma}{l^{\tau}},~\forall l=1,2,\cdots,N,j\geq0\Bigg\},
\end{aligned}
\end{equation}
where $\epsilon_i,i=1,2$ are given by Lemma \ref{lemma3.1}, $\omega_{0}$ is given by \eqref{E6.39} and $c$ is defined by \eqref{E6.23}.
\subsection{Some properties on $\mathcal{F}(\epsilon,w)$}\label{sec:6.0}
To guarantee the convergence of the iteration scheme, we need the following properties $(P1)$-$(P5)$, see also \cite{berti2008cantor,berti2010sobolev}.

$\bullet~(P1)$(Regularity) $\mathcal{F}\in C^2(H^s;H^s)$ and $\mathcal{F},\mathrm{D}_{w}\mathcal{F},\mathrm{D}^2_{w}\mathcal{F}$ are bounded on $\{\|w\|_{s}\leq 1\}$.

$\bullet~(P2)$(Tame) $\forall s\leq s'\leq k-3$, $\forall w\in H^{s'}$ and $\|w\|_{s}\leq1$,
\begin{align*}
\|\mathcal{F}(\epsilon,w)\|_{{s'}}&\leq C(s')(1+\|w\|_{{s'}}),\\
\|\mathrm{D}_{w}\mathcal{F}(\epsilon,w)h\|_{{s'}}&{\leq}C(s')(\|w\|_{{s'}}\|h\|_{s}+\|h\|_{{s'}}),\\
\|\mathrm{D}^2_{w}\mathcal{F}(\epsilon,w)[h,h]\|_{{s'}}&{\leq}C(s')(\|w\|_{{s'}}\|h\|^2_{s}+\|h\|_{s}\|h\|_{{s'}}).
\end{align*}

$\bullet~(P3)$(Taylor Tame) $\forall s\leq s'\leq k-3$, $\forall w\in H^{s'}$, $\forall h\in H^{s'}$ and $\|w\|_{s}\leq1$,
\begin{equation*}
\|\mathcal{F}(\epsilon,w+h)-\mathcal{F}(\epsilon,w)-\mathrm{D}_{w}\mathcal{F}(\epsilon,w)[h]\|_{{s'}}\leq C(s')(\|w\|_{{s'}}\|h\|^2_{s}+\|h\|_{s}\|h\|_{{s'}}).
\end{equation*}
In particular, for $s'=s$,
\begin{equation*}
\|\mathcal{F}(\epsilon,w+h)-\mathcal{F}(\epsilon,w)-\mathrm{D}_{w}\mathcal{F}(\epsilon,w)[h]\|_{s}\leq C\|h\|^2_{s}.
\end{equation*}

$\bullet~(P4)$(Smoothing) For all $N\in\mathbf{N}\backslash \{0\}$, we have
\begin{align*}
&\|P_{N}u\|_{{s+r}}\leq N^r\|u\|_{s},\quad\forall u\in H^s,\\
&\|P^{\bot}_{N}u\|_{s}\leq N^{-r}\|u\|_{{s+r}},\quad\forall u\in H^{s+r}.
\end{align*}

$\bullet~(P5)$(Invertibility of $\mathcal{L}_{N}$) Let $(\epsilon,\omega)\in\Delta^{\gamma,\tau}_{N}(w)$ for fixed $\tau,\gamma$ with $1<\tau<2,0<\gamma<1$. The linear operator is defined as
\begin{align}\label{E4.32}
\mathcal{L}_{N}(\epsilon,\omega,w)[h]:=-L_{\omega}h+\epsilon P_{N}\Pi_{W}\mathrm{D}_{w}\mathcal{F}(\epsilon,w)[h],\quad\forall h\in W^{(N)},
\end{align}
where $L_{\omega}h:=\omega^2\rho(x)h_{tt}-(p(x)h_{x})_{x}$, $h=\sum_{1\leq|l|\leq N}h_l(x)e^{\mathrm{i}lt}$.  There exist $K, K(s')>0$ such that if
\begin{equation*}
\|w\|_{{s+\sigma}}\leq1\quad\text{with}~\sigma=\frac{\tau(\tau-1)}{2-\tau},
\end{equation*}
 then $\mathcal{L}_{N}(\epsilon,\omega,w)$ is invertible with
\begin{equation}\label{E4.7}
\left\|\mathcal{L}^{-1}_{N}(\epsilon,\omega,w)h\right\|_{{s'}}\leq \frac{K(s')}{\gamma\omega}N^{\tau-1}\left(\|h\|_{{s'}}+\|w\|_{{s'+\sigma}}\|h\|_{s}\right),\quad\forall s'\geq s>{1}/{2}.
\end{equation}
In particular, for $s'=s$,
\begin{equation}\label{E4.8}
\left\|\mathcal{L}^{-1}_{N}(\epsilon,\omega,w)h\right\|_{s}\leq \frac{K}{\gamma\omega}N^{\tau-1}\|h\|_{s}.
\end{equation}
\begin{proof}
($\mathrm{i}$) It follows from formula \eqref{E2.5} and Lemma \ref{lemma3.1} that $\mathcal{F}\in C^2(H^s;H^s)$. Furthermore, according to \eqref{E2.4}, Lemma \ref{lemma2.4} and Remark \ref{Remark2}, it yields that
\begin{align*}
\|\mathcal{F}(\epsilon,w)\|_{s}&=\|F(v(\epsilon,w)+w)\|_{s}{\leq}
C(1+\|v\|_{H^1}+\|w\|_{s})\leq C,\\
\|\mathrm{D}_{w}\mathcal{F}(\epsilon,w)\|_{s}&=\|\mathrm{D}_{u}F(v(\epsilon,w)+w)(1+\mathrm{D}_{w}v(\epsilon,w))\|_{s}\stackrel{\eqref{E2.5}}{\leq}C,\\
\|\mathrm{D}^2_{w}\mathcal{F}(\epsilon,w)\|_{s}&=\|\mathrm{D}^2_{u}F(v(\epsilon,w)+w)(1+\mathrm{D}_{w}v(\epsilon,w))^2+
\mathrm{D}_{u}F(v(\epsilon,w)+w)\mathrm{D}^2_{w}v(\epsilon,w)\|\stackrel{\eqref{E2.5}}{\leq} C.
\end{align*}

($\mathrm{ii}$) It is easy to obtain that
\begin{align*}
\mathrm{D}^2_{w}\mathcal{F}(\epsilon,w)[h,h]=&\mathrm{D}^2_{u}F(v(\epsilon,w)+w)(h+\mathrm{D}_{w}v(\epsilon,w)[h])^2\\
&+\mathrm{D}_{u}F(v(\epsilon,w)+w)\mathrm{D}^2_{w}v(\epsilon,w)[h,h].
\end{align*}
With the help of \eqref{E2.1}, \eqref{E2.4}, Lemma \ref{lemma2.4} and Remark \ref{Remark2}, we get
\begin{align*}
&\|\mathcal{F}(\epsilon,w)\|_{s'}=\|F(v(\epsilon,w)+w)\|_{s'}{\leq}
C(s')(1+\|v\|_{H^1}+\|w\|_{s'})\leq C(s')(1+\|w\|_{s'}),\\
&\|\mathrm{D}_{w}\mathcal{F}(\epsilon,w)h\|_{s'}=\|\mathrm{D}_{u}F(v(\epsilon,w)+w)(1+\mathrm{D}_{w}v(\epsilon,w))\|_{s'}\stackrel{\eqref{E2.5}}{\leq}C(s')(\|w\|_{s'}\|h\|_{s}+\|h\|_{s'}),\\
&\|\mathrm{D}^2_{u}F(v(\epsilon,w)+w)(h+\mathrm{D}_{w}v(\epsilon,w)[h])^2\|_{s'}\stackrel{\eqref{E2.5}}{\leq} C(s')(\|w\|_{s'}\|h\|^2_s+\|h\|_{s'}\|h\|_{s}),\\
&\|\mathrm{D}_{u}F(v(\epsilon,w)+w)\mathrm{D}^2_{w}v(\epsilon,w)[h,h]\|_{s'}\stackrel{\eqref{E2.5}}{\leq} C(s')(\|w\|_{s'}\|h\|^2_s+\|h\|^2_{s}).
\end{align*}

($\mathrm{iii}$) According to $(P2)$, it is clear to see  that $(P3)$ holds.

($\mathrm{iv}$) Obviously, we can  obtain $(P4)$ owing to \eqref{E4.2} and the definition of $H^s$-norm.
\end{proof}

\subsection{Invertibility of the linearized operator}\label{sec:6.1}
The invertibility of the linearized operators is the core of any Nash-Moser iteration.
Let us complete the proof of the property $(P5)$.

Let $(\epsilon,\omega)\in\Delta^{\gamma,\tau}_{N}(w)$ for fixed $\tau,\gamma$ with $1<\tau<2,0<\gamma<1$, where $\Delta^{\gamma,\tau}_{N}(w)$ is defined by \eqref{E4.3}.
The linear operator may be written as, for all $h\in W_{N}$,
\begin{align*}
\mathcal{L}_{N}(\epsilon,\omega,w)[h]:&=-L_{\omega}h+\epsilon P_N\Pi_{W}\mathrm{D}_{w}F(v(\epsilon,w)+w)[h]
=\mathscr{L}_1(\epsilon,\omega,w)[h]+\mathscr{L}_2(\epsilon,\omega,w)[h],
\end{align*}
where
\begin{align}
&\mathscr{L}_1(\epsilon,\omega,w)[h]:=-L_{\omega}h+\epsilon P_N\Pi_Wf'(t,x,v(\epsilon,w)+w)h,\nonumber\\
&\mathscr{L}_2(\epsilon,\omega,w)[h]:=\epsilon P_N\Pi_Wf'(t,x,v(\epsilon,w)+w)\mathrm{D}_{w}v(\epsilon,w)[h].\label{E7.4}
\end{align}
Lemma \ref{lemma6.1} gives that, for all $u=\sum_{l\in\mathbf{Z}\setminus \{0\},j\in\mathbf{N}}\hat{u}^2_{l,j}\psi_{j}(\epsilon,w)e^{\mathrm{i}lt}$,
the $H^s$-norm
\[\|u\|^{2}_{{s}}=\sum\limits_{l\in\mathbf{Z}\setminus \{0\}}\|u_l\|^2_{H^1}(1+ l^{2s})\]
is equivalent to the norm $\sum_{l\in\mathbf{Z}\setminus \{0\},j\in\mathbf{N}}\lambda_{j}(\epsilon,w)\hat{u}^2_{l,j}(1+l^{2s})$, i.e.
\begin{align}\label{E5.26}
L^2_1\|u\|^2_{s}\leq\sum\limits_{l\in\mathbf{Z}\setminus \{0\},j\in\mathbf{N}}\lambda_{j}(\epsilon,w)\hat{u}^2_{l,j}(1+l^{2s})\leq L^2_2\|u\|^2_{s},
\end{align}
where $L_i,i=1,2$ are seen in \eqref{E6.3}.  Denote  $a(t,x):=f'(t,x,v(\epsilon,w(t,x))+w(t,x))$. It follows from  \eqref{E2.5} and the fact $\|w\|_{{s+\sigma}}\leq1$
that  $\forall v\in\mathcal{H}^1_g$,  $\forall s'\geq s>\frac{1}{2}$,
\begin{align}
&\|a\|_{{s'}}\leq C(s')(1+\|w\|_{{s'}}),\label{E5.14}\\
&\|a\|_{s}\leq\|a\|_{{s+\sigma}}\leq C.\label{E5.15}
\end{align}
Moreover, with the help of decomposing
$a(t,x)=\sum_{k\in\mathbf{Z}}a_{k}(x)e^{\mathrm{i}kt}$, $h=\sum_{1\leq |l|\leq N}h_l(x)e^{\mathrm{i}lt}$, the operator $\mathscr{L}_1(\epsilon,\omega,w)$ can be written as
\begin{align*}
\mathscr{L}_1(\epsilon,\omega,w)[h]=&\sum\limits_{{1\leq|l|\leq N}}\left(\omega^2l^2\rho h_{l}+\partial_{x}(p\partial_{x}h_l-mh_l)\right)e^{\mathrm{i}lt}+\epsilon P_{N}\left(\sum\limits_{k\in\mathbf{Z},{1\leq|l|\leq N}}a_{k-l}h_le^{\mathrm{i}kt}\right)\\
=&\sum\limits_{1\leq|l|\leq N}\left(\omega^2l^2\rho h_{l}+\partial_{x}(p\partial_{x}h_l)-mh_l+\epsilon a_0h_l\right)e^{\mathrm{i}lt}+\epsilon\sum\limits_{|l|,|k|\in\{1,\cdots,N\},k\neq l}a_{k-l}h_le^{\mathrm{i}kt}\\
=&\rho\mathscr{L}_{{1,\mathrm{D}}}(\epsilon,\omega,w)[h]-\rho\mathscr{L}_{{1,\mathrm{ND}}}(\epsilon,\omega,w)[h],
\end{align*}
where
\begin{align}
&\mathscr{L}_{{1,\mathrm{D}}}h:=\sum\limits_{1\leq|l|\leq N}\left(\omega^2l^2 h_{l}+\frac{1}{\rho}\left((ph'_l)'-mh_l+\epsilon a_0h_l\right)\right)e^{\mathrm{i}lt},\nonumber\\
&\mathscr{L}_{{1,\mathrm{ND}}}h:=-\frac{\epsilon}{\rho}\sum\limits_{|l|,|k|\in\{1,\cdots,N\},k\neq l}a_{k-l}h_le^{\mathrm{i}kt}.\label{E5.27}
\end{align}
It is evident that $a_0=\Pi_{V}f'(t,x,v(\epsilon,w)+w)$ and $h_l\in \mathcal{H}^1_g$. Hence, by
Lemma \ref{lemma6.1}, we obtain that for $h_{l}=\sum^{+\infty}_{j=0}\hat{h}_{l,j}\psi_j(\epsilon,w)$
\[\omega^2l^2 h_{l}+\frac{1}{\rho}(ph'_l)'-mh_l+\epsilon a_0h_l=\sum\limits^{+\infty}_{j=0}(\omega^2l^2-\lambda_j(\epsilon,w))\hat{h}_{l,j}\psi_j(\epsilon,w).\]
Then $\mathscr{L}_{{1,\mathrm{D}}}$  is a diagonal operator on $W_{N}$.
In addition we define $|\mathscr{L}_{{1,\mathrm{D}}}|^{\frac{1}{2}}$ as
\begin{align*}
|\mathscr{L}_{{1,\mathrm{D}}}|^{\frac{1}{2}}h=\sum\limits_{1\leq|l|\leq N}\sum\limits^{+\infty}_{j=0}{|\omega^2l^2-\lambda_j(\epsilon,w)|}^{\frac{1}{2}} \hat{h}_{l,j}\psi_j(\epsilon,w)e^{\mathrm{i}lt}.
\end{align*}
If  $\omega^2l^2-\lambda_j(\epsilon,w)\neq0,~\forall1\leq|l|\leq N,~\forall j\geq0$, then its invertibility is
\begin{align*}
|\mathscr{L}_{{1,\mathrm{D}}}|^{-\frac{1}{2}}{h}:=\sum\limits_{1\leq|l|\leq N}\sum\limits^{+\infty}_{j=0}\frac{\hat{{h}}_{l,j}}{\sqrt{|\omega^2l^2-\lambda_j(\epsilon,w)|}}\psi_j(\epsilon,w)e^{\mathrm{i}lt}.
\end{align*}
Thus $\mathcal{L}_{N}(\epsilon,\omega,w)$  can be written as
\begin{align*}
\mathcal{L}_{N}(\epsilon,\omega,w)=\rho|\mathscr{L}_{1,{\mathrm{D}}}|^{\frac{1}{2}}\left(|\mathscr{L}_{1,{\mathrm{D}}}|^{-\frac{1}{2}}
\mathscr{L}_{1,{\mathrm{D}}}|\mathscr{L}_{1,{\mathrm{D}}}|^{-\frac{1}{2}}-{R}_1-{R}_2\right)|\mathscr{L}_{1,{\mathrm{D}}}|^{\frac{1}{2}},
\end{align*}
where
\begin{equation}\label{E5.28}
{R}_1=|\mathscr{L}_{1,{\mathrm{D}}}|^{-\frac{1}{2}}
\mathscr{L}_{1,{\mathrm{ND}}}|\mathscr{L}_{1,{\mathrm{D}}}|^{-\frac{1}{2}},\quad
{R}_2=-|\mathscr{L}_{1,{\mathrm{D}}}|^{-\frac{1}{2}}
\left(1/\rho\mathscr{L}_2\right)|\mathscr{L}_{1,{\mathrm{D}}}|^{-\frac{1}{2}}.
\end{equation}
The definitions of $\mathscr{L}_{1,\mathrm{D}}$, $|\mathscr{L}_{1,{\mathrm{D}}}|^{-\frac{1}{2}}$ give
\begin{align*}
\left(|\mathscr{L}_{1,{\mathrm{D}}}|^{-\frac{1}{2}}
\mathscr{L}_{1,{\mathrm{D}}}|\mathscr{L}_{1,{\mathrm{D}}}|^{-\frac{1}{2}}\right){h}=\sum\limits_{1\leq |l|\leq N}\sum\limits_{j=0}^{+\infty}\frac{\omega^2l^2-\lambda_j(\epsilon,w)}{|\omega^2l^2-\lambda_j(\epsilon,w)|}\hat{h}_{l,j}\psi_{j}(\epsilon,w)e^{\mathrm{i}lt},\quad \forall{h}\in{W}_{N}.
\end{align*}
Consequently, $|\mathscr{L}_{1,{\mathrm{D}}}|^{-\frac{1}{2}}\mathscr{L}_{1,{\mathrm{D}}}|\mathscr{L}_{1,{\mathrm{D}}}|^{-\frac{1}{2}}$ is invertible with
\begin{align*}
\left(|\mathscr{L}_{1,{\mathrm{D}}}|^{-\frac{1}{2}}
\mathscr{L}_{1,{\mathrm{D}}}|\mathscr{L}_{1,{\mathrm{D}}}|^{-\frac{1}{2}}\right)^{-1}{h}=
\sum\limits_{1\leq |l|\leq N}\sum\limits_{j=0}^{+\infty}\frac{|\omega^2l^2-\lambda_j(\epsilon,w)|}
{\omega^2l^2-\lambda_j(\epsilon,w)}\hat{h}_{l,j}\psi_{j}(\epsilon,w)e^{\mathrm{i}lt},\quad \forall{h}\in{W}_{N}.
\end{align*}
Hence $\mathcal{L}_{N}(\epsilon,\omega,w)$ is reduced to
\begin{align}\label{E5.34}
\mathcal{L}_{N}(\epsilon,\omega,w)=\rho|\mathscr{L}_{1,{\mathrm{D}}}|^{\frac{1}{2}}\left(|\mathscr{L}_{1,{\mathrm{D}}}|^{-\frac{1}{2}}
\mathscr{L}_{1,{\mathrm{D}}}|\mathscr{L}_{1,{\mathrm{D}}}|^{-\frac{1}{2}}\right)\left(\mathrm{Id}-\mathcal{R}\right)
|\mathscr{L}_{1,{\mathrm{D}}}|^{\frac{1}{2}},
\end{align}
where $\mathcal{R}=\mathcal{R}_1+\mathcal{R}_2$ with
\begin{align*}
\mathcal{R}_1=\left(|\mathscr{L}_{1,{\mathrm{D}}}|^{-\frac{1}{2}}
\mathscr{L}_{1,{\mathrm{D}}}|\mathscr{L}_{1,{\mathrm{D}}}|^{-\frac{1}{2}}\right)^{-1}R_1,\quad
\mathcal{R}_2=\left(|\mathscr{L}_{1,{\mathrm{D}}}|^{-\frac{1}{2}}
\mathscr{L}_{1,{\mathrm{D}}}|\mathscr{L}_{1,{\mathrm{D}}}|^{-\frac{1}{2}}\right)^{-1}R_2.
\end{align*}
To verify the invertibility of $\left(\mathrm{Id}-\mathcal{R}\right)$ for all $s'\geq s>\frac{1}{2}$ and $h\in W_{N}$, it is required to estimate the upper bounds of $\left\|\mathcal{R}_ih\right\|_{s'},i=1,2$. This indicates that, $\forall s>0$, $\forall h\in W_{N}$, the upper bounds of $\|~|\mathscr{L}_{{1,\mathrm{D}}}|^{-\frac{1}{2}}{h}\|_{{s}}$ and $\|(|\mathscr{L}_{1,{\mathrm{D}}}|^{-\frac{1}{2}}
\mathscr{L}_{1,{\mathrm{D}}}|\mathscr{L}_{1,{\mathrm{D}}}|^{-\frac{1}{2}})^{-1}{h}\|_{s}$ have to be given.

First, the equivalent norm \eqref{E5.26} shows for all $s\geq0$
\begin{align}\label{E5.30}
\left\|\left(|\mathscr{L}_{1,{\mathrm{D}}}|^{-\frac{1}{2}}
\mathscr{L}_{1,{\mathrm{D}}}|\mathscr{L}_{1,{\mathrm{D}}}|^{-\frac{1}{2}}\right)^{-1}{h}\right\|_{s}\leq&\frac{1}{L_1}\left(\sum\limits_{1\leq|l|\leq N,j\in\mathbf{N}}\lambda_j(\epsilon,w)\left(\frac{|\omega^2l^2-\lambda_j(\epsilon,w)|}
{\omega^2l^2-\lambda_j(\epsilon,w)}\hat{h}_{l,j}\right)^2(1+l^{2s})\right)^{\frac{1}{2}}\nonumber\\
=&\frac{1}{L_1}\left(\sum\limits_{1\leq|l|\leq N,j\in\mathbf{N}}\lambda_j(\epsilon,w)\hat{h}^2_{l,j}(1+l^{2s})\right)^{\frac{1}{2}}\nonumber\\
\leq& \frac{L_2}{L_1}\|h\|_{s}.
\end{align}
To establish a proper upper bound of $\|~|\mathscr{L}_{{1,\mathrm{D}}}|^{-\frac{1}{2}}{h}\|_{{s}}$, assume  the following ``Melnikov's'' non-resonance conditions:
\begin{equation}\label{E5.12}
|\omega^2l^2-\lambda_j(\epsilon,w)|>\frac{\gamma\omega}{l^{\tau-1}},\quad\forall l=1,\cdots,N,~j\geq 0.
\end{equation}
 Let
\begin{equation}\label{E6.27}
\varpi_l:=\min_{j\geq0}|\omega^2l^2-\lambda_j(\epsilon,w)|=|\omega^2l^2-\lambda_{j^*}(\epsilon,w)|
\end{equation}
satisfy $\varpi_l=\varpi_{-l},l=1,2,\cdots,N$. Based on assumption \eqref{E5.12}, we claim that, $\forall {h}\in{W}_{N}$, $\forall s\geq0$, the operator $|\mathscr{L}_{{1,\mathrm{D}}}|^{-\frac{1}{2}}$
satisfies
\begin{align}\label{E5.17}
\|~|\mathscr{L}_{{1,\mathrm{D}}}|^{-\frac{1}{2}}{h}\|_{{s}}\leq\frac{\sqrt{2}L_2}{\sqrt{\gamma\omega}L_1}\|{h}\|_{{s+\frac{\tau-1}{2}}},\\
\|~|\mathscr{L}_{{1,\mathrm{D}}}|^{-\frac{1}{2}}{h}\|_{{s}}\leq\frac{\sqrt{2}{L_2}N^{\frac{\tau-1}{2}}}{\sqrt{\gamma\omega}L_1}\|{h}\|_{{s}} \label{E5.17.2}.
\end{align}
It is clear that
\[
|l|^{\tau-1}(1+l^{2s})<2(1+|l|^{2s+\tau-1}),\quad  \forall |l|\geq1.
\]
Applying this together with \eqref{E5.26}, \eqref{E5.12}-\eqref{E6.27}, we derive
\begin{align*}
\|~|\mathscr{L}_{{1,\mathrm{D}}}|^{-\frac{1}{2}}{h}\|^2_{{s}}&\leq\frac{1}{L^2_1}\sum\limits_{1\leq|l|\leq N,j\in\mathbf{N}}\lambda_j(\epsilon,w)\left(\frac{\hat{{h}}_{l,j}}{\sqrt{|\omega^2l^2-\lambda_j(\epsilon,w)|}}\right)^2(1+l^{2s})\\
&\leq\frac{1}{L^2_1}\sum\limits_{1\leq|l|\leq N,j\in\mathbf{N}}\frac{\lambda_j(\epsilon,w)}{\varpi_l}\hat{{h}}^2_{l,j}(1+l^{2s})\\
&\leq\frac{1}{\gamma\omega L^2_1}\sum\limits_{
1\leq|l|\leq N,j\in\mathbf{N}}|l|^{\tau-1}\lambda_j(\epsilon,w)\hat{h}^2_{{l,j}}(1+l^{2s})\\
&\leq \frac{2L^2_2}{\gamma\omega L^2_1}\|h\|^2_{{s+\frac{\tau-1}{2}}}\leq \frac{2L^2_2N^{\tau-1}}{\gamma\omega L^2_1}\|h\|^2_{{s}}.
\end{align*}

The next step is to verify the upper bounds of $\left\|\mathcal{R}_ih\right\|_{s'},i=1,2$ for all $s'\geq s>\frac{1}{2}$ and $h\in W_{N}$. Assume that the other ``Melnikov'' non-resonance conditions holds:
\begin{equation}\label{E5.11}
\left|\omega l-\frac{j}{c}\right|>\frac{\gamma}{l^\tau},\quad\forall l=1,\cdots,N,~j\geq 0,
\end{equation}
where $c$ is defined by \eqref{E6.23}. In fact,
   condition \eqref{E5.11} will be  applied in the proof of the claim {\bf(F1)} (see \eqref{E5.16}).
   To prove the claim {\bf(F1)}, we have to  use the asymptotic formulae \eqref{E6.24}-\eqref{E6.25} of  $\lambda_{j}(\epsilon,w)$ and guarantee $j^*\neq0$.  This leads to  some restrictions on $\omega$.
   We discuss it in three cases:

$\mathrm{(i)}$ $\alpha_1=-\frac{\beta_1}{4}\frac{(p\rho)_{x}(0)}{(p\rho)(0)},\beta_1>0, \alpha_2=\frac{\beta_2}{4}\frac{(p\rho)_{x}(0)}{(p\rho)(0)},\beta_2>0$. For all $(\epsilon,w)\in[\epsilon_1,\epsilon_2]\times\{W\cap H^s:\|w\|_{s}\leq r\}$, there exists a constant $\omega_1>\gamma$ such that, $\forall l\geq1$, $\forall\omega\geq\omega_1$, the following  holds:
\begin{equation*}
\omega^2-\lambda_0(\epsilon,w)>\omega^2-\lambda_1(\epsilon,w)>0\Rightarrow j^*\geq1,
\end{equation*}
where $j^*$ is given by \eqref{E6.27}. Owing to \eqref{E6.24} and Taylor expansion,  there exists some constant $\mathfrak{M}_0>0$ such that
\begin{equation*}
\left|\sqrt{\lambda_j(\epsilon,w)}-{j}/{c}\right|\leq {\mathfrak{M}_0}/{j},\quad \forall j\geq1.
\end{equation*}
Moreover we obtain that  for some constant  $\mathfrak{N}_{0}>0$
\begin{equation*}
\varpi_l=\min_{j\geq0}|\omega^2l^2-\lambda_j(\epsilon,w)|=|\omega^2l^2-\lambda_{j^*}(\epsilon,w)|
\Rightarrow j^*\geq \mathfrak{N}_{0}\omega l.
\end{equation*}

$\mathrm{(ii)}$ Either $\alpha_1>0,\beta_1=0,\alpha_2=\frac{\beta_2}{4}\frac{(p\rho)_{x}(\pi)}{(p\rho)(\pi)},\beta_2>0$
or $\alpha_1=-\frac{\beta_1}{4}\frac{(p\rho)_{x}(0)}{(p\rho)(0)},\beta_1>0,\alpha_2>0,\beta_2=0$. The same discussion is adopted as in $\mathrm{(i)}$.
 There exists some constant $\omega_2>\gamma$ such that $\forall(\epsilon,w)\in[\epsilon_1,\epsilon_2]\times\{W\cap H^s:\|w\|_{s}\leq r\}$, $\forall l\geq1$, $\forall \omega\geq\omega_2$,
\begin{equation*}
\omega^2-\lambda_0(\epsilon,w)>\omega^2-\lambda_1(\epsilon,w)>0\Rightarrow j^*\geq1.
\end{equation*}
Formula \eqref{E6.38} together with Taylor expansion can derive
\begin{equation*}
\left|\sqrt{\lambda_j(\epsilon,w)}-{(j+1/2)}/{c}\right|\leq{\mathfrak{M}_1}/{(j+1/2)}\leq {\mathfrak{M}_1}/{j},\quad\forall j\geq1
\end{equation*}
for some constants $\mathfrak{M}_1>0$. Furthermore
\begin{equation*}
\varpi_l=\min_{j\geq0}|\omega^2l^2-\lambda_j(\epsilon,w)|=|\omega^2l^2-\lambda_{j^*}(\epsilon,w)|
\Rightarrow j^*\geq \mathfrak{N}_{1}\omega l\quad \text{for}~\text{some}~\mathfrak{N}_{1}>0.
\end{equation*}

$\mathrm{(iii)}$:  $\alpha_1>-\frac{\beta_1}{4}\frac{(p\rho)_{x}(0)}{(p\rho)(0)}, \beta_1>0, \alpha_2>\frac{\beta_2}{4}\frac{(p\rho)_{x}(0)}{(p\rho)(0)},\beta_2>0$.
The same discussion can be  taken as $\mathrm{(i)}$. There exists a  constant $\omega_3>\gamma$ satisfying $\forall(\epsilon,w)\in[\epsilon_1,\epsilon_2]\times\{W\cap H^s:\|w\|_{s}\leq r\}$,
$\forall l\geq1$, $\forall\omega\geq\omega_3$,
\begin{equation*}
\omega^2-\lambda_{\widehat{N}}(\epsilon,w)>0\Rightarrow j^*\geq \widehat{N},
\end{equation*}
where $\widehat{N}$ is seen in Lemma \ref{lemma6.8}. It follows from formula \eqref{E6.25} and Taylor expansion that for some $\mathfrak{M}_2>0$
\begin{equation*}
\left|\sqrt{\lambda_j(\epsilon,w)}-{j}/{c}\right|\leq {\mathfrak{M}_2}/{j},\quad\forall j\geq \widehat{N}.
\end{equation*}
In addition there exists some constant $\mathfrak{N}_{2}>0$ such that
\begin{equation*}
\varpi_l=\min_{j\geq0}|\omega^2l^2-\lambda_j(\epsilon,w)|=|\omega^2l^2-\lambda_{j^*}(\epsilon,w)|
\Rightarrow j^*\geq \mathfrak{N}_{2}\omega l.
\end{equation*}
Hence, $\forall l\geq1$, $\forall\omega\geq\omega_0>\gamma$, $\forall\epsilon\in[\epsilon_1,\epsilon_2]$, $\forall w\in\{W\cap H^s:\|w\|_{s}\leq r\}$, the eigenvalues $\lambda_{j^*}(\epsilon)$ (see \eqref{E6.27}) of \eqref{E6.1} satisfy
\begin{equation}\label{E6.28}
\left|\sqrt{\lambda_{j^*}(\epsilon,w)}-{j^*}/{c}\right|\leq \mathfrak{M}/{j^*}\quad\mathrm{and}\quad
 j^*\geq\mathfrak{N}\omega l,
\end{equation}
or
\begin{equation*}
\left|\sqrt{\lambda_{j^*}(\epsilon,w)}-{(j^*+1/2)}/{c}\right|\leq \mathfrak{M}/{j^*}\quad\mathrm{and}\quad
 j^*\geq\mathfrak{N}\omega l,
\end{equation*}
where $\mathfrak{M}:=\max\{\mathfrak{M}_0,\mathfrak{M}_1,\mathfrak{M}_2\}$, $\mathfrak{N}:=\max\{\mathfrak{N}_0,\mathfrak{N}_1,\mathfrak{N}_2\}$ and
\begin{align}\label{E6.39}
\omega_0:=\max\{\omega_1,\omega_2,\omega_3\}.
\end{align}

\begin{lemm}\label{lemma6.5}

For  $\|w\|_{s+\sigma}\leq1$, under the non-resonance conditions \eqref{E5.12} and \eqref{E5.11},
 there exists some constant  $M(s')>0$ such that
\begin{equation}\label{E5.32}
\left\|\mathcal{R}_1h\right\|_{s'}\leq\frac{\epsilon M(s')}{2\gamma^3\omega}\left(\|h\|_{s'}+\|w\|_{s'+\sigma}\|h\|_s\right),\quad\forall h\in W_{N}, \forall s'\geq s>{1}/{2}.
\end{equation}
\end{lemm}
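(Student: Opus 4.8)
The plan is to peel the harmless diagonal factor off $\mathcal{R}_1$ and to extract all the smallness from $R_1=|\mathscr{L}_{1,{\mathrm{D}}}|^{-\frac{1}{2}}\mathscr{L}_{1,{\mathrm{ND}}}|\mathscr{L}_{1,{\mathrm{D}}}|^{-\frac{1}{2}}$ (see \eqref{E5.28}). Since $\mathcal{R}_1=(|\mathscr{L}_{1,{\mathrm{D}}}|^{-\frac{1}{2}}\mathscr{L}_{1,{\mathrm{D}}}|\mathscr{L}_{1,{\mathrm{D}}}|^{-\frac{1}{2}})^{-1}R_1$, estimate \eqref{E5.30} (valid for all $s\ge 0$) gives $\|\mathcal{R}_1 h\|_{s'}\le (L_2/L_1)\|R_1 h\|_{s'}$, so it suffices to bound $\|R_1 h\|_{s'}$. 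Recalling from \eqref{E5.27} that $\mathscr{L}_{1,{\mathrm{ND}}}h=-\tfrac{\epsilon}{\rho}P_N(\bar a\,h)$ with $\bar a:=a-a_0=\sum_{m\ne0}a_m(x)e^{\mathrm i m t}$, I would expand $h=\sum_{1\le|l|\le N,\,j}\hat h_{l,j}\psi_j(\epsilon,w)e^{\mathrm i l t}$ in the basis of Lemma \ref{lemma6.1} and write $R_1 h$ frequency by frequency: the inner block multiplies the $(l,j)$-coefficient by $|\omega^2l^2-\lambda_j(\epsilon,w)|^{-1/2}$, the middle block multiplies by $-\epsilon a_m(x)/\rho$ while shifting the time frequency $l\rightsquigarrow l+m$ with $m\ne0$, and the outer block multiplies the resulting $(l+m,\cdot)$-coefficients by $|\omega^2(l+m)^2-\lambda_{\cdot}(\epsilon,w)|^{-1/2}$.

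The heart of the matter — and the reason $\|R_1 h\|_{s'}$ does not carry the factor $N^{\tau-1}$ produced by the crude bound \eqref{E5.17.2} — is the separation of resonances supplied by the claim {\bf(F1)} (see \eqref{E5.16}). For a single time frequency $l$ the only dangerous $x$-index is $j^*=j^*(l)$ from \eqref{E6.27}, and by the asymptotics \eqref{E6.24}--\eqref{E6.25} together with \eqref{E6.28} one has $j^*\ge\mathfrak N\omega l$ and $|\sqrt{\lambda_{j^*}}-j^*/c|\le\mathfrak M/j^*$ (up to the half-integer shift in case (ii)). Consequently, when $m\ne0$, the resonant index $j^*(l)$ for frequency $l$ and the resonant index $j^*(l+m)$ for frequency $l+m$ differ by $\gtrsim\omega^2|m|\max(|l|,|l+m|)$, so in the product defining the $(l+m)$-component of $R_1 h$ at most one of the two weights $|\omega^2l^2-\lambda_j|^{-1/2}$, $|\omega^2(l+m)^2-\lambda_{j'}|^{-1/2}$ can be of near-resonant size; the other is then a genuine gain of order $(\omega^2|m|\max(|l|,|l+m|))^{-1/2}$. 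Bounding the near-resonant weight by $(\gamma\omega)^{-1/2}|l|^{(\tau-1)/2}$ via \eqref{E5.12} (and invoking \eqref{E5.11} in the off-resonance regime), the product of the two weights is $\lesssim(\gamma\omega)^{-1}$ uniformly in $l,m,N$ because $\tau<2$; this, combined with the fact that multiplication by $a_m(x)$ is bounded on $H^1(0,\pi)$ (an algebra), is the content I expect {\bf(F1)} to provide, and it yields the frequency-wise bound $\|(R_1 h)_k\|_{H^1}\lesssim\tfrac{\epsilon}{\gamma\omega}\sum_{m\ne0}\|a_m\|_{H^1}\|h_{k-m}\|_{H^1}$.

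Multiplying by $1+k^{2s'}$ and summing over $k$, the right-hand side is the $H^{s'}$-sequence norm of the convolution of $(\|a_m\|_{H^1})_m$ with $(\|h_l\|_{H^1})_l$, so a tame product estimate in $H^s$ (the Sobolev-sequence refinement of Remark \ref{remark1}(i), with the low index taken at $s>\tfrac12$, exactly as in the proof of $(P2)$) gives $\|R_1 h\|_{s'}\lesssim\tfrac{\epsilon}{\gamma\omega}\bigl(\|a\|_{s'}\|h\|_s+\|a\|_s\|h\|_{s'}\bigr)$. Inserting \eqref{E5.14}--\eqref{E5.15}, i.e. $\|a\|_{s'}\le C(s')(1+\|w\|_{s'})$ and $\|a\|_s\le C$ (the latter using $\|w\|_{s+\sigma}\le1$), absorbing the constant term $C(s')\|h\|_s\le C(s')\|h\|_{s'}$ (since $s'\ge s$) and using $\|w\|_{s'}\le\|w\|_{s'+\sigma}$, and finally collecting the powers of $\gamma$ and $\omega$ (which are at worst $\gamma^{-1}\omega^{-1}$, hence bounded by $\gamma^{-3}\omega^{-1}$ as $0<\gamma<1$), one arrives at $\|\mathcal{R}_1 h\|_{s'}\le\tfrac{\epsilon M(s')}{2\gamma^3\omega}\bigl(\|h\|_{s'}+\|w\|_{s'+\sigma}\|h\|_s\bigr)$, which is \eqref{E5.32}. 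Note that the precise value $\sigma=\tfrac{\tau(\tau-1)}{2-\tau}$ is irrelevant here: any $\sigma\ge0$ would do.

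The main obstacle is the separation-of-resonances step {\bf(F1)}. The delicate point is that multiplication by $a_m(x)$ does not preserve the eigenfunction index $j$, so one must control how it spreads the $x$-spectral content across the resonant window $\lambda_{j'}\approx\omega^2(l+m)^2$; this requires an off-diagonal decay estimate for the matrix $\langle a_m\psi_j,\psi_{j'}\rangle_{L^2_\rho}$, which rests on the regularity hypotheses $p,\rho\in H^3$, $m\in H^1$, $f\in\mathcal{C}_k$ and on the eigenfunction/eigenvalue asymptotics behind Lemmas \ref{lemma6.3}--\ref{lemma6.8}. One must also ensure that the dangerous index $j^*$ is ``high'' ($j^*\ge\mathfrak N\omega l$), which is exactly why $\omega\ge\omega_0$ with $\omega_0$ as in \eqref{E6.39} is assumed, so that the asymptotic formulae for $\lambda_{j^*}$ apply and the separation $\gtrsim\omega^2|m|\max(|l|,|l+m|)$ is genuine.
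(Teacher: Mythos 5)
Your reduction via \eqref{E5.30} and the frequency-wise expansion of $R_1$ match the paper, but the core of your argument --- the claim that for $k\neq l$ at most one of the two weights $|\omega^2l^2-\lambda_j(\epsilon,w)|^{-1/2}$, $|\omega^2k^2-\lambda_{j'}(\epsilon,w)|^{-1/2}$ can be near-resonant, so that their product is $\lesssim(\gamma\omega)^{-1}$ uniformly in $l,k,N$ --- is false as stated, and this is a genuine gap. The two weights carry independent eigenfunction indices: taking $j=j^*(l)$ and $j'=j^*(k)$ as in \eqref{E6.27}, both factors can simultaneously be as small as the Melnikov bound \eqref{E5.12} allows, so $\varpi_l\varpi_k$ can be as small as $(\gamma\omega)^2/(kl)^{\tau-1}$, which is not uniformly of size $(\gamma\omega)^2$; the separation of $\lambda_{j^*(l)}$ from $\lambda_{j^*(k)}$ is irrelevant to this worst case precisely because different indices $j$ are involved (and no off-diagonal decay of $\langle a_m\psi_j,\psi_{j'}\rangle_{L^2_\rho}$ is used anywhere in the paper --- the outer factor is simply estimated by $\varpi_k^{-1/2}$ through the equivalent norms \eqref{E6.3}--\eqref{E6.4}). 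The quantitatively correct statement is the claim {\bf(F1)}, i.e. \eqref{E5.16}: $(\varpi_l\varpi_k)^{-1/2}\le|k-l|^{\sigma}/(L\gamma^{3}\omega)$, proved by splitting into the regime $2|k-l|>(\max\{k,l\})^{\varsigma}$, where \eqref{E5.12} alone suffices and the growth $(kl)^{(\tau-1)/2}$ is converted into the loss $|k-l|^{\sigma}$, and the regime $2|k-l|\le(\max\{k,l\})^{\varsigma}$, where for $\max\{k,l\}\ge k_\star$ the second condition \eqref{E5.11} combined with the asymptotics \eqref{E6.28} forces one of the two divisors to be of size $\gamma\omega k^{1-\varsigma\tau}$, while for $\max\{k,l\}<k_\star\sim(\gamma\omega)^{-1/(1-\varsigma\tau)}$ one only gets $\varpi_l\varpi_k\gtrsim\gamma^{6}\omega^{2}$ --- which is exactly why the lemma carries $\gamma^{3}$ and not $\gamma$.

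Consequently your frequency-wise bound $\|(R_1h)_k\|_{H^1}\lesssim\frac{\epsilon}{\gamma\omega}\sum_{m}\|a_m\|_{H^1}\|h_{k-m}\|_{H^1}$, and the ensuing estimate involving only $\|a\|_{s'}$ and $\|w\|_{s'}$, are stronger than what this method yields; the factor $|k-l|^{\sigma}$ from \eqref{E5.16} must be absorbed into the time-regularity of $a$, i.e. one bounds the convolution with weights $\|a_{m}/\rho\|_{H^1}|m|^{\sigma}$ by $\|a\|_{s'+\sigma}\le C(s')(1+\|w\|_{s'+\sigma})$ via \eqref{E5.14}, which is precisely where the term $\|w\|_{s'+\sigma}$ on the right of \eqref{E5.32} comes from. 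Your remark that ``any $\sigma\ge0$ would do'' is likewise incorrect: $\sigma=\tau(\tau-1)/(2-\tau)$ is calibrated through $\varsigma=(2-\tau)/\tau$ so that the exponent $2-\tau-\varsigma\tau$ vanishes in the intermediate regime of {\bf(F1)}, and condition \eqref{E5.11} is not an accessory for an ``off-resonance regime'' but the engine of that regime (it bounds $|\omega(k-l)-(i^*-j^*)/c|$ from below). To repair the proof you would either have to reproduce the dichotomy argument of {\bf(F1)}, or genuinely establish and exploit off-diagonal decay of the matrix of multiplication by $a_m$ in the Sturm--Liouville basis, which is a different and substantially harder route than the one you sketched.
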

\begin{proof}
By formulae \eqref{E5.27}-\eqref{E5.28} and the definition of $|\mathscr{L}_{{1,\mathrm{D}}}|^{-\frac{1}{2}}$, we obtain for all $h\in W_{N}$
\begin{align*}
R_1h&=|\mathscr{L}_{1,{\mathrm{D}}}|^{-\frac{1}{2}}
\mathscr{L}_{1,{\mathrm{ND}}}|\mathscr{L}_{1,{\mathrm{D}}}|^{-\frac{1}{2}}h\\
&=|\mathscr{L}_{1,{\mathrm{D}}}|^{-\frac{1}{2}}\mathscr{L}_{1,{\mathrm{ND}}}\left(\sum\limits_{1\leq|l|\leq N}\sum\limits^{+\infty}_{j=1}\frac{\hat{{h}}_{l,j}}{\sqrt{|\omega^2l^2-\lambda_j(\epsilon,w)|}}\psi_j(\epsilon,w)e^{\mathrm{i}lt}\right)\\
&=-\epsilon|\mathscr{L}_{1,{\mathrm{D}}}|^{-\frac{1}{2}}\left(\sum\limits_{\stackrel{1\leq|l|,|k|\leq N}{k\neq l}}\sum\limits_{j=1}^{+\infty}\frac{\hat{h}_{l,j}}{\sqrt{|\omega^2l^2-\lambda_j(\epsilon,w)|}}\left(\frac{a_{k-l}}{\rho}\right)\psi_j(\epsilon,w)e^{\mathrm{i}kt}\right)\\
&=-\epsilon\sum\limits_{\stackrel{1\leq|l|,|k|\leq N}{k\neq l}}\sum\limits_{j=1}^{+\infty}\frac{\hat{h}_{l,j}}{\sqrt{|\omega^2k^2-\lambda_j(\epsilon,w)|}
\sqrt{|\omega^2l^2-\lambda_j(\epsilon,w)|}}\left(\frac{a_{k-l}}{\rho}\right)\psi_j(\epsilon,w)e^{\mathrm{i}kt},
\end{align*}
which then leads to
\begin{align*}
(R_1h)_{k}=-\epsilon\sum\limits_{\stackrel{1\leq|l|\leq N}{k\neq l}}\sum\limits_{j=1}^{+\infty}\frac{\hat{h}_{l,j}}{\sqrt{|\omega^2k^2-\lambda_j(\epsilon,w)|}
\sqrt{|\omega^2l^2-\lambda_j(\epsilon,w)|}}\left(\frac{a_{k-l}}{\rho}\right)\psi_j(\epsilon,w).
\end{align*}
Formulae \eqref{E6.3}-\eqref{E6.4} verify that
\begin{equation}\label{E5.29}
\|(R_1h)_k\|_{H^1}\leq\epsilon\frac{L_2}{L_1}\sum\limits_{1\leq|l|\leq N,l\neq k}\frac{1}{\sqrt{\varpi_k\varpi_l}}\left\|{a_{k-l}}/{\rho}\right\|_{H^1}\|h_l\|_{H^1}.
\end{equation}

In addition we claim that the following fact holds:\\
{\bf (F1)}:
Supposed that the non-resonance conditions \eqref{E5.12} and \eqref{E5.11} hold, if $\omega\geq\omega_0>\gamma$, for all $|k|,|l|\in\{1,\cdots,N\}$ with $k\neq l$, then we have
\begin{equation}\label{E5.16}
\frac{1}{\sqrt{\varpi_l\varpi_k}}\leq\frac{|k-l|^{\sigma}}{L\gamma^3\omega}
\end{equation}
for some constant $L>0$, where $\varpi_l$ is defined by \eqref{E6.27}, $\sigma:=(\tau-1)/{\varsigma}$ with $\varsigma=(2-\tau)/\tau\in(0,1)$.

 It follows from formulae \eqref{E5.29}-\eqref{E5.16} that
\begin{align*}
\|(R_1h)_k\|_{H^1}\leq\frac{\epsilon L_2}{LL_1\gamma^3\omega}\sum\limits_{1\leq|l|\leq N,l\neq k}\left\|{a_{k-l}}/{\rho}\right\|_{H^1}|k-l|^\sigma\|h_l\|_{H^1}.
\end{align*}
Let us define for $b_0=0$,
\begin{align*}
&{s}(t):=\sum\limits_{\stackrel{1\leq |l|,|k|\leq N}{l\neq k}}\left\|{a_{k-l}}/{\rho}\right\|_{H^1}|k-l|^\sigma\|h_l\|_{H^1}e^{\mathrm{i}kt},\\ &b(t):=\sum_{l\in\mathbf{Z}}\left\|{a_{l}}/{\rho}\right\|_{H^1}|l|^{\sigma} e^{\mathrm{i}lt},\quad
c(t):=\sum_{1\leq|l|\leq N}\|h_l\|_{H^1}e^{\mathrm{i}lt}.
\end{align*}
It is straightforward that ${s}=P_{N}(bc)$. Moreover
\begin{equation*}
\|b\|_{s'}\leq \widehat{M}\|a\|_{s'+\sigma}\stackrel{\eqref{E5.14}}{\leq}C(s')(1+\|w\|_{s'+\sigma})
\quad\text{and}\quad
\|c\|_{s'}=\|h\|_{s'},
\end{equation*}
where $\widehat{M}=\|1/\rho\|_{L^{\infty}}$. Hence, $\forall s'\geq s>\frac{1}{2}$, we get for $\|w\|_{s+\sigma}\leq1$
\begin{align*}
\|R_1h\|_{s'}&\leq\frac{\epsilon L_2}{LL_1\gamma^3\omega}\|{s}\|_{s'}
=\frac{\epsilon L_2}{LL_1\gamma^3\omega}\|P_{N}(bc)\|_{s'}\stackrel{\eqref{E2.1}}{\leq}\frac{\epsilon L_2C(s')}{LL_1\gamma^3\omega}(\|b\|_{s'}\|c\|_{s}+\|b\|_{s}\|c\|_{s'})\\
&\leq\frac{\epsilon M_1(s')}{2\gamma^3\omega}(\|w\|_{s'+\sigma}\|h\|_s+\|h\|_{s'}).
\end{align*}
If $M(s')=\frac{L_2}{L_1}M_1(s')$, then combining above inequality with \eqref{E5.30} completes the proof of the lemma.
\end{proof}

\begin{lemm}\label{lemma6.7}
Given the non-resonance conditions \eqref{E5.12}, \eqref{E5.11} and $\|w\|_{s+\sigma}\leq1$, $\forall h\in W_{N}$, $\forall s'\geq s>\frac{1}{2}$, we have for some constant $M(s')>0$
\begin{equation}\label{E5.31}
\left\|\mathcal{R}_2h\right\|_{s'}\leq\frac{\epsilon M(s')}{2\gamma\omega}\left(\|h\|_{s'}+\|w\|_{s'+\sigma}\|h\|_s\right).
\end{equation}
\end{lemm}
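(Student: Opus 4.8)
The plan is to follow the scheme of Lemma~\ref{lemma6.5}, exploiting that $\mathscr{L}_2$ is much more rigid than $\mathscr{L}_{1,\mathrm{ND}}$. Since the solution $v(\epsilon,w)$ of the $(Q)$-equation takes values in $V=\mathcal{H}^1_g$, i.e. depends on $x$ only, the function
\[
g:=\mathrm{D}_{w}v(\epsilon,w)\big[\,|\mathscr{L}_{1,\mathrm{D}}|^{-1/2}h\,\big]\in\mathcal{H}^1_g
\]
is a function of $x$ alone, so, writing $a(t,x):=f'(t,x,v(\epsilon,w)+w)=\sum_{k}a_k(x)e^{\mathrm{i}kt}$, one has $\mathscr{L}_2\big(|\mathscr{L}_{1,\mathrm{D}}|^{-1/2}h\big)=\epsilon P_N\Pi_W(a\,g)$ with Fourier coefficients $\epsilon\,a_k g$ for $1\le|k|\le N$. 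Hence, using \eqref{E5.28} and the definition of $|\mathscr{L}_{1,\mathrm{D}}|^{-1/2}$, the $k$-th Fourier component of $R_2h$ is obtained from $a_kg/\rho$ by dividing each $\psi_j(\epsilon,w)$-component by $\sqrt{|\omega^2k^2-\lambda_j(\epsilon,w)|}$ and multiplying by $-\epsilon$. Since $\mathcal{R}_2=(|\mathscr{L}_{1,\mathrm{D}}|^{-1/2}\mathscr{L}_{1,\mathrm{D}}|\mathscr{L}_{1,\mathrm{D}}|^{-1/2})^{-1}R_2$, estimate \eqref{E5.30} reduces the lemma to bounding $\|R_2h\|_{s'}$, since $\|\mathcal{R}_2h\|_{s'}\le(L_2/L_1)\|R_2h\|_{s'}$.

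The crucial preliminary step is to control $\|g\|_{H^1}$ by the \emph{low} norm $\|h\|_{s}$, with a gain $(\gamma\omega)^{-1/2}$ and without losing derivatives on $h$. Differentiating the $(Q)$-equation in $w$ (cf. Remark~\ref{Remark2} and Lemma~\ref{lemma3.1}), $g$ solves $-(pg')'+(m-\epsilon\Pi_V f'(v(\epsilon,w)+w))g=\epsilon\,\Pi_V\!\big(a\,|\mathscr{L}_{1,\mathrm{D}}|^{-1/2}h\big)$ with the general boundary conditions; the associated Sturm--Liouville operator is uniformly invertible on $\mathcal{H}^1_g$ (Lemma~\ref{lemma3.1}), so that by the $H^1(0,\pi)$-algebra property, a weighted Cauchy--Schwarz inequality and \eqref{E5.15},
\[
\|g\|_{H^1}\le C\epsilon\,\big\|\Pi_V\!\big(a\,|\mathscr{L}_{1,\mathrm{D}}|^{-1/2}h\big)\big\|_{L^2}\le C\epsilon\,\|a\|_{s}\,\big\|\,|\mathscr{L}_{1,\mathrm{D}}|^{-1/2}h\,\big\|_{0}\le C\epsilon\,\big\|\,|\mathscr{L}_{1,\mathrm{D}}|^{-1/2}h\,\big\|_{0};
\]
applying \eqref{E5.17} at index $0$ and using $0<(\tau-1)/2<1/2<s$ (so that $\|h\|_{(\tau-1)/2}\le\|h\|_{s}$) gives $\|g\|_{H^1}\le C\epsilon(\gamma\omega)^{-1/2}\|h\|_{s}$. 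Next, for each $k$ with $1\le|k|\le N$, combining \eqref{E6.3}--\eqref{E6.4}, the $H^1(0,\pi)$-algebra property and the Melnikov condition \eqref{E5.12} in the form $\varpi_k^{-1/2}\le|k|^{(\tau-1)/2}(\gamma\omega)^{-1/2}$ yields
\[
\|(R_2h)_k\|_{H^1}\le\frac{\epsilon L_2}{L_1\sqrt{\varpi_k}}\,\|a_kg/\rho\|_{H^1}\le\frac{C\epsilon}{\sqrt{\gamma\omega}}\,|k|^{(\tau-1)/2}\|a_k\|_{H^1}\|g\|_{H^1}\le\frac{C\epsilon^2}{\gamma\omega}\,|k|^{(\tau-1)/2}\|a_k\|_{H^1}\|h\|_{s}.
\]
Squaring, multiplying by $1+k^{2s'}$, summing over $k$ and using $|k|^{\tau-1}(1+k^{2s'})\le2(1+|k|^{2s'+\tau-1})$ gives $\|R_2h\|_{s'}\le C\epsilon^2(\gamma\omega)^{-1}\|h\|_{s}\,\|a\|_{s'+(\tau-1)/2}$; finally \eqref{E5.14} together with $(\tau-1)/2<\sigma$ bounds $\|a\|_{s'+(\tau-1)/2}\le C(s')(1+\|w\|_{s'+\sigma})$. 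Since $\epsilon\le\epsilon_0$ and $\|h\|_{s}\le\|h\|_{s'}$, one concludes $\|\mathcal{R}_2h\|_{s'}\le(L_2/L_1)\|R_2h\|_{s'}\le\frac{\epsilon M(s')}{2\gamma\omega}(\|h\|_{s'}+\|w\|_{s'+\sigma}\|h\|_{s})$ for a suitable $M(s')$.

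The heart of the matter --- and the reason $\mathcal{R}_2$ costs only $\gamma\omega$ rather than the $\gamma^3\omega$ of Lemma~\ref{lemma6.5}, and does not need the second Melnikov device (F1) --- is precisely the low-norm bound $\|g\|_{H^1}\le C\epsilon(\gamma\omega)^{-1/2}\|h\|_{s}$ with no loss of regularity in $h$. It rests on the $x$-only structure of the $(Q)$-equation and the uniform invertibility of its linearization, which forces the \emph{inner} $|\mathscr{L}_{1,\mathrm{D}}|^{-1/2}$ to see only the zero-index norm of $h$ (where the $(\tau-1)/2$ derivative loss in \eqref{E5.17} is harmless because $(\tau-1)/2<s$); the \emph{outer} $|\mathscr{L}_{1,\mathrm{D}}|^{-1/2}$ then supplies the remaining $(\gamma\omega)^{-1/2}$, and the weight $|k|^{(\tau-1)/2}$ it produces is absorbed as $(\tau-1)/2$ extra derivatives of $a$, hence of $w$, which stays within the budget $\sigma$. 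Verifying the uniform $L^2\to H^1$ bound for the inverse of the $(Q)$-linearization (uniformly in $\epsilon$ and $w$) is where most of the care is needed.
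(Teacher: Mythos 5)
Your proof is correct, and its overall architecture coincides with the paper's: both reduce to $R_2$ through \eqref{E5.30}, exploit that $\mathrm{D}_wv(\epsilon,w)\bigl[\,|\mathscr{L}_{1,\mathrm{D}}|^{-1/2}h\,\bigr]$ is a function of $x$ alone so that all the $t$-regularity is charged to $a$ (hence to $\|w\|_{s'+\sigma}$, via $(\tau-1)/2<\sigma$ and \eqref{E5.14}), and extract one factor $(\gamma\omega)^{-1/2}$ from each of the two half-powers of $\mathscr{L}_{1,\mathrm{D}}$, so that only the first Melnikov condition \eqref{E5.12} is used and neither {\bf(F1)} nor the $\gamma^{-3}$ loss of Lemma \ref{lemma6.5} enters. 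The differences are in execution. For the outer factor the paper applies \eqref{E5.17} at level $s'$ followed by the Moser--Nirenberg estimate \eqref{E2.1}, whereas you redo the same smoothing mode by mode from \eqref{E5.12} and \eqref{E6.3}--\eqref{E6.4}, exactly as the paper itself does for $R_1$ in \eqref{E5.29}; this is equivalent. For the inner factor the paper simply invokes boundedness of $\mathrm{D}_wv$ (Lemma \ref{lemma3.1}, Remark \ref{Remark2}) in the form $\|\mathrm{D}_wv[u]\|_{H^1}\lesssim\|u\|_{s-\frac{\tau-1}{2}}$ and then applies \eqref{E5.17} once more, while you derive the required bound directly from the linearized $(Q)$-equation $-(pg')'+(m-\epsilon a_0)g=\epsilon\Pi_V\bigl(a\,|\mathscr{L}_{1,\mathrm{D}}|^{-1/2}h\bigr)$ together with the uniform positivity of the Sturm--Liouville operator (Lemma \ref{lemma6.1}, \eqref{E6.3}) and \eqref{E5.17} at index $0$. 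This makes explicit a point the paper leaves implicit (the uniform $L^2\to H^1$ bound for the inverse of the linearized bifurcation operator, which indeed deserves the care you flag) and yields an extra factor of $\epsilon$, i.e.\ $\|R_2h\|_{s'}=O\bigl(\epsilon^2/(\gamma\omega)\bigr)$, sharper than needed and then discarded via $\epsilon\le\epsilon_0$; the paper's version is shorter but less quantitative about $\mathrm{D}_wv$.
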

\begin{proof}
The fact  $\varsigma\in(0,1)$ in the claim {\bf(F1)} gives rise to
\[\sigma=(\tau-1)/{\varsigma}>\tau-1.\]
Therefore, according to \eqref{E7.4}, \eqref{E5.14}-\eqref{E5.15}, \eqref{E5.28}, \eqref{E5.17}, \eqref{E2.1} and $\mathrm{D}_{w}v(\epsilon,w)[|\mathscr{L}_{1,{\mathrm{D}}}|^{-\frac{1}{2}}h]\in \mathcal{H}^1_g$, we derive for all $s'\geq s>\frac{1}{2}$
\begin{align*}
\|R_2h\|_{s'}&=\left\|\frac{\epsilon}{\rho}|\mathscr{L}_{1,{\mathrm{D}}}|^{-\frac{1}{2}}P_N\Pi_Wa(t,x)\mathrm{D}_wv(\epsilon,w)[|\mathscr{L}_{1,{\mathrm{D}}}|^{-\frac{1}{2}}h]\right\|_{s'}\\
&\leq\frac{\epsilon\sqrt{2}L_2\widehat{M}}{L_1\sqrt{\gamma\omega}}\left\|a(t,x)\mathrm{D}_wv(\epsilon,w)[|\mathscr{L}_{1,{\mathrm{D}}}|^{-\frac{1}{2}}h]\right\|_{s'+\frac{\tau-1}{2}}\\
&\leq\frac{\epsilon \sqrt{2} L_2\widehat{M}C(s')}{L_1\sqrt{\gamma\omega}}\left(\|a\|_{s'+\frac{\tau-1}{2}}\left\|\mathrm{D}_{w}v[|\mathscr{L}_{1,{\mathrm{D}}}|^{-\frac{1}{2}}h]\right\|_{H^1}
+\|a\|_s\left\|\mathrm{D}_{w}v[|\mathscr{L}_{1,{\mathrm{D}}}|^{-\frac{1}{2}}h]\right\|_{H^1}\right)\\
&\leq\frac{\epsilon \sqrt{2} L_2\widehat{M}C(s')}{L_1\sqrt{\gamma\omega}}\left(\|a\|_{s'+\sigma}\||\mathscr{L}_{1,{\mathrm{D}}}|^{-\frac{1}{2}}h\|_{s-\frac{\tau-1}{2}}+\|a\|_{s+\sigma}\||\mathscr{L}_{1,{\mathrm{D}}}|^{-\frac{1}{2}}h\|_{s-\frac{\tau-1}{2}}\right)\\
&\leq\frac{\epsilon M_1(s')}{2{\gamma\omega}}\left(\|w\|_{s'+\sigma}\|h\|_s+\|h\|_{s'}\right).
\end{align*}
Combining above inequality  with \eqref{E5.30}, if $M(s')=\frac{L_2}{L_1}M_1(s')$, then we give \eqref{E5.31}.
\end{proof}
It follows from  Lemmas \ref{lemma6.5}-\ref{lemma6.7} that
 the operator $(\mathrm{Id}-\mathcal{R})$ is invertible for $\frac{\epsilon M(s')}{\gamma^3\omega}$ small enough. Furthermore we have the following estimate for the invertible operator.

\begin{lemm}
For $\frac{\epsilon M(s')}{\gamma^3\omega}\leq {\tilde{\mathfrak{c}}}(s')$ small enough, we have
\begin{equation}\label{E5.21}
\|(\mathrm{Id}-\mathcal{R})^{-1}h\|_{s'}\leq 2(\|h\|_{s'}+\|w\|_{s'+\sigma}\|h\|_s).
\end{equation}
\end{lemm}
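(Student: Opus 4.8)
The plan is a routine Neumann–series argument combined with a tame bootstrap between the low norm $\|\cdot\|_s$ and the high norm $\|\cdot\|_{s'}$. First I would combine Lemmas \ref{lemma6.5} and \ref{lemma6.7}: since $0<\gamma<1$ forces $\gamma^{-3}\geq\gamma^{-1}$, adding \eqref{E5.32} and \eqref{E5.31} and renaming the constant $M(s')$, one gets for all $h\in W_{N}$ and all $s'\geq s>1/2$ that
\begin{equation*}
\|\mathcal{R}h\|_{s'}\leq\frac{\epsilon M(s')}{\gamma^3\omega}\left(\|h\|_{s'}+\|w\|_{s'+\sigma}\|h\|_s\right),
\end{equation*}
and in particular, since $\mathcal{R}=\mathcal{R}_1+\mathcal{R}_2$ maps the finite-dimensional space $W_{N}$ into itself, invertibility of $\mathrm{Id}-\mathcal{R}$ is unambiguous.

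Second, I would close the estimate in low norm. Taking $s'=s$ and using the standing hypothesis $\|w\|_{s+\sigma}\leq1$ gives $\|\mathcal{R}h\|_{s}\leq(2\epsilon M(s)/\gamma^3\omega)\|h\|_{s}$, so that for $\epsilon M(s)/(\gamma^3\omega)\leq1/4$ the operator $\mathcal{R}$ is a contraction on $W_{N}$ endowed with $\|\cdot\|_{s}$; hence $\mathrm{Id}-\mathcal{R}$ is invertible there and the geometric series bound yields $\|(\mathrm{Id}-\mathcal{R})^{-1}h\|_{s}\leq2\|h\|_{s}$.

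Third, I would bootstrap to the high norm. Set $g:=(\mathrm{Id}-\mathcal{R})^{-1}h\in W_{N}$, so $g=h+\mathcal{R}g$. Applying the combined estimate of the first step to $\mathcal{R}g$ and then inserting $\|g\|_{s}\leq2\|h\|_{s}$ from the second step gives
\begin{equation*}
\|g\|_{s'}\leq\|h\|_{s'}+\frac{\epsilon M(s')}{\gamma^3\omega}\|g\|_{s'}+\frac{2\epsilon M(s')}{\gamma^3\omega}\|w\|_{s'+\sigma}\|h\|_{s}.
\end{equation*}
For $\epsilon M(s')/(\gamma^3\omega)\leq1/2$ the factor $1-\epsilon M(s')/(\gamma^3\omega)\geq1/2$ can be moved to the left while $2\epsilon M(s')/(\gamma^3\omega)\leq1$, which produces $\|g\|_{s'}\leq2(\|h\|_{s'}+\|w\|_{s'+\sigma}\|h\|_{s})$, i.e. \eqref{E5.21}. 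The threshold $\tilde{\mathfrak{c}}(s')$ is then any number $\leq1/4$; since $M$ may be taken nondecreasing in $s'$, a single smallness condition on $\epsilon M(s')/(\gamma^3\omega)$ covers both steps.

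I do not expect a genuine obstacle here; this is the standard contraction/Neumann estimate that recurs in Nash–Moser schemes. The only point requiring care is the order of operations in the bootstrap: one must first make $\mathcal{R}$ small in the low norm (where $\|w\|_{s+\sigma}\leq1$ actually forces smallness) and obtain $\|g\|_{s}\leq2\|h\|_{s}$, and only afterwards feed this into the high-norm inequality, so that the right-hand side of \eqref{E5.21} carries the tame term $\|w\|_{s'+\sigma}\|h\|_{s}$ rather than the useless $\|w\|_{s'+\sigma}\|g\|_{s'}$; preserving this tame structure is exactly what the later iteration needs.
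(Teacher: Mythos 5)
Your argument is correct, and it reaches \eqref{E5.21} by a genuinely different mechanism than the paper. The paper first combines Lemmas \ref{lemma6.5}--\ref{lemma6.7} into \eqref{E5.33} (exactly your first step), but then proves by induction the claim {\bf (F2)}, i.e.\ the bound \eqref{E5.19} on every power $\mathcal{R}^p$, and obtains \eqref{E5.21} by summing the Neumann series $\sum_{p}\mathcal{R}^p h$ term by term; the price is the bookkeeping of the factor $p$ in front of $\|w\|_{s'+\sigma}\|h\|_s$, the gain is that no a priori information on $\|(\mathrm{Id}-\mathcal{R})^{-1}h\|_{s'}$ is ever needed. You instead invert $\mathrm{Id}-\mathcal{R}$ only in the low norm (where $\|w\|_{s+\sigma}\leq1$ makes $\mathcal{R}$ a contraction on the closed subspace $W_N\cap H^s$), and then absorb the $\|g\|_{s'}$ term from the single identity $g=h+\mathcal{R}g$. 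This is shorter and keeps the tame structure transparently, but the absorption step silently requires $\|g\|_{s'}<\infty$ beforehand; that is legitimate here only because $g\in W_N$ has time frequencies $|l|\leq N$, so by the smoothing property $(P4)$ one has $\|g\|_{s'}\leq N^{s'-s}\|g\|_{s}<\infty$ — you should say this, since it is the one place where your route uses something the paper's series argument does not. Two small corrections: $W_N$ is \emph{not} finite dimensional (its elements take values in the infinite-dimensional space $\mathcal{H}^1_g$ in $x$, cf.\ \eqref{E4.2}), so the parenthetical justification of invertibility in your first step is wrong as stated — but harmless, since your second step establishes invertibility via the geometric series in $\|\cdot\|_s$ on a complete subspace; and the constants work out as you say ($\tfrac{1}{1-x}\leq2$ and $\tfrac{2x}{1-x}\leq2$ for $x\leq\tfrac12$), so the threshold $\tilde{\mathfrak{c}}(s')\leq\tfrac14$ indeed yields the factor $2$ in \eqref{E5.21}.
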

\begin{proof}
First,  we claim that:\\
{\bf (F2):} There exists $M_2(s')>0$ such that, $\forall h\in W_{N}$, $\forall s'\geq s>\frac{1}{2}$, $\forall p\in\mathbf{N}^{+}$ and $\|w\|_{s+\sigma}\leq1$
\begin{equation}\label{E5.19}
\|\mathcal{R}^ph\|_{s'}\leq\left(\frac{\epsilon M_2(s')}{\gamma^3\omega}\right)^p(\|h\|_{s'}+p\|w\|_{s'+\sigma}\|h\|_s).
\end{equation}

The claim {\bf (F2)} can be proved by induction. For $p=1$, $0<\gamma<1$, \eqref{E5.32} and \eqref{E5.31}  imply
\begin{equation}\label{E5.33}
\left\|\mathcal{R}h\right\|_{s'}\leq\frac{\epsilon M(s')}{\gamma^3\omega}\left(\|h\|_{s'}+\|w\|_{s'+\sigma}\|h\|_s\right).
\end{equation}
In particular, for $\|w\|_{s+\sigma}\leq1$, formula \eqref{E5.33} infers
\begin{equation}\label{E5.20}
\|\mathcal{R}h\|_{s}\leq\frac{\epsilon M}{\gamma^3\omega}\|h\|_{s}.
\end{equation}
Suppose that \eqref{E5.19} holds for $p=l$ with $l\in\mathbf{N}^{+}$. Based on the assumption and \eqref{E5.33}-\eqref{E5.20}, for $\frac{\epsilon M(s')}{\gamma^3\omega}\leq \tilde{\mathfrak{c}}(s')$ small enough, we get for $p=l+1$
\begin{align*}
\|\mathcal{R}^{l+1}h\|_{s'}=&\|\mathcal{R}^l(\mathcal{R}h)\|_{s'}{\leq}
\left(\frac{\epsilon M_2(s')}{\gamma^3\omega}\right)^l(\|\mathcal{R}h\|_{s'}+l\|w\|_{s'+\sigma}\|\mathcal{R}h\|_s)\\
\leq&\left(\frac{\epsilon M_2(s')}{\gamma^3\omega}\right)^l\left(\frac{\epsilon M(s')}{\gamma^3\omega}\|h\|_{s'}+\left(\frac{\epsilon M}{\gamma^3\omega}l+\frac{\epsilon M(s')}{\gamma^3\omega}\right)\|w\|_{s'+\sigma}\|h\|_s\right)\\
\leq&\left(\frac{\epsilon M_2(s')}{\gamma^3\omega}\right)^{l+1}(\|h\|_{s'}+(l+1)\|w\|_{s'+\sigma}\|h\|_s),
\end{align*}
where $M_2(s'):=M(s')$. As a consequence, for $\frac{\epsilon M(s')}{\gamma^3\omega}\leq \tilde{\mathfrak{c}}(s')$ small enough,  the claim {\bf (F2)} is proved.
Thus, for $\frac{\epsilon M(s')}{\gamma^3\omega}\leq \tilde{\mathfrak{c}}(s')$ small enough, the claim {\bf (F2)} establishes
\begin{align*}
\|(\mathrm{Id}-\mathcal{R})^{-1}h\|_{s'}=&\|(\mathrm{Id}+\sum\limits_{p\in\mathbf{N}^{+}}\mathcal{R}^p)h\|_{s'}\leq\|h\|_{s'}+\sum\limits_{p\in\mathbf{N}^{+}}\|\mathcal{R}^ph\|_{s'}\\
\leq&\|h\|_{s'}+\sum\limits_{p\in\mathbf{N}^{+}}\left(\frac{M_2(s')\epsilon}{\gamma^3\omega}\right)^p(\|h\|_{s'}+p\|w\|_{s'+\sigma}\|h\|_s)\\
\leq&2\|h\|_{s'}+2\|w\|_{s'+\sigma}\|h\|_s,
\end{align*}
which completes the proof.
\end{proof}

Now, let us show that  the claim {\bf(F1)} holds.

\begin{proof}[{\bf{The proof of}} {\bf(F1)}]
Recall that  $l,k\geq 1$ and the asymptotic formulae \eqref{E6.24} or \eqref{E6.25}.

Case 1: $2|k-l|>(\max{\{k,l\}})^{\varsigma}$ with $\varsigma\in(0,1)$. The condition \eqref{E5.12} shows
\begin{equation}\label{E7.5}
\varpi_l\varpi_k\geq\frac{(\gamma\omega)^2}{(kl)^{\tau-1}}\geq\frac{(\gamma\omega)^2}{(\max{\{k,l\}})^{2(\tau-1)}}
\geq\frac{(\gamma\omega)^2}{2^{1/\varsigma}|k-l|^{2(\tau-1)/\varsigma}}.
\end{equation}

Case 2: $0<2|k-l|\leq(\max{\{k,l\}})^{\varsigma}$. Since $\varsigma\in(0,1)$, if $k>l$, then
\begin{equation*}
2(k-l)\leq k^{\varsigma}~\Rightarrow~ 2l\geq2k-k^{\varsigma}\geq k.
\end{equation*}
In the same way, if $l>k$, then $2k\geq l$. As a result
$\frac{k}{2}\leq l\leq2k$.
Moreover denote
\begin{equation*}
\varpi_l:=|\omega^2l^2-\lambda_{j^*}(\epsilon,w)|,\quad\varpi_k:=|\omega^2k^2-\lambda_{i^*}(\epsilon,w)|,
\end{equation*}
 and $k_{\star}:=\left(\frac{6L_3}{\gamma\omega}\right)^{\frac{1}{1-\varsigma\tau}}$ with $L_3={\mathfrak{M}}/{\mathfrak{N}}$.

$\mathrm{(i)}$ Let us consider $\max{\{k,l\}}\geq k_\star$. Suppose $\max{\{k,l\}}=k$. It follows from \eqref{E5.11}-\eqref{E6.28}, $k\leq 2l$ and $\tau\in(1,2)$ that
\begin{align*}
\Big|\Big(\omega k-\sqrt{\lambda_{i^*}(\epsilon,w)}\Big)
-&\Big(\omega l-\sqrt{\lambda_{j^*}(\epsilon,w)}\Big)\Big|=\left|\omega (k-l)-\left(\sqrt{\lambda_{i^*}(\epsilon,w)}-\sqrt{\lambda_{j^*}(\epsilon,w)}\right)\right|\\
\geq&\left|\omega(k-l)-\frac{i^*-j^*}{c}\right|-\left|\sqrt{\lambda_{i^*}(\epsilon,w)}-(i^*/c)\right|-\left|\sqrt{\lambda_{j^*}(\epsilon,w)}-(j^*/c)\right|\\
\geq&\left|\omega(k-l)-\frac{i^*-j^*}{c}\right|-\frac{L_3}{\omega l}-\frac{L_3}{\omega k}>\frac{\gamma}{(k-l)^\tau}-\frac{3L_3}{\omega k}
\\
\geq &\frac{2^\tau \gamma}{k^{\varsigma\tau}}-\frac{3L_3}{\omega k}\geq\frac{\gamma}{2k^{\varsigma\tau}}+\frac{\gamma}{k^{\varsigma\tau}}+\frac{\gamma}{2k^{\varsigma\tau}}-\frac{3L_3}{\omega k}.
\end{align*}
The expression $\varsigma=(2-\tau)/\tau\in(0,1)$ leads to $\varsigma\tau<1$. Hence, from $k\leq2l$ and $\max{\{k,l\}}=k\geq k_\star$, we derive \begin{equation}\label{E5.10}
\left|\left(\omega k-\sqrt{\lambda_{i^*}(\epsilon,w)}\right)\right|+\left|\left(\omega l-\sqrt{\lambda_{j^*}(\epsilon,w)}\right)\right|
\geq\frac{1}{2}\left(\frac{\gamma}{k^{\varsigma\tau}}+\frac{\gamma}{l^{\varsigma\tau}}\right).
\end{equation}
The same conclusion is reached if $\max{\{k,l\}}=l$. In addition the inequality in \eqref{E5.10} implies that
\begin{equation*}
\mathrm{either}\quad\left|\omega k-\sqrt{\lambda_{i^*}(\epsilon,w)}\right|\geq\frac{\gamma}{2k^{\varsigma\tau}}\quad\mathrm{or}\quad\left|\omega l-\sqrt{\lambda_{j^*}(\epsilon,w)}\right|\geq\frac{\gamma}{2l^{\varsigma\tau}}
\end{equation*}
holds. Without loss of generality, we suppose $|\omega k-\sqrt{\lambda_{i^*}(\epsilon,w)}|\geq\frac{\gamma}{2k^{\varsigma\tau}}$. Then
\begin{equation*}
\varpi_k=|\omega^2 k^2-\lambda_{i^*}(\epsilon,w)|=\left|\left(\omega k+\sqrt{\lambda_{i^*}(\epsilon,w)}\right)\left(\omega k-\sqrt{\lambda_{i^*}(\epsilon,w)}\right)\right|\geq \frac{\gamma\omega}{2}k^{1-\varsigma\tau},
\end{equation*}
which leads to
\begin{equation}\label{E7.6}
\varpi_l\varpi_k\stackrel{\eqref{E5.12}}{\geq} \frac{\gamma\omega}{l^{\tau-1}} \frac{\gamma\omega}{2}k^{1-\varsigma\tau}
\geq\frac{(\gamma\omega)^2}{2^{\tau}}k^{2-\tau-\varsigma\tau}=\frac{(\gamma\omega)^2}{2^\tau} \quad  \text{ for} ~l\leq2k,
\end{equation}
where $\varsigma$ is taken as $(2-\tau)/\tau$ to ensure $2-\tau-\varsigma\tau=0$.

$\mathrm{(ii)}$ If $\max{\{k,l\}}=k<k_\star$, then
\begin{equation}\label{E7.7}
\varpi_l\varpi_k\stackrel{\eqref{E5.12}}{\geq}\frac{(\gamma\omega)^2}{(kl)^{\tau-1}}>\frac{(\gamma\omega)^2}{(k_\star)^{2(\tau-1)}}
=\gamma^2\omega^2\left(\frac{\gamma\omega}{6L_3}\right)^{\frac{1}{\tau-1}2(\tau-1)}\stackrel{\omega>\gamma}{>}\frac{\gamma^6\omega^2}{(6L_3)^2}.
\end{equation}
Formula \eqref{E5.16} is reached if we take the minimums of lower bounds in \eqref{E7.5}, \eqref{E7.6}-\eqref{E7.7}.
The next step is to consider $l,k\geq 1$ and the asymptotic formula \eqref{E6.38}. The only difference is that
\begin{align*}
\Big|\Big(\omega k-\sqrt{\lambda_{i^*}(\epsilon,w)}\Big)-\Big(\omega l-\sqrt{\lambda_{j^*}(\epsilon,w)}\Big)\Big|=&\left|\omega (k-l)-\left(\sqrt{\lambda_{i^*}(\epsilon,w)}-\sqrt{\lambda_{j^*}(\epsilon,w)}\right)\right|\\
\geq&\left|\omega(k-l)-\frac{i^*-j^*}{c}\right|-\left|\sqrt{\lambda_{i^*}(\epsilon,w)}-\frac{i^*+1/2}{c}\right|\\
&-\left|\sqrt{\lambda_{j^*}(\epsilon,w)}-\frac{j^*+1/2}{c}\right|.\\
\end{align*}
Since $\varpi_{l}=\varpi_{-l}$, the remainder of the lemma may be proved in the similar way as  above with $l\geq1,k\leq-1$, or $l\leq-1,k\geq1$, or $l,k\leq-1$.
\end{proof}

\begin{proof}[{\bf{The proof of}} {\bf(P5)}]
It follows from  \eqref{E5.34}-\eqref{E5.30},  \eqref{E5.17.2} and \eqref{E5.21} that
\begin{align*}
\|\mathcal{L}^{-1}_N(\epsilon,\omega,w)\|_{s'}&\leq\frac{\sqrt{2}\widehat{M}L_2N^{\frac{\tau-1}{2}}}{L_1\sqrt{\gamma\omega}}
\left\|\left(\mathrm{Id}-\mathcal{R}\right)^{-1}
\left(|\mathscr{L}_{1,{\mathrm{D}}}|^{-\frac{1}{2}}
\mathscr{L}_{1,{\mathrm{D}}}|\mathscr{L}_{1,{\mathrm{D}}}|^{-\frac{1}{2}}\right)^{-1}|\mathscr{L}_{1,{\mathrm{D}}}|^{-\frac{1}{2}}h\right\|_{s'}\\
&\leq\frac{4\widehat{M}L^3_2N^{{\tau-1}}}{{L^3_1\gamma\omega}}(
\|h\|_{s'}+\|w\|_{s'+\sigma}\|h\|_{s}),
\end{align*}
where $\widehat{M}=\|1/\rho\|_{L^{\infty}}$. By means of the fact $\|w\|_{s+\sigma}\leq1$, we get for $s'=s$
\begin{equation*}
\|\mathcal{L}^{-1}_N(\epsilon,\omega,w)\|_{s}\leq\frac{8\widehat{M}L^3_2N^{{\tau-1}}}{{L^3_1\gamma\omega}}\|h\|_{s}.
\end{equation*}
Thus the property $(P5) $ holds.
\end{proof}

\subsection{The Nash-Moser scheme}\label{sec:6.2}
Denote by $A_0$ the open set
\begin{equation}\label{E4.30}
A_{0}:=\Bigg\{(\epsilon,\omega)\in(\epsilon_1,\epsilon_2)\times(\omega_0,+\infty):~\left|\omega l-\sqrt{\lambda_{j}}\right|>\frac{\gamma}{l^{\tau}},~\forall l=1,\cdots,N_0, j\geq0\Bigg\},
\end{equation}
where $\lambda_j,j\geq0$ are the eigenvalues of the Sturm-Liouville problem
\begin{equation*}
\left\{
\begin{aligned}
&-(p(x)y'(x))'+m(x)y(x)=\lambda\rho(x)y(x),\\
&\alpha_1y(0)-\beta_1y_{x}(0)=0,\\
&\alpha_2y(\pi)+\beta_2y_{x}(\pi)=0.
\end{aligned}
\right.
\end{equation*}
In addition, define for $1<\chi\leq2$
\begin{equation}\label{E5.35}
\beta:=\chi(3\tau+\sigma)+\frac{\chi}{\chi-1}(\tau-1+\sigma)+1,
\end{equation}
where $\sigma=\frac{\tau(\tau-1)}{2-\tau}$, and denote  by
$ \mathfrak{B}(0,\rho)$ the open ball of center 0 and radius $\rho$.
\begin{lemm}(inductive scheme)\label{lemma4.1}
For all $n\in\mathbf{N}$, there exists a sequence of subsets $(\epsilon,\omega)\in A_{n}\subseteq A_{n-1}\subseteq\cdots\subseteq A_1\subseteq A_{0}$, where
\begin{equation*}
A_{n}:=\Bigg\{(\epsilon,\omega)\in A_{n-1} |~ (\epsilon,\omega)\in\Delta^{\gamma,\tau}_{N_{n}}(w_{n-1})\Bigg\},
\end{equation*}
and a sequence $w_{n}(\epsilon,\omega)\in W_{N_{n}}$ with
\begin{equation}\label{E4.10}
\|w_{n}\|_{s+\sigma}\leq1,
\end{equation}
 such that, if $(\epsilon,\omega)\in A_{n}$, for $\frac{\epsilon N^{\tau-1}_0}{\gamma\omega}\leq \mathfrak{c}$ small enough, then $w_{n}(\epsilon,\omega)$ is a solution of
\begin{equation*}
  L_{\omega}w-\epsilon P_{N_n}\Pi_{W}\mathcal{F}(\epsilon,w)=0.  \quad\quad (P_{N_{n}})
\end{equation*}
Furthermore $w_{n}=\sum^{n}_{i=0}h_i$, where $h_i\in W_{N_i}$ with
\begin{equation}\label{E4.11}
\|h_0\|_{s}\leq \frac{K_1\epsilon}{\gamma\omega},\quad\|h_i\|_{s}\leq\frac{ K_{2}\epsilon}{\gamma\omega}N^{-\sigma-3}_{i},\quad\forall1\leq i\leq n
\end{equation}
for $K_1>0,K_2>0$ depending on $\rho, p, m, f, \epsilon_0, \hat{v}, \gamma,\tau, s,\beta$ at most.
\end{lemm}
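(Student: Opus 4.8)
The plan is to prove the lemma by induction on $n$, running a Newton--Nash-Moser iteration in which the frequency cut-off is raised from $N_n$ to $N_{n+1}=[e^{\mathfrak{d}\chi^{n+1}}]$ at each stage. Besides $\eqref{E4.10}$ and $\eqref{E4.11}$ I would carry along an auxiliary high-norm inductive hypothesis $\|w_n\|_{s+\beta}\le N_n^{\mu}$ with $\mu:=\frac{\chi}{\chi-1}(\tau-1+\sigma)$, and accordingly assume $k$ large enough (say $k\ge s+\beta+\sigma+3$) so that the tame bounds $(P2)$--$(P3)$ are available up to regularity $s+\beta+\sigma$. The super-exponential spacing $N_{n+1}\simeq N_n^{\chi}$ with $\chi>1$ is what makes the low norm decay doubly exponentially while the high norm is only allowed to grow polynomially in $N_n$.

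\textbf{Base step ($n=0$).} One solves $(P_{N_0})$ inside $W_{N_0}$ by a contraction argument: rewrite the equation as $w=-\mathcal{L}_{N_0}^{-1}(\epsilon,\omega,0)\big[\epsilon P_{N_0}\Pi_{W}\big(\mathcal{F}(\epsilon,w)-\mathrm{D}_{w}\mathcal{F}(\epsilon,0)[w]\big)\big]$ and use $(P5)$ at $w=0$ — note that $A_0$ is precisely the set on which $\mathcal{L}_{N_0}(\epsilon,\omega,0)$ is invertible — together with $(P1)$ and $(P3)$ for $w=0$. The right-hand side is then a contraction of a small $H^s$-ball whenever $\epsilon N_0^{\tau-1}/(\gamma\omega)$ is small; the fixed point $w_0=h_0$ obeys $\|h_0\|_s\le K_1\epsilon/(\gamma\omega)$ by $\eqref{E4.8}$, while $\|w_0\|_{s+\sigma}\le 1$ and $\|w_0\|_{s+\beta}\le N_0^{\mu}$ follow from the tame bound $\eqref{E4.7}$ with $s'=s+\sigma$ and $s'=s+\beta$.

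\textbf{Inductive step.} Assume $w_n\in W_{N_n}$ solves $(P_{N_n})$ and satisfies the hypotheses. On $A_{n+1}$ the condition $(\epsilon,\omega)\in\Delta^{\gamma,\tau}_{N_{n+1}}(w_n)$ forces both non-resonance conditions of $(P5)$: $\eqref{E5.11}$ is literally the second inequality in $\Delta^{\gamma,\tau}_{N_{n+1}}(w_n)$, and $\eqref{E5.12}$ follows since $|\omega^2l^2-\lambda_j(\epsilon,w_n)|=|\omega l-\sqrt{\lambda_j(\epsilon,w_n)}|\,|\omega l+\sqrt{\lambda_j(\epsilon,w_n)}|\ge(\gamma/l^{\tau})\cdot\omega l=\gamma\omega/l^{\tau-1}$; combined with $\|w_n\|_{s+\sigma}\le 1$, property $(P5)$ gives invertibility of $\mathcal{L}_{N_{n+1}}(\epsilon,\omega,w_n)$. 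Seeking $w_{n+1}=w_n+h_{n+1}$ with $h_{n+1}\in W_{N_{n+1}}$ and subtracting $(P_{N_n})$ from $(P_{N_{n+1}})$ yields the fixed-point equation
\begin{equation*}
h=-\mathcal{L}_{N_{n+1}}^{-1}(\epsilon,\omega,w_n)\Big[\epsilon P_{N_{n+1}}\Pi_{W}Q_n(h)+\epsilon\big(P_{N_{n+1}}-P_{N_n}\big)\Pi_{W}\mathcal{F}(\epsilon,w_n)\Big],
\end{equation*}
where $Q_n(h):=\mathcal{F}(\epsilon,w_n+h)-\mathcal{F}(\epsilon,w_n)-\mathrm{D}_{w}\mathcal{F}(\epsilon,w_n)[h]$. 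One solves this by contraction in a small $H^s$-ball of $W_{N_{n+1}}$: the quadratic remainder $Q_n$ is controlled by $(P3)$, which supplies the self-improving factor $\|h\|_s$ (hence a threshold on $\epsilon/(\gamma\omega)$), while the tail $(P_{N_{n+1}}-P_{N_n})\Pi_{W}\mathcal{F}(\epsilon,w_n)$ is controlled by the smoothing estimate $(P4)$, which trades a factor $N_n^{-b}$ for the high norm $\|\mathcal{F}(\epsilon,w_n)\|_{s+b}\le C(1+\|w_n\|_{s+b})$ from $(P2)$.

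\textbf{Closing the induction.} Applying $\eqref{E4.7}$ to the fixed point with $s'=s$ and with $s'=s+\beta$, using $N_{n+1}\simeq N_n^{\chi}$ and the high-norm hypothesis $\|w_n\|_{s+\beta}\le N_n^{\mu}$, one obtains $\|h_{n+1}\|_s\lesssim\frac{\epsilon}{\gamma\omega}N_{n+1}^{\tau-1}N_n^{-b}(1+\|w_n\|_{s+b})$ and a companion bound at level $s+\beta$; choosing $b$ large enough these reproduce $\|h_{n+1}\|_s\le\frac{K_2\epsilon}{\gamma\omega}N_{n+1}^{-\sigma-3}$ and $\|w_{n+1}\|_{s+\beta}\le N_{n+1}^{\mu}$, with $K_1,K_2$ fixed because each one-step estimate regenerates itself with a fixed multiplicative constant. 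Then $\eqref{E4.10}$ comes for free from $\eqref{E4.11}$ via $\|h_i\|_{s+\sigma}\le N_i^{\sigma}\|h_i\|_s$ and summability of $\sum_i N_i^{-3}$. The main obstacle is the bookkeeping of derivative losses: the $N^{\tau-1}$ loss in $\mathcal{L}_{N_{n+1}}^{-1}$, the $\sigma$-loss inside $(P5)$, and the polynomial growth of $\|w_n\|_{s+\beta}$ must all be outrun by the smoothing gain $N_n^{-b}$ and by $\chi>1$ — matching these exponents (the geometric sum $\sum_j\chi^{-j}=\chi/(\chi-1)$ produces the term $\frac{\chi}{\chi-1}(\tau-1+\sigma)$, and the low-norm closure produces $\chi(3\tau+\sigma)$) is precisely what forces the value of $\beta$ fixed in $\eqref{E5.35}$, and verifying it is the technical heart of the proof.
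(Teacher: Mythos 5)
Your overall scheme is the one the paper uses: the same splitting of $(P_{N_{n+1}})$ into the quadratic Taylor remainder (handled by $(P3)$) plus the high-frequency tail $(P_{N_{n+1}}-P_{N_n})\Pi_W\mathcal{F}(\epsilon,w_n)$ (handled by $(P4)$ and the tame bound $(P2)$), inversion of $\mathcal{L}_{N_{n+1}}(\epsilon,\omega,w_n)$ via $(P5)$ on $A_{n+1}$ (including the correct derivation of \eqref{E5.12} from the definition of $\Delta^{\gamma,\tau}_{N_{n+1}}(w_n)$), a contraction in a ball of radius of order $N_{n+1}^{-\sigma-3}\epsilon/(\gamma\omega)$, and the recovery of \eqref{E4.10} by summing $N_i^{\sigma}\|h_i\|_s$. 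Your auxiliary hypothesis $\|w_n\|_{s+\beta}\le N_n^{\mu}$ with $\mu=\frac{\chi}{\chi-1}(\tau-1+\sigma)$ is exactly the paper's claim {\bf (F3)} (there proved separately, in Lemma \ref{lemma4.3}, through the recursion $S_n\le(1+N_n^{\tau-1+\sigma})S_{n-1}$ obtained by applying \eqref{E4.12} at level $s+\beta$), and your remark that matching the losses $N_{n+1}^{\tau-1}$, $\sigma$ and the high-norm growth against $N_n^{-\beta}$ is what forces \eqref{E5.35} is accurate, though you leave that verification, which is the bulk of the paper's proof, as a sketch.

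The one step that would fail as written is the base step. You invert $\mathcal{L}_{N_0}(\epsilon,\omega,0)$ and justify this by asserting that ``$A_0$ is precisely the set on which $\mathcal{L}_{N_0}(\epsilon,\omega,0)$ is invertible''. This is not so: property $(P5)$ requires $(\epsilon,\omega)\in\Delta^{\gamma,\tau}_{N_0}(0)$, i.e.\ the first Melnikov condition for the $\epsilon$-dependent eigenvalues $\lambda_j(\epsilon,0)$ of \eqref{E6.1} \emph{and} the second condition $|\omega l-j/c|>\gamma/l^{\tau}$, which is needed in the proof of $(P5)$ (claim {\bf (F1)}) to control the off-diagonal part $\mathscr{L}_{1,\mathrm{ND}}$ produced by $\epsilon D_w\mathcal{F}$. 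The set $A_0$ in \eqref{E4.30} contains neither of these: it only imposes $|\omega l-\sqrt{\lambda_j}|>\gamma/l^{\tau}$ for the \emph{unperturbed} Sturm--Liouville eigenvalues, and imposes nothing at all on $|\omega l-j/c|$. The paper sidesteps this by never invoking $(P5)$ at the initial step: it inverts the diagonal operator $\tfrac1\rho L_\omega$, whose eigenvalues $\omega^2l^2-\lambda_j$ are bounded below by $\gamma\omega/l^{\tau-1}$ directly from the definition of $A_0$, and treats the whole term $\epsilon\Pi_W\mathcal{F}(\epsilon,w)$ (not just the Taylor remainder) as the contraction nonlinearity, using only the smallness of $\epsilon N_0^{\tau-1}/(\gamma\omega)$. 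Your base step should be rewritten in this way (or, at least, the first Melnikov condition for $\lambda_j(\epsilon,0)$ recovered by the eigenvalue perturbation bound \eqref{E3.4} and a half-gap argument, plus a crude Neumann-series bound for the off-diagonal block on the finite set $|l|\le N_0$); as stated, the appeal to $(P5)$ on $A_0$ is unjustified.
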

\begin{proof}
The lemma is verified by induction.

Step1: initialization. Let $N_{0}:=[e^{\mathfrak{d}}]$, where $\mathfrak{d}=\ln N_{0}$. By the  definition of $A_0$, it is easy to see that
\begin{equation*}
|\omega^2l^2-\lambda_j|=\left|\omega l-\sqrt{\lambda_j}\right|\left|\omega l+\sqrt{\lambda_j}\right|>\left|\omega l-\sqrt{\lambda_j}\right|\omega l>\frac{\gamma\omega}{l^{\tau-1}},\quad\forall l=1,2,\cdots,N_0,~\forall j\geq0,
\end{equation*}
where $\omega^2l^2-\lambda_j$ is the eigenvalue of $\frac{1}{\rho}L_{\omega}$. Then, for $\|w_0\|\leq1$, the operator $\frac{1}{\rho}L_{\omega}$ is invertible with
\begin{align}\label{E8.1}
\|w_{0}\|_{s}=\left\|\epsilon \left(({1}/{\rho})L_{\omega}\right)^{-1}\left(({1}/{\rho})P_{0}\Pi_{W}\mathcal{F}(\epsilon,w_0)\right)\right\|_{s}
\leq\frac{\epsilon \widehat{M}N^{\tau-1}_0}{\gamma\omega}\|P_0\Pi_{W}\mathcal{F}(\epsilon,w_0)\|_{s}\stackrel{(P1)}{\leq}\frac{K_1\epsilon}{\gamma\omega}.
\end{align}
Moreover, for $\frac{\epsilon N^{\tau-1}_0}{\gamma\omega}\leq \mathfrak{c}$ small enough, we also have
\begin{align}
&\|w_{0}\|_{s+\beta}\leq\frac{\epsilon \widehat{M}N^{\beta+\tau-1}_0}{\gamma\omega}\|P_0\Pi_{W}\mathcal{F}(\epsilon,w_0)\|_{s}\stackrel{(P1)}{\leq}\widehat{C},\label{E8.2}\\
&\left\|\epsilon \left( ({1}/{\rho})L_{\omega}\right)^{-1}\left(({1}/{\rho})P_{0}\Pi_{W}\mathrm{D}_{w}\mathcal{F}(\epsilon,w_0)\right)\right\|_{s}\leq\frac{\epsilon \widehat{M} N^{\tau-1}_0}{\gamma\omega}\|P_{0}\Pi_{W}\mathrm{D}_{w}\mathcal{F}(\epsilon,w_0)\|_{s}\stackrel{(P1)}{\leq}\frac{1}{2}.\label{E8.3}
\end{align}
Formulae \eqref{E8.1} and  \eqref{E8.3} establish that the map $w_0\mapsto\epsilon(\frac{1}{\rho}L_{\omega})^{-1}(\frac{1}{\rho}P_{0}\Pi_{W}\mathcal{F}(\epsilon,w_0))$  is a contraction in $\{w_0\in W_{N_{0}}:\|w_{0}\|_s\leq \rho_0\}$ for $\frac{\epsilon N^{\tau-1}_0}{\gamma\omega}\leq \mathfrak{c}$ small enough,  where $\rho_0=\frac{K_1\epsilon}{\gamma\omega}$.

Step 2: assumption. Assume that we have obtained a solution $w_{n}\in W_{N_{n}}$ of $(P_{N_{n}})$ satisfying conditions \eqref{E4.10}-\eqref{E4.11}.

Step 3: iteration. Our goal is to find a solution $w_{n+1}\in W_{N_{n+1}}$ of $(P_{N_{n+1}})$ with conditions \eqref{E4.10}-\eqref{E4.11} at ($n+1$)-th step. For $h_{n+1}\in W_{N_{n+1}}$, denote by
\begin{equation*}
w_{n+1}=w_{n}+h_{n+1}
\end{equation*}
a solution of
\begin{equation*}
L_{\omega}w-\epsilon P_{N_{n+1}}\Pi_{W}\mathcal{F}(\epsilon,w)=0. \quad\quad (P_{N_{n+1}})
\end{equation*}
From the fact 
$L_{\omega}w_{n}=\epsilon P_{N_{n}}\Pi_{W}\mathcal{F}(\epsilon,w_{n})$,
it yields that
\begin{align*}
L_{\omega}(w_{n}+h)-\epsilon P_{N_{n+1}}\Pi_{W}\mathcal{F}(\epsilon,w_{n}+h)=&L_{\omega}h+ L_{\omega}w_{n}-\epsilon P_{N_{n+1}}\Pi_{W}\mathcal{F}(\epsilon,w_{n}+h)\\
=&-\mathcal{L}_{N_{n+1}}(\epsilon,\omega,w_n)h+R_{n}(h)+r_{n},
\end{align*}
where
\begin{align*}
&R_{n}(h):=\epsilon P_{N_{n+1}}(-\Pi_{W}\mathcal{F}(\epsilon,w_n+h)
+\Pi_{W}\mathcal{F}(\epsilon,w_n)+\Pi_{W}\mathrm{D}_{w}\mathcal{F}(\epsilon,w_n)[h]),\\
&r_{n}:=\epsilon(P_{N_{n}}\Pi_{W}\mathcal{F}(\epsilon,w_n)-P_{N_{n+1}}\Pi_{W}\mathcal{F}(\epsilon,w_n))=
-\epsilon P^{\bot}_{N_{n}}P_{N_{n+1}}\Pi_{W}\mathcal{F}(\epsilon,w_n).
\end{align*}
By means of \eqref{E4.7}-\eqref{E4.8}, \eqref{E4.10} and $(\epsilon,\omega)\in A_{n+1}\subseteq A_{n}$, the linear operator $\mathcal{L}_{N_{n+1}}(\epsilon,\omega,w_n)$ is invertible with
\begin{equation}\label{E4.12}
\left\|\mathcal{L}^{-1}_{N_{n+1}}(\epsilon,\omega,w_{n})h\right\|_{s'}\leq \frac{K(s')}{\gamma\omega}N_{n+1}^{\tau-1}\left(\|h\|_{s'}+\|w\|_{s'+\sigma}\|h\|_{s}\right),\quad \forall s'\geq s,
\end{equation}
and
\begin{equation}\label{E4.13}
\left\|\mathcal{L}^{-1}_{N_{n+1}}(\epsilon,\omega,w_{n})h\right\|_{s}\leq \frac{K}{\gamma\omega}N_{n+1}^{\tau-1}\|h\|_{s}.
\end{equation}
Define a map
\begin{align*}
\mathcal{G}_{n+1}: &W_{N_{n+1}}\rightarrow W_{N_{n+1}}\\
&h\mapsto-\mathcal{L}_{N_{n+1}}(\epsilon,\omega,w_n)^{-1}(R_{n}(h)+r_{n}).
\end{align*}
Then solving $(P_{N_{n+1}})$ is reduced to find  the fixed point of  $h=\mathcal{G}_{n+1}(h)$. Let us show that there exists $K_2>0$ such that, for $(\epsilon,\omega)\in A_{n+1}$ and $\frac{\epsilon}{\gamma\omega}$ small enough, $\mathcal{G}_{n+1}$ is a contraction in
\[\mathfrak{B}_{n+1}(0,\rho_{n+1})=\{h~|~h\in W_{N_{n+1}}~\text{with}~\|h\|_{s}\leq \rho_{n+1}\},\] where
\begin{align}\label{E4.33}
\rho_{n+1}:=\frac{ K_2\epsilon}{\gamma\omega}N^{-\sigma-3}_{n+1}\quad\text {with}~\sigma=\frac{\tau(\tau-1)}{2-\tau}.
\end{align}
 In fact, the properties $(P2)$-$(P4)$ imply
\begin{equation*}
\|R_{n}(h)\|_{s}{\leq}\epsilon C\|h\|^2_{s},\quad
\|r_{n}\|_{s}{\leq}\epsilon C(\beta)N^{-\beta}_{n}S_{n},
\end{equation*}
where
\begin{align}\label{E4.34}
S_{n}:=1+\|w_n\|_{s+\beta}.
\end{align}
If $\frac{\epsilon}{\gamma\omega}$ is small enough, then we claim that the following
\begin{equation}\label{E4.35}
{\bf ( F3):}\quad S_{n}\leq C'S_{0}N^{\frac{1}{\chi-1}{(\tau-1+\sigma)}}_{n+1}
\end{equation}
holds for some constant $C':=C'(\chi,\mathfrak{d},\tau,\sigma)>0$. While the proof of  {\bf (F3)} will be given in the next lemma.

 Combining \eqref{E4.35} with definition \eqref{E5.35} on $\beta$ and formulae \eqref{E8.2}, \eqref{E4.13}-\eqref{E4.33}, for $\frac{\epsilon}{\gamma\omega}$ small enough, we have
\begin{align*}
\|\mathcal{G}_{n+1}(h)\|_{s}&{\leq}\frac{K}{\gamma\omega}N^{\tau-1}_{n+1}(\|R_{n}(h)\|_{s}+\|r_{n}\|_{s})\leq\frac{\epsilon K'}{\gamma\omega}N^{\tau-1}_{n+1}N^{-\beta}_{n}S_{n}+\frac{\epsilon K'}{\gamma\omega}N^{\tau-1}_{n+1}\rho^2_{n+1}\leq\rho_{n+1}.
\end{align*}
Moreover
\begin{equation}\label{E4.15}
\mathrm{D}_{h}\mathcal{G}_{n+1}(h)[\hat{w}]=\epsilon\mathcal{L}^{-1}_{N_{n+1}}(\epsilon,\omega,w_n)P_{N_{n+1}}(\Pi_{W}\mathrm{D}_{w}
\mathcal{F}(\epsilon,w_n+h)-\Pi_W\mathrm{D}_{w}\mathcal{F}(\epsilon,w_n))\hat{w}.
\end{equation}
Using definition \eqref{E5.35} on $\beta$, for $\frac{\epsilon}{\gamma\omega}$ small enough, we derive
\begin{equation}\label{E4.16}
\|\mathrm{D}_h\mathcal{G}_{n+1}(h)[\hat{w}]\|_{s}\stackrel{(P1),\eqref{E4.13}}{\leq}\frac{K''\epsilon}{\gamma\omega}N^{\tau-1}_{n+1}
\|h\|_{s}\|\hat{w}\|_{s}\leq\frac{K''\epsilon}{\gamma\omega}N^{\tau-1}_{n+1}\rho_{n+1}\|\hat{w}\|_{s}\leq\frac{1}{2}\|\hat{w}\|_{s}.
\end{equation}
Hence the map $\mathcal{G}_{n+1}$ is a  contraction, which gives rise to $h_{n+1}\in W_{N_{n+1}}$. In addition, for $\frac{\epsilon}{\gamma\omega}$ small enough, the following holds:
\begin{equation*}
\|w_{n+1}\|_{s+\sigma}\leq\sum\limits_{i=0}^{n+1}\|h_{i}\|_{s+\sigma}\stackrel{(P4)}{\leq}
\sum\limits_{i=0}^{n+1}N^{\sigma}_{i}\|h_{i}\|_{s}\leq\sum\limits_{i=0}^{n+1}N^{\sigma}_{i}K_2\epsilon(\gamma\omega)^{-1}N^{-\sigma-3}_{i}\leq1.
\end{equation*}
This completes the proof.
\end{proof}
\begin{rema}
In Lemma \ref{lemma4.1}, we construct the function $h_{n}$ depending on the parameters $(\epsilon,\omega)$.
\end{rema}

Let us give the proof of the fact {\bf (F3)}.
\begin{lemm}\label{lemma4.3}
Let $S_n$ be given by \eqref{E4.34}. Given conditions
\eqref{E4.10}-\eqref{E4.11} for all $i\leq n$, there exist $C':=C'(\chi,\mathfrak{d},\tau,\sigma)$ such that for $\frac{\epsilon}{\gamma\omega}$ small enough
\begin{equation*}
S_{n}\leq C'S_{0}N^{\frac{1}{\chi-1}{(\tau-1+\sigma)}}_{n+1}.
\end{equation*}

\end{lemm}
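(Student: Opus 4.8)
The plan is to read off a one--step recursion for $S_n$ from the fixed--point characterisation of $h_{n+1}$ in Lemma \ref{lemma4.1} and then to telescope it, using that $N_n=[e^{\mathfrak{d}\chi^{n}}]$ (see \eqref{E4.2}) grows super--geometrically. Since $w_{n+1}=w_n+h_{n+1}$ we have $S_{n+1}\le S_n+\|h_{n+1}\|_{s+\beta}$, so everything reduces to bounding $\|h_{n+1}\|_{s+\beta}$. Recall that $h_{n+1}=-\mathcal{L}_{N_{n+1}}^{-1}(\epsilon,\omega,w_n)\big(R_n(h_{n+1})+r_n\big)$ with $r_n=-\epsilon P^{\bot}_{N_n}P_{N_{n+1}}\Pi_W\mathcal{F}(\epsilon,w_n)$, and that $(\epsilon,\omega)\in A_{n+1}\subseteq\Delta^{\gamma,\tau}_{N_{n+1}}(w_n)$ together with $\|w_n\|_{s+\sigma}\le1$ (see \eqref{E4.10}) makes \eqref{E4.12} available at level $s'=s+\beta$ (take $k$ large enough that $s+\beta\le k-3$, so $(P2)$--$(P3)$ hold at this regularity and $R_n(h_{n+1})+r_n$ has finite $H^{s+\beta}$ norm, $w_n\in W_{N_n}$ being a trigonometric polynomial in $t$); this gives
\[
\|h_{n+1}\|_{s+\beta}\le\frac{K(\beta)}{\gamma\omega}\,N_{n+1}^{\tau-1}\Big(\|R_n(h_{n+1})+r_n\|_{s+\beta}+\|w_n\|_{s+\beta+\sigma}\,\|R_n(h_{n+1})+r_n\|_{s}\Big).
\]

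The delicate point is to keep this linear in $S_n$. By $(P2)$, $\|r_n\|_{s+\beta}\le\epsilon\|\mathcal{F}(\epsilon,w_n)\|_{s+\beta}\le\epsilon C(\beta)S_n$, and by $(P3)$, $\|R_n(h_{n+1})\|_{s+\beta}\le\epsilon C(\beta)\big(S_n\rho_{n+1}^2+\rho_{n+1}\|h_{n+1}\|_{s+\beta}\big)$ with $\rho_{n+1}=K_2\epsilon(\gamma\omega)^{-1}N_{n+1}^{-\sigma-3}$ (see \eqref{E4.33}, \eqref{E4.11}). For the mixed term I would spend the extra $\sigma$ derivatives only on $w_n$, i.e. $\|w_n\|_{s+\beta+\sigma}\le N_n^{\sigma}\|w_n\|_{s+\beta}\le N_n^{\sigma}S_n$ by the smoothing $(P4)$, while estimating $r_n$ in low norm at the \emph{minimal} scale $s+\sigma$, namely $\|r_n\|_{s}\le\epsilon N_n^{-\sigma}\|\mathcal{F}(\epsilon,w_n)\|_{s+\sigma}\le 2\epsilon C(\sigma)N_n^{-\sigma}$ by $(P4)$, $(P2)$ and \eqref{E4.10}; likewise $\|R_n(h_{n+1})\|_{s}\le\epsilon C\rho_{n+1}^2$ by $(P3)$. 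Then the powers $N_n^{\pm\sigma}$ cancel, so $\|w_n\|_{s+\beta+\sigma}\|R_n(h_{n+1})+r_n\|_s\le C\epsilon S_n$ --- \emph{not} $C\epsilon S_n^2$ --- and (absorbing the $\|h_{n+1}\|_{s+\beta}$ term on the left, whose coefficient is $\lesssim\tfrac{\epsilon^2}{(\gamma\omega)^2}N_{n+1}^{\tau-4-\sigma}\le\tfrac12$ because $\tau<2$ and $\tfrac{\epsilon}{\gamma\omega}$ is small) one gets $\|h_{n+1}\|_{s+\beta}\le\tfrac{C_1(\beta)\epsilon}{\gamma\omega}N_{n+1}^{\tau-1}S_n$, hence the recursion
\[
S_{n+1}\le S_n\Big(1+\tfrac{C_1(\beta)\epsilon}{\gamma\omega}\,N_{n+1}^{\tau-1}\Big).
\]

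Iterating this, with $S_0\le1+\widehat{C}$ bounded by \eqref{E8.2}, yields $S_n\le S_0\prod_{i=1}^{n}(1+a_i)$ where $a_i:=\tfrac{C_1(\beta)\epsilon}{\gamma\omega}N_i^{\tau-1}$. Split $\{1,\dots,n\}=I_1\cup I_2$ with $I_2:=\{i:a_i>1\}$. On $I_2$ we have $1+a_i\le 2a_i$, so choosing $\tfrac{\epsilon}{\gamma\omega}$ small enough that $2C_1(\beta)\epsilon(\gamma\omega)^{-1}\le1$ gives $\prod_{i\in I_2}(1+a_i)\le\prod_{i=1}^{n}N_i^{\tau-1}\le e^{\mathfrak{d}(\tau-1)\sum_{i=1}^{n}\chi^{i}}\le C_2(\chi,\mathfrak{d},\tau)\,N_{n+1}^{(\tau-1)/(\chi-1)}$, using $N_{n+1}\ge\tfrac12 e^{\mathfrak{d}\chi^{n+1}}$. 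On $I_1$ the quantities $N_i^{\tau-1}$ grow by a factor $\ge2$ between consecutive indices (for $N_0$ large), hence $\sum_{i\in I_1}N_i^{\tau-1}\le 2\max_{i\in I_1}N_i^{\tau-1}\le 2\gamma\omega/(C_1(\beta)\epsilon)$, so $\sum_{i\in I_1}a_i\le2$ and $\prod_{i\in I_1}(1+a_i)\le e^{2}$. Combining, $S_n\le e^{2}C_2(\chi,\mathfrak{d},\tau)\,S_0\,N_{n+1}^{(\tau-1)/(\chi-1)}\le C'(\chi,\mathfrak{d},\tau,\sigma)\,S_0\,N_{n+1}^{(\tau-1+\sigma)/(\chi-1)}$ (since $\sigma>0$ and $N_{n+1}\ge1$), which is exactly the assertion.

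The main obstacle is the recursion step. A naive use of \eqref{E4.12} forces the mixed term $\|w_n\|_{s+\beta+\sigma}\|R_n(h_{n+1})+r_n\|_s$; estimating both factors with the $\beta$--scale data would introduce an $S_n^{2}$, and the recursion $S_{n+1}\le S_n\big(1+\tfrac{C\epsilon}{\gamma\omega}N_{n+1}^{\tau-1}S_n\big)$ so obtained would give a constant blowing up as $\chi\to1^{+}$ and an exponent strictly larger than $(\tau-1+\sigma)/(\chi-1)$. The remedy is the asymmetric splitting above --- placing all the extra derivatives on $w_n$ and estimating the smoothing remainder $r_n$ at the threshold regularity $s+\sigma$, where \eqref{E4.10} yields a uniform bound --- so that $N_n^{\sigma}$ cancels $N_n^{-\sigma}$. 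A secondary subtlety is the uniformity of the product estimate in $\epsilon,\gamma,\omega$: it works only because the large factors $a_i>1$ come with the compensating power $(\epsilon/\gamma\omega)^{|I_2|}\le1$, while the finitely many small factors $a_i\le1$ sum up (super--geometrically) to an absolute constant.
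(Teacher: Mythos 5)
Your proposal is correct and follows the same skeleton as the paper's proof: a one--step recursion for $S_n$ extracted from $h_n=\mathcal{L}^{-1}_{N_n}(R_{n-1}(h_n)+r_{n-1})$ via \eqref{E4.12}, the tame estimates $(P2)$--$(P3)$, the smoothing $(P4)$, absorption of the $\rho_n\|h_n\|_{s+\beta}$ term, and then telescoping using $N_i=[e^{\mathfrak{d}\chi^i}]$. Where you genuinely diverge is in the mixed term: the paper simply bounds $\|R_{n-1}(h_n)\|_s+\|r_{n-1}\|_s\le\epsilon C$ (so the recursion is already linear in $S_{n-1}$ there too, not quadratic, contrary to the ``naive'' scenario you describe) and pays the full $N_{n-1}^{\sigma}$ coming from $\|w_{n-1}\|_{s+\beta+\sigma}\le N_{n-1}^{\sigma}S_{n-1}$, arriving at $S_n\le(1+N_n^{\tau-1+\sigma})S_{n-1}$ and hence exactly the exponent $\frac{\tau-1+\sigma}{\chi-1}$ of the statement. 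You instead exploit $r_n=-\epsilon P^{\bot}_{N_n}P_{N_{n+1}}\Pi_W\mathcal{F}(\epsilon,w_n)$ to gain $N_n^{-\sigma}$ in the low norm (using $(P4)$, $(P2)$ at level $s+\sigma$ and \eqref{E4.10}), cancel the $N_n^{\sigma}$ loss, and obtain the sharper per-step factor $1+\tfrac{C\epsilon}{\gamma\omega}N_{n+1}^{\tau-1}$; this proves the stronger bound $S_n\le C'S_0N_{n+1}^{(\tau-1)/(\chi-1)}$, which implies the lemma since $\sigma>0$. Two small remarks: (i) your $I_1/I_2$ splitting of the product, and in particular the doubling claim ``$N_i^{\tau-1}$ grows by a factor $\ge2$ for $N_0$ large'', is an unnecessary extra hypothesis --- once $C_1\epsilon/(\gamma\omega)\le1$ you can bound $1+a_i\le1+N_i^{\tau-1}$ for every $i$ and control $\prod_{i=1}^n(1+e^{\mathfrak{d}\chi^i(\tau-1)})$ by the convergent factor $\prod_{i\ge1}(1+e^{-\mathfrak{d}\chi^i(\tau-1)})$ exactly as the paper does, with a constant depending only on $\chi,\mathfrak{d},\tau$; (ii) your parenthetical requirement $s+\beta\le k-3$ for $(P2)$--$(P3)$ at the $\beta$ scale is indeed needed and is implicit in the paper's own proof as well.
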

\begin{proof}
Firstly, let us  claim
\begin{align}\label{E4.36}
S_{n}\leq (1+N^{\tau-1+\sigma}_{n})S_{n-1}.
\end{align}
In fact, it follows from $(P2)$-$(P3)$, the definitions of $R_{n-1}(h),r_{n-1},S_{n}$ and the fact $\|h_n\|_{s}\leq\rho_{n}$ that
\begin{align*}
&\|R_{n-1}(h_{n})\|_{s}\leq\epsilon C,\quad\|r_{n-1}\|_{s}\leq\epsilon C,\quad\|r_{n-1}\|_{s+\beta}\leq \epsilon C(\beta)(1+\|w_{n-1}\|_{s+\beta})=\epsilon C(\beta)S_{n-1},\\
&\|R_{n-1}(h_{n})\|_{s+\beta}\leq \epsilon C(\beta)(\|w_{n-1}\|_{s+\beta}\|h_n\|^2_s+\|h_n\|_s\|h_n\|_{s+\beta})\leq \epsilon C(\beta)(S_{n-1}\rho^2_{n}+\rho_{n}\|h_n\|_{s+\beta}).
\end{align*}
In view of the equality $h_{n}=\mathcal{L}^{-1}_{N_{n}}(\epsilon,\omega,w_{n-1})(R_{n-1}(h_{n})+r_{n-1})$, formula \eqref{E4.12}, definition \eqref{E4.33} on $\rho$ and the definition of $\sigma$, for $ \frac{\epsilon}{\gamma\omega}$ small enough, it yields that
\begin{align*}
\|h_{n}\|_{s+\beta}{\leq}&\frac{K(\beta)}{\gamma\omega}N^{\tau-1}_{n}
\left(\|R_{n-1}(h_{n})\|_{s+\beta}+\|r_{n-1}\|_{s+\beta}+\|w_{n-1}\|_{s+\beta+\sigma}(\|R_{n-1}(h_{n})\|_{s}+\|r_{n-1}\|_{s})\right)\\
\leq&\frac{\epsilon K'(\beta)}{2\gamma\omega}N^{\tau-1}_{n}(S_{n-1}\rho^{2}_{n}+\rho_{n}\|h_{n}\|_{s+\beta}+S_{n-1}+N^{\sigma}_{n-1}S_{n-1})\\
\leq&\frac{\epsilon K'(\beta)}{\gamma\omega}N^{\tau-1+\sigma}_{n}S_{n-1}+\frac{\epsilon K'(\beta)}{2\gamma\omega}\|h_{n}\|_{s+\beta}\\
\leq&\frac{\epsilon K'(\beta)}{\gamma\omega}N^{\tau-1+\sigma}_{n}S_{n-1}+\frac{1}{2}\|h_{n}\|_{s+\beta},
\end{align*}
which leads to
\begin{equation}\label{E4.17}
\|h_{n}\|_{s+\beta}\leq\frac{2\epsilon K'(\beta)}{\gamma\omega}N^{\tau-1+\sigma}_{n}S_{n-1}\leq N^{\tau-1+\sigma}_{n}S_{n-1}.
\end{equation}
This indicates
\begin{equation*}
S_{n}=1+\|w_n\|_{s+\beta}\leq1+\|w_{n-1}\|_{s+\beta}+\|h_{n}\|_{s+\beta}\leq(1+N^{\tau-1+\sigma}_{n})S_{n-1}.
\end{equation*}
Consequently, with the help of the inequality: $N_{n+1}\leq e^{\mathfrak{d}\chi^{n+1}}<N_{n+1}+1<2N_{n+1}$, the definition of $N_n$ and the claim \eqref{E4.36}, we have
\begin{align*}
S_{n}{\leq}&S_0\prod^{n}_{i=1}(1+N^{\tau-1+\sigma})\leq S_0\prod^{n}_{i=1}(1+e^{\mathfrak{d}\chi^n(\tau-1+\sigma)}) \\
\leq& C''S_0e^{\mathfrak{d}\frac{\chi^{n+1}}{\chi-1}(\tau-1+\sigma)}\\
\leq& C'S_0N_{n+1}^{\frac{1}{\chi-1}(\tau-1+\sigma)},
\end{align*}
where $C'=C''2^{\frac{1}{\chi-1}(\tau-1+\sigma)}=2^{\frac{1}{\chi-1}
(\tau-1+\sigma)}\prod^{n}_{i=1}(1+e^{-\mathfrak{d}\chi^n(\tau-1+\sigma)})$.
\end{proof}

To give the measure estimates on $B_\gamma$ defined by \eqref{E4.31}, the estimates on the derivatives of $h_n$ with respect to $(\epsilon,\omega)$ have to be required.
\begin{lemm}\label{lemma4.5}
For $\frac{\epsilon}{\gamma^2\omega}$ small enough, the map $h_{i},i\in\mathbf{N}$ belong to $C^1(A_{i};W_{N_{i}})$ with
\begin{align*}
&\|\partial_{\omega}h_{0}\|_{s}\leq\frac{ K_{3}\epsilon}{\gamma^2\omega},\quad
\|\partial_{\omega}h_{i}\|_{s}\leq\frac{ K_{5}\epsilon}{\gamma^2\omega}N_{i}^{-1},\quad\forall i\geq1,\\
&\|\partial_{\epsilon}h_{0}\|_{s}\leq\frac{ K_{4}}{\gamma\omega},\quad\quad
\|\partial_{\epsilon}h_{i}\|_{s}\leq\frac{ K_{6}}{\gamma\omega}N_{i}^{-1},\quad\forall i\geq1,
\end{align*}
where $K_i>0,i=3,4,5,6$ depend on $\rho, p, m, f, \epsilon_0, \hat{v}, \gamma,\tau, s,\beta$ at most.
\end{lemm}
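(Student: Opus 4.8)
The plan is to argue by induction on $i$, differentiating the fixed–point identities that define the $h_i$ in Lemma \ref{lemma4.1} with respect to $\omega$ and to $\epsilon$, and propagating — alongside the low–norm bounds in the statement — the auxiliary high–norm estimates $\|\partial_\omega h_i\|_{s+\beta}$, $\|\partial_\epsilon h_i\|_{s+\beta}$ (the analogues of \eqref{E4.17}), which are exactly what is needed to control $r_{i-1}$ and $R_{i-1}$ after differentiation. \emph{Initialization.} The function $w_0=h_0$ is obtained from the implicit function theorem applied to $U(\epsilon,\omega,w_0):=L_\omega w_0-\epsilon P_{N_0}\Pi_W\mathcal{F}(\epsilon,w_0)$, whose partial derivative in $w_0$ is $-\mathcal{L}_{N_0}(\epsilon,\omega,w_0)$, invertible on $A_0$; since $\omega\mapsto L_\omega$ is polynomial and $\mathcal{F}\in C^2$ (Lemma \ref{lemma3.1}), $h_0\in C^1(A_0;W_{N_0})$. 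Differentiating $U=0$ yields
\[
\partial_\omega h_0=\mathcal{L}_{N_0}^{-1}\bigl(2\omega\rho\,(h_0)_{tt}\bigr),\qquad \partial_\epsilon h_0=-\mathcal{L}_{N_0}^{-1}\bigl(P_{N_0}\Pi_W\mathcal{F}(\epsilon,h_0)+\epsilon P_{N_0}\Pi_W\partial_\epsilon\mathcal{F}(\epsilon,h_0)\bigr),
\]
and one concludes with \eqref{E4.13} ($\|\mathcal{L}_{N_0}^{-1}\|\lesssim N_0^{\tau-1}/(\gamma\omega)$), the elementary inequality $\|(h_0)_{tt}\|_s\le N_0^2\|h_0\|_s$ valid on $W_{N_0}$, the bound $\|h_0\|_s\le K_1\epsilon/(\gamma\omega)$ from \eqref{E4.11}, the boundedness of $\mathcal{F}$ and $\partial_\epsilon\mathcal{F}$ on $\{\|w\|_s\le1\}$ (with $\partial_\epsilon\mathcal{F}$ controlled through $\partial_\epsilon v$, see Remark \ref{Remark2}), and the fact that $N_0$ is a fixed constant; the $s+\beta$ analogues follow identically from \eqref{E4.12}.

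\emph{Inductive step.} Assume the claimed bounds and their $s+\beta$ analogues for $h_0,\dots,h_{i-1}$, so that $w_{i-1}=\sum_{j<i}h_j\in C^1$ with $\|\partial_\omega w_{i-1}\|_s\lesssim\epsilon/(\gamma^2\omega)$, $\|\partial_\epsilon w_{i-1}\|_s\lesssim 1/(\gamma\omega)$ (geometric sums, since $\sum_j N_j^{-1}<\infty$), plus high–norm bounds controlled as in Lemma \ref{lemma4.3}. Since $h_i$ is the fixed point of $\mathcal{G}_i$ with $\|\mathrm{D}_h\mathcal{G}_i\|\le 1/2$ by \eqref{E4.16}, and $\mathcal{G}_i$ depends in a $C^1$ way on $(\epsilon,\omega)$ — explicitly and through $w_{i-1},\partial_\omega w_{i-1},\partial_\epsilon w_{i-1}$ — the uniform contraction principle gives $h_i\in C^1(A_i;W_{N_i})$ and $\|\partial_\omega h_i\|_s\le 2\|(\partial_\omega\mathcal{G}_i)(h_i)\|_s$, similarly for $\partial_\epsilon$. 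Writing $\mathcal{G}_i(h)=-\mathcal{L}_{N_i}(\epsilon,\omega,w_{i-1})^{-1}(R_{i-1}(h)+r_{i-1})$, using $\partial_\omega[\mathcal{L}_{N_i}^{-1}]=-\mathcal{L}_{N_i}^{-1}(\partial_\omega\mathcal{L}_{N_i})\mathcal{L}_{N_i}^{-1}$ and the identity $\mathcal{L}_{N_i}^{-1}(R_{i-1}(h_i)+r_{i-1})=-h_i$ (the fixed–point equation itself), one gets
\[
(\partial_\omega\mathcal{G}_i)(h_i)=-\mathcal{L}_{N_i}^{-1}(\partial_\omega\mathcal{L}_{N_i})h_i-\mathcal{L}_{N_i}^{-1}\partial_\omega\bigl(R_{i-1}(h_i)+r_{i-1}\bigr).
\]
In the first term the dominant part of $\partial_\omega\mathcal{L}_{N_i}h_i$ is $-2\omega\rho\,(h_i)_{tt}$, which loses two time derivatives but on $W_{N_i}$ obeys $\|2\omega\rho(h_i)_{tt}\|_s\lesssim\omega N_i^2\|h_i\|_s\le\omega N_i^2\rho_i$ (the remaining part, coming from $\mathrm{D}_w^2\mathcal{F}[\partial_\omega w_{i-1},h_i]$ and the $v$–dependence, is $O(\epsilon\,\|\partial_\omega w_{i-1}\|_s\|h_i\|_s)$ and subordinate), so by \eqref{E4.13} and \eqref{E4.33},
\[
\|\mathcal{L}_{N_i}^{-1}(\partial_\omega\mathcal{L}_{N_i})h_i\|_s\lesssim\frac{N_i^{\tau-1}}{\gamma\omega}\,\omega N_i^2\rho_i\lesssim\frac{\epsilon}{\gamma^2\omega}\,N_i^{\tau-\sigma-2}\le\frac{\epsilon}{\gamma^2\omega}\,N_i^{-1},
\]
the last step because $\sigma=\frac{\tau(\tau-1)}{2-\tau}\ge\tau-1$ for $\tau\in(1,2)$. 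In the second term, $\partial_\omega R_{i-1}(h_i)$ is controlled by the differentiated Taylor–tame bound (P3) and is quadratically small, while $\partial_\omega r_{i-1}=-\epsilon P^{\bot}_{N_{i-1}}P_{N_i}\Pi_W\mathrm{D}_w\mathcal{F}(\epsilon,w_{i-1})[\partial_\omega w_{i-1}]$ is handled by (P4), (P2) and the high–norm bound on $\partial_\omega w_{i-1}$; after applying $\mathcal{L}_{N_i}^{-1}$ the net power of $N_i$ is $\le-1$, thanks to the choice of $\beta$ in \eqref{E5.35} (recall $N_{i-1}=N_i^{1/\chi}$) and the smallness of $\epsilon/(\gamma^2\omega)$. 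Adding the two contributions gives $\|\partial_\omega h_i\|_s\le K_5\epsilon/(\gamma^2\omega)N_i^{-1}$, and running the same argument in $\|\cdot\|_{s+\beta}$ — exactly as for \eqref{E4.17} — propagates the auxiliary bound $\|\partial_\omega h_i\|_{s+\beta}\lesssim\frac{\epsilon}{\gamma^2\omega}N_i^{\tau-1+\sigma+2}S_{i-1}$. The $\partial_\epsilon$–estimates are parallel but cleaner: $\partial_\epsilon L_\omega=0$, so the doubly–singular term $\mathcal{L}_{N_i}^{-1}(\partial_\epsilon\mathcal{L}_{N_i})h_i$ carries a spare factor $\epsilon/(\gamma\omega)$ and is subordinate; the genuine leading term is $\mathcal{L}_{N_i}^{-1}\partial_\epsilon r_{i-1}$, whose order–one piece $-P^{\bot}_{N_{i-1}}P_{N_i}\Pi_W\mathcal{F}(\epsilon,w_{i-1})$ produces the $1/(\gamma\omega)$ (rather than $\epsilon/(\gamma^2\omega)$) scaling and yields $\|\partial_\epsilon h_i\|_s\le K_6/(\gamma\omega)N_i^{-1}$ together with its $s+\beta$ analogue.

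\emph{Main obstacle.} The real difficulty is the bookkeeping of derivative losses: differentiating in $\omega$ hits the second–order wave operator $L_\omega$ and costs two time derivatives (a factor $N_i^2$ on $W_{N_i}$), while differentiating $\mathcal{L}_{N_i}^{-1}$ squares the small–divisor factor (an extra $N_i^{\tau-1}/(\gamma\omega)$, whence the extra power of $\gamma^{-1}$ in the $\partial_\omega$ bound). One must verify that these losses, \emph{combined with} the $\sigma$–losses built into the tame estimates and the $N_{i-1}^{-\beta}$ gain hidden in $r_{i-1}$, still telescope so that in every term $N_i$ appears to a power $\le-1$; this is precisely what the choices $\sigma=\frac{\tau(\tau-1)}{2-\tau}$, $\beta$ as in \eqref{E5.35}, $1<\chi\le2$, together with the smallness hypothesis on $\epsilon/(\gamma^2\omega)$, were designed to guarantee, in exact parallel with Lemmas \ref{lemma4.1} and \ref{lemma4.3}.
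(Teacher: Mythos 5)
Your proposal follows essentially the same route as the paper's proof: induction on $i$, implicit differentiation of the fixed-point/implicit equation defining $h_i$, the inverse bounds \eqref{E4.12}--\eqref{E4.13} (with the loss $N_i^{2}$ from $\partial_\omega L_\omega$ absorbed by $\|h_i\|_s\le\rho_i$ and $\sigma\ge\tau-1$), the smoothing gain $N_{i-1}^{-\beta}$ hidden in $r_{i-1}$, and the inductive propagation of the high-norm quantities $S_n,S'_n,S''_n$, which is exactly the content of the paper's claims {\bf (F3)}--{\bf (F4)} and Lemmas \ref{lemma4.3}, \ref{lemma4.6}, \ref{lemma4.8}. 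The only inaccuracy is the claimed high-norm increment $\|\partial_\omega h_i\|_{s+\beta}\lesssim\frac{\epsilon}{\gamma^2\omega}N_i^{\tau+\sigma+1}S_{i-1}$: the dominant contribution $\mathcal{L}_{N_i}^{-1}\bigl[2\omega\rho\,(h_i)_{tt}\bigr]$, estimated with \eqref{E4.17}, actually carries the exponent $2\tau+\sigma$ (as in the paper's $\Sigma_1$ and \eqref{E4.38}), a harmless discrepancy since the choice of $\beta$ in \eqref{E5.35} still makes every term close with a net power of $N_i$ at most $-1$.
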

\begin{proof}
This lemma is verified by induction. For $w_{0}=h_{0}$, define
\begin{equation*}
U_{0}(\epsilon,\omega,h):=h-\epsilon L^{-1}_{\omega}P_0\Pi_{W}\mathcal{F}(\epsilon,h), \quad
\mathcal{G}_{0}(h_0):=-\epsilon L^{-1}_{\omega}P_{0}\Pi_{W}\mathcal{F}(\epsilon,h_0).
\end{equation*}
The definition of $A_0$ indicates for $\frac{\epsilon}{\gamma\omega}$ small enough
\begin{equation*}
\|\mathrm{D}_{h}\mathcal{G}_{0}(h_0)\|_{s}=\|-\epsilon L^{-1}_{\omega}P_{0}\Pi_{W}\mathrm{D}_w\mathcal{F}(\epsilon,h_0)\|_{s}\leq\frac{\epsilon \widehat{M} N^{\tau-1}_0}{\gamma\omega}\|P_{0}\Pi_{W}\mathcal{F}(\epsilon,h_0)\|_{s}\stackrel{(P1)}{\leq}\frac{1}{2}.
\end{equation*}
This implies that
$\mathrm{D}_hU_{0}(\epsilon,\omega,h_0)=\mathrm{Id}-\mathrm{D}_{h}\mathcal{G}_{0}(h_0)$
is invertible. Clearly, it can be seen that $h_0\in C^1(A_0;W_{N_0})$ by the implicit function theorem.
Taking the derivative of the identity $L_{\omega}(L^{-1}_{\omega}h)=h$ with respect to $\omega$ yields
\begin{equation*}
\partial_{\omega}L^{-1}_{\omega}h=-(L^{-1}_{\omega})(2\omega\rho\partial_{tt})L^{-1}_{\omega}h.
\end{equation*}
Then, due to the definition of $A_0$, we get
\begin{align}\label{E8.4}
\|\partial_{\omega}L^{-1}_{\omega}h_0\|_{s}\leq\frac{\widehat{K}}{\gamma^2\omega}\|h_0\|_{s}.
\end{align}
It follows from taking the derivative of  $h_{0}=\epsilon L^{-1}_{\omega}P_0\Pi_{W}\mathcal{F}(\epsilon,h_0)$ with respect to $\omega$ that
\begin{equation*}
\partial_{\omega}h_{0}=\epsilon(\mathrm{Id}-\mathrm{D}_{h}\mathcal{G}_{0}(h_0))^{-1}\partial_{\omega}L^{-1}_{\omega}P_0\Pi_{W}\mathcal{F}(\epsilon,h_0),
\end{equation*}
which carries out
$\|\partial_{\omega}h_0\|_{s}\leq\frac{K_3\epsilon}{\gamma^2\omega}$ in view of \eqref{E8.4}. Furthermore we have
\begin{equation*}
\partial_{\epsilon}h_{0}=(\mathrm{Id}-\mathrm{D}_{h}\mathcal{G}_{0}(h_0))^{-1}L^{-1}_{\omega}P_0\Pi_{W}\mathcal{F}(\epsilon,h_0)
\end{equation*}
by taking derivative of  $h_{0}=\epsilon L^{-1}_{\omega}P_0\Pi_{W}\mathcal{F}(\epsilon,h_0)$ with respect to  $\epsilon$.
A similar process yields $\|\partial_{\epsilon}h_0\|_{s}\leq\frac{K_4}{\gamma\omega}$.

Assume $h_{i}\in C^1(A_i;W_{N_i})$ with $\|\partial_{\omega}h_{i}\|_{s}\leq\frac{ K_{5}\epsilon}{\gamma^2\omega}N_{i}^{-1}$, $\|\partial_{\epsilon}h_{i}\|_{s}\leq\frac{ K_{6}\epsilon}{\gamma\omega}N_{i}^{-1}$,
 $\forall1\leq i\leq n$.  By  assumption, it is easy to see that
\begin{align}\label{E4.39}
\|\partial_{\omega}w_{n}\|_{s}\leq\frac{\widehat{K}'\epsilon}{\gamma^2\omega},\quad \|\partial_{\epsilon}w_{n}\|_{s}\leq\frac{\widehat{K}'}{\gamma\omega}.
\end{align}
Moreover denote
\begin{align}\label{E4.37}
S'_{n}:=1+\|\partial_{\omega}{w_{n}}\|_{s+\beta},\quad S''_{n}:=1+\|\partial_{\epsilon}{w_{n}}\|_{s+\beta}.
\end{align}
We also claim that\\
{\bf (F4):} For $\frac{\epsilon}{\gamma\omega}$ small enough, the following inequalities
\begin{align}\label{E4.38}
S'_{n}\leq C_1'\frac{S_{0}}{\gamma}N^{2\tau+\sigma+{\frac{1}{\chi-1}{(\tau-1+\sigma)}}}_{n+1},\quad S''_{n}\leq C_2'\frac{S_{0}}{\gamma\omega}N^{2\tau+\sigma+{\frac{1}{\chi-1}{(\tau-1+\sigma)}}}_{n+1}
\end{align}
hold for some constant $C_i':=C_i'(\chi,\mathfrak{d},\tau,\sigma),i=1,2$. While the proof of {\bf (F4)}  will be given in  Lemma \ref{lemma4.8}.

Let us verify the results of the lemma for $i=n+1$. Set
\begin{equation}\label{E4.18}
U_{n+1}(\epsilon,\omega,h):=-L_{\omega}(w_{n}+h)+\epsilon  P_{N_{n+1}}\Pi_{W}\mathcal{F}(\epsilon,w_n+h).
\end{equation}
Since $h_{n+1}(\epsilon,\omega)$ is a solution of \eqref{E4.18}, it is straightforward to give
\begin{equation}\label{E4.4}
U_{n+1}(\epsilon,\omega,h_{n+1})=0.
\end{equation}
Formula \eqref{E4.15} indicates 
\begin{align}\label{E8.5}
\mathrm{D}_{h}U_{n+1}(\epsilon,\omega,h_{n+1}){=}\mathcal{L}_{N_{n+1}}(\epsilon,\omega,w_{n+1})
{=}\mathcal{L}_{N_{n+1}}(\epsilon,\omega,w_{n})(\mathrm{Id}-\mathrm{D}_{h}\mathcal{G}(h_{n+1})).
\end{align}
Then estimate \eqref{E4.16} shows that the operator $\mathcal{L}_{N_{n+1}}(\epsilon,\omega,w_{n+1})$ is invertible with
\begin{equation}\label{E4.19}
\|\mathcal{L}^{-1}_{N_{n+1}}(\epsilon,\omega,w_{n+1})\|_{s}\leq\|(\mathrm{Id}-\mathrm{D}_{h}\mathcal{G}(h_{n+1}))^{-1}\mathcal{L}^{-1}_{N_{n+1}}(\epsilon,\omega,w_{n})\|_{s}
\leq\frac{2K}{\gamma\omega}N_{n+1}^{\tau-1}.
\end{equation}
The implicit function theorem establishes $h_{n+1}\in C^1(A_{n+1};W_{N_{n+1}})$, which then infers
\begin{equation*}
\partial_{\omega,\epsilon}U_{n+1}(\epsilon,\omega,h_{n+1})+\mathrm{D}_{h}U_{n+1}(\epsilon,\omega,h_{n+1})\partial_{\omega,\epsilon}h_{n+1}=0
\end{equation*}
by \eqref{E4.4}. Consequently, using $w_{n+1}=w_n+h_{n+1}$, we obtain
\begin{equation}\label{E4.20}
\partial_{\omega,\epsilon}h_{n+1}=-\mathcal{L}^{-1}_{N_{n+1}}(\epsilon,\omega,w_{n+1})\partial_{\omega,\epsilon}U_{n+1}(\epsilon,\omega,h_{n+1}),
\end{equation}
where
\begin{align*}
\partial_{\omega}U_{n+1}(\epsilon,\omega,h_{n+1})=&2\omega\rho(x)(h_{n+1})_{tt}-\epsilon P^{\bot}_{N_{n}}P_{N_{n+1}}\Pi_{W}\mathrm{D}_{w}\mathcal{F}(\epsilon,w_n)\partial_{\omega}w_{n}\\
&-\epsilon P_{N_{n+1}}(\Pi_{W}\mathrm{D}_{w}\mathcal{F}(\epsilon,w_{n+1})-\Pi_{W}\mathrm{D}_{w}\mathcal{F}(\epsilon,w_n))\partial_{\omega}w_{n},
\end{align*}
\begin{align*}
\partial_{\epsilon}U_{n+1}(\epsilon,\omega,h_{n+1})=&-P^{\bot}_{N_{n}}P_{N_{n+1}}\Pi_{W}\mathcal{F}(\epsilon,w_n)-P_{N_{n+1}}
(\Pi_{W}\mathcal{F}(\epsilon,w_{n+1})-\Pi_{W}\mathcal{F}(\epsilon,w_n))\\
&-\epsilon P^{\bot}_{N_{n}}P_{N_{n+1}}\Pi_{W}\partial_{\epsilon}\mathcal{F}(\epsilon,w_n)-\epsilon P_{N_{n+1}}
(\Pi_{W}\partial_{\epsilon}\mathcal{F}(\epsilon,w_{n+1})-\Pi_{W}\partial_{\epsilon}\mathcal{F}(\epsilon,w_n))\\
&-\epsilon P^{\bot}_{N_{n}}P_{N_{n+1}}\Pi_{W}\mathrm{D}_{w}\mathcal{F}(\epsilon,w_n)\partial_{\epsilon}w_{n}\\
&-\epsilon P_{N_{n+1}}(\Pi_{W}\mathrm{D}_{w}\mathcal{F}(\epsilon,w_{n+1})-\Pi_{W}\mathrm{D}_{w}\mathcal{F}(\epsilon,w_n))\partial_{\epsilon}w_{n}.
\end{align*}
Furthermore $(P1)$-$(P2)$ and Remark \ref{Remark2} imply
\begin{align*}
&\|\Pi_{W}\partial_{\epsilon}\mathcal{F}(\epsilon,w_n)\|_{s}\leq C(1+\|\partial_{\epsilon} w_n\|_{s}),\\
&\|\Pi_{W}\partial_{\epsilon}\mathcal{F}(\epsilon,w_n)\|_{s+\beta}\leq C(\beta)\|w_n\|_{s+\beta}(1+\|\partial_{\epsilon }w_n\|_{s})+C(\beta)(1+\|\partial_{\epsilon} w_n\|_{s+\beta}),\\
&\|\Pi_{W}\partial_{\epsilon}\mathcal{F}(\epsilon,w_{n+1})-\Pi_{W}\partial_{\epsilon}\mathcal{F}(\epsilon,w_n)\|_{s}\leq C(1+\|\partial_{\epsilon} w_n\|_{s})\|h_{n+1}\|_s
\end{align*}
and
\begin{align*}
\|\Pi_{W}\partial_{\epsilon}\mathcal{F}(\epsilon,w_{n+1})-\Pi_{W}\partial_{\epsilon}\mathcal{F}(\epsilon,w_n)\|_{s+\beta}\leq& C(\beta)(\|w_n\|_{s+\beta}+\|h_{n+1}\|_{s+\beta})(1+\|\partial_{\epsilon} w_n\|_{s})
\\&+C(\beta)(1+\|\partial_{\epsilon} w_n\|_{s+\beta}).
\end{align*}
By means of $(P1)$, $(P4)$, $\eqref{E4.19}$ and $\eqref{E2.1}$, some simple calculation leads to
\begin{align*}
\|\partial_{\omega}h_{n+1}\|_s\leq&\frac{\widetilde{K}}{\gamma}N^{\tau+1}_{n+1}\|h_{n+1}\|_s+\frac{\epsilon \widetilde{K}}{\gamma\omega}{N^{\tau-1}_{n+1}}{N^{-\beta}_{n}}(\|\partial_{\omega}{w_{n}}\|_{s+\beta}
+\|w_{n}\|_{s+\beta}\|\partial_{\omega}w_{n}\|_s)\\
&+\frac{\epsilon \widetilde{K}}{\gamma\omega}N^{\tau-1}_{n+1}\rho_{n+1}\|\partial_{\omega}w_{n}\|_{s}\\
\leq&\frac{\widetilde{K}}{\gamma}N^{\tau+1}_{n+1}\|h_{n+1}\|_s+\frac{\epsilon \widetilde{K}}{\gamma\omega}{N^{\tau-1}_{n+1}}{N^{-\beta}_{n}}(S'_{n}+S_{n}\|\partial_{\omega}w_n\|_{s})\\
&+\frac{\epsilon \widetilde{K}}{\gamma\omega}N^{\tau-1}_{n+1}\rho_{n+1}\|\partial_{\omega}w_{n}\|_{s},
\end{align*}
\begin{align*}
\|\partial_{\epsilon}h_{n+1}\|_s\leq&\frac{\widetilde{K}}{\gamma\omega}N^{\tau-1}_{n+1}N^{-\beta}_{n}S_n
+\frac{\widetilde{K}}{\gamma\omega}\|h_{n+1}\|_{s}+\frac{\epsilon \widetilde{K}}{\gamma\omega}N^{\tau-1}_{n+1}N^{-\beta}_{n}(S_n(1+\|\partial_{\epsilon}w_n\|_s)+S''_{n})\\
&+\frac{\epsilon \widetilde{K}}{\gamma\omega}N^{\tau-1}_{n+1}(1+\|\partial_{\epsilon}w_n\|_s)\|h_{n+1}\|_{s},
\end{align*}
where $S_n$ is given by \eqref{E4.34}, $S'_n,S''_n$ are given by \eqref{E4.37}.
For $\frac{\epsilon}{\gamma^2\omega}$ small enough, applying \eqref{E8.2}, \eqref{E4.35}, \eqref{E4.39}, \eqref{E4.38}, and the definitions of $\rho_{n+1}$ (see \eqref{E4.33}), $\beta$ (see \eqref{E5.35}), we  can obtain
\begin{equation*}
\|\partial_{\omega} h_{n+1}\|_{s}\leq\frac{ K_{5}\epsilon}{\gamma^2\omega}N_{n+1}^{-1},\quad\|\partial_{\epsilon} h_{n+1}\|_{s}\leq\frac{ K_{6}}{\gamma\omega}N_{n+1}^{-1}.
\end{equation*}
This completes the  proof of Lemma \ref{lemma4.5}.
\end{proof}

\begin{rema}\label{remark8}
Lemma \ref{lemma4.5} implies that, for all $\gamma\in(0,1)$, $\|\partial_{\omega}w_n\|_s\leq\frac{\epsilon E}{\gamma^2\omega}$ and $\|\partial_{\epsilon}w_n\|_s\leq\frac{E}{\gamma^2\omega}$ for some $E>0$.
\end{rema}

To prove {\bf (F4)}, we first  have to estimate the upper bound of $\|\mathcal{L}^{-1}_{N_{n+1}}(\epsilon,\omega,w_{n+1})\hat{w}\|_{s+\beta}$.
\begin{lemm}\label{lemma4.6}
 For $\frac{\epsilon}{\gamma\omega}$ small enough,  one  has
\begin{equation*}
\|\mathcal{L}^{-1}_{N_{n+1}}(\epsilon,\omega,w_{n+1})\hat{w}\|_{s+\beta}\leq\frac{ \widehat{E}}{\gamma\omega}N^{\tau-1}_{n+1}\|\hat{w}\|_{s+\beta}
+\frac{ \widehat{E}}{\gamma\omega}N^{2(\tau-1)}_{n+1}(\|w_{n}\|_{s+\beta+\sigma}+\|h_{n+1}\|_{s+\beta})\|\hat{w}\|_{s},
\end{equation*}
where  $\beta$ is  given by \eqref{E5.35}.
\end{lemm}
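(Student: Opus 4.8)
The plan is to reduce the estimate to a Neumann series built around the factorisation \eqref{E8.5}. Since that identity gives
\[
\mathcal{L}^{-1}_{N_{n+1}}(\epsilon,\omega,w_{n+1})=\bigl(\mathrm{Id}-\mathrm{D}_{h}\mathcal{G}(h_{n+1})\bigr)^{-1}\mathcal{L}^{-1}_{N_{n+1}}(\epsilon,\omega,w_{n}),
\]
for a fixed $\hat w\in W_{N_{n+1}}$ I set $z:=\mathcal{L}^{-1}_{N_{n+1}}(\epsilon,\omega,w_{n})\hat w$ and $u:=(\mathrm{Id}-\mathrm{D}_{h}\mathcal{G}(h_{n+1}))^{-1}z$, so that $u$ is the vector to be estimated and it satisfies $u=z+\mathrm{D}_{h}\mathcal{G}(h_{n+1})[u]$. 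The first step is to apply the tame bound \eqref{E4.12} with $s'=s+\beta$ (legitimate since $(\epsilon,\omega)\in A_{n+1}$ and $\|w_n\|_{s+\sigma}\le1$ by \eqref{E4.10}), together with its low-norm counterpart \eqref{E4.13}, to obtain
\[
\|z\|_{s+\beta}\le\frac{K(\beta)}{\gamma\omega}N_{n+1}^{\tau-1}\bigl(\|\hat w\|_{s+\beta}+\|w_n\|_{s+\beta+\sigma}\|\hat w\|_{s}\bigr),\qquad \|z\|_{s}\le\frac{K}{\gamma\omega}N_{n+1}^{\tau-1}\|\hat w\|_{s}.
\]

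The second step is a tame bound for $\mathrm{D}_{h}\mathcal{G}(h_{n+1})$ in the $(s+\beta)$-norm. Starting from \eqref{E4.15} and writing, by the mean value theorem,
\[
\mathrm{D}_{h}\mathcal{G}(h_{n+1})[u]=\epsilon\,\mathcal{L}^{-1}_{N_{n+1}}(\epsilon,\omega,w_{n})\,P_{N_{n+1}}\Pi_{W}\!\int_{0}^{1}\mathrm{D}^{2}_{w}\mathcal{F}(\epsilon,w_{n}+th_{n+1})[h_{n+1},u]\,\mathrm{d}t,
\]
I would estimate the integrand by the (polarised) bilinear version of the $\mathrm{D}^{2}_{w}\mathcal{F}$ tame estimate in $(P2)$ and then apply \eqref{E4.12} once more with $s'=s+\beta$; using $\|w_n\|_{s+\beta}\le\|w_n\|_{s+\beta+\sigma}$ and $\|h_{n+1}\|_{s}\le\rho_{n+1}\le1$ (see \eqref{E4.33}) this yields a bound of the form
\[
\|\mathrm{D}_{h}\mathcal{G}(h_{n+1})[u]\|_{s+\beta}\le\frac{\epsilon C(\beta)}{\gamma\omega}N_{n+1}^{\tau-1}\|h_{n+1}\|_{s}\,\|u\|_{s+\beta}+\frac{\epsilon C(\beta)}{\gamma\omega}N_{n+1}^{\tau-1}\bigl(\|w_n\|_{s+\beta+\sigma}+\|h_{n+1}\|_{s+\beta}\bigr)\|u\|_{s}.
\]
By the contraction estimate \eqref{E4.16} the first coefficient is $\le\tfrac12$ once $\tfrac{\epsilon}{\gamma\omega}$ is small, and the low-norm Neumann series (again from \eqref{E4.16}) gives $\|u\|_{s}\le2\|z\|_{s}$.

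Combining the two steps, I would absorb the $\tfrac12\|u\|_{s+\beta}$ term on the left, insert $\|u\|_{s}\le2\|z\|_{s}\le\tfrac{2K}{\gamma\omega}N_{n+1}^{\tau-1}\|\hat w\|_{s}$ together with the bound on $\|z\|_{s+\beta}$, and finally use $N_{n+1}^{\tau-1}\le N_{n+1}^{2(\tau-1)}$ and $\tfrac{\epsilon}{\gamma\omega}\le1$ to collect all the terms proportional to $\|\hat w\|_{s}$; this produces exactly
\[
\|\mathcal{L}^{-1}_{N_{n+1}}(\epsilon,\omega,w_{n+1})\hat w\|_{s+\beta}\le\frac{\widehat E}{\gamma\omega}N_{n+1}^{\tau-1}\|\hat w\|_{s+\beta}+\frac{\widehat E}{\gamma\omega}N_{n+1}^{2(\tau-1)}\bigl(\|w_n\|_{s+\beta+\sigma}+\|h_{n+1}\|_{s+\beta}\bigr)\|\hat w\|_{s}.
\]
The main obstacle is the bookkeeping in the tame inequalities: one has to track precisely which factor in each product carries the high index $\|\cdot\|_{s+\beta}$ and which carries only $\|\cdot\|_{s}$, so that the coefficient of $\|\hat w\|_{s+\beta}$ remains linear and of size $O(N_{n+1}^{\tau-1})$, while the entire ``derivative loss'' (the extra factor $N_{n+1}^{\tau-1}$ and the weights $\|w_n\|_{s+\beta+\sigma}$, $\|h_{n+1}\|_{s+\beta}$) is shifted onto the $\|\hat w\|_{s}$ term; this is exactly where the smallness of $\|h_{n+1}\|_{s}$ and the bilinear — not merely quadratic — form of the second-derivative estimate are needed.
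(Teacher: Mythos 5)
Your proposal is correct and follows essentially the same route as the paper: both use the factorisation \eqref{E8.5} to write $\mathcal{L}^{-1}_{N_{n+1}}(\epsilon,\omega,w_{n+1})=(\mathrm{Id}-\mathrm{D}_{h}\mathcal{G}(h_{n+1}))^{-1}\mathcal{L}^{-1}_{N_{n+1}}(\epsilon,\omega,w_{n})$, bound the Neumann/fixed-point identity with the tame estimates from $(P2)$ and \eqref{E4.12}--\eqref{E4.13}, absorb the small $\|\cdot\|_{s+\beta}$ term using $\|h_{n+1}\|_{s}\leq\rho_{n+1}$ and \eqref{E4.16}, and control the low norm by the contraction. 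The only cosmetic difference is that you estimate the action of $\mathrm{D}_{h}\mathcal{G}(h_{n+1})$ on $u$ directly (via the polarised second-derivative bound), whereas the paper bounds $\|\mathrm{D}_{h}\mathcal{G}(h_{n+1})\|_{s+\beta}$ and then invokes the product estimate \eqref{E2.1}; the substance is the same.
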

\begin{proof}
Denote $\mathcal{A}(h_{n+1}):=(\mathrm{Id}-\mathrm{D}_{h}\mathcal{G}(h_{n+1}))^{-1}\hat{w}$. It is obvious  that
\begin{align}\label{E8.6}
\mathcal{A}(h_{n+1})=\hat{w}+\mathrm{D}_{h}\mathcal{G}(h_{n+1})\mathcal{A}(h_{n+1}).
\end{align}
According to$(P2)$, \eqref{E4.12}-\eqref{E4.33}, \eqref{E4.15} and $\|h_{n+1}\|_s\leq\rho_{n+1}$, it leads to
\begin{align*}
\|\mathrm{D}_{h}\mathcal{G}(h_{n+1})\|_{s+\beta}&\leq\frac{\epsilon E'(\beta)}{2\gamma\omega}N^{\tau-1}_{n+1}(\|w_{n}\|_{s+\beta}\|h_{n+1}\|_{s}+\|h_{n+1}\|_{s+\beta}+\|w_{n}\|_{s+\beta+\sigma}\|h_{n+1}\|_{s})\\
&\leq\frac{\epsilon E'(\beta)}{\gamma\omega}N^{\tau-1}_{n+1}(\|w_{n}\|_{s+\beta+\sigma}\|h_{n+1}\|_{s}+\|h_{n+1}\|_{s+\beta})\\
&\leq\frac{\epsilon E'(\beta)}{\gamma\omega}N^{\tau-1}_{n+1}(\|w_{n}\|_{s+\beta}+\|h_{n+1}\|_{s+\beta}).
\end{align*}
Combining above inequality  with \eqref{E4.16}, \eqref{E8.6} and \eqref{E2.1} yields
\begin{align*}
\|\mathcal{A}(h_{n+1})\|_{s+\beta}\leq&\|\hat{w}\|_{s+\beta}+\|\mathrm{D}_{h}\mathcal{G}(h_{n+1})\mathcal{A}(h_{n+1})\|_{s+\beta}\\
{\leq}&\|\hat{w}\|_{s+\beta}+C(\beta)\|\mathrm{D}_{h}\mathcal{G}(h_{n+1})\|_{s+\beta}\|\mathcal{A}(h_{n+1})\|_{s}+C(\beta)\|\mathrm{D}_{h}\mathcal{G}(h_{n+1})\|_{s}\|\mathcal{A}(h_{n+1})\|_{s+\beta}\\
{\leq}&\|\hat{w}\|_{s+\beta}+\frac{\epsilon E''}{\gamma\omega}N^{\tau-1}_{n+1}(\|w_{n}\|_{s+\beta}+\|h_{n+1}\|_{s+\beta})\|\hat{w}\|_{s}+\frac{\epsilon E''}{\gamma\omega}N^{\tau-1}_{n+1}\rho_{n+1}\|\mathcal{A}(h_{n+1})\|_{s+\beta}.
\end{align*}
Then, for $\frac{\epsilon}{\gamma\omega}$ small enough, it shows that
\begin{equation*}
\|\mathcal{A}(h_{n+1})\|_{s+\beta}\leq\|\hat{w}\|_{s+\beta}+\frac{\epsilon E''}{\gamma\omega}N^{\tau-1}_{n+1}(\|w_{n}\|_{s+\beta}+\|h_{n+1}\|_{s+\beta})\|\hat{w}\|_{s}+\frac{1}{2}\|\mathcal{A}(h_{n+1})\|_{s+\beta},
\end{equation*}
which implies
\begin{equation*}
\|(\mathrm{Id}-\mathrm{D}_{h}\mathcal{G}(h_{n+1}))^{-1}\hat{w}\|_{s+\beta}\leq2\|\hat{w}\|_{s+\beta}+\frac{2\epsilon E''}{\gamma\omega}N^{\tau-1}_{n+1}(\|w_{n}\|_{s+\beta}+\|h_{n+1}\|_{s+\beta})\|\hat{w}\|_{s}.
\end{equation*}
Hence, for $\frac{\epsilon}{\gamma\omega}$ small enough, it follows from \eqref{E8.5} and \eqref{E4.12}-\eqref{E4.13} that
\begin{align*}
\|\mathcal{L}^{-1}_{N_{n+1}}(\epsilon,\omega,w_{n+1})\hat{w}\|_{s+\beta}{\leq}&
\frac{ \widehat{E}}{\gamma\omega}N^{\tau-1}_{n+1}\|\hat{w}\|_{s+\beta}
+\frac{ \widehat{E}}{\gamma\omega}N^{2(\tau-1)}_{n+1}(\|w_{n}\|_{s+\beta+\sigma}+\|h_{n+1}\|_{s+\beta})\|\hat{w}\|_{s}.
\end{align*}
\end{proof}
Now, let us verify that the claim {\bf (F4)}.
\begin{lemm}\label{lemma4.8}
Supposed  $\|\partial_{\omega}h_{i}\|_{s}\leq\frac{ K_{5}\epsilon}{\gamma^2\omega}N_{i}^{-1}$, $\|\partial_{\epsilon}h_{i}\|_{s}\leq\frac{ K_{6}}{\gamma\omega}N_{i}^{-1}$, $1\leq i\leq n-1$,
 for $\frac{\epsilon}{\gamma\omega}$ small enough, we have
\begin{equation*}
S'_{n}\leq C'_1\frac{S_{0}}{\gamma}N^{2\tau+\sigma+{\frac{1}{\chi-1}{(\tau-1+\sigma)}}}_{n+1},\quad S''_{n}\leq C'_2\frac{S_{0}}{\gamma\omega}N^{2\tau+\sigma+{\frac{1}{\chi-1}{(\tau-1+\sigma)}}}_{n+1}
\end{equation*}
with  the  constants $C'_i:=C'_i(\chi,\mathfrak{d},\tau,\sigma)>0,i=1,2$.
\end{lemm}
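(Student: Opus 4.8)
The argument mirrors the proofs of Lemmas \ref{lemma4.3} and \ref{lemma4.5}: the plan is to derive a one-step recursion for $S'_n$ (and $S''_n$) in terms of $S'_{n-1}$ (resp.\ $S''_{n-1}$) together with the already controlled quantity $S_{n-1}$, and then iterate that recursion exactly as in Lemma \ref{lemma4.3}, using $N_n=[e^{\mathfrak{d}\chi^n}]$ and the super-geometric growth of the $N_n$'s.

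First I would differentiate the identity $h_n=\mathcal{G}_n(h_n)$, i.e.\ apply the implicit function theorem to $U_n(\epsilon,\omega,h_n)=0$ as in \eqref{E4.20} but with the index shifted from $n+1$ to $n$, to write $\partial_{\omega}h_n=-\mathcal{L}^{-1}_{N_n}(\epsilon,\omega,w_n)\,\partial_{\omega}U_n(\epsilon,\omega,h_n)$ and likewise for $\partial_{\epsilon}h_n$; invertibility of $\mathcal{L}_{N_n}(\epsilon,\omega,w_n)$ comes from \eqref{E4.19}, and the $\|\cdot\|_{s+\beta}$-bound on its inverse is precisely Lemma \ref{lemma4.6}. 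Next I would estimate both $\|\partial_{\omega}U_n(\epsilon,\omega,h_n)\|_{s+\beta}$ and $\|\partial_{\omega}U_n(\epsilon,\omega,h_n)\|_s$ term by term from the explicit expression for $\partial_{\omega}U_{n+1}$ obtained in Lemma \ref{lemma4.5}: the term $2\omega\rho(x)(h_n)_{tt}$ costs two time derivatives, hence a factor $N_n^2$, and by \eqref{E4.17} is controlled by $\omega N_n^{\tau+1+\sigma}S_{n-1}$ in high norm and by $C\epsilon\gamma^{-1}N_n^{-\sigma-1}$ in low norm; the term carrying $P^{\bot}_{N_{n-1}}$ gains $N_{n-1}^{-\beta}$ by $(P4)$; and the difference $\Pi_W\mathrm{D}_w\mathcal{F}(\epsilon,w_n)-\Pi_W\mathrm{D}_w\mathcal{F}(\epsilon,w_{n-1})$ is bounded through the Taylor-tame estimates $(P1)$-$(P3)$ in terms of $\|h_n\|_s$, $\|h_n\|_{s+\beta}$ and $\|w_{n-1}\|_{s+\beta}$, the factor $\partial_{\omega}w_{n-1}$ being handled by the induction hypothesis on $\partial_{\omega}h_i$, $i\le n-1$, together with Remark \ref{remark8}. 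Feeding these into Lemma \ref{lemma4.6}, using $\|h_n\|_s\le\rho_n$ from \eqref{E4.33} and the definition \eqref{E5.35} of $\beta$ to absorb every stray power of $N_{n-1}$ against $N_{n-1}^{-\beta}$, and taking $\frac{\epsilon}{\gamma^2\omega}$ small, one arrives at $\|\partial_{\omega}h_n\|_{s+\beta}\le\frac{C}{\gamma}N_n^{2\tau+\sigma}S_{n-1}$, the remaining contributions being absorbable multiples of $S'_{n-1}$ with arbitrarily small coefficients. The estimate for $\|\partial_{\epsilon}h_n\|_{s+\beta}$ is entirely analogous; since $\partial_{\epsilon}U_n$ carries no factor of $\omega$ multiplying a $\partial_{tt}$ term, while $\mathcal{L}^{-1}_{N_n}$ still contributes $\frac{1}{\gamma\omega}$, one loses the extra power of $\omega^{-1}$ present in the bound for $S''_n$.

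It remains to iterate. Writing $S'_n=1+\|\partial_{\omega}w_n\|_{s+\beta}\le S'_0+\sum_{i=1}^{n}\|\partial_{\omega}h_i\|_{s+\beta}$, inserting the bound just obtained and then substituting the estimate $S_{i-1}\le C'S_0 N_i^{\frac{1}{\chi-1}(\tau-1+\sigma)}$ from Lemma \ref{lemma4.3}, one gets $S'_n\le S'_0+\frac{C}{\gamma}S_0\sum_{i=1}^{n}N_i^{2\tau+\sigma+\frac{1}{\chi-1}(\tau-1+\sigma)}$. Because $N_i=[e^{\mathfrak{d}\chi^i}]$ grows super-geometrically, the sum is dominated, up to a constant depending only on $\chi,\mathfrak{d},\tau,\sigma$, by its last term, and $N_n\le N_{n+1}$; the base value $S'_0=1+\|\partial_{\omega}h_0\|_{s+\beta}$ is bounded by a constant depending on the fixed $N_0$ and $\gamma$ (differentiating $h_0=\epsilon(\frac{1}{\rho}L_{\omega})^{-1}(\frac{1}{\rho}P_0\Pi_W\mathcal{F}(\epsilon,h_0))$ and using \eqref{E8.4} and $(P1)$ as in the initialization of Lemma \ref{lemma4.5}). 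This yields $S'_n\le C'_1\frac{S_0}{\gamma}N_{n+1}^{2\tau+\sigma+\frac{1}{\chi-1}(\tau-1+\sigma)}$, and the same computation with $\gamma\omega$ in place of $\gamma$ produces the bound on $S''_n$.

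I expect the main obstacle to be the norm bookkeeping of the middle step: controlling the term $2\omega\rho(x)(h_n)_{tt}$, which introduces two extra time derivatives (a factor $N_n^2$) on top of the loss $N_n^{\tau-1}$ already carried by $\mathcal{L}^{-1}_{N_n}$ in high norm, and controlling the Taylor-type differences $\mathrm{D}_w\mathcal{F}(\epsilon,w_n)-\mathrm{D}_w\mathcal{F}(\epsilon,w_{n-1})$ together with their $\partial_{\epsilon}$-analogues, so that every surviving power of $N_{n-1}$ is beaten by the smoothing gain $N_{n-1}^{-\beta}$ --- which is exactly the reason $\beta$ is chosen as in \eqref{E5.35}.
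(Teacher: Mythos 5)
Your overall plan coincides with the paper's: differentiate the fixed-point identity $U_n(\epsilon,\omega,h_n)=0$, invert $\mathcal{L}_{N_n}$ via \eqref{E4.19} and Lemma \ref{lemma4.6}, estimate $\partial_{\omega}U_n$ term by term (the $2\omega\rho\,(h_n)_{tt}$ term costing $N_n^2$ and being tamed by \eqref{E4.17}, the $P^{\bot}_{N_{n-1}}$ term gaining $N_{n-1}^{-\beta}$, the Taylor differences handled by $(P1)$--$(P3)$), and then iterate using $S_{i-1}\leq C'S_0N_i^{\frac{1}{\chi-1}(\tau-1+\sigma)}$ and the super-geometric growth of $N_i$. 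Up to the one-step estimate this reproduces the paper's recursion.

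The gap is in how you resolve that recursion. The contribution of $-\epsilon P^{\bot}_{N_{n-1}}P_{N_n}\Pi_W\mathrm{D}_w\mathcal{F}(\epsilon,w_{n-1})\partial_{\omega}w_{n-1}$ in the $\|\cdot\|_{s+\beta}$-norm contains, through the tame estimate $(P2)$, the full term $\|\partial_{\omega}w_{n-1}\|_{s+\beta}=S'_{n-1}-1$ with no smoothing gain (the projector only helps in the low norm), and after applying $\mathcal{L}^{-1}_{N_n}$ its coefficient is $\frac{\epsilon E_1}{\gamma\omega}N_n^{\tau-1}$. This is \emph{not} ``arbitrarily small'': for fixed $\frac{\epsilon}{\gamma\omega}$ it diverges as $n\to\infty$, so in the step $S'_n\leq S'_0+\sum_{i\leq n}\|\partial_{\omega}h_i\|_{s+\beta}$ you are not entitled to discard the $S'_{i-1}$-terms and keep only $\frac{C}{\gamma}N_i^{2\tau+\sigma}S_{i-1}$. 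The correct treatment is either the paper's: keep the linear recursion $S'_n\leq(1+\frac{\epsilon E_1}{\gamma\omega}N_n^{\tau-1})S'_{n-1}+\frac{E_1}{\gamma}N_n^{2\tau+\sigma}S_{n-1}$, bound the product $\prod_{i\leq n}(1+\frac{\epsilon E_1}{\gamma\omega}N_i^{\tau-1})\leq C N_{n+1}^{\frac{\tau-1}{\chi-1}}$ (still within the exponent budget since $\tau-1\leq\tau-1+\sigma$) and estimate the resulting convolution-type double sum $\Sigma'_2$ as in the paper; or, alternatively, an induction on $n$ on the claimed bound itself, verifying $\frac{\epsilon E_1}{\gamma\omega}N_n^{\tau-1}\cdot N_n^{\mathfrak{r}}\leq\frac{1}{2}N_{n+1}^{\mathfrak{r}}$ with $\mathfrak{r}=2\tau+\sigma+\frac{1}{\chi-1}(\tau-1+\sigma)$, which holds because $N_{n+1}\sim N_n^{\chi}$ and $(\chi-1)\mathfrak{r}\geq\tau-1+\sigma>\tau-1$. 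Without one of these arguments the final summation step, as you wrote it, does not go through; the same correction is needed for $S''_n$.
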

\begin{proof}
First, let us check that for $\frac{\epsilon}{\gamma\omega}$ small enough, there exists  some constant $E_1$ such that
\begin{align}
&S'_{n}\leq(1+\frac{\epsilon E_1}{\gamma\omega}N^{\tau-1}_{n})S'_{n-1}+\frac{E_1}{\gamma}N^{2\tau+\sigma}_{n}S_{n-1},\label{E4.40}\\
&S''_{n}\leq(1+\frac{\epsilon E_1}{\gamma\omega}N^{\tau-1}_{n})S''_{n-1}+\frac{E_1}{\gamma\omega}N^{2\tau+\sigma}_{n}S_{n-1}.\nonumber
\end{align}
In fact, it is obvious  that
\begin{equation*}
S'_{n}=1+\|\partial_{\omega}w_{n}\|_{s+\beta}\leq1+\|\partial_{\omega}w_{n-1}\|_{s+\beta}+\|\partial_{\omega}h_n\|_{s+\beta}
=S'_{n-1}+\|\partial_{\omega}h_n\|_{s+\beta}.
\end{equation*}
Formula \eqref{E4.20} and Lemma \ref{lemma4.6} yield $\|\partial_{\omega}h_n\|_{s+\beta}\leq \Sigma_1+\Sigma_2$, where
\begin{align*}
\Sigma_1:=&\frac{\widehat{E}}{\gamma\omega}N^{\tau-1}_{n}(2\omega\|\rho\|_{H^1}N^2_{n}\|h_n\|_{s+\beta}+
\epsilon\|\Pi_{W}\mathrm{D}_{w}\mathcal{F}(\epsilon,w_{n-1})\partial_{\omega}w_{n-1}\|_{s+\beta})\\
&+\epsilon\frac{\widehat{E}}{\gamma\omega}N^{\tau-1}_{n}(\|(\Pi_{W}\mathrm{D}_{w}\mathcal{F}(\epsilon,w_{n})-\Pi_{W}\mathrm{D}_{w}\mathcal{F}(\epsilon,w_{n-1}))\partial_{\omega}w_{n-1}\|_{s+\beta}),\\
\Sigma_2:=&\frac{\widehat{E}}{\gamma\omega}N^{2(\tau-1)}_{n}(\|w_{n-1}\|_{s+\beta+\sigma}+\|h_{n}\|_{s+\beta})(2\omega\|\rho\|_{H^1}N^2_{n}
\|h_{n}\|_{s})\\
&+\epsilon\frac{\widehat{E}}{\gamma\omega}N^{2(\tau-1)}_{n}(\|w_{n-1}\|_{s+\beta+\sigma}+\|h_{n}\|_{s+\beta})(\|\partial_{\omega} w_{n-1}\|_{s}
+\|h_n\|_{s}\|\partial_{\omega}w_{n-1}\|_{s}).
\end{align*}
Applying \eqref{E4.17}, \eqref{E2.1}, Remark \ref{remark8},  property $(P2)$ and definition \eqref{E4.33} on $\rho_{n}$,  for $\frac{\epsilon}{\gamma\omega}$ small enough,  we can obtain
\begin{align*}
&\Sigma_1\leq\frac{E_1}{2\gamma}N^{2\tau+\sigma}_{n}S_{n-1}+\frac{\epsilon E_1}{\gamma\omega}N^{\tau-1}_{n}S'_{n-1},\quad \Sigma_2\leq\frac{E_1}{2\gamma}N^{2\tau+\sigma}_{n}S_{n-1}.
\end{align*}
The  proof of the relationship between $S''_n$ and $S''_{n-1}$  can apply  the similar step as above. For the sake of convenience, we omit the process.

Denote  $\alpha_1=\tau-1$, $\alpha_2=2\tau+1$, $\alpha_3=\tau-1+\sigma$. Formula \eqref{E4.40} leads to
\[
S'_{n}\leq \Sigma'_1+\Sigma'_2,
\]
 where
\begin{align*}
\Sigma'_1=S'_{0}\prod^{n}_{i=1}\left(1+\frac{\epsilon E_1}{\gamma\omega}N^{\alpha_1}_{i}\right),\quad
\Sigma'_2=\sum\limits_{i=1}^{n}\left(\prod_{j=2}^i\left(1+\frac{\epsilon E_1}{\gamma\omega}N^{\alpha_1}_{n-(j-2)}\right)\right)\frac{E_1}{\gamma}N^{\alpha_2}_{n-(i-1)}S_{n-i}.
\end{align*}
Since the upper bound on $\Sigma'_1$ is proved in the same way as shown in Lemma \ref{lemma4.3},  the detail is omitted.
As a consequence,
 \[\Sigma'_1\leq C'B'_{0}N^{\frac{1}{\chi-1}\alpha_1}_{n+1}.\]
We write $\Sigma'_2=\sum^n_{j=1}\Sigma'_{2,j}$, where
\begin{align*}
\Sigma'_{2,1}=\frac{E_1}{\gamma}N^{\alpha_2}_{n}S_{n-1},\quad
\Sigma'_{2,i}=\left(\prod_{j=2}^i\left(1+\frac{\epsilon E_1}{\gamma\omega}N^{\alpha_1}_{n-(j-2)}\right)\right)\frac{E_1}{\gamma}N^{\alpha_2}_{n-(i-1)}S_{n-i}\quad\forall2\leq i\leq n.
\end{align*}
On the one hand, formula \eqref{E4.35} shows
\begin{equation*}
\Sigma'_{2,1}\leq\frac{E_2S_0}{\gamma}e^{\alpha_2\mathfrak{d}\chi^n}e^{\alpha_3\mathfrak{d}\frac{\chi^n}{\chi-1}}
=\frac{E_2S_0}{\gamma}e^{\mathfrak{d}\chi^n(\alpha_2+\frac{1}{\chi-1}\alpha_3)}.
\end{equation*}
And on the other hand, a simple computation yields
\begin{align*}
\sum\limits_{i=2}^{n} \Sigma'_{2,i}&\leq\frac{E_3S_0}{\gamma}\sum\limits_{i=2}^{n}e^{\alpha_1\mathfrak{d}\frac{\chi^{n+1}-\chi^{n+2-i}}{\chi-1}}
e^{\alpha_2\mathfrak{d}\chi^{n+1-i}}e^{{\alpha_3}\mathfrak{d}\frac{\chi^{n+1-i}}{\chi-1}}\\
&\leq\frac{E_3S_0}{\gamma}e^{\alpha_1\mathfrak{d}\frac{\chi^{n+1}}{\chi-1}}
\sum\limits_{i=2}^{n}e^{{(-\alpha_1+(\chi-1)\alpha_2+\alpha_3)}\frac{\chi^{n+2-i}}{\chi-1}}\\
&\leq\frac{E_3S_0}{\gamma}e^{((\chi-1)\alpha_2+\alpha_3)\mathfrak{d}\frac{\chi^{n+1}}{\chi-1}}\leq\frac{E_3S_0}{\gamma}N^{\alpha_2+\frac{1}{\chi-1}\alpha_3}_{n+1}.
\end{align*}
Thus, for $S'_0\leq \frac{{C}_0}{\gamma}S_0$, we obtain $S'_{n}\leq C'_1\frac{S_{0}}{\gamma}N^{\mathfrak{r}}$, where $\mathfrak{r}=2\tau+\sigma+{\frac{1}{\chi-1}{(\tau-1+\sigma)}}$.
The upper bound of $S''_n$ can be proved by the similar method as employed on $S'_n$.
\end{proof}
\subsection{Whitney extension}\label{sec:6.3}
Define
\begin{align}
&\widehat{A}_n:=\left\{(\epsilon,\omega)\in A_n,\quad\mathrm{dist}((\epsilon,\omega),\partial A_n)>\frac{\gamma_{0}\gamma^4}{N^{\tau+1}_n}\right\},\label{E4.45}\\
&\widetilde{A}_n:=\left\{(\epsilon,\omega)\in A_n,\quad\mathrm{dist}((\epsilon,\omega),\partial A_n )>\frac{2\gamma_{0}\gamma^4}{N^{\tau+1}_n}\right\}\subset \widehat{A}_n.\label{E4.46}
\end{align}
Remark that $\gamma_{0}$ will be given in Lemma \ref{lemma4.10}. Define a $C^{\infty}$ cut-off function $\psi_n:~A_0\rightarrow[0,1]$ as
\begin{align*}
\psi_n:=
\begin{cases}
1\quad\quad\text{if}\quad(\omega,\epsilon)\in\widetilde{A}_n,\\
0\quad\quad\text{if}\quad(\omega,\epsilon)\in\widehat{A}_n
\end{cases}
\end{align*}
with
\begin{align}\label{E1.19}
|\partial_{\omega,\epsilon}\psi_n|\leq C{N^{\tau+1}_{n}}/{(\gamma_0\gamma^4)},
\end{align}
where $A_0$ is defined by \eqref{E4.30}. Then $\tilde{h}_n:=\psi_nh_n\in C^1(A_0;W_{N_n})$. From the definition of $\psi_n$, \eqref{E4.11}, \eqref{E1.19} and Lemma \ref{lemma4.5}, for $\frac{\epsilon}{\gamma^2\omega}$ small enough, it yields that
\begin{align}
&\|\tilde{h}_{n}\|_{s}\leq\|{h}_{n}\|_{s}\leq\frac{\widetilde{C}\epsilon}{\gamma\omega}N^{-\sigma-3}_{n},\label{E4.42}\\
\|\partial_{\omega}\tilde{h}_{n}\|_{s}&\leq|\partial_{\omega}\psi_{n}|\|h_n\|_s+|\psi_{n}|\|\partial_{\omega}h_n\|_{s}
\leq\frac{\widetilde{C}(\gamma_0)\epsilon}{\gamma^{5}\omega}N^{-1}_{n},\label{E4.43}\\
\|\partial_{\epsilon}\tilde{h}_{n}\|_{s}&\leq|\partial_{\epsilon}\psi_{n}|\|h_n\|_s+|\psi_{n}|\|\partial_{\epsilon}h_n\|_{s}
\leq\frac{\widetilde{C}(\gamma_0)}{\gamma^{5}\omega}N^{-1}_{n}.\label{E4.44}
\end{align}
Formulae \eqref{E4.42}-\eqref{E4.44} show that $\tilde{w}_n=\sum_{i=0}^{n}\tilde{h}_i$ is an extention of ${w}_n$ with $\tilde{w}_n(\epsilon,\omega)={w}_n(\epsilon,\omega)$ for all $(\epsilon,\omega)\in\widetilde{A}_n$.
Then $\tilde{w}(\epsilon,\omega)$ belongs to $C^1(A_0;W)$ with
\begin{equation}\label{E4.22}
\|\tilde{w}\|_{s}\leq\frac{K\epsilon}{\gamma\omega},
\quad\|\partial_{\omega}\tilde{w}\|_s\leq\frac{K(\gamma_0)\epsilon}{\gamma^5\omega},
\quad\|\partial_{\epsilon}\tilde{w}\|_s\leq\frac{K(\gamma_0)}{\gamma^5\omega}.
\end{equation}
Furthermore, for $n\geq1$, \eqref{E4.42} gives rise to
\begin{align}
\|\tilde{w}-\tilde{w}_{n}\|_{s}&\leq\sum\limits_{i\geq n+1}\frac{\widetilde{C}\epsilon}{\gamma\omega}N^{-\sigma-3}_{i}\stackrel{\widetilde{C}'=\widetilde{C}2^{\sigma+3}}{\leq}\sum\limits_{i\geq n+1}\frac{\widetilde{C}'\epsilon}{\gamma\omega}e^{-(\sigma+3)\mathfrak{d}\chi^{i}}\leq\frac{\widetilde{C}''\epsilon}{\gamma\omega}e^{-(\sigma+3)\mathfrak{d}\chi^{n}}\nonumber\\
&\leq\frac{\widetilde{C}''\epsilon}{\gamma\omega}N^{-\sigma-3}_{n}\leq\frac{\widetilde{C}'''\epsilon}{\gamma\omega}N^{-(\sigma+3)/\chi}_{n+1}.\label{E4.21}
\end{align}
Let ${{\lambda}}_j(\epsilon,\tilde{w}),j\geq0$ denote the eigenvalues of the Sturm-Liouville problem
\begin{align*}
\begin{cases}
-(py')'+my-\epsilon\Pi_{V}f'(v(\epsilon,\tilde{w})+\tilde{w})y={\lambda}\rho y,\\
\alpha_1y(0)-\beta_1y_{x}(0)=0,\\
\alpha_2y(\pi)+\beta_2y_{x}(\pi)=0,
\end{cases}
\end{align*}
where $c$ is defined by \eqref{E6.23}. Define
\begin{equation}\label{E4.31}
B_{\gamma}:=\left\{(\epsilon,\omega)\in\Delta^{2\gamma,\tau}_{\infty}(\tilde{w}):~\left|\omega l-\sqrt{\lambda_{j}}\right|>\frac{2\gamma}{l^{\tau}},
\quad\forall l=1,\cdots,N_0,j\geq0\right\},
\end{equation}
where
\begin{align*}
\Delta^{2\gamma,\tau}_{\infty}(\tilde{w}):=\bigcap_{n\geq0}\Delta^{2\gamma,\tau}_{N_n}(\tilde{w}_{n-1})
:=\bigg\{&(\epsilon,\omega)\in(\epsilon_1,\epsilon_2)\times(2\omega_0,+\infty):~\left|\omega l-{j}/{c}\right|>\frac{2\gamma}{l^{\tau}},\\
&\left|\omega l-\sqrt{\lambda_j(\epsilon,\tilde{w})}\right|>\frac{2\gamma}{l^{\tau}},~\forall l\geq1,j\geq0\bigg\}.\\
\end{align*}

\begin{lemm}\label{lemma4.10}
If $\frac{\epsilon}{\gamma^3\omega}$ is  small enough, then we have for some $\gamma_0>0$
\begin{equation*}
B_{\gamma}\subseteq\widetilde{A}_n\subset A_n,\quad n\geq0.
\end{equation*}
\end{lemm}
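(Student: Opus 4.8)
The plan is to argue by induction on $n$, proving the slightly stronger statement that every $(\epsilon,\omega)\in B_\gamma$ has $\mathrm{dist}((\epsilon,\omega),\partial A_n)>2\gamma_0\gamma^4/N_n^{\tau+1}$; by \eqref{E4.46} this \emph{is} the inclusion $B_\gamma\subseteq\widetilde A_n$, while $\widetilde A_n\subset A_n$ is immediate from the definition. Here $\gamma_0\in(0,1)$ will be a small constant depending only on the fixed data ($\rho,p,m,f,\hat v,\tau,\chi$ and the uniform lower bound $\eta_0>0$ for $\lambda_0$ in Lemma \ref{lemma6.1}), fixed at the end of the argument, after which $\epsilon/(\gamma^3\omega)$ is required small; since $\gamma<1$ this also makes $\epsilon/\omega$, $\epsilon/(\gamma^2\omega)$ and $\epsilon^2/(\gamma^2\omega)$ small. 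For $n=0$ I would use that $A_0$ in \eqref{E4.30} is carved out by $|\omega l-\sqrt{\lambda_j}|>\gamma/l^\tau$, $1\le l\le N_0$, with the $\epsilon$-independent Sturm--Liouville eigenvalues $\lambda_j$, which are precisely the ones occurring in the finite condition of \eqref{E4.31}, there with weight $2\gamma/l^\tau$ (the conditions on $\lambda_j(\epsilon,\tilde w)$ in $\Delta^{2\gamma,\tau}_\infty(\tilde w)$ hold for \emph{all} $l\ge1$). Moving the parameters by at most $2\gamma_0\gamma^4/N_0^{\tau+1}$ changes $|\omega l-\sqrt{\lambda_j}|$ by at most $l\cdot 2\gamma_0\gamma^4/N_0^{\tau+1}\le 2\gamma_0\gamma^4/l^\tau$, so the condition stays $>\gamma/l^\tau$ once $2\gamma_0\gamma^3<1$; since also $\omega>2\omega_0$ (and, as in the measure estimate of Theorem \ref{Th1}, one works inside a rectangle $\Omega\subset(\epsilon_1,\epsilon_2)\times(2\omega_0,+\infty)$ to avoid the $\epsilon$-endpoints), the whole ball lies in $A_0$, giving $B_\gamma\subseteq\widetilde A_0$.

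For the inductive step I would fix $(\epsilon,\omega)\in B_\gamma$ and assume $B_\gamma\subseteq\widetilde A_{n-1}$. Setting $r_m:=2\gamma_0\gamma^4/N_m^{\tau+1}$ (so $r_n<r_{n-1}$), the hypothesis says the open $r_{n-1}$-ball $\mathcal B$ about $(\epsilon,\omega)$ lies in $A_{n-1}$, hence so does the closed $r_n$-ball $\mathcal B_n\subset\mathcal B$; on $\mathcal B$ the map $w_{n-1}$ (Lemma \ref{lemma4.1}) is $C^1$ with the derivative bounds of Lemma \ref{lemma4.5} and Remark \ref{remark8}, and $\mathcal B$ is convex so the mean value theorem is available there. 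The crucial point is the estimate \emph{at the center}: since $(\epsilon,\omega)\in\widetilde A_{n-1}$ one has $w_{n-1}(\epsilon,\omega)=\tilde w_{n-1}(\epsilon,\omega)$, so \eqref{E4.21} gives $\|w_{n-1}(\epsilon,\omega)-\tilde w(\epsilon,\omega)\|_s\le C\epsilon(\gamma\omega)^{-1}N_n^{-(\sigma+3)/\chi}\le C\epsilon(\gamma\omega)^{-1}N_n^{-\tau}$, the last step using the elementary inequality $(\sigma+3)/\chi\ge\tau$ — valid for all $\chi\in(1,2]$, $\tau\in(1,2)$, $\sigma=\tau(\tau-1)/(2-\tau)$, since it reduces to $3\tau^2-8\tau+6>0$, and which is exactly what is encoded in the choice of $\sigma$ and $\beta$ in \eqref{E5.35}. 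Feeding this into the Lipschitz dependence of the eigenvalues of \eqref{E6.1} on their coefficients, namely $|\lambda_j(\epsilon_1,w_1)-\lambda_j(\epsilon_2,w_2)|\le C(|\epsilon_1-\epsilon_2|+\epsilon\|w_1-w_2\|_s)$ read off from the min--max characterization using $f\in\mathcal C_k$ and the $C^1$-dependence of $v(\epsilon,w)$ (Lemma \ref{lemma3.1}), together with $\lambda_j\ge\eta_0>0$ (so $t\mapsto\sqrt t$ is Lipschitz there) and $B_\gamma\subseteq\Delta^{2\gamma,\tau}_\infty(\tilde w)$, I obtain $|\omega l-\sqrt{\lambda_j(\epsilon,w_{n-1}(\epsilon,\omega))}|\ge 2\gamma/l^\tau-C\epsilon^2(\gamma\omega)^{-1}l^{-\tau}\ge 3\gamma/(2l^\tau)$ for all $l\ge1$, $j\ge0$, once $\epsilon^2/(\gamma^2\omega)$ is small (using $N_n^{-\tau}\le l^{-\tau}$ for $l\le N_n$).

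Then I would propagate this to $\mathcal B_n$: for $(\epsilon',\omega')\in\mathcal B_n$ and $1\le l\le N_n$, the mean value theorem on the segment in the convex set $\mathcal B$, the eigenvalue Lipschitz estimate, the bounds of Remark \ref{remark8}, and $l|\omega-\omega'|\le N_n r_n=2\gamma_0\gamma^4/N_n^\tau\le 2\gamma_0\gamma^4/l^\tau$, $r_n\le 2\gamma_0\gamma^4/l^\tau$ reduce the center value $3\gamma/(2l^\tau)$ by at most $C\gamma_0\gamma^4(1+\epsilon/(\gamma^2\omega))/l^\tau$, which is $<\gamma/(2l^\tau)$ once $\gamma_0$ is small (so the $\gamma_0\gamma^4\le\gamma_0$ terms are absorbed, using $\gamma<1$) and $\epsilon/(\gamma^3\omega)$ is small; likewise $|\omega'l-j/c|>2\gamma/l^\tau-2\gamma_0\gamma^4/l^\tau>\gamma/l^\tau$ from the $|\omega l-j/c|$-part of $\Delta^{2\gamma,\tau}_\infty(\tilde w)$. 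Hence $(\epsilon',\omega')\in\Delta^{\gamma,\tau}_{N_n}(w_{n-1})$, and since also $(\epsilon',\omega')\in A_{n-1}$, one gets $(\epsilon',\omega')\in A_n$. As $\mathcal B_n$ is the closed $r_n$-ball and all inequalities are strict, $\mathrm{dist}((\epsilon,\omega),\partial A_n)>2\gamma_0\gamma^4/N_n^{\tau+1}$, i.e. $(\epsilon,\omega)\in\widetilde A_n$, which closes the induction; at the very end $\gamma_0$ is fixed as the smallest threshold used, and the standing hypothesis ``$\epsilon/(\gamma^3\omega)$ small'' subsumes all the smallness invoked.

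The hard part, and the place where the construction has to fit together, is the center estimate: it must beat $\gamma/l^\tau$ \emph{uniformly} for $l$ up to $N_n$, which forces one to use the inductive hypothesis at full strength — so that $w_{n-1}=\tilde w_{n-1}$ there, not merely $(\epsilon,\omega)\in A_{n-1}$ — together with the superexponentially small tail bound \eqref{E4.21} for $\tilde w-\tilde w_{n-1}$ and the exponent inequality $(\sigma+3)/\chi\ge\tau$ hard-wired into $\sigma$ and $\beta$. A secondary but genuine technical point is the geometry underlying the propagation: one must check that the $r_n$-ball lies inside the $r_{n-1}$-ball, which is a convex subset of $A_{n-1}$, so that one-variable calculus may be applied along straight segments even though $A_{n-1}$ itself need not be convex.
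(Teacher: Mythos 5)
Your proof is correct and follows essentially the same route as the paper's: an induction showing that the ball of radius $2\gamma_0\gamma^4/N_n^{\tau+1}$ about each point of $B_\gamma$ lies in $A_n$, using the inductive identification $w_{n-1}=\tilde w_{n-1}$ at the center, the tail bound \eqref{E4.21} combined with the exponent inequality $(\sigma+3)/\chi\geq\tau$, the Lipschitz dependence of the eigenvalues, and the derivative bounds of Remark \ref{remark8}. The only deviations are presentational and harmless: you split the estimate into a center bound plus a propagation over the ball and justify the eigenvalue Lipschitz estimate via min--max (with an extra factor $\epsilon$) instead of quoting the Kato-based Lemma \ref{Lemma6.6}, while the paper performs one combined triangle inequality and checks the exponent condition by minimizing $(\sigma+3)/\tau$ over $\tau\in(1,2)$.
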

Before proving Lemma \ref{lemma4.10}, we have to introduce the following ``perturbation of self-adjoint operators'' result developed  by T.Kato \cite{kato1995perturbation}.
 Denote by $H$ and $\mathcal{B}(H)$ a Hilbert space and the space of bounded operators from $H$ to $H$ respectively.
\begin{theo}{\cite[Theorem 4.10]{kato1995perturbation}}\label{Theorem5.7}
Define $T_1=T_2+\mathcal{S}$ with $T_2$ self-adjoint in $H$ and $\mathcal{S}\in\mathcal{B}(H)$ symmetric. Then $T_1$ is a self-adjoint operator with $\mathrm{dist}(\Sigma(T_1),\Sigma(T_2))\leq\|\mathcal{S}\|$, namely
\begin{equation*}
\sup_{\xi\in\Sigma(T_1)}\mathrm{dist}(\xi,\Sigma(T_2))\leq\|\mathcal{S}\|,\quad\sup_{\xi\in\Sigma(T_2)}\mathrm{dist}(\xi,\Sigma(T_1))\leq\|\mathcal{S}\|,
\end{equation*}
where $\Sigma(T_1)$ and $\Sigma(T_2)$ are spectrums of $T_1$ and $T_2$ respectively.
\end{theo}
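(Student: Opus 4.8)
The plan is to establish the two assertions in order: first that $T_1=T_2+\mathcal{S}$ is self-adjoint, and then the two-sided spectral inclusion, both of which reduce to a Neumann-series argument applied to the resolvent of $T_2$ (and, by symmetry, of $T_1$).

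\textbf{Self-adjointness of $T_1$.} Since $\mathcal{S}\in\mathcal{B}(H)$ is everywhere defined, $D(T_1)=D(T_2)$, and for $u,v\in D(T_2)$ one has $(T_1u,v)=(T_2u,v)+(\mathcal{S}u,v)=(u,T_2v)+(u,\mathcal{S}v)=(u,T_1v)$, so $T_1$ is symmetric. To promote symmetry to self-adjointness I would use the standard criterion that a symmetric operator $T$ is self-adjoint iff $\mathrm{Ran}(T\pm\mathrm{i}\mu)=H$ for one (hence every) $\mu>0$. Pick $\mu>\|\mathcal{S}\|$. Because $T_2$ is self-adjoint, $\pm\mathrm{i}\mu$ lies in its resolvent set with $\|(T_2\mp\mathrm{i}\mu)^{-1}\|\le 1/\mu$. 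Factor $T_1\mp\mathrm{i}\mu=(T_2\mp\mathrm{i}\mu)\bigl(I+(T_2\mp\mathrm{i}\mu)^{-1}\mathcal{S}\bigr)$; since $\|(T_2\mp\mathrm{i}\mu)^{-1}\mathcal{S}\|\le\|\mathcal{S}\|/\mu<1$, the second factor is boundedly invertible via the Neumann series, hence $T_1\mp\mathrm{i}\mu$ is invertible and in particular surjective. Thus $T_1$ is self-adjoint; this is exactly the bounded-perturbation case of the Kato--Rellich theorem.

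\textbf{Spectral distance.} The key analytic input is the spectral-theorem identity: for a self-adjoint operator $T$ and any $\zeta$ in its resolvent set, $\|(T-\zeta)^{-1}\|=\mathrm{dist}(\zeta,\Sigma(T))^{-1}$. Fix $\zeta\in\mathbf{C}$ with $\mathrm{dist}(\zeta,\Sigma(T_2))>\|\mathcal{S}\|$; then $\zeta$ lies in the resolvent set of $T_2$ and $\|(T_2-\zeta)^{-1}\|<1/\|\mathcal{S}\|$. Writing $T_1-\zeta=(T_2-\zeta)\bigl(I+(T_2-\zeta)^{-1}\mathcal{S}\bigr)$ and using $\|(T_2-\zeta)^{-1}\mathcal{S}\|\le\|(T_2-\zeta)^{-1}\|\,\|\mathcal{S}\|<1$, the factor $I+(T_2-\zeta)^{-1}\mathcal{S}$ is invertible with bounded inverse $\sum_{p\ge0}\bigl(-(T_2-\zeta)^{-1}\mathcal{S}\bigr)^{p}$, so $T_1-\zeta$ is invertible and $\zeta$ lies in the resolvent set of $T_1$. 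Contrapositively, every $\zeta\in\Sigma(T_1)$ satisfies $\mathrm{dist}(\zeta,\Sigma(T_2))\le\|\mathcal{S}\|$, that is $\sup_{\zeta\in\Sigma(T_1)}\mathrm{dist}(\zeta,\Sigma(T_2))\le\|\mathcal{S}\|$. Now interchange the roles of $T_1$ and $T_2$: since $T_1$ is self-adjoint by the first step and $T_2=T_1+(-\mathcal{S})$ with $\|-\mathcal{S}\|=\|\mathcal{S}\|$, the same argument gives $\sup_{\zeta\in\Sigma(T_2)}\mathrm{dist}(\zeta,\Sigma(T_1))\le\|\mathcal{S}\|$. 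Together these yield the stated two-sided bound $\mathrm{dist}(\Sigma(T_1),\Sigma(T_2))\le\|\mathcal{S}\|$.

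\textbf{Main obstacle.} The algebra above is routine once the two ingredients are available, namely the range criterion for self-adjointness plus the Neumann-series trick, and the identity $\|(T-\zeta)^{-1}\|=\mathrm{dist}(\zeta,\Sigma(T))^{-1}$ for self-adjoint $T$. The only genuinely delicate bookkeeping is the dichotomy when proving the $\sup$ inequalities: for $\zeta\in\Sigma(T_2)$ the bound $\mathrm{dist}(\zeta,\Sigma(T_2))=0\le\|\mathcal{S}\|$ is trivial, so the resolvent estimate is invoked only on the set $\{\zeta:\mathrm{dist}(\zeta,\Sigma(T_2))>\|\mathcal{S}\|\}$; one must also allow $\zeta$ to be complex (so that the resolvent set of $T_2$ is genuinely entered, e.g.\ when $\Sigma(T_2)\subset\mathbf{R}$). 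Keeping this case split clean is essentially the whole content of the proof.
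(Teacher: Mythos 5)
Your proof is correct: the bounded Kato--Rellich argument (range criterion plus Neumann series) for self-adjointness, and the identity $\|(T-\zeta)^{-1}\|=\mathrm{dist}(\zeta,\Sigma(T))^{-1}$ combined with the factorization $T_1-\zeta=(T_2-\zeta)\bigl(I+(T_2-\zeta)^{-1}\mathcal{S}\bigr)$ and symmetry in $T_1,T_2$, is exactly the standard proof of this result. The paper itself gives no proof — it simply quotes Theorem V.4.10 from Kato's book — and your argument is essentially Kato's; the only detail worth stating explicitly is that $I+(T_2-\zeta)^{-1}\mathcal{S}$ maps $D(T_2)$ bijectively onto $D(T_2)$ (or, equivalently, use the factorization with the bounded factor on the left), which is immediate from $u=v-(T_2-\zeta)^{-1}\mathcal{S}u$.
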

This implies the following lemma.
\begin{lemm}\label{Lemma6.6}
The eigenvalues of \eqref{E6.1} satisfy $\forall(\epsilon,w)\in[\epsilon_1,\epsilon_2]\times\left\{W\cap H^s:\|w\|_{s}\leq r \right\}$, $\forall n\geq0$,
\begin{equation}\label{E3.4}
|\lambda_{n}(\epsilon,w)-\lambda_{n}(\epsilon',w')|\leq\kappa(|\epsilon-\epsilon'|+\|w-w'\|_s)\
\end{equation}
 for some constant $\kappa>0$.
\end{lemm}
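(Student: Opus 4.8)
The plan is to realize $\lambda_n(\epsilon,w)$ as the spectrum of a self-adjoint operator on the weighted space $L^2_{\rho}$ and to apply the Kato perturbation estimate of Theorem \ref{Theorem5.7}. Concretely, with $d_{\epsilon,w}(x):=m(x)-\epsilon\Pi_{V}f'(t,x,v(\epsilon,w(t,x))+w(t,x))$ as in \eqref{E6.36}, set $T_{\epsilon,w}y:=\rho^{-1}\bigl(-(py')'+d_{\epsilon,w}y\bigr)$ on the domain $\{y\in H^2(0,\pi):\ y\in\mathcal{H}^1_g\}\subset L^2_{\rho}$. By Lemma \ref{lemma6.1} this operator is self-adjoint for $(\cdot,\cdot)_{L^2_{\rho}}$ and its spectrum is exactly $\{\lambda_n(\epsilon,w)\}_{n\ge0}$. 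Writing $T_{\epsilon,w}=T_0+\mathcal{S}_{\epsilon,w}$, where $T_0y:=\rho^{-1}(-(py')'+my)$ is the fixed self-adjoint part and $\mathcal{S}_{\epsilon,w}y:=-\epsilon\rho^{-1}\Pi_{V}f'(t,x,v(\epsilon,w)+w)\,y$ is a bounded symmetric multiplication operator (note $\Pi_{V}f'$ is a real function of $x$ only), I would apply Theorem \ref{Theorem5.7} with $T_1=T_0+\mathcal{S}_{\epsilon,w}$, $T_2=T_0+\mathcal{S}_{\epsilon',w'}$ to obtain $\mathrm{dist}\bigl(\Sigma(T_{\epsilon,w}),\Sigma(T_{\epsilon',w'})\bigr)\le\|\mathcal{S}_{\epsilon,w}-\mathcal{S}_{\epsilon',w'}\|_{\mathcal{B}(L^2_{\rho})}$.

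The second step is the estimate $\|\mathcal{S}_{\epsilon,w}-\mathcal{S}_{\epsilon',w'}\|_{\mathcal{B}(L^2_{\rho})}\le\kappa(|\epsilon-\epsilon'|+\|w-w'\|_{s})$. Since $\mathcal{S}_{\epsilon,w}-\mathcal{S}_{\epsilon',w'}$ is multiplication by $\rho^{-1}\bigl(\epsilon'\Pi_{V}f'(\cdot,v(\epsilon',w')+w')-\epsilon\Pi_{V}f'(\cdot,v(\epsilon,w)+w)\bigr)$, its operator norm on $L^2_{\rho}$ is bounded by $\|1/\rho\|_{L^\infty}$ times the $L^\infty(0,\pi)$-norm of this bracket. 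Setting $a:=f'(t,x,v(\epsilon,w)+w)$ and $a':=f'(t,x,v(\epsilon',w')+w')$, I split $\epsilon a-\epsilon'a'=(\epsilon-\epsilon')a+\epsilon'(a-a')$. From $f\in\mathcal{C}_k$ (so $\partial_u f,\partial_u^2 f$ are continuous, hence bounded on compacts), the embedding $H^1(0,\pi)\hookrightarrow C[0,\pi]$, Remark \ref{remark1}(ii), the bound $\|w\|_{s}\le r$ and $\|v(\epsilon,w)\|_{H^1}\le C$ from Lemma \ref{lemma3.1}, the argument $v(\epsilon,w)+w$ ranges in a fixed compact set, giving $\|a\|_{L^\infty}\le C$ and, by the mean value theorem in the $u$-variable, $\|a-a'\|_{L^\infty}\le C\,\|(v(\epsilon,w)+w)-(v(\epsilon',w')+w')\|_{L^\infty_{t,x}}$. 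Finally, $v(\epsilon,w)$ is $C^2$ on $[\epsilon_1,\epsilon_2]\times\{\|w\|_{s}\le r\}$ by Lemma \ref{lemma3.1}, hence Lipschitz there, so $\|v(\epsilon,w)-v(\epsilon',w')\|_{H^1}\le C(|\epsilon-\epsilon'|+\|w-w'\|_{s})$; together with $\|w-w'\|_{L^\infty_{t,x}}\le C(s)\|w-w'\|_{s}$ (Remark \ref{remark1}(ii) and $H^1(0,\pi)\hookrightarrow C[0,\pi]$) this yields $\|a-a'\|_{L^\infty}\le C(|\epsilon-\epsilon'|+\|w-w'\|_{s})$, and thus the desired bound on $\|\mathcal{S}_{\epsilon,w}-\mathcal{S}_{\epsilon',w'}\|_{\mathcal{B}(L^2_{\rho})}$.

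To pass from the Hausdorff distance of the spectra to the index-by-index bound \eqref{E3.4}, I would invoke the Courant--Fischer min-max characterization $\lambda_n(\epsilon,w)=\min_{\dim E=n+1}\ \max_{0\ne y\in E}(T_{\epsilon,w}y,y)_{L^2_{\rho}}/(y,y)_{L^2_{\rho}}$: since $T_{\epsilon,w}-T_{\epsilon',w'}=\mathcal{S}_{\epsilon,w}-\mathcal{S}_{\epsilon',w'}$ is bounded and symmetric, this immediately gives $|\lambda_n(\epsilon,w)-\lambda_n(\epsilon',w')|\le\|\mathcal{S}_{\epsilon,w}-\mathcal{S}_{\epsilon',w'}\|_{\mathcal{B}(L^2_{\rho})}$ for every $n$; alternatively one can use simplicity of the eigenvalues (Lemma \ref{lemma6.1}) and a continuity argument along the segment joining $(\epsilon,w)$ to $(\epsilon',w')$. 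Either route gives \eqref{E3.4} with the $\kappa$ produced above, uniformly on the parameter set. The main technical obstacle is precisely the sup-norm Lipschitz estimate on $a-a'$: one must combine, without loss, the Sobolev embeddings for $H^s$ and for $H^1(0,\pi)$ with the $C^2$-regularity of the reduced solution $v(\epsilon,w)$ so that all error terms are controlled by $|\epsilon-\epsilon'|+\|w-w'\|_{s}$; the rest is routine.
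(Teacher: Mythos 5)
Your proposal is correct, and it rests on the same key tool as the paper, namely the Kato perturbation estimate of Theorem \ref{Theorem5.7}; the difference lies in the setup. The paper first performs the Liouville substitution \eqref{E6.26}--\eqref{E6.35}, so that the perturbed object is the potential $\vartheta=c^{2}d(\psi)/\rho(\psi)$ of the normalized operator $-\partial_{\xi\xi}+\vartheta$ on the unweighted $L^{2}$, and then estimates $\|\vartheta_{2}-\vartheta_{1}\|_{\mathcal{L}(L^2,L^2)}\leq\|\vartheta_{2}-\vartheta_{1}\|_{L^\infty}\leq\|\vartheta_{2}-\vartheta_{1}\|_{H^{1}}\leq\kappa_{0}\|d_{2}-d_{1}\|_{H^{1}}$, invoking Lemma \ref{lemma2.4} for the final Lipschitz bound in $(\epsilon,w)$; this reuses the machinery already built in Section \ref{sec:5} and keeps $\mu_n=c^{2}\lambda_n$ as the bridge back to \eqref{E6.1}. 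You instead work directly with the divergence-form operator $\rho^{-1}(-(py')'+d_{\epsilon,w}y)$ on the weighted space $L^{2}_{\rho}$, where the boundary conditions (hence the domain) are independent of $(\epsilon,w)$ and the difference of the two operators is a bounded symmetric multiplication operator; this avoids the change of variables entirely, and your sup-norm Lipschitz estimate on the potential (mean value theorem in $u$ plus the $C^{2}$, hence Lipschitz, dependence of $v(\epsilon,w)$ from Lemma \ref{lemma3.1} and the embeddings of Remark \ref{remark1}) plays the role the paper delegates to Lemma \ref{lemma2.4}. You also add the Courant--Fischer min-max step to convert the Hausdorff-distance bound on spectra into the index-by-index inequality \eqref{E3.4}; the paper writes $|\mu_{n}(\vartheta_{2})-\mu_{n}(\vartheta_{1})|\leq\|\vartheta_{2}-\vartheta_{1}\|_{\mathcal{L}(L^2,L^2)}$ without comment, and your min-max (or, equivalently, simplicity plus a continuity argument) is exactly the justification that makes this precise, so your write-up is, if anything, slightly more complete on that point. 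The only caveat, which applies equally to the paper's own proof, is that the Lipschitz constant for $v(\epsilon,w)$ on the ball $\{\|w\|_{s}\leq r\}$ requires the boundedness of $\mathrm{D}_{w}v$ and $\partial_{\epsilon}v$ there, which the paper assumes implicitly (cf.\ Remark \ref{Remark2} and the proof of $(P1)$--$(P2)$).
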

The proof can be seen in the Appendix.

\begin{proof}
(Lemma \ref{lemma4.10})
It is clear to read that $\widetilde{A}_n\subset A_n,n\geq0$.
 Moreover we claim that\\
 {\bf (F5):}~ There exists $\gamma_0>0$,  for $\frac{\epsilon}{\gamma^3\omega}$ small enough,  such that
\begin{equation*}
{\mathfrak{B}}\left((\epsilon,\omega),\frac{2\gamma_0\gamma^4}{N^{\tau+1}_n}\right)\subseteq A_n\quad\forall(\epsilon,\omega)\in B_{\gamma},\forall n\in\mathbf{N}.
\end{equation*}
The claim {\bf (F5)} shows that $(\epsilon,\omega)$ may belong to $\widetilde{A}_n$ for all $n\geq0$.

 Now we verify {\bf (F5)} by induction.
If $\gamma_0\leq\frac{1}{2}$,  $\forall(\epsilon_1,\omega_1)\in{\mathfrak{B}}\left((\epsilon,\omega),\frac{2\gamma_0\gamma^4}{N^{\tau+1}_0}\right)$, $\forall l=1,\cdots,N_0$,  then we can obtain
\begin{equation*}
\left|\omega_1l-\sqrt{\lambda_j}\right|\geq\left|\omega l-\sqrt{\lambda_j}\right|-\left|\omega-\omega_1\right|l>\frac{2\gamma}{l^{\tau}}-\frac{2\gamma_0\gamma^4}{N^{\tau+1}_0}l
\geq\frac{\gamma}{l^\tau}+\frac{\gamma}{N^\tau_0}-\frac{2\gamma_0\gamma^4}{N^{\tau}_0}\geq\frac{\gamma}{l^{\tau}}.
\end{equation*}

Another step is to suppose  that
\[{\mathfrak{B}}\left((\epsilon,\omega),\frac{2\gamma_0\gamma^4}{N^{\tau+1}_n}\right)\subseteq A_n,\]
 which  implies that $(\epsilon,\omega)\in\widetilde{A}_n$. As a result $\tilde{w}_n(\epsilon,\omega)={w}_n(\epsilon,\omega)$.

Finally, let us check that the claim {\bf (F5)} holds at $(n+1)$-th step.
A similar argument yields $\forall(\epsilon_1,\omega_1)\in\mathfrak{B}((\epsilon,\omega),\frac{2\gamma_0\gamma^4}{N^{\tau+1}_{n+1}})$, $\forall l=1,\cdots,N_{n+1},$
\begin{equation*}
\left|\omega_1l-\frac{j}{c}\right|\geq\left|\omega l-\frac{j}{c}\right|-\left|\omega-\omega_1\right|l>\frac{2\gamma}{l^{\tau}}-\frac{2\gamma_0\gamma^4}{N^{\tau+1}_{N_{n+1}}}l\geq \frac{\gamma}{l^\tau}
+\frac{\gamma}{N^\tau_{n+1}}-\frac{2\gamma_0\gamma^4}{N^{\tau}_{n+1}}\geq\frac{\gamma}{l^{\tau}}
\end{equation*}
if $\gamma_0\leq\frac{1}{2}$. For brevity, denote $\lambda_{j,n}(\epsilon_1,\omega_1):=\lambda_{j}(\epsilon_1,w_n(\epsilon_1,\omega_1))$, $\tilde{\lambda}_j(\epsilon,\omega):=\lambda_{j}(\epsilon,\tilde{w}(\epsilon,\omega))$. Moreover, let
\begin{equation*}
\delta_0:=\inf\left\{~|\lambda_j(\epsilon,\omega)|:~j\geq0,\epsilon\in[\epsilon_1,\epsilon_2],\|w\|_s\leq r~\right\}.
\end{equation*}
Lemma \ref{lemma6.1} together with formula \eqref{E3.4} can show that $\delta_0>0$ is a constant. It follows from \eqref{E3.4}, Remark \ref{remark8}, \eqref{E4.21} and $\omega\geq\omega_0>\gamma$ that
\begin{align*}
\left|\sqrt{\lambda_{j,n}(\epsilon_1,\omega_1)}-\sqrt{\tilde{\lambda}_{j}(\epsilon,\omega)}\right|=&\frac{|\lambda_{j,n}(\epsilon_1,\omega_1)
-\tilde{\lambda}_{j}(\epsilon,\omega)|}{\left|\sqrt{\lambda_{j,n}(\epsilon_1,\omega_1)}\right|+\left|\sqrt{\tilde{\lambda}_{j}(\epsilon,\omega)}\right|}\leq
\frac{1}{\sqrt{\delta_0}}{|\lambda_{j,n}(\epsilon_1,\omega_1)
-\tilde{\lambda}_{j}(\epsilon,\omega)|}\\
\leq&\frac{\kappa}{\sqrt{\delta_0}}|\epsilon-\epsilon_1|+\frac{\kappa}{\sqrt{\delta_0}}\|w_n(\epsilon_1,\omega_1)-\tilde{w}(\epsilon,\omega)\|_s\\
\leq&\frac{\kappa}{\sqrt{\delta_0}}|\epsilon-\epsilon_1|+\frac{\kappa}{\sqrt{\delta_0}}\|w_n(\epsilon_1,\omega_1)-{w}_n(\epsilon_1,\omega)\|_s\\
&+\frac{\kappa}{\sqrt{\delta_0}}\|w_n(\epsilon_1,\omega)-{w}_n(\epsilon,\omega)\|_s
+\frac{\kappa}{\sqrt{\delta_0}}\|\tilde{w}_n(\epsilon,\omega)-\tilde{w}(\epsilon,\omega)\|_s\\
\leq&\frac{\kappa}{\sqrt{\delta_0}}\left(\frac{2\gamma_0\gamma^4}{N^{\tau+1}_{n+1}}+\frac{2E}{\gamma^2\omega}\frac{2\gamma_0\gamma^4}{N^{\tau+1}_{n+1}}+
\frac{\widetilde{C}'''\epsilon}{\gamma\omega }{N^{-(\sigma+3)/\chi}_{n+1}}\right).
\end{align*}
If the fact $-(\sigma+3)/\chi\leq{-\tau}$ holds, then, for $\gamma_0,\frac{\epsilon}{\gamma^2\omega}$ small enough, we infer
\begin{align}\label{E8.8}
\left|\sqrt{\lambda_{j,n}(\epsilon_1,\omega_1)}-\sqrt{\tilde{\lambda}_{j}(\epsilon,\omega)}\right|\leq\frac{\gamma}{2l^{\tau}}.
\end{align}
In fact, define a function $\mathfrak{g}$ on $(1,2)$ as
\begin{align*}
\mathfrak{g}(\tau):=\frac{\sigma+3}{\tau}\quad\text{with}~\sigma=\frac{\tau(\tau-1)}{2-\tau}.
\end{align*}
It is evident that $\min\limits_{1<\tau<2}\mathfrak{g}(\tau)=\mathfrak{g}(3-\sqrt{3})=1+\sqrt{3}$. This shows that $\chi\leq\frac{\sigma+3}{\tau}$ owing to $1<\chi\leq2$.
Consequently, by means of \eqref{E8.8}, $\forall(\epsilon_1,\omega_1)\in\mathfrak{B}((\epsilon,\omega),\frac{2\gamma_0\gamma^4}{N^{\tau+1}_{n+1}})$, $\forall l=1,\cdots,N_{n+1}$,
we can obtain that   for $\gamma_0,\frac{\epsilon}{\gamma^2\omega}$ small enough
\begin{align*}
\left|\omega_1l-\sqrt{\lambda_{j,n}(\epsilon_1,\omega_1)}\right|&\geq\left|\omega l-\sqrt{\tilde{\lambda}_{j}(\epsilon,\omega)}\right|-|\omega-\omega_1|l-\left|\sqrt{\lambda_{j,n}(\epsilon_1,\omega_1)}-\sqrt{\tilde{\lambda}_{j}(\epsilon,\omega)}\right|\\
&>\frac{2\gamma}{l^{\tau}}-\frac{2\gamma_0\gamma^4}{N^{\tau+1}_{n+1}}l-\frac{\gamma}{2l^{\tau}}>\frac{\gamma}{l^{\tau}}.
\end{align*}
The proof  is completed.
\end{proof}
Let $\Omega:=(\epsilon',\epsilon'')\times(\omega',\omega'')$ stand for a rectangle contained in $(\epsilon_1,\epsilon_2)\times(2\omega_0,+\infty)$. Denote by $B_{\gamma}(\epsilon)$ ($\mathrm{resp}$. $B_{\gamma}(\omega)$) the $\epsilon$ ($\mathrm{resp}$. $\omega$)-section as follows:
\begin{align*}
&B_{\gamma}(\epsilon):=\left\{\omega:(\epsilon,\omega)\in B_\gamma\right\}~
 \forall \epsilon\in (\epsilon',\epsilon'');\quad B_{\gamma}(\omega):=\left\{\epsilon:(\epsilon,\omega)\in B_\gamma\right\}~ \forall \omega\in (\omega',\omega'').
\end{align*}
We have to fix $\omega''-\omega'=\mathbf{\textbf{constant}}$.
\begin{lemm}
For $\frac{\epsilon}{\gamma^5\omega}$ small enough, the measure estimate on $B_{\gamma}(\epsilon)$ satisfies
\begin{equation}\label{E4.24}
|B_{\gamma}(\epsilon)\cap(\omega',\omega'')|\geq(1-Q\gamma)(\omega''-\omega')
\end{equation}
for some constant $Q>0$. Furthermore
\begin{equation}\label{E4.25}
|B_{\gamma}\cap\Omega|\geq(1-Q\gamma)|\Omega|=(1-Q\gamma)(\omega''-\omega')(\epsilon''-\epsilon').
\end{equation}
\end{lemm}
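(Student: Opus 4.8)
The plan is to prove the measure estimate for $B_\gamma$ by the standard strategy of estimating, for each fixed $\epsilon$, the set of ``bad'' frequencies $\omega$ removed by each non-resonance inequality, and then summing a convergent series in the level index. Fix $\epsilon\in(\epsilon',\epsilon'')$ and recall that $B_\gamma(\epsilon)$ is the complement (inside $(\omega',\omega'')$) of the union over $l\geq1$, $j\geq0$ of the ``resonant strips''
\begin{align*}
R^{(1)}_{l,j}&:=\left\{\omega\in(\omega',\omega''):~\left|\omega l-\tfrac{j}{c}\right|\leq\tfrac{2\gamma}{l^{\tau}}\right\},\\
R^{(2)}_{l,j}&:=\left\{\omega\in(\omega',\omega''):~\left|\omega l-\sqrt{\lambda_j(\epsilon,\tilde{w}(\epsilon,\omega))}\right|\leq\tfrac{2\gamma}{l^{\tau}}\right\},
\end{align*}
together with finitely many analogous sets for $l=1,\dots,N_0$ coming from \eqref{E4.31}; the latter contribute only finitely many strips and hence are harmless. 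For $R^{(1)}_{l,j}$ the function $\omega\mapsto\omega l-j/c$ is affine with slope $l$, so $|R^{(1)}_{l,j}|\leq 4\gamma/l^{\tau+1}$; moreover for fixed $l$ only $O(l)$ values of $j$ can intersect the bounded interval $(\omega',\omega'')$ (since $\omega''-\omega'$ is a fixed constant). Summing, $\sum_{l\geq1}\sum_{j}|R^{(1)}_{l,j}|\leq C\gamma\sum_{l\geq1} l\cdot l^{-(\tau+1)}=C\gamma\sum_{l\geq1}l^{-\tau}<\infty$ because $\tau>1$.

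The key point is the second family $R^{(2)}_{l,j}$, where the dependence on $\omega$ enters \emph{both} through the explicit $\omega l$ term and implicitly through $\tilde{w}(\epsilon,\omega)$ inside $\lambda_j$. Here I would use the differentiability of $\tilde w$ in $\omega$: by Remark \ref{remark8} (and \eqref{E4.22}) we have $\|\partial_\omega\tilde w\|_s\leq K(\gamma_0)\epsilon/(\gamma^5\omega)$, and by Lemma \ref{Lemma6.6} the map $w\mapsto\lambda_j(\epsilon,w)$ is Lipschitz with constant $\kappa$; combined with the lower bound $\lambda_j\geq\delta_0>0$ this gives
\[
\left|\frac{d}{d\omega}\left(\omega l-\sqrt{\lambda_j(\epsilon,\tilde w(\epsilon,\omega))}\right)\right|
\geq l-\frac{\kappa}{2\sqrt{\delta_0}}\|\partial_\omega\tilde w\|_s
\geq l-\frac{C\epsilon}{\gamma^5\omega}\geq\frac{l}{2}
\]
provided $\epsilon/(\gamma^5\omega)$ is small enough. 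Hence the function is strictly monotone on $(\omega',\omega'')$ and each $R^{(2)}_{l,j}$ is an interval of length $\leq 8\gamma/l^{\tau+1}$. For the counting in $j$: on $(\omega',\omega'')$ we have $\omega l$ bounded by $C l$, and from the asymptotics \eqref{E6.24}--\eqref{E6.25} one has $\sqrt{\lambda_j}\sim j/(c)$ (or $(j+1/2)/c$), so $R^{(2)}_{l,j}\neq\emptyset$ forces $j\leq C l$; thus again only $O(l)$ indices $j$ are relevant for each $l$. Summing as before yields $\sum_{l\geq1}\sum_{j}|R^{(2)}_{l,j}|\leq C\gamma\sum_{l\geq1}l^{-\tau}=:C'\gamma$.

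Putting the two families (and the finite contribution from the $N_0$-terms) together, $|(\omega',\omega'')\setminus B_\gamma(\epsilon)|\leq Q\gamma$ for a constant $Q$ depending on $\tau$, $c$, $\delta_0$, $\kappa$, $\omega''-\omega'$ and the implied asymptotic constants but not on $\epsilon$ or $\gamma$, which is exactly \eqref{E4.24}. Finally, \eqref{E4.25} follows by Fubini: $|B_\gamma\cap\Omega|=\int_{\epsilon'}^{\epsilon''}|B_\gamma(\epsilon)\cap(\omega',\omega'')|\,d\epsilon\geq(1-Q\gamma)(\omega''-\omega')(\epsilon''-\epsilon')=(1-Q\gamma)|\Omega|$. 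I expect the main obstacle to be the rigorous justification of the derivative lower bound for $R^{(2)}_{l,j}$ — specifically, controlling the implicit $\omega$-dependence of the eigenvalue $\lambda_j(\epsilon,\tilde w(\epsilon,\omega))$ uniformly in $j$, which is where the smallness hypothesis on $\epsilon/(\gamma^5\omega)$ and the uniform positivity $\delta_0>0$ (guaranteed by Lemma \ref{lemma6.1} and \eqref{E3.4}) are essential; the counting estimate on admissible $j$ is a secondary technical point requiring the asymptotic formulae of Lemma \ref{lemma6.1}.
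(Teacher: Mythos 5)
Your proposal is correct and follows essentially the same route as the paper: decompose the complement of $B_\gamma(\epsilon)$ into resonant strips, get a length bound $O(\gamma/l^{\tau+1})$ per strip from the derivative lower bound $\partial_\omega(\omega l-\sqrt{\tilde\lambda_j})\geq l/2$ (using \eqref{E4.22}, Lemma \ref{Lemma6.6} and $\lambda_j\geq\delta_0>0$, with $\epsilon/(\gamma^5\omega)$ small), count $O(l)$ admissible $j$ per $l$, sum the convergent series in $l$ since $\tau>1$, and conclude \eqref{E4.25} by Fubini. The only cosmetic difference is that you count the admissible $j$ directly from the asymptotics $\sqrt{\lambda_j}\sim j/c$, whereas the paper uses the uniform spectral gap $\delta_1$ of \eqref{E4.23}, which is itself deduced from the same asymptotics.
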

\begin{proof}
The completementary set $\mathfrak{R}$ of $B_{\gamma}$ is
\begin{equation*}
\mathfrak{R}\subseteq\mathfrak{R}^1\cup\mathfrak{R}^2\cup\mathfrak{R}^3,
\end{equation*}
where $\mathfrak{R}^1=\bigcup\limits_{l\geq1,j\geq0}\mathfrak{R}^1_{l,j}$, $\mathfrak{R}^2=\bigcup\limits_{l\geq1,j\geq0}\mathfrak{R}^2_{l,j}$, $\mathfrak{R}^3=\bigcup\limits_{l\geq1,j\geq0}\mathfrak{R}^3_{l,j}$, with
\begin{align*}
\mathfrak{R}^1_{l,j}&:=\left\{\omega\in(\omega',\omega''):\left|\omega l-\sqrt{\tilde{\lambda}_{j}(\epsilon,\omega)}\right|\leq\frac{2\gamma}{l^{\tau}}\right\},\\
\mathfrak{R}^2_{l,j}&:=\left\{\omega\in(\omega',\omega''):\left|\omega l-\sqrt{{\lambda}_{j}}\right|\leq\frac{2\gamma}{l^{\tau}}\right\},\quad\mathfrak{R}^3_{l,j}:=\left\{\omega\in(\omega',\omega''):\left|\omega l-\frac{j}{c}\right|\leq\frac{2\gamma}{l^{\tau}}\right\}.
\end{align*}
Let us give  the upper bound of $|\mathfrak{R}^1|$. It follows from \eqref{E4.22}, \eqref{E3.4} and the definition of $\delta_0$ that
\begin{align*}
\left|\sqrt{\tilde{\lambda}_{j}(\epsilon,\omega_1)}-\sqrt{\tilde{\lambda}_{j}(\epsilon,\omega)}\right|&=\frac{\left|\tilde{\lambda}_{j}(\epsilon,\omega_1)
-\tilde{\lambda}_{j}(\epsilon,\omega)\right|}{\left|\sqrt{\tilde{\lambda}_{j}(\epsilon,\omega_1)}\right|+\left|\sqrt{\tilde{\lambda}_{j}(\epsilon,\omega)}\right|}\leq
\frac{1}{\sqrt{\delta_0}}{\left|\tilde{\lambda}_{j}(\epsilon,\omega_1)
-\tilde{\lambda}_{j}(\epsilon,\omega)\right|}\\
&\leq\frac{\kappa}{\sqrt{\delta_0}}\|\tilde{\omega}(\epsilon,\omega_1)-\tilde{\omega}(\epsilon,\omega)\|_s\leq\frac{\kappa \epsilon K(\gamma_0)}{\sqrt{\delta_0}\gamma^5\omega}|\omega_1-\omega|,
\end{align*}
which  implies  $\left|\partial_{\omega}\sqrt{\tilde{\lambda}_j(\epsilon,\omega)}\right|\leq\frac{\kappa \epsilon K(\gamma_0)}{\sqrt{\delta_0}\gamma^5\omega}$.

 Define the function  $\mathfrak{f}(\omega):=\omega l-\sqrt{\tilde{\lambda}_j(\epsilon,\omega)}$. For $\frac{\epsilon}{\gamma^5\omega}$  small enough, it is obvious that
\begin{equation*}
\partial_{\omega}\mathfrak{f}(\omega)=l-\partial_{\omega}\sqrt{\tilde{\lambda}_j(\epsilon,\omega)}\geq\frac{l}{2},\quad\forall l\geq1.
\end{equation*}
 A simple computation yields
\begin{equation*}
|\mathfrak{R}^1_{l,j}|\leq\frac{|\mathfrak{f}(\omega_1)-\mathfrak{f}(\omega_2)|}{|\partial_{\omega}\mathfrak{f}(\omega)|}\leq\frac{8\gamma}{l^{\tau+1}}.
\end{equation*}
If $\mathfrak{R}^1_{l,j}\neq\emptyset$, for fixed $l$, then it is easy to show that
\begin{equation*}
\omega'l-\frac{2\gamma}{l^\tau}\leq\sqrt{\tilde{\lambda}_j(\epsilon,\omega)}\leq\omega''l+\frac{2\gamma}{l^\tau}.
\end{equation*}
Set
\begin{equation}\label{E4.23}
\delta_1:=\inf\left\{\left|{\sqrt{\lambda_{j+1}(\epsilon,\omega)}}-\sqrt{{\lambda}_j(\epsilon,\omega)}\right|:j\geq0,\epsilon\in[\epsilon_1,\epsilon_2],\|w\|_s\leq r\right\}>0,
\end{equation}
and
\begin{equation*}
\delta_2:=\inf\left\{\left|{\sqrt{\lambda_{j+1}(\epsilon,\omega)}}-\sqrt{{\lambda}_j(\epsilon,\omega)}\right|:j\geq0,(\epsilon,\omega)\in B_\gamma\right\},
\end{equation*}
while \eqref{E4.23} will be verified in the appendix.
Recall that ${{\lambda_{j}(\epsilon,\omega)}}={{\tilde{\lambda}_{j}(\epsilon,\omega)}}$ on $B_\gamma$.
It follows from  \eqref{E4.22} that  for $\frac{\epsilon}{\gamma\omega}$ small enough
 \[\|\tilde{w}\|_{s}\leq\frac{K\epsilon}{\gamma\omega}<r,\]
 which  implies that  $\delta_2\geq\delta_1>0$. Thus
\begin{equation*}
\sharp j\leq\frac{1}{\delta_1}(l(\omega''-\omega')+\frac{4\gamma}{l^{\tau}})+1,
\end{equation*}
where $\sharp j$ denotes the number of $j$. As a consequence
\begin{equation*}
|\mathfrak{R}^1|\leq\sum\limits_{l=1}^{+\infty}\frac{8\gamma}{l^{\tau+1}}\left(\frac{1}{\delta_1}(l(\omega''-\omega')
+\frac{4\gamma}{l^{\tau}})+1\right)\leq\sum\limits_{l=1}^{+\infty}\frac{8\gamma}{l^{\tau+1}}Q''l(\omega''-\omega')
\leq Q'\gamma(\omega''-\omega').
\end{equation*}
The remainder of the argument on the upper bounds of $|\mathfrak{R}^2|,~|\mathfrak{R}^3|$ is analogous to the one used as above and so is omitted. Finally, we get
\[|\mathfrak{R}^2|\leq Q'\gamma(\omega''-\omega'), \quad|\mathfrak{R}^3|\leq Q'\gamma(\omega''-\omega').\]
Formula \eqref{E4.24} is obtained. In addition
\begin{equation*}
|B_{\gamma}\cap\Omega|=\int_{\epsilon'}^{\epsilon''}|B_{\gamma}(\epsilon)\cap(\omega',\omega'')|
~\mathrm{d}\epsilon\geq(1-Q\gamma)|\Omega|.
\end{equation*}
\end{proof}

\begin{lemm}
For $\frac{\epsilon}{\gamma^5\omega}$ small enough, for every $\gamma_1\in(0,1)$, the measure estimate on $B_{\gamma}(\omega)$ satisfies
\begin{equation*}
\left|\left\{\omega\in(\omega',\omega''):\frac{|B_{\gamma}(\omega)\cap(\epsilon',\epsilon'')|}
{\epsilon''-\epsilon'}\geq1-\gamma_1\right\}\right|\geq(\omega''-\omega')\left(1-Q\frac{\gamma}{\gamma_1}\right).
\end{equation*}
\end{lemm}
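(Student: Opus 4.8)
The plan is to obtain this one-dimensional estimate as a soft consequence of the two-dimensional bound \eqref{E4.25} via Fubini's theorem followed by a Chebyshev (Markov) inequality, so no new analysis on the eigenvalues is needed. First I would note that $B_\gamma$ is a Borel subset of the rectangle $\Omega=(\epsilon',\epsilon'')\times(\omega',\omega'')$: it is cut out by the countably many strict inequalities defining $\Delta^{2\gamma,\tau}_\infty(\tilde w)$ together with finitely many more for $l\le N_0$, and each such inequality involves the continuous functions $(\epsilon,\omega)\mapsto \omega l-\sqrt{\lambda_j(\epsilon,\tilde w(\epsilon,\omega))}$ and $(\epsilon,\omega)\mapsto \omega l-j/c$ (continuity in $w$ comes from Lemma \ref{Lemma6.6}, and $\tilde w\in C^1(A_0;W\cap H^s)$). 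In particular every $\epsilon$- and $\omega$-section of $B_\gamma$ is measurable, and \eqref{E4.25} may be restated for the complement $\mathfrak R:=\Omega\setminus B_\gamma$ as $|\mathfrak R|\le Q\gamma(\omega''-\omega')(\epsilon''-\epsilon')$.

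Next I would slice $\mathfrak R$ along the $\epsilon$-axis. For $\omega\in(\omega',\omega'')$ set $\mathfrak R(\omega):=\{\epsilon\in(\epsilon',\epsilon''):(\epsilon,\omega)\notin B_\gamma\}$, so that $|\mathfrak R(\omega)|=(\epsilon''-\epsilon')-|B_\gamma(\omega)\cap(\epsilon',\epsilon'')|$. Fubini's theorem then gives
\begin{equation*}
\int_{\omega'}^{\omega''}|\mathfrak R(\omega)|\,\mathrm{d}\omega=|\mathfrak R|\le Q\gamma\,(\omega''-\omega')(\epsilon''-\epsilon').
\end{equation*}
Introduce the exceptional set
\begin{equation*}
\mathcal W:=\left\{\omega\in(\omega',\omega''):\ |\mathfrak R(\omega)|>\gamma_1(\epsilon''-\epsilon')\right\}=\left\{\omega\in(\omega',\omega''):\ \frac{|B_\gamma(\omega)\cap(\epsilon',\epsilon'')|}{\epsilon''-\epsilon'}<1-\gamma_1\right\}.
\end{equation*}

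Finally I would apply Chebyshev's inequality: since $|\mathfrak R(\omega)|>\gamma_1(\epsilon''-\epsilon')$ for every $\omega\in\mathcal W$,
\begin{equation*}
\gamma_1(\epsilon''-\epsilon')\,|\mathcal W|\le\int_{\mathcal W}|\mathfrak R(\omega)|\,\mathrm{d}\omega\le\int_{\omega'}^{\omega''}|\mathfrak R(\omega)|\,\mathrm{d}\omega\le Q\gamma\,(\omega''-\omega')(\epsilon''-\epsilon'),
\end{equation*}
whence $|\mathcal W|\le Q\frac{\gamma}{\gamma_1}(\omega''-\omega')$. Passing to the complement of $\mathcal W$ inside $(\omega',\omega'')$ yields
\begin{equation*}
\left|\left\{\omega\in(\omega',\omega''):\ \frac{|B_\gamma(\omega)\cap(\epsilon',\epsilon'')|}{\epsilon''-\epsilon'}\ge1-\gamma_1\right\}\right|=(\omega''-\omega')-|\mathcal W|\ge(\omega''-\omega')\left(1-Q\frac{\gamma}{\gamma_1}\right),
\end{equation*}
which is the desired estimate (with the same constant $Q$ as in \eqref{E4.24}--\eqref{E4.25}, and under the same smallness requirement that $\epsilon/(\gamma^5\omega)$ be small, which was used there). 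There is no genuine obstacle in this lemma: the only points deserving a line of justification are the measurability of $B_\gamma$ and the fact that the two-dimensional bound \eqref{E4.25} is available precisely under the stated hypothesis; the rest is the standard Fubini--Chebyshev passage from a product-measure estimate to a statement about almost every section.
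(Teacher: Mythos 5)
Your proposal is correct and is essentially the paper's argument: the paper also applies Fubini's theorem to $|B_\gamma\cap\Omega|$ together with the bound \eqref{E4.25}, splitting $(\omega',\omega'')$ into the good set $\Omega^+_\epsilon$ and the bad set $\Omega^-_\epsilon$ (your $\mathcal W$), and the Chebyshev step you perform on the complement $\mathfrak R$ is the same computation written for $B_\gamma$ itself. The only cosmetic difference is that you phrase the estimate through $|\mathfrak R|\le Q\gamma|\Omega|$ rather than through the lower bound on the sections of $B_\gamma$, and you add a remark on measurability that the paper leaves implicit.
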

\begin{proof}
Define
\begin{equation*}
\begin{aligned}
&\Omega^{+}_{\epsilon}:=\{\omega\in(\omega',\omega''):
|B_{\gamma}(\omega)\cap(\epsilon',\epsilon'')|\geq(\epsilon''-\epsilon')(1-\gamma_1)\},\\
&\Omega^{-}_{\epsilon}:=\{\omega\in(\omega',\omega''):
|B_{\gamma}(\omega)\cap(\epsilon',\epsilon'')|<(\epsilon''-\epsilon')(1-\gamma_1)\}.
\end{aligned}
\end{equation*}
It follows from the Fubini`s theorem that
\begin{align}
|B_{\gamma}\cap\Omega|&=\int^{\omega''}_{\omega'}|B_{\gamma}(\omega)\cap(\epsilon',\epsilon'')|\mathrm{d}\omega=\int_{\Omega^{+}_{\epsilon}}|B_{\gamma}(\omega)\cap(\epsilon',\epsilon'')|\mathrm{d}\omega+
\int_{\Omega^{-}_{\epsilon}}|B_{\gamma}(\omega)\cap(\epsilon',\epsilon'')|\mathrm{d}\omega\nonumber\\
&\leq(\epsilon''-\epsilon')|\Omega^{+}_{\epsilon}|+(\epsilon''-\epsilon')(1-\gamma_1)|\Omega^{-}_{\epsilon}|. \label{E4.26}
\end{align}
Formulae \eqref{E4.25}, \eqref{E4.26} and the quality $|\Omega^{+}_{\epsilon}|+|\Omega^{-}_{\epsilon}|=\omega''-\omega'$ give
\begin{equation*}
(\omega''-\omega')(1-Q\gamma)\leq(\omega''-\omega')-\gamma_1|\Omega^{-}_{\epsilon}|\Longrightarrow
|\Omega^{-}_{\epsilon}|\leq(\omega''-\omega')Q\frac{\gamma}{\gamma_1}.
\end{equation*}
Combining this with $(\omega''-\omega')(1-Q\gamma)\leq|\Omega^{+}_{\epsilon}|+(1-\gamma_1)|\Omega^{-}_{\epsilon}|$, we derive
\begin{equation*}
|\Omega^{+}_{\epsilon}|\geq(\omega''-\omega')(1-Q\frac{\gamma}{\gamma_1}).
\end{equation*}
This completes the proof of the lemma.
\end{proof}

The above discussion in subsections \ref{sec:6.0}-\ref{sec:6.3} gives that Theorem \ref{Th2} holds. Lemma \ref{lemma3.1} together with Theorem \ref{Th2} may show that
\begin{equation*}
\bar{u}:=v(\epsilon,\tilde{w}(\epsilon,\omega))+\tilde{w}(\epsilon,\omega)\in \mathcal{{H}}^1_g(0,\pi)\oplus(W\cap H^s)
\end{equation*}
is a solution of Eq. \eqref{E1.4}. Thus the proof of Theorem \ref{Th1} is completed .

\section{Appendix}\label{sec:8}
\subsection{The proof of Remark \ref{remark1}}\label{prm1}
{\rm{(i)}} We decompose $uv$ as $\sum_{l\in\mathbf{Z}}(\sum_{k\in\mathbf{Z}}u_{l-k}v_{k})e^{\mathrm{i}lt}$ for all $u,v\in H^s$ with $s>1/2$. Using the
Cauchy inequality, we can get
\begin{align*}
\|uv\|^2_{s}&=\sum\limits_{l}(1+l^{2s})\left\|\sum\limits_{k}u_{l-k}v_{k}\right\|^2_{H^1}\leq\sum\limits_{l}\left(\sum\limits_{k}(1+l^{2s})^{\frac{1}{2}}c_{lk}\|u_{l-k}v_{k}\|_{H^1}\frac{1}{c_{lk}}\right)^2\\
&{\leq}\sum\limits_{l}\left(\sum\limits_{k}\frac{1}{c^2_{lk}}
\sum\limits_{k}\|u_{l-k}\|^2_{H^1}(1+|l-k|^{2s})\|v_{k}\|^2_{H^1}(1+k^{2s})\right),
\end{align*}
where
\begin{equation*}
c_{lk}:=\left(\frac{(1+k^{2s})(1+|l-k|^{2s})}{1+l^{2s}}\right)^{\frac{1}{2}}.
\end{equation*}
A simple process yields for $s>1/2$
\begin{equation*}
\begin{aligned}
1+l^{2s}&\leq1+(|k|+|l-k|)^{2s}\leq1+2^{2s-1}(k^{2s}+|l-k|^{2s})<2^{2s-1}(1+k^{2s}+1+|l-k|^{2s}).
\end{aligned}
\end{equation*}
Then, for $s>1/2$, we have
\begin{align*}
\sum\limits_{k\in\mathbf{Z}}\frac{1}{c^2_{lk}}<2^{2s-1}\left(\sum\limits_{k\in\mathbf{Z}}\frac{1}{1+k^{2s}}+\sum\limits_{k\in\mathbf{Z}}\frac{1}{1+|l-k|^{2s}}\right)
=2^{2s}\sum\limits_{k\in\mathbf{Z}}\frac{1}{1+k^{2s}}=:C(s)^2.
\end{align*}
Hence $\|uv\|_{s}$ may be bounded from above by $C(s)\|u\|_s\|v\|_s$.
\\
{\rm{(ii)}} It also follows from the  Cauchy inequality  that
\begin{equation*}
\sum\limits_{l\in\mathbf{Z}}\|u_l\|_{L^{\infty}}\leq C\sum\limits_{l\in
\mathbf{Z}}\|u_l\|_{H^1}\leq C\left(\sum\limits_{l\in
\mathbf{Z}}\|u_l\|_{H^1}(1+l^{2s})\right)^{\frac{1}{2}}\left(\sum\limits_{l\in\mathbf{Z}}\frac{1}{1+l^{2s}}\right)^{\frac{1}{2}}\leq C(s)\|u\|_{s}.
\end{equation*}

\subsection{Preliminaries}\label{sec:5.2}
By the definitions of $H^{s}$,  for completeness,   we list  Lemmas \ref{lemma2.1}-\ref{lemma2.5} and the proof can be found in   \cite{berti2008cantor}.
\begin{lemm}[Moser-Nirenberg]\label{lemma2.1}
For all $u_1,u_2\in H^{s'}\cap H^s$ with $s'\geq0$ and $s>\frac{1}{2}$, we have
\begin{align}
\|u_1u_2\|_{{s'}}
&\leq C(s')\left(\|u_1\|_{L^{\infty}(\mathbf{T},H^1(0,\pi))}\|u_2\|_{{s'}}+\|u_1\|_{{s'}}\|u_2\|_{L^{\infty}(\mathbf{T},H^1(0,\pi))}\right)\label{E2.10}\\
&\leq  C(s')\left(\|u_1\|_{s}\|u_2\|_{{s'}}+\|u_1\|_{{s'}}\|u_2\|_{s}\right).\label{E2.1}
\end{align}
\end{lemm}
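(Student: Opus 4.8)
This is the standard Moser--Nirenberg tame product estimate transported to the scale $H^s$, which couples $H^1$--regularity in $x$ with $H^s$--regularity in $t$. The plan is to treat the two variables in turn: first reduce the multiplication in $x$ by the one--dimensional Sobolev algebra property, then distribute the $H^{s'}$--regularity in $t$ between the two factors by an interpolation (paraproduct) argument, and finally deduce \eqref{E2.1} from \eqref{E2.10} using Remark \ref{remark1}(ii).

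First I would record the $x$--ingredient: since $H^1(0,\pi)\hookrightarrow C([0,\pi])$, for $\phi,\psi\in H^1(0,\pi)$ one has $\|\phi\psi\|_{H^1(0,\pi)}\le C\big(\|\phi\|_{L^\infty}\|\psi\|_{H^1}+\|\phi\|_{H^1}\|\psi\|_{L^\infty}\big)\le C_0\|\phi\|_{H^1}\|\psi\|_{H^1}$, so $H^1(0,\pi)$ is a Banach algebra. Writing $u_i=\sum_l(u_i)_l(x)e^{\mathrm{i}lt}$ gives $(u_1u_2)_l=\sum_k(u_1)_{l-k}(u_2)_k$, hence $\|(u_1u_2)_l\|_{H^1}\le C_0\sum_k\|(u_1)_{l-k}\|_{H^1}\|(u_2)_k\|_{H^1}$. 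The time weight is split by the elementary discrete Leibniz inequality $(1+l^{2s'})^{1/2}\le C(s')\big((1+|l-k|^{2s'})^{1/2}+(1+|k|^{2s'})^{1/2}\big)$, valid for all $l,k$ because $s'\ge0$, which separates a half--sum carrying the weight on $u_1$ from one carrying it on $u_2$.

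For the weaker inequality \eqref{E2.1} one may then bound each half--sum by discrete Young's inequality, obtaining $\|u_1u_2\|_{s'}\le C(s')\big(\|u_1\|_{s'}\sum_m\|(u_2)_m\|_{H^1}+\|u_2\|_{s'}\sum_m\|(u_1)_m\|_{H^1}\big)$, and then use $\sum_m\|(u_i)_m\|_{H^1}\le C(s)\|u_i\|_s$ (Cauchy--Schwarz together with the convergence of $\sum_m(1+m^{2s})^{-1}$, which is exactly where $s>1/2$ is needed). For the sharper \eqref{E2.10} the crude Young bound is not enough: one keeps the convolution as the $l$--th time--Fourier coefficient of a product, passes to physical space in $t$ by Parseval in the Hilbert space $H^1(0,\pi)$ (so that $\sum_l\|w_l\|_{H^1}^2=\frac{1}{2\pi}\int_{\mathbf{T}}\|w(t)\|_{H^1}^2\,\mathrm{d}t$), uses a Leibniz expansion of $\partial_t^{s'}(u_1u_2)$ (fractional for non--integer $s'$) together with the algebra estimate above, and then for each fixed $x$ applies the classical one--variable Moser--Nirenberg interpolation $\|\partial_t^{j}w\|_{L^2_t}\le C\|w\|_{L^\infty_t}^{1-j/s'}\|\partial_t^{s'}w\|_{L^2_t}^{j/s'}$ to $w=u_1(\cdot,x)$ and $w=u_2(\cdot,x)$, followed by H\"{o}lder in $t$ and Young's inequality $ab\le a^p/p+b^q/q$; this distributes the $t$--derivatives while leaving only the $L^\infty(\mathbf{T},H^1(0,\pi))$--norm on the low--regularity factor. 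Finally \eqref{E2.1} follows from \eqref{E2.10} and the embedding bound $\|u_i\|_{L^\infty(\mathbf{T},H^1(0,\pi))}\le C(s)\|u_i\|_s$ of Remark \ref{remark1}(ii).

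The only delicate point is \eqref{E2.10}: insisting on the $L^\infty(\mathbf{T},H^1(0,\pi))$--norm (rather than an $\ell^1$--norm of the time--Fourier coefficients) on one factor rules out the elementary Young argument and forces the interpolation step, which must be carried out in the single time variable with values in the Hilbert space $H^1(0,\pi)$ so that Parseval and the algebra property remain available. All remaining steps are routine; the full details are as in \cite{berti2008cantor}.
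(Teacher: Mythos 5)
A preliminary remark: the paper itself does not prove this lemma --- it is listed ``for completeness'' with the proof deferred to \cite{berti2008cantor}; only the cruder algebra bound of Remark \ref{remark1}(i) is proved in the appendix, by the same convolution/weight-splitting device you use. So your proposal has to stand on its own. Your treatment of \eqref{E2.1} is fine, both directly (weight splitting plus Young's inequality plus $\sum_m\|(u_i)_m\|_{H^1}\leq C(s)\|u_i\|_{s}$ for $s>\frac{1}{2}$) and as a consequence of \eqref{E2.10} together with Remark \ref{remark1}(ii), and you correctly identify the crux: the naive Young argument only produces the $\ell^1$-norm of the time-Fourier coefficients, which is not controlled by $\|u_i\|_{L^{\infty}(\mathbf{T},H^1(0,\pi))}$, so \eqref{E2.10} needs a genuinely different mechanism.

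For \eqref{E2.10}, however, the sketch has concrete defects. First, the interpolation inequality you display, $\|\partial_t^{j}w\|_{L^2_t}\leq C\|w\|_{L^\infty_t}^{1-j/s'}\|\partial_t^{s'}w\|_{L^2_t}^{j/s'}$, is wrong as stated: the left-hand norm must be $L^{2s'/j}_t$ (and $L^{2s'/(s'-j)}_t$ for the companion factor), otherwise the subsequent H\"older step cannot place the Leibniz term $\partial_t^{j}u_1\,\partial_t^{s'-j}u_2$ in $L^2_t$. Second, applying the interpolation ``for each fixed $x$'' to the scalar functions $u_i(\cdot,x)$ does not do the job: the quantity to be controlled is the $H^1(0,\pi)$-norm of the product, which contains $\partial_x(u_1u_2)$, and pointwise-in-$x$ interpolation gives no information on $\|\partial_x\partial_t^{j}u_1(t,\cdot)\|_{L^2(0,\pi)}$ in the required mixed norms. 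The correct formulation --- which you actually state in your closing paragraph, in contradiction with the body of the argument --- is the Gagliardo--Nirenberg/Moser interpolation for the $H^1(0,\pi)$-valued map $t\mapsto u_i(t,\cdot)$ (available since $H^1(0,\pi)$ is a Hilbert space), combined with the algebra property of $H^1(0,\pi)$ to estimate each Leibniz term, the $j=0$ and $j=s'$ terms giving precisely the two products in \eqref{E2.10}. Third, ``fractional Leibniz for non-integer $s'$'' is not an available black box in this vector-valued periodic setting; the non-integer case needs an actual argument, e.g.\ a dyadic decomposition in time (as the paper itself invokes, following Delort, for the composition lemma) or interpolation between the integer cases. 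With these repairs your plan is the standard one and does yield the lemma, but as written the key step behind \eqref{E2.10} does not go through.
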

\begin{lemm}[Logarithmic convexity]\label{lemma2.2}
Let $0\leq\mathfrak{a}\leq a\leq b\leq\mathfrak{b}$ satisfy $a+b=\mathfrak{a}+\mathfrak{b}$. Taking $\mathfrak{l}:=\frac{\mathfrak{b}-a}{\mathfrak{b}-\mathfrak{a}}$, it holds
\begin{equation*}
\|u_1\|_{a}\|u\|_{b}\leq\mathfrak{l}\|u_1\|_{\mathfrak{l}}\|u_2\|_{\mathfrak{b}}+(1-\mathfrak{l})\|u_2\|_{\mathfrak{a}}\|u_1\|_{\mathfrak{b}}
\end{equation*}
for all $u_1,u_2\in H^{\mathfrak{b}}$. In particular
\begin{equation}\label{E2.3}
\|u\|_{a}\|u\|_{b}\leq\|u\|_{\mathfrak{a}}\|u\|_{\mathfrak{b}}, \quad \forall u\in H^{\mathfrak{b}}.
\end{equation}
\end{lemm}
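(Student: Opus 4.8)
The plan is to reduce the whole statement to a single scalar convexity estimate applied Fourier mode by Fourier mode, so that the two displayed inequalities become two nested applications of H\"older's inequality. First I would record the elementary consequences of the hypotheses $0\le\mathfrak{a}\le a\le b\le\mathfrak{b}$, $a+b=\mathfrak{a}+\mathfrak{b}$, $\mathfrak{l}=(\mathfrak{b}-a)/(\mathfrak{b}-\mathfrak{a})$: one has $\mathfrak{l}\in[0,1]$ and the affine representations $a=\mathfrak{l}\mathfrak{a}+(1-\mathfrak{l})\mathfrak{b}$ and $b=(1-\mathfrak{l})\mathfrak{a}+\mathfrak{l}\mathfrak{b}$ (the second using $a+b=\mathfrak{a}+\mathfrak{b}$). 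Then, for each integer $l$, H\"older's inequality applied to the two-term sums $(1,l^{2\mathfrak{a}})$ and $(1,l^{2\mathfrak{b}})$ with conjugate exponents $1/\mathfrak{l}$, $1/(1-\mathfrak{l})$ gives
\begin{equation*}
1+l^{2a}=1^{\mathfrak{l}}1^{1-\mathfrak{l}}+(l^{2\mathfrak{a}})^{\mathfrak{l}}(l^{2\mathfrak{b}})^{1-\mathfrak{l}}\le(1+l^{2\mathfrak{a}})^{\mathfrak{l}}(1+l^{2\mathfrak{b}})^{1-\mathfrak{l}},
\end{equation*}
and, symmetrically, $1+l^{2b}\le(1+l^{2\mathfrak{a}})^{1-\mathfrak{l}}(1+l^{2\mathfrak{b}})^{\mathfrak{l}}$.

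Next I would substitute these pointwise bounds into the definition $\|u\|_{\sigma}^2=\sum_{l}\|u_l\|_{H^1}^2(1+l^{2\sigma})$ and apply H\"older once more, now in the summation index $l$ against the positive weights $\|u_{j,l}\|_{H^1}^2$, splitting $\|u_{j,l}\|_{H^1}^2=(\|u_{j,l}\|_{H^1}^2)^{\mathfrak{l}}(\|u_{j,l}\|_{H^1}^2)^{1-\mathfrak{l}}$. This yields the interpolation inequalities
\begin{equation*}
\|u_j\|_{a}\le\|u_j\|_{\mathfrak{a}}^{\mathfrak{l}}\|u_j\|_{\mathfrak{b}}^{1-\mathfrak{l}},\qquad\|u_j\|_{b}\le\|u_j\|_{\mathfrak{a}}^{1-\mathfrak{l}}\|u_j\|_{\mathfrak{b}}^{\mathfrak{l}},\qquad j=1,2.
\end{equation*}
Multiplying the first inequality for $u_1$ by the second for $u_2$ gives
\begin{equation*}
\|u_1\|_{a}\|u_2\|_{b}\le(\|u_1\|_{\mathfrak{a}}\|u_2\|_{\mathfrak{b}})^{\mathfrak{l}}(\|u_1\|_{\mathfrak{b}}\|u_2\|_{\mathfrak{a}})^{1-\mathfrak{l}},
\end{equation*}
and the weighted arithmetic--geometric mean inequality $x^{\mathfrak{l}}y^{1-\mathfrak{l}}\le\mathfrak{l}x+(1-\mathfrak{l})y$ for $x,y\ge0$ then produces the asserted first estimate. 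For the particular case $u_1=u_2=u$ it is enough to multiply the two interpolation inequalities for $u$ directly, which gives $\|u\|_{a}\|u\|_{b}\le\|u\|_{\mathfrak{a}}\|u\|_{\mathfrak{b}}$ without any use of Young's inequality.

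I do not expect a genuine obstacle here: the argument is just two nested H\"older estimates followed by Young's inequality, and it is the standard logarithmic convexity argument recorded in \cite{berti2008cantor}. The only points requiring care are the bookkeeping already mentioned --- checking $\mathfrak{l}\in[0,1]$ and the affine identities for $a,b$, and disposing of the degenerate cases $\mathfrak{l}\in\{0,1\}$ (where $a=b$ or $a=\mathfrak{a}$, so the claim is immediate) and $\mathfrak{a}=\mathfrak{b}$ separately --- together with the observation that the displayed inequality as printed contains two evident misprints, whose correct form is $\|u_1\|_{a}\|u_2\|_{b}\le\mathfrak{l}\|u_1\|_{\mathfrak{a}}\|u_2\|_{\mathfrak{b}}+(1-\mathfrak{l})\|u_1\|_{\mathfrak{b}}\|u_2\|_{\mathfrak{a}}$.
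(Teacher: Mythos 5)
Your proof is correct, and since the paper offers no proof of this lemma at all --- it only lists it in the appendix and refers to \cite{berti2008cantor} --- your two-step argument (termwise H\"older on the weights $1+l^{2a}\leq(1+l^{2\mathfrak{a}})^{\mathfrak{l}}(1+l^{2\mathfrak{b}})^{1-\mathfrak{l}}$, H\"older in $l$ to get $\|u\|_{a}\leq\|u\|_{\mathfrak{a}}^{\mathfrak{l}}\|u\|_{\mathfrak{b}}^{1-\mathfrak{l}}$ and its symmetric counterpart, then weighted AM--GM) is exactly the standard interpolation proof used in that reference. Your reading of the statement is also right: as printed it contains misprints, and the intended inequality is $\|u_1\|_{a}\|u_2\|_{b}\leq\mathfrak{l}\|u_1\|_{\mathfrak{a}}\|u_2\|_{\mathfrak{b}}+(1-\mathfrak{l})\|u_1\|_{\mathfrak{b}}\|u_2\|_{\mathfrak{a}}$, which is what your argument delivers.
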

Define
\begin{align*}
\mathscr {C}_k:=\left\{f\in C([0,\pi]\times\mathbf{R};\mathbf{R}):~u\mapsto f(\cdot,u)~{\mathrm{belongs}}~{\mathrm{ to}}~C^{k}(\mathbf{R};H^{1}(0,\pi))\right\}.
\end{align*}
\begin{lemm}\label{lemma2.5}
Let $f\in\mathscr {C}_1$. Then the composition operator $u(x)\mapsto f(x,u(x))$ belongs to $C(H^1(0,\pi);H^1(0,\pi))$ with
\begin{equation*}
\|f(x,u(x))\|_{H^1}\leq C\left(\max_{u\in[-\mathfrak{C},\mathfrak{C}]}\|f(\cdot,u)\|_{H^1}+\max_{u\in[-\mathfrak{C},\mathfrak{C}]}\|\partial_uf(\cdot,u)\|_{H^1}\|u\|_{H^1}\right),
\end{equation*}
where $\mathfrak{C}:=\|u\|_{L^{\infty}(0,\pi)}$. In particular, we have
\begin{equation*}
\|f(x,0)\|_{H^1}\leq C.
\end{equation*}
\end{lemm}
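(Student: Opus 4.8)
The plan is to reduce the statement to the one–dimensional chain rule for Sobolev functions together with the embedding $H^{1}(0,\pi)\hookrightarrow C([0,\pi])$. First I would note that this embedding gives $\mathfrak{C}=\|u\|_{L^{\infty}(0,\pi)}<\infty$, so $u$ takes values in the compact interval $[-\mathfrak{C},\mathfrak{C}]$; since $f\in C([0,\pi]\times\mathbf{R};\mathbf{R})$, the composed function $g(x):=f(x,u(x))$ is continuous on $[0,\pi]$, hence measurable and square integrable. For the $L^{2}$–part of the norm I would use the embedding in the $x$–variable uniformly over the compact range of $u$: for every $x\in[0,\pi]$ and every $|v|\le\mathfrak{C}$ one has $|f(x,v)|\le\|f(\cdot,v)\|_{L^{\infty}(0,\pi)}\le C\|f(\cdot,v)\|_{H^{1}}$, whence $\|g\|_{L^{2}}^{2}=\int_{0}^{\pi}|f(x,u(x))|^{2}\,dx\le \pi C^{2}\bigl(\max_{|v|\le\mathfrak{C}}\|f(\cdot,v)\|_{H^{1}}\bigr)^{2}$.

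For the derivative I would establish the chain–rule identity $g'(x)=\partial_{x}f(x,u(x))+\partial_{u}f(x,u(x))\,u'(x)$ in the weak sense, where $\partial_{x}f(\cdot,v)\in L^{2}(0,\pi)$ denotes the weak $x$–derivative of $f(\cdot,v)\in H^{1}(0,\pi)$ and $\partial_{u}f(\cdot,v)\in H^{1}(0,\pi)$ the derivative of the $C^{1}$ curve $v\mapsto f(\cdot,v)$. Granting this, the first term is bounded in $L^{2}$ by $\max_{|v|\le\mathfrak{C}}\|\partial_{x}f(\cdot,v)\|_{L^{2}}\le\max_{|v|\le\mathfrak{C}}\|f(\cdot,v)\|_{H^{1}}$, while the second is bounded by $\bigl(\max_{|v|\le\mathfrak{C}}\|\partial_{u}f(\cdot,v)\|_{L^{\infty}}\bigr)\|u'\|_{L^{2}}\le C\bigl(\max_{|v|\le\mathfrak{C}}\|\partial_{u}f(\cdot,v)\|_{H^{1}}\bigr)\|u\|_{H^{1}}$, which together give exactly the asserted estimate for $\|g\|_{H^{1}}$. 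To justify the identity under only $\mathscr{C}_{1}$–regularity in $v$ and $H^{1}$– (rather than $C^{1}$–) regularity in $x$, I would mollify $f$ in $x$, setting $f^{\delta}(x,v):=(\rho_{\delta}\ast_{x}\widetilde{f}(\cdot,v))(x)$ (after extending $f(\cdot,v)$ to $H^{1}(\mathbf{R})$ by a fixed bounded extension operator); the function $f^{\delta}$ is $C^{\infty}$ in $x$ and $C^{1}$ in $v$, so the classical chain rule applies to $g^{\delta}(x):=f^{\delta}(x,u(x))$ and yields the analogous formula with a uniform–in–$\delta$ bound on $\|g^{\delta}\|_{H^{1}}$ by the same constants. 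Since $v\mapsto f(\cdot,v)$ and $v\mapsto\partial_{u}f(\cdot,v)$ are continuous, the sets $\{f(\cdot,v):|v|\le\mathfrak{C}\}$ and $\{\partial_{u}f(\cdot,v):|v|\le\mathfrak{C}\}$ are compact in $H^{1}(0,\pi)$, so $f^{\delta}(\cdot,v)\to f(\cdot,v)$ and $\partial_{u}f^{\delta}(\cdot,v)\to\partial_{u}f(\cdot,v)$ in $H^{1}$ uniformly in $|v|\le\mathfrak{C}$; this gives $g^{\delta}\to g$ uniformly on $[0,\pi]$, hence in $L^{2}$, and the uniform $H^{1}$–bound together with weak lower semicontinuity of $\|\cdot\|_{H^{1}}$ identifies the limit: $g\in H^{1}(0,\pi)$ with $\|g\|_{H^{1}}$ bounded by the right-hand side.

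Continuity of the composition operator $u\mapsto f(\cdot,u(\cdot))$ from $H^{1}(0,\pi)$ to $H^{1}(0,\pi)$ I would obtain by taking $u_{n}\to u$ in $H^{1}$, hence $u_{n}\to u$ in $C([0,\pi])$ and (along a subsequence) $u_{n}'\to u'$ in $L^{2}$: then $f(x,u_{n}(x))\to f(x,u(x))$ uniformly by uniform continuity of $f$ on compacta, and the chain–rule formula together with the joint continuity of $\partial_{u}f$ and a standard Nemytskii–operator continuity argument for the Carath\'eodory term $\partial_{x}f$ gives $\partial_{x}\bigl(f(\cdot,u_{n})\bigr)\to\partial_{x}\bigl(f(\cdot,u)\bigr)$ in $L^{2}$. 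The displayed ``in particular'' is simply the special case $u\equiv0$ (equivalently, the restatement that $f(\cdot,0)\in H^{1}(0,\pi)$, which is contained in the hypothesis $f\in\mathscr{C}_{1}$). The main obstacle is the rigorous justification of the chain–rule identity in the second paragraph: because $f(\cdot,v)$ is only $H^{1}$ in $x$, the expression $\partial_{x}f(x,u(x))$ has no literal pointwise meaning and the standard $C^{1}$ chain rule cannot be invoked verbatim; the mollification–plus–compactness device above is precisely what repairs this, and the one point demanding care is that all relevant convergences be uniform over the compact range $[-\mathfrak{C},\mathfrak{C}]$ of $u$.
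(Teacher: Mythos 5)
The paper itself gives no proof of this lemma (it is quoted from \cite{berti2008cantor}), so I can only judge your argument on its own terms, and it contains a genuine gap at the key estimate. Your treatment of the zero-order term and of the term $\partial_u f(x,u(x))u'(x)$ is fine, because there you only need pointwise control of $f(\cdot,v)$ and $\partial_u f(\cdot,v)$ along the graph of $u$, which the embedding $H^1(0,\pi)\hookrightarrow L^\infty(0,\pi)$ provides. But the bound you claim for the first chain-rule term, namely $\|\partial_x f(\cdot,u(\cdot))\|_{L^2}\leq \max_{|v|\leq\mathfrak{C}}\|\partial_x f(\cdot,v)\|_{L^2}$, is false: for each frozen $v$ the function $\partial_x f(\cdot,v)$ is only in $L^2$, and its restriction to the graph $\{(x,u(x))\}$ (a null set in the $(x,v)$-plane) is not controlled by the frozen-$v$ norms. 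The mollification device you introduce repairs the \emph{meaning} of $\partial_x f(x,u(x))$ but not this estimate, which is what you need uniformly in $\delta$. A symptom of the problem is that your argument never uses the full $H^1$ norm of $\partial_u f(\cdot,v)$, only its sup norm, so it would prove a strictly stronger inequality than the lemma; that stronger inequality fails, e.g.\ for $f(x,v)=\phi\bigl((v-x)/\epsilon\bigr)\sin(x/\delta)$ with $u(x)=x$ and $\delta\ll\epsilon\ll1$, where $\partial_x f$ concentrates along the graph of $u$ and $\|f(\cdot,u(\cdot))\|_{H^1}\sim\delta^{-1}$ while $\max_v\|f(\cdot,v)\|_{H^1}+\max_v\|\partial_u f(\cdot,v)\|_{L^\infty}\|u\|_{H^1}=o(\delta^{-1})$.

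The standard way to close the gap is precisely where the term $\max_{|v|\le\mathfrak{C}}\|\partial_u f(\cdot,v)\|_{H^1}\|u\|_{H^1}$ in the statement comes from: at the mollified level write, for a.e.\ $x$,
\begin{equation*}
\partial_x f^{\delta}(x,u(x))=\partial_x f^{\delta}(x,0)+\int_0^{u(x)}\partial_x\partial_u f^{\delta}(x,s)\,\mathrm{d}s,
\end{equation*}
bound the integral term via Minkowski's inequality by $\mathfrak{C}\max_{|v|\le\mathfrak{C}}\|\partial_u f^{\delta}(\cdot,v)\|_{H^1}$, and use $\mathfrak{C}\leq C\|u\|_{H^1}$; this gives a $\delta$-uniform bound compatible with the asserted inequality, and the same representation (whose right-hand side passes to the limit in $L^2$) is what makes the limiting derivative well defined. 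The same repair is needed in your continuity argument, where the convergence of the ``Carath\'eodory term'' $\partial_x f(\cdot,u_n(\cdot))\to\partial_x f(\cdot,u(\cdot))$ in $L^2$ cannot be obtained from a generic Nemytskii continuity statement but follows from this integral representation together with the uniform convergence $u_n\to u$. With these corrections the overall mollify--chain rule--pass-to-the-limit scheme does work.
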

With the help of Lemmas \ref{lemma2.1}-\ref{lemma2.5}, the following lemma can be obtained.
\begin{lemm}
Let $f\in\mathcal{C}_{k}$ with $k\geq1$. Then the composition operator
\begin{equation*}
u(t,x)\mapsto f(t,x,u(t,x))
\end{equation*}
is a continuous map from $H^{s}\cap {H}^{s'}$ to ${H}^{s'}$ for all $s>\frac{1}{2},0\leq s'\leq k-1$. Furthermore
\begin{equation}\label{E2.4}
\|f(t,x,u)\|_{{s'}}\leq C(s',\|u\|_{s})(1+\|u\|_{{s'}}).
\end{equation}
\end{lemm}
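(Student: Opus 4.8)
The plan is to establish the tame bound \eqref{E2.4} first and then deduce the continuity statement from it. Two preliminary facts are used throughout: every $u\in H^s$ is real valued, since $u_{-l}=u_l^*$; and, by Remark \ref{remark1}(ii) together with $H^1(0,\pi)\hookrightarrow C[0,\pi]$, one has $\|u\|_{L^\infty(\mathbf T\times[0,\pi])}+\|u\|_{L^\infty(\mathbf T;H^1(0,\pi))}\le C(s)\|u\|_s=:\mathfrak C$, so the range of $u$ lies in $[-\mathfrak C,\mathfrak C]$. Consequently, whenever $a+b\le k-1$ the function $\partial_t^a\partial_u^b f$ belongs to $\mathcal C_{k-a-b}$ with $k-a-b\ge1$, and Lemma \ref{lemma2.5}, applied for each fixed $t$ and then taking the supremum over $t\in\mathbf T$, bounds the composed coefficient $(\partial_t^a\partial_u^b f)(t,x,u(t,x))$ in $L^\infty(\mathbf T;H^1(0,\pi))$ by a constant depending only on $s$ and $\|u\|_s$.

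For integer $s'$ I would proceed as follows. The case $s'=0$ is immediate from Lemma \ref{lemma2.5} after integrating the fibrewise $H^1(0,\pi)$-bound over $t$. For an integer $s'$ with $1\le s'\le k-1$, use the norm equivalence $\|v\|_{s'}^2\simeq\|v\|_0^2+\|\partial_t^{s'}v\|_0^2$ and the higher chain rule (Fa\`{a} di Bruno): $\partial_t^{s'}[f(t,x,u)]$ is a finite sum of terms $(\partial_t^a\partial_u^b f)(t,x,u)\,\partial_t^{m_1}u\cdots\partial_t^{m_b}u$ with $a+m_1+\dots+m_b=s'$, $m_i\ge1$, $0\le b\le s'$. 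In each term the coefficient is bounded in $L^\infty(\mathbf T;H^1(0,\pi))$ by $C(s,\|u\|_s)$ as above, while the product of the time derivatives $\partial_t^{m_i}u$ is controlled in $H^0$ by $C(s',\|u\|_s)(1+\|u\|_{s'})$ upon iterating the Moser--Nirenberg estimate \eqref{E2.1}, using the logarithmic convexity \eqref{E2.3} to interpolate the intermediate time derivatives, and exploiting that $H^1(0,\pi)$ is an algebra in the spatial variable. Summing the finitely many terms yields \eqref{E2.4} for integer $s'$.

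For non-integer $s'=m+\theta$ with $m=[s']$ and $\theta\in(0,1)$ (so $m+1=\lceil s'\rceil\le k-1$), I would pass from the integer estimate to level $s'$ through the difference (Gagliardo) characterization of $H^{s'}$ in the time variable, $\|v\|_{s'}^2\simeq\|v\|_m^2+\iint_{\mathbf T\times\mathbf T}\|\partial_t^m v(t,\cdot)-\partial_t^m v(t',\cdot)\|_{H^1}^2\,|t-t'|^{-1-2\theta}\,dt\,dt'$: the $\|v\|_m$-part is the integer case, and in the seminorm of $\partial_t^m[f(t,x,u)]$, expanded by Fa\`{a} di Bruno, each coefficient $(\partial_t^a\partial_u^b f)(t,x,u(t,x))$ is Lipschitz in $t$ (since $\partial_t^a\partial_u^b f\in\mathcal C_{\ge1}$ and the values stay in $[-\mathfrak C,\mathfrak C]$), while each $\partial_t^{m_i}u$ contributes a $\theta$-seminorm $\le\|u\|_{m_i+\theta}\le\|u\|_{s'}$; since $\iint_{\mathbf T\times\mathbf T}|t-t'|^{1-2\theta}\,dt\,dt'<\infty$, combining these as in the integer step gives \eqref{E2.4} for non-integer $s'$ as well. (Equivalently, one can invoke a nonlinear interpolation theorem between the integer levels $m$ and $m+1$.)

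For the continuity I would combine \eqref{E2.4} with a density argument. Approximate $f$ on $\mathbf T\times[-\mathfrak C-1,\mathfrak C+1]$ by functions $f_n$ that are polynomials in $u$ with trigonometric-polynomial coefficients (these are dense in the relevant $C^k$-topology); then $u\mapsto f_n(t,x,u)$ is continuous from $H^s\cap H^{s'}$ to $H^{s'}$ by \eqref{E2.1} and the algebra property, while $\|f(t,x,w)-f_n(t,x,w)\|_{s'}$ is small uniformly on bounded subsets of $H^s\cap H^{s'}$ by \eqref{E2.4} applied to $f-f_n$ (the tame constant being linear in $f$); an $\varepsilon/3$ argument then gives continuity of $u\mapsto f(t,x,u)$ from $H^s\cap H^{s'}$ to $H^{s'}$. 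The main obstacle is the non-integer case: the hypotheses control only $\|u\|_s$ and $\|u\|_{s'}$, so one cannot naively interpolate $\|f(t,x,u)\|_{s'}$ between $\|f(t,x,u)\|_{[s']}$ and $\|f(t,x,u)\|_{[s']+1}$ (the latter may be infinite), and the levels $s'$ near $k-1$ are delicate because the regularity of the composed coefficients is only barely sufficient.
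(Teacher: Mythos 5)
Your integer case and your continuity argument are essentially fine, and they differ from the paper only in organization: the paper proves \eqref{E2.4} by induction on the integer level, using the elementary inequality $\|\mathfrak{q}\|_{\mathfrak{k}+1}\leq\|\mathfrak{q}\|_{0}+\|\partial_t\mathfrak{q}\|_{\mathfrak{k}}$ together with the first–order chain rule $\partial_t[f(t,x,u)]=\partial_t f+\partial_u f\,\partial_t u$, Lemma \ref{lemma2.5}, \eqref{E2.10} and \eqref{E2.3}; this avoids the multi‑factor products that your Fa\`a di Bruno expansion produces (your product bookkeeping can be made to work with \eqref{E2.1}, \eqref{E2.3} and Remark \ref{remark1}(ii), but it is heavier). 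For continuity the paper propagates the continuity statement of Lemma \ref{lemma2.5} and the compactness of $\mathbf{T}$ through the same induction, whereas you use a density/approximation argument; the latter is workable, provided you justify the density of your approximants in the relevant $C^{k-1}$‑topology with $H^1(0,\pi)$‑valued coefficients and the fact that the constant in \eqref{E2.4} is controlled by those norms of $f$.

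The genuine gap is in the non‑integer case. Your key assertion that each composed coefficient $(\partial_t^a\partial_u^b f)(t,x,u(t,x))$ is Lipschitz in $t$ with values in $H^1(0,\pi)$ is not justified by the hypotheses: its time increment involves the increment of $u$, and $u$ is only assumed to lie in $H^{s}\cap H^{s'}$ with $s>\frac12$, so $t\mapsto u(t,\cdot)$ is merely continuous (H\"older of exponent below $s-\frac12$) into $H^1(0,\pi)$, not Lipschitz; the direct $t$‑dependence of $f$ is Lipschitz, but the dependence through $u$ is not. Moreover, in the seminorm term where the time difference falls on the coefficient, that increment is multiplied by $\|\partial_t^{m_1}u\cdots\partial_t^{m_b}u(t',\cdot)\|_{H^1}$ with $m_1+\dots+m_b$ as large as $m=[s']$; pulling this factor out uniformly in $t'$ would require a bound on $\|u\|_{m+\frac12+\delta}$ for some $\delta>0$, which is not dominated by $\|u\|_{s}+\|u\|_{s'}$ when the fractional part $\theta$ of $s'$ is smaller than $\frac12$. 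So the difference‑quotient argument as sketched does not close with the tame constant $C(s',\|u\|_s)(1+\|u\|_{s'})$, and your fallback of ``nonlinear interpolation between the levels $m$ and $m+1$'' runs into exactly the obstruction you yourself point out, since $\|u\|_{m+1}$ is not available. This is precisely the point where the paper abandons elementary estimates and invokes a Fourier dyadic (paraproduct) decomposition, referring to Lemma A.1 of \cite{Delort2011}; to repair your proof you would need either that decomposition or a genuinely different handling of the ``difference hits the coefficient'' term.
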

\begin{proof}
If $s'=l$ is an integer, for all $l\in\mathbf{N}$ with $l\leq k-1$, $u\in H^{s}\cap H^{l}$,  we have to prove that
\begin{equation}\label{E3.0}
\|f(t,x,u)\|_{l}\leq C(l,\|u\|_{s})(1+\|u\|_{l})
\end{equation}
 and that
 \begin{equation}\label{E3.1}
 f(t,x,u_n)\rightarrow f(t,x,u)
 \end{equation}
when $u_n\rightarrow u$ in $ H^{s}\cap H^{l}$.
 Let us verify formula \eqref{E3.0} and  the continuous property of $f$ with respect to $u$ by a recursive argument. Obviously, Lemma \ref{lemma2.5} indicates for all $g\in\mathscr {C}_1$,
\begin{equation}\label{E2.6}
\|g(x,u(x))\|_{H^1}\leq C(1+\|u\|_{H^1}).
\end{equation}
First, for $l=0$, we can derive
\begin{align*}
\|f(t,x,u)\|_{0}&\leq C\max_{t\in\mathbf{T}}\|f(t,\cdot,u(t,\cdot))\|_{H^1(0,\pi)}\stackrel{\mathrm{Fix}~\mathrm{t},~\eqref{E2.6}}{\leq}C(1+\max_{t\in\mathbf{T}}\|u(t,\cdot)\|_{H^1(0,\pi)})\\
&\stackrel{\mathrm{Remark}\ref{remark1}\mathrm{(ii)}}{\leq}C(1+\|u\|_{s})=:C(\|u\|_{s}).
\end{align*}
A similar argument as above can yield
\begin{equation}\label{E2.7}
\|\partial_{t}f(t,x,u)\|_{0}\leq C(\|u\|_{s}),\quad\max_{t\in\mathbf{T}}\|\partial_{u}f(t,\cdot,u(t,\cdot))\|_{H^1(0,\pi)}\leq C(\|u\|_{s}).
\end{equation}
By Remark \ref{remark1} $\mathrm{(ii)}$, it leads to
\[\max_{t\in\mathbf{T}}\|u_{n}(t,\cdot)-u(t,\cdot)\|_{H^1(0,\pi)}\rightarrow0 \quad \text { as}~ u_n\rightarrow u ~ \text { in}~  H^{s}\cap H^{l}.
\] Then, it follows from  the continuity property in Lemma \ref{lemma2.5} and the compactness of $\mathbf{T}$ that
\begin{equation*}
\|f(t,x,u_{n})-f(t,x,u)\|_{0}\leq C\max_{t\in\mathbf{T}}\|f(t,\cdot,u_{n}(t,\cdot))-f(t,\cdot,u(t,\cdot))\|_{H^1(0,\pi)}\rightarrow0
\end{equation*}
as $ u_n\rightarrow u$
in $ H^{s}\cap H^{l}$.

Assume that \eqref{E3.0} holds for $l=\mathfrak{k}$ with $\mathfrak{k}\in \mathbf N^+$, then we have to  verify  that  it holds for $l=\mathfrak{k}+1$ with $\mathfrak{k}+1 \leq k-1$.

Since $\partial_{t}f,\partial_{u}f\in\mathcal{C}_{k-1}$, by the above assumption for $l=\mathfrak{k}$, we get for $u\in H^s\cap H^l$
\begin{equation*}
\|\partial_{t}f(t,x,u)\|_{\mathfrak{k}}\leq C(\mathfrak{k},\|u\|_{s})(1+\|u\|_{\mathfrak{k}}),\quad\|\partial_{u}f(t,x,u)\|_{\mathfrak{k}}\leq C(\mathfrak{k},\|u\|_{s})(1+\|u\|_{\mathfrak{k}}).
\end{equation*}
Let $\mathfrak{q}(t,x):=f(t,x,u(t,x))$. We write $\mathfrak{q}$ as the form
 \[\mathfrak{q}(t,x)=\sum_{j\in\mathbf{Z}}\mathfrak{q}_{j}(x)e^{{\rm i}jt}.\]
 It is obvious that  $\mathfrak{q}_{t}(t,x)=\sum_{j\in\mathbf{Z}}{\rm i}j\mathfrak{q}_{j}(x)e^{{\rm i}jt}$. By the definition of $\|\cdot\|_{s}$, we obtain
\begin{align*}
\|\mathfrak{q}(t,x)\|^2_{{\mathfrak{k}+1}}&=\sum\limits_{j\in\mathbf{Z}}(1+j^{2(\mathfrak{k}+1)})\|\mathfrak{q}_{j}\|^2_{H^1}=
\sum_{j\in\mathbf{Z}}\|\mathfrak{q}_{j}\|^2_{H^1}+\sum_{j\in\mathbf{Z}}j^{2\mathfrak{k}}\|{\rm i}j\mathfrak{q}_j\|^2_{H^1}\\
&\leq\|\mathfrak{q}(t,x)\|^2_{0}+\|\mathfrak{q}_{t}(t,x)\|^2_{\mathfrak{k}}
\leq\left(\|\mathfrak{q}(t,x)\|_{0}+\|\mathfrak{q}_{t}(t,x)\|_{\mathfrak{k}}\right)^2.
\end{align*}
As a consequence
\begin{equation}\label{E2.8}
\|f(t,x,u)\|_{{\mathfrak{k}+1}}\leq \|f(t,x,u)\|_{0}+\|\partial_{t}f(t,x,u)\|_{\mathfrak{k}}+\|\partial_{u}f(t,x,u)\partial_{t}u\|_{\mathfrak{k}}.
\end{equation}
For $l=1$, formulae \eqref{E2.7}-\eqref{E2.8} carry out
\begin{align*}
\|f(t,x,u)\|_{1}&\leq \|f(t,x,u)\|_{0}+\|\partial_{t}f(t,x,u)\|_{0}+\max_{t\in\mathbf{T}}\|\partial_{u}f(t,\cdot,u(t,\cdot))\|_{H^1(0,\pi)}\|u\|_{1}\\
&{\leq}~2C(\|u\|_{s})+C(\|u\|_{s})\|u\|_{1}\\
&\leq C(1,\|u\|_{s})(1+\|u\|_{1}),
\end{align*}
where $C(1,\|u\|_{s}):=2C(\|u\|_{s})$. Letting $s_{1}\in(1/2,\min(1,s))$, we establishes ${s}_{1}<s_1+1\leq l<l+1$ for $l\geq2$. Therefore, it follows from \eqref{E2.3} that
\begin{equation}\label{E2.9}
\|u\|_{{l}}\|u\|_{{s_1+1}}\leq\|u\|_{{l+1}}\|u\|_{{s_1}}\leq\|u\|_{{l+1}}\|u\|_{s}.
\end{equation}
Thus, by combining \eqref{E2.10}, \eqref{E2.7}, \eqref{E2.9}, Remark \ref{remark1} $\mathrm{(ii)}$ with the above assumption for $l=\mathfrak{k}$, we get
\begin{align*}
\|f(t,x,u)\|_{{\mathfrak{k}+1}}
\leq & C(\|u\|_{s})+C(\mathfrak{k},\|u\|_{s})(1+\|u\|_{\mathfrak{k}})+C(\mathfrak{k})\|\partial_{u}f(t,x,u)\|_{\mathfrak{k}}\|\partial_{t}u\|_{L^{\infty}(\mathbf{T},H^1(0,\pi))}\\
&+C(\mathfrak{k})
\|\partial_{u}f(t,x,u)\|_{L^{\infty}(\mathbf{T},H^1(0,\pi))}\|u\|_{{\mathfrak{k}+1}}\\
\leq &C(\|u\|_{s})+C(\mathfrak{k},\|u\|_{s})(1+\|u\|_{\mathfrak{k}})+C(\mathfrak{k})C(\mathfrak{k},\|u\|_{s})(1+\|u\|_{\mathfrak{k}})\|u\|_{{s_1+1}}\\
&+C(\mathfrak{k})C(\|u\|_{s})\|u\|_{{\mathfrak{k}+1}}\\
\leq&C(\mathfrak{k}+1,\|u\|_{s})(1+\|u\|_{{\mathfrak{k}+1}}),
\end{align*}
where  $C(\mathfrak{k}+1,\|u\|_{s})=4\max{\left\{C(\|u\|_{s}),C(\mathfrak{k},\|u\|_{s}),C(\mathfrak{k})C(\mathfrak{k},\|u\|_{s})(1+\|u\|_{s}),C(\mathfrak{k})C(\|u\|_{s})\right\}}.$
This implies that \eqref{E3.0} is  satisfied for $l=\mathfrak{k}+1$.

Finally,  we  assume that \eqref{E3.1} holds for $l=\mathfrak{k}$.
 Using the inequality \eqref{E2.8}, we may obtain that the continuity property of $f$ with respect to $u$ also holds for  $l=\mathfrak{k}+1$ with $\mathfrak{k}+1\leq k-1$.

When  $s'$ is not an integer, we can obtain the result by the  Fourier dyadic decomposition. The argument is similar to the proof of the Lemma A.1 in \cite{Delort2011}.
\end{proof}
\begin{lemm}\label{lemma2.4}
For all $0\leq s'\leq k-3$, define a map $F$ as
\begin{align*}
F:\quad H^{s}\cap H^{s'}&\rightarrow {H}^{s'}\\
u&\mapsto f(t,x,u),
\end{align*}
where $f\in\mathcal{C}_{k}$ with $k\geq3$. Then $F$ is a  $C^2$ map with respect to $u$. Furthermore for all $h\in H^s\cap H^{s'}$, we have
\begin{equation*}
{\rm D}_{u}F(u)[h]=\partial_{u}f(t,x,u)h, \quad {\rm D}^2_{u}G(u)[h,h]=\partial^2_{u}f(t,x,u)h^2,\quad \text{with}
\end{equation*}
\begin{align}\label{E2.5}
&\|\partial_{u}f(t,x,u)\|_{{s'}}\leq C(s',\|u\|_{s})(1+\|u\|_{{s'}}),\quad\|\partial^2_{u}f(t,x,u)\|_{{s'}}\leq C(s',\|u\|_{s})(1+\|u\|_{{s'}}).
\end{align}
\end{lemm}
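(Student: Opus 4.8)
The plan is to deduce everything from the composition lemma proved just above (whose conclusion is \eqref{E2.4}), applied not only to $f$ but also to $\partial_u f$ and $\partial_u^2 f$, together with the Banach-algebra estimate of Remark \ref{remark1}(i) and the Moser--Nirenberg inequalities of Lemma \ref{lemma2.1} valid for $s>1/2$. First I would record the tame bounds. Since $f\in\mathcal{C}_k$ we have $\partial_u f\in\mathcal{C}_{k-1}$ and $\partial_u^2 f\in\mathcal{C}_{k-2}$, with $k-2\ge1$; because $s'\le k-3=(k-2)-1$, the composition lemma applies to each of $f,\partial_u f,\partial_u^2 f$, so that $u\mapsto f(t,x,u)$, $u\mapsto\partial_u f(t,x,u)$, $u\mapsto\partial_u^2 f(t,x,u)$ are continuous maps from $H^s\cap H^{s'}$ to $H^{s'}$ obeying \eqref{E2.4}. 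Applying \eqref{E2.4} to $\partial_u f$ and $\partial_u^2 f$ is precisely the pair of estimates \eqref{E2.5}, and the constants $C(s',\|u\|_s)$ stay uniformly bounded as $u$ ranges over a fixed $H^s$-ball, a fact used repeatedly below.

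Next I would establish the $C^2$-regularity (in the $\|\cdot\|_s$-topology, the one in which the statement is used, e.g. in Lemma \ref{lemma3.1} and in $(P1)$), together with the derivative formulas. For the first derivative I use Taylor's formula with integral remainder to second order,
\begin{equation*}
F(u+h)-F(u)-\partial_u f(t,x,u)\,h=h^2\int_0^1(1-\theta)\,\partial_u^2 f(t,x,u+\theta h)\,\mathrm{d}\theta=:R(h),
\end{equation*}
so that by the algebra property of $H^s$ and \eqref{E2.5}, $\|R(h)\|_s\le C\|h\|_s^2\sup_{\theta\in[0,1]}\|\partial_u^2 f(t,x,u+\theta h)\|_s\le C'\|h\|_s^2$ for $\|h\|_s\le1$; hence $R(h)=o(\|h\|_s)$ and $\mathrm{D}_u F(u)[h]=\partial_u f(t,x,u)h$, which is a bounded operator on $H^s$ by the algebra property. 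The same computation carried out with the full inequality \eqref{E2.1} and \eqref{E2.5} gives the tame remainder bound in $H^{s'}$. Continuity of $u\mapsto\mathrm{D}_u F(u)$ follows from $\|(\mathrm{D}_u F(u_1)-\mathrm{D}_u F(u_2))h\|_s\le C\|\partial_u f(\cdot,u_1)-\partial_u f(\cdot,u_2)\|_s\|h\|_s$ and the continuity of the composition operator for $\partial_u f\in\mathcal{C}_{k-1}$.

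For the second derivative I repeat the argument one level up: with a first-order Taylor expansion of $\partial_u f$,
\begin{equation*}
\partial_u f(t,x,u+h)-\partial_u f(t,x,u)-\partial_u^2 f(t,x,u)\,h=\int_0^1\big[\partial_u^2 f(t,x,u+\theta h)-\partial_u^2 f(t,x,u)\big]h\,\mathrm{d}\theta,
\end{equation*}
whose product with $k$ is bounded in $\|\cdot\|_s$ by $C\sup_{\theta\in[0,1]}\|\partial_u^2 f(\cdot,u+\theta h)-\partial_u^2 f(\cdot,u)\|_s\,\|h\|_s\|k\|_s$; the supremum tends to $0$ with $\|h\|_s$ by continuity of the composition operator for $\partial_u^2 f\in\mathcal{C}_{k-2}$, so the remainder operator has norm $o(\|h\|_s)$, giving $\mathrm{D}_u^2 F(u)[h,k]=\partial_u^2 f(t,x,u)\,h\,k$ and in particular $\mathrm{D}_u^2 F(u)[h,h]=\partial_u^2 f(t,x,u)h^2$. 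Continuity of $u\mapsto\mathrm{D}_u^2 F(u)$ as a bilinear map follows the same way, and non-integer values of $s'$ are treated, exactly as in the previous proof, by Fourier dyadic decomposition.

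The main obstacle is the loss of derivatives: $\mathrm{D}_u F$ and $\mathrm{D}_u^2 F$ are not bounded on $H^{s'}$ but only tame, so the differentiability must be kept at the $H^s$-level (where $s>1/2$ provides the algebra structure) while the derivative formulas are propagated to $H^{s'}$ only through the $\|u\|_{s'}$-dependent constants of \eqref{E2.5}. The second delicate point is that Fréchet (not merely Gâteaux) differentiability requires the \emph{quadratic} remainder bound $\|R(h)\|_s\lesssim\|h\|_s^2$, which forces the use of the second-order Taylor expansion and hence the hypothesis $k\ge3$ (so that $\partial_u^2 f$ exists and lies in $\mathcal{C}_{k-2}$ with $k-2\ge1$); correspondingly $s'\le k-3$ is exactly what makes \eqref{E2.4} available for $\partial_u^2 f$. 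Finally, one must use the uniformity of the constants $C(s',\|u\|_s)$ over small $H^s$-balls in order to pass the suprema over $\theta\in[0,1]$ through all the estimates.
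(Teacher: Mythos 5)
Your proposal is correct and follows essentially the same route as the paper: apply the composition estimate \eqref{E2.4} to $\partial_u f\in\mathcal{C}_{k-1}$ and $\partial_u^2 f\in\mathcal{C}_{k-2}$ to get \eqref{E2.5}, then obtain the derivative formulas from Taylor expansions with integral remainder and the continuity of the composition operators. The only (harmless) deviation is that for $\mathrm{D}_uF$ you use a second-order expansion to get a quadratic remainder, whereas the paper bounds the first-order integral remainder by $o(\|h\|_{\max\{s,s'\}})$ directly via continuity of $u\mapsto\partial_u f(t,x,u)$; note also that Fr\'echet differentiability does not strictly require the quadratic bound, so the paper's weaker argument already suffices.
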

\begin{proof}
Since $\partial_{u}f,\partial^2_{u}f$ are in $\mathcal{C}_{k-1},\mathcal{C}_{k-2}$ respectively, by the inequality \eqref{E2.4}, we verify that
\eqref{E2.5} holds and that the maps $u\mapsto\partial_{u}f(t,x,u)$, $u\mapsto\partial^2_{u}f(t,x,u)$ are continuous.
Let us check that $F$  is $C^2$ respect to $u$.
 It follows form the continuity property of $u\mapsto\partial_{u}f(t,x,u)$ that
\begin{align*}
\|f(t,x,u+h)-f(t,x,u)-&\partial_{u}f(t,x,u)h\|_{{s'}}=\left\|h\int_0^1(\partial_{u}f(t,x,u+\mathfrak{t} h)-\partial_{u}f(t,x,u))~\mathrm{d}\mathfrak{t}\right\|_{{s'}}\\
&\leq C(s')\|h\|_{{\max{\{s,s'\}}}}\max_{\sigma\in[0,1]}\|\partial_{u}f(t,x,u+\mathfrak{t} h)-\partial_{u}f(t,x,u)\|_{{\max{\{s,s'\}}}}\\
&=o(\|h\|_{{\max{\{s,s'\}}}}).
\end{align*}
Hence, for all $h\in H^s\cap H^{s'}$, we obtain that
\begin{equation*}
{\rm D}_{u}F(u)[h]=\partial_{u}f(t,x,u)h
\end{equation*}
and that $u\mapsto {\rm D}_uF(u)$ is continuous. In addition
\begin{align*}
&\partial_{u}f(t,x,u+\sigma h)h-\partial_{u}f(t,x,u)h-\partial^2_{u}f(t,x,u)h^2=h^2\int_0^1(\partial^2_{u}f(t,x,u+\mathfrak{t} h)-\partial^2_{u}f(t,x,u))~\mathrm{d}\mathfrak{t}.
\end{align*}
The same discussion as above yields that $F$ is twice differentiable with respect to $u$ and that $u\mapsto {\rm D}^2_uF(u)$ is continuous.
\end{proof}

\subsection{The proof of Lemma \ref{Lemma6.6}}
\begin{proof}
By \eqref{E6.6}, let $\vartheta_i(\xi)=c^2 \frac{d_i(\psi)}{\rho(\psi)}(\xi),i=1,2$. Define
\[T_1 u:=\frac{\mathrm{d}^{2}}{\mathrm{d}{\xi^2}}u+\vartheta_1(\xi)u, \quad T_2 u:=\frac{\mathrm{d}^{2}}{\mathrm{d}{\xi^2}}u+\vartheta_2(\xi)u.\]
It follows from \eqref{E6.36}, Lemma \ref{lemma2.5}, $m\in H^1$ and $\rho\in H^3$ that
\[\|\vartheta_1u\|_{L^2}\leq\|\vartheta_1\|_{L^{\infty}}\|u\|_{L^2}\leq\left\|c^2 \frac{d_i(\psi)}{\rho(\psi)}\right\|_{H^1}\|u\|_{L^2}\leq \widetilde{\mathfrak{C}}\|u\|_{L^2}.\]
This indicates that $\vartheta\in\mathcal{L}(L^2,L^2)$. It is obvious that $T_1,T_2$ are self-adjoint using Theorem \ref{Theorem5.7}. By means of Theorem \ref{Theorem5.7}, Lemma \ref{lemma2.4} and the inverse Liouville substitution of \eqref{E6.26}, for all $(\epsilon,w)\in[\epsilon_1,\epsilon_2]\times\{W\cap H^s:\|w\|_{s}\leq r\}$, we derive
\begin{align*}
|\lambda_n(d_2)-\lambda_n(d_1)|&\leq\frac{1}{c^2}|\mu_{n}(\vartheta_2)-\mu_n(\vartheta_1)|
\leq\frac{1}{c^2}\|\vartheta_2-\vartheta_1\|_{\mathcal{L}(L^2,L^2)}
\leq\frac{1}{c^2}\|\vartheta_2-\vartheta_1\|_{L^{\infty}}\\&\leq\frac{1}{c^2}\|\vartheta_2-\vartheta_1\|_{H^1}
\leq\kappa_0\|d_2-d_1\|_{H^1}\\
&\leq\kappa(|\epsilon-\epsilon'|+\|w-w'\|_s).
\end{align*}
\end{proof}

\subsection{The proof of formula \eqref{E4.23}}
\begin{proof}
If $j>\max{\{{L}_0,{4 c\mathfrak{M}}\}}$, $\forall\epsilon\in[\epsilon_1,\epsilon_2]$ ,$\forall\|w\|_s\leq r$, then formula \eqref{E6.28} shows that either
\begin{align*}
\inf\left|\sqrt{\lambda_{j+1}(\epsilon,w)}-\sqrt{\lambda_j(\epsilon,w)}\right|&\geq\frac{1}{c}-\left|\sqrt{\lambda_{j+1}(\epsilon,w})-\frac{j+1}{c}\right|
-\left|\sqrt{\lambda_{j}(\epsilon,w)}-\frac{j}{c}\right|\\
&\geq\frac{1}{c}-\frac{2\mathfrak{M}}{j}>\frac{1}{2c}
\end{align*}
or
\begin{align*}
\inf\left|\sqrt{\lambda_{j+1}(\epsilon,w)}-\sqrt{\lambda_j(\epsilon,w)}\right|&\geq\frac{1}{c}-\left|\sqrt{\lambda_{j+1}(\epsilon,w)}-\frac{j+1+1/2}{c}\right|
-\left|\sqrt{\lambda_{j}(\epsilon,w)}-\frac{j+1/2}{c}\right|\\
&>\frac{1}{2c}
\end{align*}
holds. For $0\leq j\leq\max{\{L_0,{4c\mathfrak{M}}\}}$, the minimum
\begin{equation*}
\mathfrak{w}_j:=\min_{\stackrel{\epsilon\in[\epsilon_1,\epsilon_2]}{w\in\{W\cap H^s:\|w\|_{s}\leq r\}}}\left|\sqrt{\lambda_{j+1}(\epsilon,w)}-\sqrt{\lambda_j(\epsilon,w)}\right|
\end{equation*}
can be  obtained.
\end{proof}




\end{document}